\newtheorem{thm}{Theorem}[section]
\newtheorem{lem}{Lemma}[section]
\newtheorem{prop}[lem]{Proposition}
\newtheorem{cor}[lem]{Corollary}
\newtheorem{rem}[lem]{Remark}
\numberwithin{equation}{section}
\newcommand{\bR}{ \mathbb{R}} 
\newcommand{\bN}{ \mathbb{N}} 
\newcommand{\bZ}{ \mathbb{Z}} 
\newcommand{\bT}{ \mathbb{T}} 
\newcommand{\defd}{:=}
\newcommand{\Lm}{ {\mathcal L}}
\newcommand \ON{\;\text{ on }}
\newcommand \AND{\;\text{ and }\;}
\newcommand \IN{\;\text{ in }}
\newcommand \FOR{\;\text{ for }}
\newcommand \FORA{\;\text{ for all }}
\newcommand \eps{\varepsilon}
\newcommand \epsTwo{\varepsilon_3}
\newcommand \epsTT{\varepsilon_2}
\newcommand \epsThree{\varepsilon_4}
\newcommand \epsFour{\varepsilon_5}
\newcommand \CSix{C_7}
\newcommand \CSeven{C_8}
\newcommand \CEight{C_9}
\newcommand \CNine{C_{10}}
\newcommand \CTen{C_{11}}
\newcommand \CElev{C_{12}}
\newcommand \CTwelve{C_{13}}
\newcommand \CThirt{C_6}
\newcommand \pN{p}   
\newcommand \pNP{q}  
\newcommand \FNP{F}  
\newcommand \Mp{\mathcal G}
\newcommand \hOne{w}
\newlength{\originalbase}
\newcommand{\spacing}[1]{\setlength{\baselineskip}{#1\originalbase}}
\begin{document}               

\newcommand{\avint}{{- \hspace{-3.5mm} \int}}

\spacing{1}

\title{Semi-geostrophic system with variable Coriolis parameter}
\author{Jingrui Cheng}
\address{Department of Mathematics\\
         University of Wisconsin\\
         Madison WI 53706\\
USA}
\email{jrcheng@math.wisc.edu}
\author{Michael Cullen}
\address{
Met Office\\ Fitzroy Road\\ Exeter Devon EX1 3PB\\ United Kingdom}
\email{mike.cullen@metoffice.com}
\author{Mikhail Feldman}
\address{Department of Mathematics\\
         University of Wisconsin\\
         Madison WI 53706\\
USA}
\email{feldman@math.wisc.edu}
\date{\today}
\pagestyle{myheadings}

\maketitle
\begin{abstract}
We prove short time existence and uniqueness of smooth solutions
(in $C^{k+2,\alpha}$ with $k\geq 2$) to the 2-D semi-geostrophic system and
semi-geostrophic shallow water system with variable Coriolis parameter $f$ and
periodic boundary conditions, under the natural convexity condition on the
initial data. The dual space used in analysis of the semi-geostrophic system with
constant $f$ does not exist for the variable Coriolis parameter case, and we
develop a time-stepping procedure in Lagrangian coordinates
to overcome this difficulty.
\end{abstract}

\section{Introduction}
\label{section-1}

The semi-geostrophic system (abbreviated as SG) is a model of large-scale
atmospheric/oceanic flows, where "large-scale" means that the flow is rotational
dominated. All previous works on analysis of the SG system have been restricted
to the case of constant Coriolis force, where the ability to write the equations
in 'dual' coordinates enables the equations to be solved in that space and then
mapped back to physical space.  Examples are the results of
Benamou and Brenier \cite{benamou}, Cullen and Gangbo \cite{cul01},
Cullen and Feldman \cite{culfeld},
Ambrosio, Colombo, De Philippis and Figalli\cite{Euler solution}.
All these solve SG subject to a convexity
condition introduced by Cullen and Purser \cite{cul84}.
Convexity condition allows to  interpret the mapping  between the physical and
dual spaces as an optimal map for Monge-Kantorovich mass transport problem,
which makes possible the use of methods of Monge-Kantorovich theory in the study of SG with
constant Coriolis force.

The background and applicability of this model is reviewed by Cullen \cite{cul06}. In the atmosphere, this model is applicable on scales larger than about 1000km, which is comparable to the radius of the Earth. Thus the variations of the vertical component of the Coriolis force have to be taken into account, as these are a fundamental part of atmospheric dynamics on this scale. Thus SG with variable rotation (i.e. Coriolis parameter) is more physically realistic.

Attempts to extend the theory to the case of variable rotation were made by Cullen {\em et al.} \cite{cul05} and Cullen \cite {cul08}. These included formal arguments why the equations should be solvable. In particular, they derived a solvability condition in the form of the positive definiteness of a stability matrix which generalises the convexity condition used in the constant rotation case. They also showed that geostrophic balance could be defined by the condition that the energy was stationary under a  certain class of Lagrangian displacements. These properties suggest that a rigorous existence proof should be possible.

In this paper we prove short time existence and uniqueness of smooth solutions to SG with variable Coriolis parameter subject to the strict positive definiteness of the stability matrix. Since dual variables are not available, the result has to be proved working directly in the physical coordinates. Somewhat surprisingly, Monge-Ampere type equations appear in this process, even though we do not use Monge-Kantorovich mass transport as in case of constant Coriolis parameter. We consider two versions of the SG equations. The simplest to analyse is two-dimensional incompressible SG flow. However, this is not a physically relevant model. We therefore also analyse the SG shallow water equations, which are an accurate approximation to the full shallow water equations on large scales.

\section{Formulation of the problems and main results}
\label{FromMainTh-Sect}
We consider the SG equation with non-constant Coriolis force on a 2-D flat torus:
\begin{align}\label{1.1}
&(u_1^g,u_2^g)=f^{-1}(-\frac{\partial p}{\partial x_2},
\frac{\partial p}{\partial x_1}), \\
&\label{1.2}
D_tu_1^g+\frac{\partial p}{\partial x_1}-fu_2=0,\\
&\label{1.3}
D_tu_2^g+\frac{\partial p}{\partial  x_2}+fu_1=0,\\
&\label{1.4}
\nabla\cdot\mathbf{u}=0,
\end{align}
with initial data $p|_{t=0}=p_0(x)$. Here $\mathbf{u}=(u_1,u_2)$ is the physical velocity and  $D_t=\partial_t+\mathbf{u}\cdot\nabla$, the material derivative. $\mathbf{u}_g=(u_1^g,u_2^g)$ is the
geostrophic wind velocity, $p$ is the pressure, and $f=f(x)$ is the Coriolis parameter,
which  is a given smooth positive function.

In this paper we consider two-dimensional periodic case. That is, all the functions appearing above
are assumed to be defined on $\bR^2$ and periodic with respect to $\bZ^2$,
hence can be thought of defined on a 2-D torus.

Physically interesting solutions of the SG system must satisfy the convexity principle introduced by Cullen and Purser \cite{cul84}. In the case when $f\equiv1$, the convexity condition means that the modified pressure function
$P(x_1,x_2)=p(x_1,x_2)+\frac{1}{2}(x_1^2+x_2^2)$ is convex.
We will introduce the analogue of this convexity condition when $f$ is not
a constant, see (\ref{1.7}) below, and prove short time existence and
 uniqueness of solutions when this condition is satisfied by the initial data.
 Before we state the main results of this paper, we first
 introduce some notations:

In the following, we identify $\bT^2$ with $\bR^2/\bZ^2$.

We will denote $C^{k,\alpha}(\bT^2)$ to be the space of $C^{k,\alpha}$ functions on $\bR^2$ and periodic with respect to $\bZ^2$, which is equipped with the norm
$$\|p\|_{k,\alpha}=\sum_{0\leq|\beta|\leq k}\|D^{\beta}p\|_0+\sum_{|\beta|=k}[D^{\beta}u]_{\alpha},
$$
where
$$\|v\|_0=\max_{x\in\bR^2}|v(x)|,\qquad
[v]_{\alpha}=\sup_{x,y\in\bR^2}\frac{|v(x)-v(y)|}{|x-y|^{\alpha}}.
$$
Sometimes we will write $C^{k,\alpha}$ instead of $C^{k,\alpha}(\bT^2)$ for
simplicity. Similarly define $L^2(\bT^2)$ which consists of periodic functions
which is in $L^2_{loc}(\bR^2)$. All these spaces can be equivalently understood as corresponding spaces on 2-d torus $\bT^2$.

Now we state the main result of this paper:
\begin{thm}\label{1.5}
Let $k\ge 2$ be integer.
 Let $f\in C^{k,\alpha}(\mathbf{\bT^2})$ with $f(x)>0$ on $\bT^2$.
Let $p_0\in C^{k+2,\alpha}(\mathbf{\bT^2})$ with $\int_{\bT^2}p_0(x)dx=0$.
Suppose also the following convexity-type condition is satisfied
\begin{equation}\label{1.6}
I+f^{-1}D(f^{-1}Dp_0)\ge c_0 I \ON\bT^2\textrm{ for some } c_0>0.
\end{equation}
Then there exists $T_0>0$, depending on $\|p_0\|_{k+2,\alpha}$, $c_0$, $f$ and $k$, such that there exists a solution $(p,u_g,v_g,\mathbf{u})$ to (\ref{1.1})-(\ref{1.4}) with initial data $p_0$ on $[0,T_0]\times\bT^2$ which satisfies
\begin{align}\label{1.7}
&I+f^{-1}D(f^{-1}Dp)>0\ON[0,T_0]\times\bT^2,\\
\label{1.7-1}
&\int_{\bT^2}p(t,x)dx=0 \FORA t\in[0,T_0],
\end{align}
and the following regularity
\begin{equation}\label{1.8}
\partial_t^mp\in L^{\infty}(0,T_0;C^{k+2-m,\alpha}(\bT^2)) \FOR
0\leq m\leq k+1,\;  \mathbf{u}\in L^{\infty}(0,T_0;C^{k,\alpha}(\bT^2)).
\end{equation}
Moreover, any solution $(p,\mathbf{u}_g,\mathbf{u})$ to (\ref{1.1})-(\ref{1.4}) with initial data $p_0$, defined on $[0,T]\times\bT^2$ for some $T>0$, which satisfies (\ref{1.7}), (\ref{1.7-1}) and has regularity $p\in L^{\infty}(0,T;C^3(\bT^2))$, $\partial_tp\in L^{\infty}(0,T;C^2(\bT^2))$ is unique.
\end{thm}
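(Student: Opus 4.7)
My plan is to reformulate the SG system as an evolution for $p$ alone --- expressing $\mathbf{u}$ algebraically in terms of $(p,\partial_t p)$ and recovering $\partial_t p$ from a linear elliptic equation --- and to solve the resulting problem by a short-time time-stepping scheme. In complex notation $u=u_1+iu_2$, with $u^g=(i/f)\nabla p$, equations (\ref{1.2})--(\ref{1.3}) combine into $D_tu^g + \nabla p + ifu=0$; expanding $D_t = \partial_t + u\cdot\nabla$ and solving for $u$ yields
\[
\bigl[I + f^{-1}D(f^{-1}Dp)\bigr]^{T} \mathbf{u} \;=\; f^{-1}J\nabla p - f^{-2}\nabla\partial_t p,
\]
where $J$ denotes rotation by $\pi/2$. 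The coefficient matrix is the transpose of the stability matrix in (\ref{1.6}); its symmetric part is positive definite by the convexity hypothesis, so it is invertible and $\mathbf{u}$ is determined by $(p,\partial_t p)$. Imposing $\nabla\cdot\mathbf{u}=0$ then converts the system into a linear divergence-form elliptic equation for $\partial_t p$ with uniformly positive principal part, solvable under the mean-zero normalization (\ref{1.7-1}).

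For existence, I will construct approximate solutions by a time-stepping scheme in Lagrangian coordinates: given $p$ (and flow map $X$) at time $t_n$ satisfying (\ref{1.6}) with constant $\ge c_0/2$, solve the elliptic equation for $\partial_t p$, advance $p$ and $X$ over a small step, and iterate. The Lagrangian viewpoint is natural because, without dual variables, the stability condition must be tracked along trajectories directly in physical space; the Monge-Amp\`ere type equations alluded to in the introduction arise when the volume-preservation constraint $\det DX=1$ is enforced together with the geostrophic update. Schauder estimates on the elliptic equation give $\partial_t p\in C^{k+1,\alpha}$ when $p\in C^{k+2,\alpha}$, and iterating after time-differentiation reproduces the full regularity (\ref{1.8}), each additional time derivative costing one spatial derivative through commutators. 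A short-time continuity argument using the strict inequality in (\ref{1.6}) propagates the stability condition on some interval $[0,T_0]$ independent of the time step, closing uniform bounds; Arzel\`a--Ascoli in H\"older spaces extracts the limit solution.

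For uniqueness, the difference $q=p_1-p_2$ of two solutions satisfies a linear second-order equation (obtained by subtracting the reformulated elliptic equations), with coefficients controlled by the $C^3$/$C^2$ bounds on $(p_i,\partial_t p_i)$; using the positive-definite principal part, an energy estimate $\tfrac{d}{dt}\|\nabla q\|_{L^2}^2 \le C\|\nabla q\|_{L^2}^2$, Gronwall, and $q(0)=0$ force $q\equiv 0$, after which $\mathbf{u}_1=\mathbf{u}_2$ follows from the algebraic determination of $\mathbf{u}$. The main obstacle is the existence step: simultaneously propagating the strict positive definiteness of the stability matrix and closing uniform $C^{k+2,\alpha}$ bounds on $p$ through the discrete scheme. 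Without a convex dual potential the Monge-Amp\`ere analysis must be carried out in physical coordinates, and the nonlinear dependence of the inverse matrix $M^{-1}$ on $D^2 p$ must be controlled in H\"older norms at every step.
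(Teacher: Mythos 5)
Your existence scheme has a loss-of-derivative problem that the paper explicitly identifies as the reason for abandoning this route (see the discussion around equations (\ref{2.19new})--(\ref{2.21new}) at the end of Section \ref{FromMainTh-Sect}). You propose: given $p(t_n)\in C^{k+2,\alpha}$, solve the divergence-form elliptic equation for $\partial_t p$, then advance $p$. But the coefficients of that equation are built from $D^2p$ and hence are only $C^{k,\alpha}$, so Schauder gives $\partial_t p\in C^{k+1,\alpha}$, and the advanced $p(t_{n+1})\approx p(t_n)+\delta t\,\partial_t p$ is only $C^{k+1,\alpha}$. Iterating loses a spatial derivative each step; the phrase ``iterating after time-differentiation reproduces the full regularity'' does not repair this, since time-differentiating only moves you further down the scale $\partial_t^m p\in C^{k+2-m,\alpha}$ and never explains why $p(t)$ itself stays in $C^{k+2,\alpha}$ for $t>0$. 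The uniform top-order bound in (\ref{1.8}) does not come out of this scheme.

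The paper's actual device is to make the Monge--Amp\`ere structure load-bearing rather than incidental. The discrete Lagrangian equation (\ref{1.30}) couples $p_{n+1}$ to $p_n$ through the unknown map $F_{n+1}^{-1}$; enforcing $\det DF_{n+1}^{-1}\equiv 1$ becomes a Monge--Amp\`ere type equation (\ref{1.34}) for $p_{n+1}$, solved by the implicit function theorem (Proposition \ref{5.4spm}). The estimate that closes the scheme is the determinant estimate: with $\nu_n:=\det\bigl(I+f^{-1}D(f^{-1}Dp_n)\bigr)$, Lemma \ref{3.32} shows $\|\nu_{n+1}\|_{k,\alpha}\le\|\nu_n\|_{k,\alpha}+C\,\delta t$, and Lemma \ref{3.39} recovers $\|p_n\|_{k+2,\alpha}\le C_4\bigl(\|\nu_n\|_{k,\alpha}+1\bigr)$ from Monge--Amp\`ere elliptic regularity. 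The two derivatives gained by that step are exactly what compensates the two lost in forming the determinant, so uniform $C^{k+2,\alpha}$ bounds propagate. You notice in passing that Monge--Amp\`ere equations ``arise when the volume-preservation constraint is enforced,'' but you never use them to regain regularity --- that is the idea you are missing. A secondary caution on uniqueness: a direct energy estimate $\tfrac{d}{dt}\|\nabla q\|_{L^2}^2\le C\|\nabla q\|_{L^2}^2$ for $q=p_1-p_2$ will not close at the $H^1$ level, because $M(p_1)^{-1}-M(p_2)^{-1}$ is of second order in $q$; the paper instead runs a coupled Gronwall argument in $\|\phi_1-\phi_2\|_{L^2}$ and $\|p_1-p_2\|_{H^2}$, linked by the Lagrangian determinant identity (\ref{6.18}) and the bound on $H(t,x)$ in Lemma \ref{6.21}.
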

Similar results hold for the semi-geostrophic shallow water system (\ref{5.1})-(\ref{5.4}).
\begin{thm}\label{1.9}
Let $k\ge 2$ be integer.
 Let $f\in C^{k,\alpha}(\mathbf{\bT^2})$ with $f(x) >0$ on $\bT^2$.
Let $h_0\in C^{k+2,\alpha}(\bT^2)$ with $\int_{\mathbf{T^2}}h_0(x)dx=1$. Suppose also the following convexity and positivity conditions are satisfied for initial data:
\begin{equation}\label{1.10}
I+f^{-1}D(f^{-1}Dh_0)\ge c_0 I \AND h_0\ge c_1  \ON\bT^2\;\textrm{ for some } c_0,\,c_1>0.
\end{equation}
Then there exists $T_0>0$, depending on $\|h_0\|_{k+2,\alpha}$, $c_0,c_1$, $f$ and $k$, such that there exists a solution $(h,u_g,v_g,\mathbf{u})$ to (\ref{5.1})-(\ref{5.4}) with initial data $h_0$ on $[0,T_0]\times\bT^2$ which satisfies
\begin{align}\label{1.11}
&I+f^{-1}D(f^{-1}Dh)>0,\; h>0\ON[0,T_0]\times\bT^2,\\
\label{1.11-1}
& \int_{\bT^2}h(t,x)dx=1 \FORA t\in[0,T_0],
\end{align}
and the following regularity
\begin{equation}
\partial_t^mh\in L^{\infty}(0,T_0;C^{k+2-m,\alpha}(\bT^2)) \FOR 0\leq m\leq k+1,  \mathbf{u}\in L^{\infty}(0,T_0;C^{k,\alpha}(\bT^2)).
\end{equation}
Moreover, any solution $(h,\mathbf{u}_g,\mathbf{u})$ to (\ref{5.1})-(\ref{5.4}) with initial data $h_0$, defined on $[0,T]\times\bT^2$ for some $T>0$ which satisfies (\ref{1.11}), (\ref{1.11-1}) and has regularity $h\in L^{\infty}(0,T;C^{3}(\bT^2))$, $\partial_th\in L^{\infty}(0,T;C^2(\bT^2))$ is unique.

\end{thm}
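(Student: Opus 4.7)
The plan is to follow the same strategy announced for Theorem \ref{1.5}, adapting the time-stepping in Lagrangian coordinates to the shallow water setting. Introduce the flow map $F(t,\cdot):\bT^2\to\bT^2$ by $\partial_t F = \mathbf{u}\circ F$, $F(0,\cdot)=\mathrm{id}$. The continuity equation (\ref{5.4}) then becomes the pushforward identity $h(t,F(t,x))\det DF(t,x)=h_0(x)$, so that once $F$ is known, $h$ is determined algebraically. The unknowns for the scheme are then $F$ and $\mathbf{u}$, with $h$ and $\mathbf{u}_g = f^{-1}\nabla^{\perp}h$ recovered from $F$. Differentiating the geostrophic balance along particle trajectories and substituting into (\ref{5.2})-(\ref{5.3}) produces, as in the incompressible case, an elliptic equation of Monge--Amp\`ere type for the next-step pressure, whose linearization is exactly governed by the matrix $I+f^{-1}D(f^{-1}Dh)$ appearing in the convexity hypothesis (\ref{1.10}).

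Next, I would discretize $[0,T_0]$ with step $\tau$, advance $F^{n+1} = F^n + \tau\,\mathbf{u}^n$, obtain $h^{n+1}$ from the pushforward relation, and solve the Monge--Amp\`ere-type problem above for $\mathbf{u}^{n+1}$ on $\bT^2$. Since (\ref{1.10}) makes the linearization uniformly elliptic at $t=0$, Schauder theory provides $C^{k+2,\alpha}$ bounds on $h^{n+1}$ and $C^{k,\alpha}$ bounds on $\mathbf{u}^{n+1}$, with constants controlled by the previous step. A Gr\"onwall-type bootstrap then yields uniform-in-$\tau$ estimates on a time interval $T_0$ depending only on $\|h_0\|_{k+2,\alpha}$, $c_0$, $c_1$, $f$, and $k$. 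The lower bound $h_0\ge c_1$ is propagated because $\det DF$ stays close to $1$ on a short interval, and the convexity condition is propagated by continuity of the eigenvalues of $I+f^{-1}D(f^{-1}Dh)$ in time. Passing $\tau\to 0$ via Arzel\`a--Ascoli and standard compactness gives a solution with the regularity claimed in (\ref{1.11-1})--(1.12), and the normalization $\int h = 1$ is preserved by the pushforward.

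For uniqueness, I would take two solutions $(h,\mathbf{u}_g,\mathbf{u})$ and $(\tilde h,\tilde{\mathbf{u}}_g,\tilde{\mathbf{u}})$ with the stated regularity and the same initial data, compare their Lagrangian flow maps $F$ and $\tilde F$, and derive a differential inequality for a norm of the form $\|F-\tilde F\|_{L^2}^2+\|h-\tilde h\|_{H^1}^2$. The uniform positive definiteness (\ref{1.11}) makes the Monge--Amp\`ere linearization coercive for both solutions, which lets me bound the difference of the pressures by the difference of the flow maps; Gr\"onwall's lemma then forces the two solutions to coincide.

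The main obstacle, as flagged in the abstract, is precisely the absence of a dual Kantorovich formulation: the Monge--Amp\`ere estimates and the propagation of convexity must be carried out entirely in physical coordinates. In the shallow water setting this is further complicated by the need to simultaneously track positivity of $h$ and nondegeneracy of $I+f^{-1}D(f^{-1}Dh)$, both of which depend nonlinearly on $F$ through the pushforward. Closing the $C^{k+2,\alpha}$ estimates on a time interval independent of $\tau$ while maintaining both inequalities is the technically delicate portion; once those uniform bounds are in hand, the rest of the argument is a routine adaptation of the scheme built for Theorem \ref{1.5}.
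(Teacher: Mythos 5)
Your high-level picture (Lagrangian flow map, pushforward relation $h(t,F(t,x))\det DF(t,x)=h_0(x)$, a Monge--Amp\`ere type constraint, Arzel\`a--Ascoli passage to the limit, Gr\"onwall for uniqueness) matches the paper, but the iteration you describe is not the paper's iteration and, as written, would not close because it loses derivatives. You propose: explicitly update $F^{n+1}=F^n+\tau\,\mathbf{u}^n$, then read off $h^{n+1}$ from the pushforward identity, then ``solve the Monge--Amp\`ere-type problem for $\mathbf{u}^{n+1}$.'' There are two problems. First, if $\mathbf{u}^n\in C^{k,\alpha}$ then $F^{n+1}\in C^{k,\alpha}$, and the pushforward formula forces $h^{n+1}=h_0(\,(F^{n+1})^{-1})/\det D(F^{n+1})^{-1}$ to lie only in $C^{k-1,\alpha}$; you need $h^{n+1}\in C^{k+2,\alpha}$ to start the next step, so each step bleeds three derivatives. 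Second, the Monge--Amp\`ere equation is an equation for the scalar depth $h$, not for the velocity $\mathbf{u}$; once $h^{n+1}$ has already been pinned down by the pushforward, the Monge--Amp\`ere constraint is over-determined and has nothing left to solve for. What you are in effect describing is essentially the ``first attempt'' the paper explicitly rejects in Section~\ref{FromMainTh-Sect} for precisely this loss-of-derivative reason.

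The paper's scheme inverts the order of dependence: given $h_n$, it solves for the \emph{map} $F_{n+1}^{-1}$ implicitly from the discretized geostrophic-balance relation (\ref{5.6}), as a function of both $h_n$ and the unknown $h_{n+1}$ (this is the implicit-function-theorem machinery built around $\hat Q$ and $\Mp$ in Section~\ref{constMaps-Sect}), and then imposes the pushforward condition $\det DF_{n+1}^{-1}=h_{n+1}/(h_n\circ F_{n+1}^{-1})$ as the Monge--Amp\`ere equation (\ref{5.9}) to be solved for $h_{n+1}$. Because $h_{n+1}$ is then the \emph{output} of an elliptic Schauder estimate rather than the image of a composition with a less regular map, Lemma~\ref{5.13} (the shallow-water analogue of Lemma~\ref{3.16}) gives $\|h_{n+1}\|_{k+2,\alpha}\le C_0(\|h_n\|_{k+2,\alpha})$, and the refined estimate on $\nu_n=\det(I+f^{-1}D(f^{-1}Dh_n))/h_n$ in Lemmas~\ref{5.22}, \ref{5.27} prevents the constant from drifting. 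Without this specific ordering there is no way to get time-uniform $C^{k+2,\alpha}$ bounds. You also do not mention the structural feature that distinguishes the shallow-water linearization from the incompressible case: the Fr\'echet derivative $D_qP(h_0,h_0,0)$ in (\ref{5.12}) picks up a zero-order term with strictly negative coefficient, $L(w)=a_{ij}\partial_{ij}w+b_i\partial_iw-w$, which is what makes the strong maximum principle and the method-of-continuity argument go through in $C_0^{2,\alpha/2}$ (and is also the mechanism that keeps the lower bound $h\ge\mu>0$ under control in Lemma~\ref{6.67} and Proposition~\ref{6.69}). Your uniqueness sketch in $\|F-\tilde F\|_{L^2}^2+\|h-\tilde h\|_{H^1}^2$ is in the right spirit, though the paper needs $H^2$ control on $h-\tilde h$ (Proposition~\ref{6.69}) and the auxiliary Lemma~\ref{9.8nn} comparing flow-map inverses, since the pushforward is $(\phi_t)_\#h_0=h_t$ rather than Lebesgue-measure-preserving.
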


All previous works on existence of solutions for the SG system concern the case when the Coriolis parameter $f$ is constant (and then by rescaling
we can set $f\equiv1$), and make use of the dual space. Namely, we introduce the "geopotential" $P=p(x_1,x_2)+\frac{1}{2}(x_1^2+x_2^2)$, then the system
(\ref{1.1})-(\ref{1.4}) can be put in the form.
\begin{align}\label{1.13}
&D_t(\nabla P)=J(\nabla P-id),\\
\label{1.14}
&\nabla\cdot\mathbf{u}=0,
\end{align}
with initial conditions
\begin{equation}\label{1.15}
P|_{t=0}=p_0+\frac{1}{2}(x_1^2+x_2^2),
\end{equation}
where
\begin{displaymath}
J =
\left( \begin{array}{cc}
0 &-1  \\
1& 0  \\
\end{array} \right).
\end{displaymath}
Cullen-Purser convexity condition is that $P$ is convex, which
coincides with condition (\ref{1.7}) for $f\equiv 1$.
For each $t>0$,
introduce the measure $\nu_t=\nabla P(t, \cdot)_\#(\Lm_{\bT^2})$ (i.e. the push-forward of the
Lebesgue measure
$\Lm_{\bT^2}$ on the torus by the map $\nabla P(t, \cdot)$,
and let $\nu$ be a measure on $[0,\infty)\times \bT^2$ defined by
$d\nu=d\nu_tdt$.
Then the measure $\nu$ will satisfy the equation
\begin{equation}\label{1.16}
\partial_t\nu+\nabla\cdot(U\nu)=0,
\end{equation}
with
\begin{equation}\label{1.17}
U=J(id-\nabla P^*),
\end{equation}
with also the  initial condition
\begin{equation}\label{1.18}
\nu|_{t=0}=\nu_0:=\nabla P_{0\#}(\Lm_{\bT^2}).
\end{equation}
Here $P^*$ is the Legendre transform of the convex function $P$. Notice the vector field (\ref{1.17}) is divergence-free.

	To prove existence of the SG system (\ref{1.1})-(\ref{1.4}) when $f\equiv1$, or equivalently (\ref{1.13})-(\ref{1.14}), it is easier to first consider the existence of solutions to the system (\ref{1.16})-(\ref{1.18}).
In general, thes solution has low regularity which makes it difficult to
transform that solution back to the physical space. For general initial data
with $\nu_0\in L^p$,
it is shown in \cite{culfeld} that a solution in physical space exists in
Lagrangian sense. In \cite{FeldTud-15,FeldTud-15a} a weaker form of
Lagrangian solutions  in physical space was obtained in case when $\nu_0$ is a general measure.
If
 the solution $(\nu, P)$ in dual space has enough regularity, then such solutions can be transformed back to physical variables and give Eulerian solutions to the original equation (\ref{1.13})-(\ref{1.14}). In the case when the density of the initial measure $\nu_0$ is
between two positive constants:
  $0<\lambda\leq\nu_0\leq\Lambda$ on $\bT^2$,
Ambrosio et al \cite{Euler solution} obtained a solution to (\ref{1.16})-(\ref{1.18}) with $P^*\in W^{2,1}(\bT^2)$,  and this regularity turns out to be sufficient to transform back and give a weak solution to (\ref{1.1})-(\ref{1.4})
in the sense of distributions. For smooth solutions, Loeper \cite{Loeper} obtained short time existence of smooth solutions to (\ref{1.16})-(\ref{1.18}) when $\nu_0$ is smooth and positive. And because of smoothness, there is no difficulty to rewrite the equation in terms of original physical variables.

The  approach described above does not work for the system (\ref{1.1})-(\ref{1.4}) when $f$ is not a constant,  because a dual space cannot be defined in such case. Therefore,
we work directly with system (\ref{1.1})-(\ref{1.4}).

As a first attempt, we may try the following argument. Note
that (\ref{1.2}), (\ref{1.3}) is a linear
 algebraic system for the physical velocity $\mathbf{u}$.
 Assuming $I+f^{-1}D(f^{-1}Dp)>0$, solve for $\mathbf{u}$ and use
 the definition of $\mathbf{u}_g$ by (\ref{1.1}) to obtain
\begin{equation}\label{2.19new}
\mathbf{u}=(I+f^{-1}D(f^{-1}Dp))^{-1}(f^{-1}J\nabla p-f^{-2}\partial_t\nabla p).
\end{equation}
Substituting (\ref{2.19new}) into (\ref{1.4}), we obtain an elliptic equation in $\partial_tp$:
\begin{equation}\label{2.20new}
\nabla\cdot\big[(I+f^{-1}D(f^{-1}Dp))^{-1}f^{-2}\partial_t\nabla p\big]=\nabla\cdot\big[(I+f^{-1}D(f^{-1}Dp))^{-1}f^{-1}J\nabla p\big].
\end{equation}
Then we may try to solve (\ref{1.1})-(\ref{1.4}) by a fixed point argument.
For a given $p$, we solve the elliptic equation:
\begin{equation}\label{2.21new}
\nabla\cdot\big[(I+f^{-1}D(f^{-1}Dp))^{-1}f^{-2}\nabla w\big]=\nabla\cdot\big[(I+f^{-1}D(f^{-1}Dp))^{-1}f^{-1}J\nabla p\big].
\end{equation}
We expect the solution $w$ to give $\partial_tp$.
Hence we define $\hat{p}(t,x)=p_0(x)+\int_0^tw(s,x)ds$. This procedure gives a
map $p\rightarrow\hat{p}$. If this map has a fixed point $p$ and it is smooth,
then it gives a solution to the system (\ref{1.1})-(\ref{1.4}). This approach runs
into a serious difficulty because of loss of derivative.
Indeed, if we assume $p$ to be in $C^{k+2,\alpha}$ in spatial variables, then
the coefficients and the right-hand side  of the divergence form elliptic
equation (\ref{2.21new}) are in $C^{k,\alpha}$. Then, from the standard
elliptic estimates, the solution $w$ will be
$C^{k+1,\alpha}$ in $x$-variables. Next we integrate $w$ in time, and the resulting
 $\hat{p}$
has regularity $C^{k+1,\alpha}$ in spatial variables.
Thus we lose one derivative (in space) by
performing this procedure. For this reason, we take a different approach.

We will construct solutions
 using a time-stepping procedure  in the Lagrangian coordinates in  physical space.
 In the rest of the section we will give some motivation for the time-stepping procedure
 to be used in the next sections.

System  (\ref{1.1})-(\ref{1.4}) can be written in Lagrangian coordinates as following.
First we use (\ref{1.1}) and write the equation (\ref{1.2})-(\ref{1.3}) as
\begin{equation}\label{1.19}
D_t\mathbf{u}_g-fJ\mathbf{u}_g+fJ\mathbf{u}=0.
\end{equation}
 Denote by $\phi(t,x)$ the flow map generated by $\mathbf{u}$. Then
$\phi(t,x)$ satisfies
\begin{equation*}
\begin{split}
&\partial_t \phi(t,x) = \mathbf{u}(t, \phi(t,x))\IN \bR\times\bR^2,\\
&\phi(0,x)=x \ON \bR^2.
\end{split}
\end{equation*}
Then from standard ODE theory we can see $\phi(t,x+h)=\phi(t,x)+h$ for any $h\in\bZ^2$. Equation (\ref{1.4}) implies that for each $t$ the map $\phi(t,\cdot)$
is  Lebesgue measure preserving: $\phi(t,\cdot)_\#\mathcal{L}^2_{\bR^2} =\mathcal{L}^2_{\bR^2}$,
where the left-hand side denotes the push-forward of the Lebesgue measure
$\mathcal{L}^2_{\bR^2}$ by the map $\phi(t,\cdot)$.
Express the geostrophic wind velocity in Lagrangian variables:
\begin{equation}\label{LagrGeostrVel}
\mathbf{v}_g(t,x_0)=\mathbf{u}_g(t,\phi(t,x_0))
\FOR (t, x_0)\in \bR_+\times\bR^2,
\end{equation}
where $x_0=(x_{01}, x_{02})$ is the spatial coordinate at time $t=0$. Then $\mathbf{v}_g$ is periodic with respect to $\bZ^2$ since $\mathbf{u}_g$ is assumed to be periodic in spatial arguments.
Now the equation (\ref{1.19}) can be written as
\begin{equation}\label{1.20}
\partial_t\mathbf{v}_g(t,x_0)-f(\phi(t,x_0))J\mathbf{v}_g(t,x_0)
+f(\phi(t,x_0))J\partial_t\phi(t,x_0)=0.
\end{equation}

We thus have rewritten system (\ref{1.1})--(\ref{1.4}) in the following Lagrangian
form: for $T>0$, find a function $p\in C^1([0,T)\times\bT^2)$ and a family of maps
$\phi\in C^1([0,T)\times\bR^2;\,\bR^2)$ such that:
\begin{equation}\label{lagrangianSG}
\begin{split}
& \textrm{Equation (\ref{1.20}) holds on $(0,T)\times\bT^2$
with $\mathbf{u}_g$,
$\mathbf{v}_g$ defined by (\ref{1.1}), (\ref{LagrGeostrVel})};\\
&\phi(t, \cdot)_\# \mathcal{L}^2_{\bR^2}=\mathcal{L}^2_{\bR^2} \FORA t\in (0,T),\,\phi(t,x+h)=\phi(t,x)+h\textrm{ for any $h\in\bZ^2$};\\
& p(0, x)=p_0(x),\;\;\phi(0,x)=x \ON \bR^2,
\end{split}
\end{equation}
where $p_0(x)$ is a given periodic function. For sufficiently small
$T>0$, we will find a smooth solution of (\ref{lagrangianSG})
such that $\phi(t, \cdot):\bR^2\to \bR^2$ is a diffeomorphism for each $t\in[0, T]$.
This determines a solution of (\ref{lagrangianSG}) by defining
$\mathbf{u}(t, x)=(\partial_t \phi)(t, \phi_t^{-1}(x))$,
where $\phi_t(\cdot)\defd\phi(t, \cdot)$.

In the rest of this section, we briefly describe the plan of the paper.

In Section \ref{TimeSteppingProcSect} we define
a time-stepping approximation of the system (\ref{lagrangianSG}).
For a time step size $\delta t>0$ and $n=0, 1,\dots, N$, a periodic function
$p_n$ on $\bR^2$ is an approximation of $p(n\delta t, \cdot)$,
and a measure-preserving map
$F_{n+1}: \bR^2 \to \bR^2$ with $F(\cdot+h)=F(\cdot)+h$ for any $h\in\bZ^2$ is an approximation of the flow map
 connecting time steps $n\delta t$ and $(n+1)\delta t$.
Then $p_0$ is given by the initial data.
On $n$-th step of  iteration, assuming that $p_n$ is known, we
define equations for  $F_{n+1}$ and $p_{n+1}$.
Equation for $p_{n+1}$ is of Monge-Ampere type.

In Section \ref{constMaps-Sect}, we show that for any $p_n,p_{n+1}$ close
enough to $p_0$ in $C^{2,\alpha}$ norm, and $\delta t$ small depending
on $p_0$, it is always possible to define a map $F_{n+1}$ which is a $C^{1,\alpha}$
diffeomorphism and satisfies equation (\ref{1.30}), and such a map is unique among all maps
close enough to the identity.

In Sections \ref{constApprSol-Sect}, \ref{constApprSol-1-Sect} we show
that if the step
size $\delta t$ is sufficiently small, then
for any $p_n$ which is close $p_0$ in $C^{3,\alpha}$-norm,
 we can find $p_{n+1}$ close to $p_0$ in
$C^{2,\alpha/2}$, such that the map defined in Section \ref{constMaps-Sect} is measure
preserving. This is done by solving
the iteration equation of Monge-Ampere type using
the implicit function theorem. Also we establish an estimate which shows
that if $p_0\in C^{k+2,\alpha}$, $k\ge 1$,
 then $p_{n+1}$ will remain
bounded in $C^{k+2,\alpha}$ and will
be $C\delta t$ close to $p_n$ in $C^{k+1,\alpha}$. This allows to define
time-stepping solution on time interval $[0, T]$, independent of (small) $\delta t$.

In Section \ref{passageLim-Sect}, we pass time-stepping solutions to the limit
as
$\delta t \to 0$,
 and show that the limit is
  a smooth solution of system  (\ref{1.1})-(\ref{1.4}) in Lagrangian coordinates.
  Because of smoothness, there is no difficulty to transform to Eulerian coordinates.

In Section \ref{varCoriolisPar-sect}, we extend above discussion to the SG
shallow water case.

In Section \ref{uniq-Sect}, we prove uniqueness of solutions, both for SG and SG shallow water equations, under the assumptions of Theorem \ref{1.5} and \ref{1.9} respectively.

\section{Time-stepping in Lagrangian coordinates}
\label{TimeSteppingProcSect}
In this section
 we define a time-stepping approximation of the system (\ref{lagrangianSG}).
We first give a heuristic motivation for the equations defined below.

In the following argument it will be more convenient to work with periodic functions
on $\bR^2$, instead of functions on $\bT^2$.

Discretize the time at $t_0$ with step size $\delta t$. Then the time difference equation corresponding to (\ref{1.20}) is
\begin{equation}\label{1.21}
\begin{split}
&\mathbf{v}_g(t_0+\delta t,x_0)-\mathbf{v}_g(t_0,x_0)-
f(\phi(t_0, x_0))J\mathbf{v}_g(t_0,x_0)\delta t\\
&\qquad\qquad+f(\phi(t_0,x_0))J(\phi(t_0+\delta t,x_0)-\phi(t_0,x_0))=0.
\end{split}
\end{equation}
On the other hand, we have
\begin{equation}\label{1.22}
R_{f(\phi(t_0,x_0))\delta t}\mathbf{v}_g(t_0, x_0)=\mathbf{v}_g(t_0, x_0)+f(\phi(t_0, x_0))J\mathbf{v}_g(t_0,x_0)\delta t+O(\delta t^2)
\end{equation}
where
\begin{equation}\label{1.22pr}
R_{a}=
\left( \begin{array}{cc}
\cos a &-\sin a  \\
\sin a& \cos a  \\
\end{array} \right)
\end{equation}
is the matrix defining a rotation by angle $a$.
Then we can replace (\ref{1.21}) with
\begin{equation}\label{1.23}
\mathbf{v}_g(t_0+\delta t,x_0)-R_{f\delta t}\mathbf{v}_g(t_0,x_0)+f(\phi(t_0,x_0))J(\phi(t_0+\delta t,x_0)-\phi(t_0,x_0))=0,
\end{equation}
where $R_{f\delta t}=R_{f(\phi(t_0,x_0))\delta t}$.
Write the flow map from time $t_0$ to $t_0+\delta t$ as $F$, then $\phi(t_0+\delta t,x_0)=F\circ\phi(t_0, x_0)$. Write $x=\phi(t_0+\delta t)$,
then $\phi(t_0, x_0)=F^{-1}(x)$. With these notations and recalling (\ref{LagrGeostrVel}), equation (\ref{1.23})
becomes
\begin{equation}\label{1.24}
\mathbf{u}_g(t_0+\delta t,x)-R_{f\delta t}\mathbf{u}_g(t_0,F^{-1}(x))+f(F^{-1}(x))J(x-F^{-1}(x))=0,
\end{equation}
where in $R_{f\delta t}$ function $f$ is evaluated  at $F^{-1}(x)$.
Recalling that $\mathbf{u}_g=f^{-1}J\nabla p$ and noting that
$R_{f\delta t}J=J R_{f\delta t}$, we obtain
\begin{equation}\label{1.25}
f^{-1}(x)J\nabla p(t_0+\delta t,x)-f^{-1}JR_{f\delta t}\nabla p(t_0, F^{-1}(x))+f(F^{-1}(x))J(x-F^{-1}(x))=0.
\end{equation}
In the second term on the left hand side above, $f$ is evaluated at $F^{-1}(x)$.

Let $t_0=n\delta t$, and write $p_{n+1}=p((n+1)\delta t),p_n=p(n\delta t)$, and $F_{n+1}$ for the flow map connecting time step $n\delta t$ and $(n+1)\delta t$, we obtain from (\ref{1.25}):
\begin{equation}\label{1.30}
x+f^{-1}(x)f^{-1}(F_{n+1}^{-1}(x))\nabla p_{n+1}(x)=F_{n+1}^{-1}(x)+(f^{-2}R_{f\delta t}\nabla p_n)(F_{n+1}^{-1}(x)).
\end{equation}
In the second term of the right-hand side, all functions are evaluated at $F_{n+1}^{-1}(x)$.

We require the map $F_{n+1}^{-1}$ to be a measure preserving
 diffeomorphism of $\bT^2$.

Next we will set up the iteration scheme based on the ideas described above. Let $p_0$ be the
initial data.
Then we define $p_1, p_2, \dots, N$ inductively as following.
Let $n\in\{0, 1,2\dots, N\}$, and a function $p_n$ is given. We look for a function $p_{n+1}$ and a measure
preserving map $F_{n+1}$ such that (\ref{1.30}) holds.
Since we want $F_{n+1}$ to be measure preserving, we take gradient on both sides of (\ref{1.30}), and collect terms involving $DF_{n+1}^{-1}$:
\begin{equation}\label{1.31}
\begin{split}
&\big(I+f^{-1}\nabla p_{n+1}\otimes\nabla(f^{-1})(x)+f^{-2}D^2p_{n+1}\big)(x)
+A_{n+1}(x)\\
&\qquad=[\big(I+f^{-1}\nabla p_n\otimes\nabla (f^{-1})+
f^{-2}D^2p_n\big)(F_{n+1}^{-1}(x))
+B_{n+1}(x)]DF_{n+1}^{-1}(x),
\end{split}
\end{equation}
where
\begin{align}\label{1.32}
\begin{split}
A_{n+1}(x)=&[f^{-1}(F_{n+1}^{-1}(x))-f^{-1}(x)]\nabla p_{n+1}(x)\otimes
\nabla (f^{-1})(x)\\
&\qquad+f^{-1}(x)(f^{-1}(F_{n+1}^{-1}(x))-f^{-1}(x))D^2p_{n+1}(x),
\end{split}
\\
\label{1.33}
\begin{split}
B_{n+1}(x)=&[(f^{-1}\nabla p_n)(F_{n+1}^{-1}(x))-(f^{-1}\nabla p_{n+1})(x)]\otimes\nabla(f^{-1})(F_{n+1}^{-1}(x))\\
&\qquad +D[f^{-2}(R_{f\delta t}-I)\nabla p_n](F_{n+1}^{-1}(x)).
\end{split}
\end{align}
Taking determinant on both sides of (\ref{1.31}), we see $\det DF_{n+1}^{-1}\equiv1$ if and only if
\begin{equation}\label{1.34}
\begin{split}
&\det[\big(I+f^{-1}\nabla p_{n+1}\otimes\nabla(f^{-1})
+f^{-2}D^2p_{n+1}\big)(x)+A_{n+1}(x)]\\
&\qquad=\det[\big(I+f^{-1}\nabla p_n\otimes\nabla (f^{-1})
+f^{-2}D^2p_n\big)(F_{n+1}^{-1}(x))+B_{n+1}(x)].
\end{split}
\end{equation}
Number $N$ will be defined below so that $N\delta t$ is small and $p_1,\dots, p_N$ are close to
$p_0$ in the norms specified below.
After (\ref{1.30}), (\ref{1.31})--(\ref{1.34}) is solved for $n=1, \dots, N$,
we define
approximate solution $(p_{\delta t}, \phi^{\delta t})$
 of (\ref{lagrangianSG})
 with step size $\delta t$ to be $p_{\delta t}(t)=p_n$ if $t\in[n\delta t,(n+1)\delta t)$, the approximate flow map $\phi^{\delta t}(t)=F_{n}\circ F_{n-1}\circ\cdots F_1$, for $t\in[n\delta t,(n+1)\delta t)$.

In the following, to simplify notations, we write $q(x)=p_{n+1}(x)$, $p(x)=p_n(x)$, and $F=F_{n+1}$,
 $A(x)=A_{n+1}(x)$, $B(x)=B_{n+1}(x)$ for functions and maps used in (\ref{1.31})--(\ref{1.34}).
 In the present notations (\ref{1.30}) becomes
\begin{equation}\label{3.2}
x+f^{-1}(x)f^{-1}(F^{-1}(x))\nabla q(x)=F^{-1}(x)+
(f^{-2}R_{f\delta t}\nabla p)(F^{-1}(x)).
\end{equation}
Here, in the last term, all functions are evaluated at $F^{-1}(x)$.
Equation (\ref{1.34}) in the present notation is the following:
\begin{equation}\label{3.3}
\begin{split}
&\det[\big(I+f^{-1}\nabla q\otimes\nabla(f^{-1})
+f^{-2}D^2 q\big)(x)+A(x)]\\
&\qquad=\det[\big(I+f^{-1}\nabla p\otimes\nabla (f^{-1})
+f^{-2}D^2p\big)(F^{-1}(x))+B(x)],
\end{split}
\end{equation}
where the expressions of $A(x)$, $B(x)$ are
given by (\ref{1.32}), (\ref{1.33}) with $p_n=p$, $p_{n+1}=q$, $F_{n+1}=F$.

In next two sections, for a given $\pN$ which is close to $p_0$ in $C^{2,\alpha}$ and small
$\delta t>0$,
 we find $q$
and $F$ which satisfy (\ref{3.2}), (\ref{3.3}). Here $\alpha\in (0,1)$ is fixed from now on.

\section{Construction of maps}
\label{constMaps-Sect}
Let $p_0\in C^{3,\alpha}(\mathbf{\bT^2})$ satisfy $\int_{\bT^2}p_0(x)dx=0$
and (\ref{1.6}).
In this section we show that, given $\pN,\pNP$ which are
 close to $p_0$  and small $\delta t>0$, the map $\FNP^{-1}$ satisfying
 (\ref{3.2}) exists and is invertible.  For this we use implicit function theorem.

We continue to work with periodic functions on $\bR^2$, instead of working directly on $\bT^2$.
Then, for $k=0,1,\dots$ and $\alpha\in (0, 1)$, we denote  by $C^{k,\alpha}(\bT^2)$ the space of functions
$\varphi:\bR^2\rightarrow\bR$ which are in $C^{k,\alpha}_{loc}(\bR^2)$
 and  $\bZ^2$-periodic:
 $$
C^{k,\alpha}(\bT^2)=\{\varphi\in C^{k,\alpha}_{loc}(\bR^2)\;|\;
\varphi(x+h)=\varphi(x) \FORA h\in\bZ^2\}.
 $$
 Then $C^{k,\alpha}(\bT^2)$ is a closed subspace of $C^{k,\alpha}(\bR^2)$,
 thus $C^{k,\alpha}(\bT^2)$ with norm $\|\varphi\|_{k,\alpha}\defd \|\varphi\|_{k,\alpha, \bR^2}$
 is a Banach space.

 Let $U_1\subset C^{2,\alpha}$ be the open subset defined by:
\begin{equation}\label{2.1}
U_1=\{p\in C^{2,\alpha}:I+f^{-1}D(f^{-1}Dp)>\frac{c_0}{2}I\}.
\end{equation}

Also, we denote by $C^{k,\alpha}(\bT^2; \bR^2)$ the space
 of mappings $w:\bR^2\rightarrow\bR^2$ which are in $C^{k,\alpha}_{loc}(\bR^2; \bR^2)$
 and  $\bZ^2$-periodic:
\begin{equation}\label{periodicPlusId-set}
C^{k,\alpha}(\bT^2; \bR^2)=\{w\in C^{k,\alpha}_{loc}(\bR^2; \bR^2)\;|\;
w(x+h)=w(x) \FORA x\in\bR^2,\,h\in\bZ^2\}.
\end{equation}
 We also consider mappings $w:\bR^2\rightarrow\bR^2$ with the following periodicity
 property:
  $$
C^{k,\alpha}_{p}(\bR^2; \bR^2)=\{w\in C^{k,\alpha}_{loc}(\bR^2; \bR^2)\;|\;
w(x+h)=w(x)+h \FORA h\in\bZ^2\}.
 $$
 Note that $C^{k,\alpha}_{p}(\bR^2; \bR^2)$ is not a subspace. We also note  that
\begin{equation}\label{periodicPlusId-set-shift}
C^{k,\alpha}_{p}(\bR^2; \bR^2)=Id + C^{k,\alpha}(\bT^2; \bR^2),
\end{equation}
 where $Id$ is the identity map $Id(x)=x$ in $\bR^2$. Indeed, if $w(x+h)=w(x)+h$ for all
 $x\in\bR^2$, $h\in\bZ^2$ if and only if $v\defd w(x)-x$ is $\bZ^2$-periodic.
The reason to introduce $C^{k,\alpha}_{p}(\bR^2; \bR^2)$ is the following: if
$\pN, \pNP$ are periodic, and $\|\pN -\pNP\|_{2,\alpha}$ and $\delta t$ are small,
then the map $z=F^{-1}$ solving (\ref{3.2}) and close to $Id$, satisfies
$z\in C^{k,\alpha}_{p}(\bR^2; \bR^2)$, see Lemma \ref{perOfMap} below.

We first rewrite equation  (\ref{3.2}) as following.

For fixed $\pN, \pNP\in C^2(\bT^2)$, and $ \delta t\in(-1,1)$ , consider  the map
\begin{align}\nonumber
&\hat Q_{\pN, \pNP, \delta t}=\hat Q:\bR^2\times\bR^2\rightarrow\bR^2\;\mbox{ defined by}\\
\label{2.2-11}
&\hat Q(x,w)=
x+f^{-1}(x)f^{-1}(w)\nabla \pNP(x)-w-\big(f^{-2}R_{f\delta t}\nabla \pN\big)(w).
\end{align}

Solving (\ref{3.2}) for $F$, with given $\pN, \pNP, \delta t$, is equivalent to solving
\begin{equation}\label{5.10-eq0}
\hat Q_{\pN, \pNP, \delta t}(x,z) = 0
\end{equation}
for $z$ for each $x\in \bR^2$, then $F^{-1}(x)=z(x)$.
Also, we note that for any $\pN\in C^{2,\alpha}(\bT^2)$
\begin{equation}\label{5.10-eq0-Zero}
\hat Q_{\pN, \pN, 0}(x,x)=0  \FORA x\in\bR^2,
\end{equation}
which is obtained directly from (\ref{2.2-11}) using that $R_0=I$ in (\ref{1.22pr}).
Thus we expect that $z(x)-x$ is small if $\|p-q\|_{2,\alpha}$ and $\delta t$ are small.

In next lemma we use the set $U_1$ defined by (\ref{2.1}):
\begin{lem}\label{injectivity}
For any $\pN_0\in U_1$ there exists $\eps>0$ such that for any
$\pN,\pNP\in C^{2,\alpha}(\bT^2)$ satisfying $\|\pN -\pN_0\|_{2,\alpha,\bR^2}\le \eps$,
$\|\pNP -\pN_0\|_{2,\alpha,\bR^2}\le \eps$, and
any $\delta t\in (-\eps, \eps)$
\begin{enumerate}[(i)]
\item\label{injectivity-i1}
$D_w\hat Q_{\pN, \pNP, \delta t}(x,w)<-\frac{c_0}{4}I$
if $ |x-w|<\eps$;
\item\label{injectivity-i2}
For any $x\in\bR^2$, map
$\hat Q_{\pN, \pNP, \delta t}(x,\cdot): \bR^2\to \bR^2$ is injective on $B_\eps(x)$.
\end{enumerate}
\end{lem}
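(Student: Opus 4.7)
The plan is to compute $D_w \hat Q_{\pN,\pNP,\delta t}(x,w)$ explicitly, recognize its value at the basepoint $(\pN,\pNP,\delta t,w) = (p_0,p_0,0,x)$ as the negative of the stability matrix in (\ref{1.6}), and then use joint continuity (plus $\bZ^2$-periodicity, to reduce to a compact set) to absorb the perturbations, finally deducing injectivity by a standard mean-value argument.

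Differentiating (\ref{2.2-11}) in $w$, the $w$-dependence appears in three places: the scalar factor $f^{-1}(w)$ multiplying the $w$-independent vector $f^{-1}(x)\nabla \pNP(x)$, producing a rank-one term $f^{-1}(x)\nabla \pNP(x)\otimes \nabla(f^{-1})(w)$; the identity term $-w$, producing $-I$; and the composite $-f^{-2}(w) R_{f(w)\delta t}\nabla \pN(w)$. Using $\tfrac{d}{da}R_a = JR_a$, the $w$-derivative hitting the rotation angle contributes an $O(\delta t)$ factor that vanishes at $\delta t=0$. At the basepoint, specializing $\pN=\pNP=p_0$, $\delta t=0$, $w=x$, and applying $\nabla(f^{-2}) = 2 f^{-1}\nabla(f^{-1})$ to collapse the surviving rank-one tensors, a direct calculation gives
\[
D_w \hat Q_{p_0,p_0,0}(x,x) = -\bigl[I + f^{-1}(x)\, D(f^{-1}\nabla p_0)(x)\bigr].
\]
Since $p_0\in U_1$, definition (\ref{2.1}) yields $D_w \hat Q_{p_0,p_0,0}(x,x) < -\tfrac{c_0}{2}I$ uniformly in $x$, in the sense of quadratic forms.

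For part (\ref{injectivity-i1}), the full expression for $D_w \hat Q_{\pN,\pNP,\delta t}(x,w)$ is polynomial in the pointwise values of $f^{\pm 1}, \nabla f, \nabla \pN, \nabla \pNP, D^2 \pN$, plus a Lipschitz $O(\delta t)$ remainder from $R_{f\delta t}-I$, so the map $(\pN,\pNP,\delta t,x,w)\mapsto D_w \hat Q_{\pN,\pNP,\delta t}(x,w)$ is jointly continuous in the $C^{2,\alpha}$ topology on $(\pN,\pNP)$ and the usual topology on the rest. The invariance $\hat Q(x+h,w+h) = \hat Q(x,w)$ for $h\in\bZ^2$ lets us restrict the supremum estimate to the compact fundamental cell $[0,1]^2$, so the strict bound at the basepoint persists as $D_w \hat Q < -\tfrac{c_0}{4} I$ once $\|\pN-p_0\|_{2,\alpha}$, $\|\pNP-p_0\|_{2,\alpha}$, $|\delta t|$ and $|w-x|$ are all smaller than a single $\eps = \eps(p_0,f,c_0) > 0$.

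Part (\ref{injectivity-i2}) follows immediately. Fix $x\in\bR^2$ and suppose $w_1, w_2\in B_\eps(x)$ with $\hat Q(x,w_1) = \hat Q(x,w_2)$. Convexity of $B_\eps(x)$ keeps the segment $\gamma(s) = w_2 + s(w_1-w_2)$ inside the ball, and the fundamental theorem of calculus gives
\[
0 = \hat Q(x,w_1) - \hat Q(x,w_2) = M(w_1-w_2), \qquad M := \int_0^1 D_w \hat Q(x,\gamma(s))\, ds.
\]
By (\ref{injectivity-i1}) each integrand satisfies $\langle D_w \hat Q\, v, v\rangle \le -\tfrac{c_0}{4}|v|^2$, and the bound is inherited by the average $M$; pairing the displayed identity with $w_1-w_2$ forces $|w_1-w_2|^2 \le 0$, hence $w_1=w_2$. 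The only nontrivial step is the basepoint computation itself---carefully tracking the rank-one tensor contributions from the $w$-dependence of $f^{-1}(w)$ and of the composite $f^{-2}(w) R_{f(w)\delta t}\nabla \pN(w)$, and seeing that they telescope into $-f^{-1}D(f^{-1}\nabla p_0)$---after which the continuity-plus-compactness argument and the mean-value trick are routine.
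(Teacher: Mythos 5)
Your proof is correct and follows essentially the same structure as the paper's: compute the base derivative $D_w\hat Q_{p_0,p_0,0}(x,x)=-\bigl(I+f^{-1}D(f^{-1}Dp_0)\bigr)(x)<-\tfrac{c_0}{2}I$, argue the perturbation in $\pN,\pNP,\delta t,w$ is harmless, and conclude injectivity via the mean-value (line-integral) argument. The one noticeable difference is in the perturbation step: the paper proves the explicit quantitative estimate
$|D_w\hat Q_{\pN_0,\pN_0,0}(x,x)-D_w\hat Q_{\pN,\pNP,\delta t}(x,w)|\le C(\|\pN-\pN_0\|_{2,0}+\|\pNP-\pN_0\|_{1,0}+|x-w|^\alpha+|\delta t|)\le C\eps^\alpha$,
whereas you invoke joint continuity of $(\pN,\pNP,\delta t,x,w)\mapsto D_w\hat Q$ together with the $\bZ^2$-translation invariance to reduce to a compact set and then let soft compactness produce $\eps$. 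Both are valid; the paper's version makes explicit how $\eps$ depends on $\|f^{-1}\|_{C^{1,\alpha}}$ and $\|\pN_0\|_{C^{2,\alpha}}$, which your compactness argument yields only implicitly. Everything else — including the base computation tracking the rank-one tensors and the use of $\nabla(f^{-2})=2f^{-1}\nabla(f^{-1})$, and the injectivity step — matches.
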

\begin{proof}
We note first that for $\pNP=\pN=\pN_0$ and $\delta t=0$, we get for any $x\in \bR^2$:
\begin{equation*}
\begin{split}
D_w\hat Q_{\pN_0, \pN_0, 0}(x,x)&=
-I-\big(f^{-1}\nabla \pN_0\otimes\nabla (f^{-1})+f^{-2}D^2\pN_0\big)(x)\\
&= -I-\big(f^{-1}D(f^{-1}D\pN_0)\big)(x)
<-\frac{c_0}{2}I.
\end{split}
\end{equation*}

Now consider any $\pN$, $\pNP$, $\delta t$ satisfying conditions of Lemma,
and any $x,w\in \bR^2$ with $|x-w|<\eps$. Then, assuming that $\eps\in (0,1)$,
we have $\|\pN\|_{2,\alpha}, \|\pNP\|_{2,\alpha} \le \|\pN_0\|_{2,\alpha}+1$, thus
we get:
\begin{equation*}
\begin{split}
|D_w\hat Q_{\pN_0, \pN_0, 0}(x,x)- D_w\hat Q_{\pN, \pNP, \delta t}(x,w)|
&\le
|D_w\hat Q_{\pN_0, \pN_0, 0}(x,x)- D_w\hat Q_{\pN, \pN_0, \delta t}(x,w)|\\
&\qquad+
|D_w\hat Q_{\pN, \pN_0, \delta t}(x,w)- D_w\hat Q_{\pN, \pNP, \delta t}(x,w)|
\\
&\le C(\|\pN-\pN_0\|_{2,0}+\|\pNP-\pN_0\|_{1,0}+|x-w|^\alpha+|\delta t|)\\
&\le C\eps^\alpha,
\end{split}
\end{equation*}
where $C$ depends on $\|f^{-1}\|_{C^{1,\alpha}(\bT^2)}$ and
$\|\pN_0\|_{C^{2,\alpha}(\bT^2)}$,
and $C$ may be different in different occurrences.
Thus, choosing $\eps$ small depending only on $\|f^{-1}\|_{C^{1,\alpha}(\bT^2)}$
and  $\|\pN_0\|_{C^{2,\alpha}(\bT^2)}$, we
get assertion (\ref{injectivity-i1}) of Lemma.

Now we prove  assertion (\ref{injectivity-i2}) of Lemma.
For $w, \hat w \in  B_\eps(x)$ with $w\ne \hat w$
we have $\tau w + (1-\tau) \hat w \in  B_\eps(x)$ for any $\tau\in[0,1]$, and then
denoting
${\bf e}\defd w-\hat w$,
we get ${\bf e}\ne 0$ and thus
\begin{equation*}
\begin{split}
\big(\hat Q(x,w)-\hat Q(x,\hat w)\big)\cdot {\bf e}
=\int_0^1 \big(D_w \hat Q(x, \tau w + (1-\tau) \hat w){\bf e}\big) \cdot {\bf e} \,d\tau
\le -\frac{c_0}{4} |{\bf e}|^2<0.
\end{split}
\end{equation*}

\end{proof}

Next we show
that solutions $z=F^{-1}$ of (\ref{3.2}), which are close to the identity map,
lie in the set $C^{1,\alpha}_{p}(\bR^2; \bR^2)$. 
We first note the property
\begin{equation}\label{5.10s-hat}
\hat Q(x+k, w+h)=\hat Q(x,w)+k-h \textrm{  for any $x, w\in\bR^2$, $h,k\in\bZ^2$},
\end{equation}
which follows from (\ref{2.2-11}).
\begin{lem}\label{perOfMap}
For any $\pN_0\in U_1$ there exists $\eps>0$ such that if
$\pN, \pNP\in C^{2,\alpha}(\bT^2)$, $z:\bR^2\to\bR^2$ and $\delta t\in (-\eps, \eps)$
satisfy (\ref{3.2}) with  $F^{-1}\defd z$, and also satisfy
$\|\pN -\pN_0\|_{2,\alpha,\bR^2}\le \eps$, $\|\pNP -\pN_0\|_{2,\alpha,\bR^2}\le \eps$,
$\|z- Id\|_{L^\infty(\bR^2)}< \eps$,
$|\delta t|<\eps$, then $z\in C^{1,\alpha}_{p}(\bR^2; \bR^2)$.
\end{lem}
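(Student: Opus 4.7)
The plan is to use two facts: the lattice symmetry of $\hat Q$ encoded in (\ref{5.10s-hat}), and the local injectivity of $\hat Q(x,\cdot)$ established in Lemma \ref{injectivity}(\ref{injectivity-i2}). I would first confirm that $z$ is $C^{1,\alpha}_{loc}(\bR^2;\bR^2)$, then derive the translation identity $z(x+h)=z(x)+h$.

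For the regularity, I would apply the implicit function theorem to the equation $\hat Q_{\pN,\pNP,\delta t}(x,z(x))=0$. Since $\pN,\pNP\in C^{2,\alpha}(\bT^2)$ and $f\in C^{k,\alpha}(\bT^2)$ with $k\ge 2$, the formula (\ref{2.2-11}) shows $\hat Q\in C^{1,\alpha}_{loc}(\bR^2\times\bR^2;\bR^2)$ as a function of $(x,w)$. By Lemma \ref{injectivity}(\ref{injectivity-i1}), shrinking $\eps$ if necessary, $D_w\hat Q(x,z(x))<-\tfrac{c_0}{4}I$ at every $x$, hence it is invertible. The implicit function theorem then produces a $C^{1,\alpha}$ solution branch in a neighborhood of each $x$, and by the injectivity in Lemma \ref{injectivity}(\ref{injectivity-i2}) this local branch must coincide with $z$. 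Thus $z\in C^{1,\alpha}_{loc}(\bR^2;\bR^2)$.

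For the periodicity-plus-identity property, I would fix any $h\in\bZ^2$ and $x\in\bR^2$. Applying (\ref{5.10s-hat}) with $k=h$ and $w=z(x)$ to the identity $\hat Q(x,z(x))=0$ yields
\begin{equation*}
\hat Q(x+h,\,z(x)+h)=\hat Q(x,z(x))+h-h=0.
\end{equation*}
By the defining equation for $z$, also $\hat Q(x+h,\,z(x+h))=0$. Now both candidate solutions lie in $B_\eps(x+h)$: indeed $|z(x+h)-(x+h)|<\eps$ by hypothesis, and $|(z(x)+h)-(x+h)|=|z(x)-x|<\eps$ for the same reason. Lemma \ref{injectivity}(\ref{injectivity-i2}) says $\hat Q(x+h,\cdot)$ is injective on $B_\eps(x+h)$, forcing $z(x+h)=z(x)+h$. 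Combined with the $C^{1,\alpha}$ regularity, this places $z$ in $C^{1,\alpha}_p(\bR^2;\bR^2)$.

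I do not anticipate a serious obstacle. The only real content is the observation that (\ref{5.10s-hat}) reduces the periodicity question to a uniqueness statement for $\hat Q(x+h,\cdot)=0$, which Lemma \ref{injectivity} has already provided. The subtlety in the regularity step is making sure the local IFT branches glue into the global $z$, which is automatic from the same injectivity.
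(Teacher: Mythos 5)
Your proposal is correct and takes essentially the same approach as the paper: use the lattice identity \eqref{5.10s-hat} together with the local injectivity from Lemma \ref{injectivity}(\ref{injectivity-i2}) to obtain $z(x+h)=z(x)+h$, and use the implicit function theorem with the nondegeneracy from Lemma \ref{injectivity}(\ref{injectivity-i1}) and the $C^{1,\alpha}_{loc}$ regularity of $\hat Q$ from \eqref{2.2-11} to obtain the regularity of $z$. The only small addition in your write-up is making explicit that the local IFT branches coincide with $z$ by injectivity, which the paper leaves implicit.
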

\begin{proof}
From (\ref{5.10s-hat}), if $\hat Q(x,z(x))=0$, then
 $\hat Q(x+h, z(x)+h)=\hat Q(x,z(x))=0$  for any $x\in\bR^2$, $h\in\bZ^2$.
Combined with property $|z(x)-x|<\eps$ and injectivity
of $\hat Q(x,\cdot)$ on the $B_\eps(x)$ shown in
Lemma \ref{injectivity}(\ref{injectivity-i2}),
we obtain $z(x+h)=z(x)+h$.

Finally, the fact $z\in C^{1,\alpha}_{loc}(\bR^2; \bR^2)$ follows from
the Implicit Function Theorem applied to the equation $\hat Q(x,w)=0$, using
nondegeneracy of $D_w \hat Q(x,z(x))$ for any  $x$ which follows
from Lemma \ref{injectivity}(\ref{injectivity-i1})
since $|z(x)-x|<\eps$, and using regularity
$\hat Q\in C^{1,\alpha}_{loc}(\bR^2\times\bR^2; \bR^2)$ which follows from (\ref{2.2-11})
for $\pN, \pNP\in C^{2,\alpha}$.
\end{proof}

Now we show that (\ref{3.2}) has a solution $F^{-1}=z\in
C^{1,\alpha/2}_p(\bR^2; \bR^2)$. For that, we use
Implicit Function Theorem in the following spaces.
 Define a map
\begin{align}\label{2.2-pr-00}
&Q:C^{3,\alpha}(\bT^2)\times C^{2,\alpha/2}(\bT^2)\times C^{1,\alpha/2}_p(\bR^2; \bR^2)
\times(-1,1)
\rightarrow C^{1,\alpha/2}(\bT^2; \bR^2)
\quad\text{ by}\\
\label{2.2-pr-00-1}
&Q(\pN,\pNP, z, \delta t)(x)=\hat Q_{\pN, \pNP, \delta t}(x,z(x)).
\end{align}
Thus,  $Q$ is given by the expression:
\begin{align}
\label{2.2}
&Q(\pN,\pNP, z, \delta t)(x)=
x+f^{-1}(x)f^{-1}(z(x))\nabla \pNP(x)-z(x)-\big(f^{-2}R_{f\delta t}\nabla \pN\big)(z(x)).
\end{align}
The fact that $Q$ in (\ref{2.2}) acts into $C^{1,\alpha}(\bT^2; \bR^2)$
is seen as following: Regularity
$Q(\pN,\pNP, z, \delta t)(\cdot)\in C^{1,\alpha}_{loc}(\bR^2; \bR^2)$
follows directly from the choice of spaces in the domain of $Q$ and the explicit
expression (\ref{2.2}).  The $\bZ^2$-periodicity
of
$Q(\pN,\pNP, z, \delta t)(\cdot)$  follows from
the property
\begin{equation}\label{5.10s}
Q(\pN,\pNP, z+h, \delta t)(x+k)=Q(\pN,\pNP, z, \delta t)(x)+h-k \textrm{  for any }
x\in\bR^2,\;h,k\in\bZ^2,
\end{equation}
where (\ref{5.10s}) follows from (\ref{2.2})
using $\bZ^2$-periodicity of $f$, $\pN$, $\pNP$ and property (\ref{periodicPlusId-set-shift})
for $z\in C^{1,\alpha}_{p}(\bR^2; \bR^2)$.

Given $(\pN, \pNP, \delta t)$, solving (\ref{3.2}) for $F^{-1}$  is equivalent to
solving
\begin{equation}\label{5.10-eq}
Q(\pN,\pNP, z, \delta t) = 0
\end{equation}
for $z$, then $F^{-1}=z$ solves (\ref{3.2}).
From (\ref{5.10-eq0-Zero}) and (\ref{2.2-pr-00-1}), we have
 for any $\pN\in C^{3,\alpha}(\bT^2)$
\begin{equation}\label{5.10-eq-Zero}
Q(\pN,\pN, Id, 0)=0,
\end{equation}
where $Id$ is the identity map in $\bR^2$. Then
we will solve (\ref{5.10-eq}) for $z(\cdot)$ when $\pN\in U_1\cap C^{3,\alpha}$, and
when
$\|\pN-\pNP\|_{2,\alpha}$ and $\delta t$ are small.

Since the set of functions
$C^{1,\alpha}_p(\bR^2; \bR^2)$ is not a space, it is convenient to replace $z(\cdot)$
by $w(x)=z(x)-x$ in $Q$, since then $w\in  C^{1,\alpha}(\bT^2; \bR^2)$
by (\ref{periodicPlusId-set-shift}). Thus we define
\begin{equation}\label{2.2-pr-Q1}
\begin{split}
&Q_1:C^{3,\alpha}(\bT^2)\times C^{2,\alpha/2}(\bT^2)\times C^{1, \alpha/2}(\bT^2; \bR^2)
\times(-1,1)
\rightarrow C^{1,\alpha/2}(\bT^2; \bR^2)\\
&\mbox{by }\; Q_1(\pN,\pNP, w, \delta t)=Q(\pN,\pNP, w+Id, \delta t),
\end{split}
\end{equation}
that is
\begin{equation}\label{2.2-Q1}
\begin{split}
Q_1(\pN,\pNP, w, \delta t)(x)=&
f^{-1}(x)f^{-1}(x+w(x))\nabla \pNP(x)-w(x)\\
&\quad-\big(f^{-2}R_{f\delta t}\nabla \pN\big)(x+w(x)).
\end{split}
\end{equation}

Expressing equation (\ref{5.10-eq}) in terms of $Q_1$, we see that
solving (\ref{3.2}) for $F$, for a given $\pN, \pNP, \delta t$, is equivalent to solving
\begin{equation}\label{5.10-eq-Q1}
Q_1(\pN,\pNP, w, \delta t) = 0
\end{equation}
for $w$, then $F=(Id+w)^{-1}$.
From (\ref{5.10-eq-Zero}), for any $\pN\in C^{3,\alpha}(\bT^2)$
\begin{equation}\label{5.10-eq-Zero-Q1}
Q_1(\pN,\pN, w_0, 0)=0,
\end{equation}
where $w_0$ is the zero map in $\bR^2$, i.e. $w_0:\bR^2\to\bR^2$ is given by $w_0(x)=0$.
Then
we will solve (\ref{5.10-eq-Q1}) for $w(\cdot)$ with small
$\|w\|_{2,\alpha}$, when $\pN\in U_1\cap C^{3,\alpha}$, and when
$\|\pN-\pNP\|_{2,\alpha/2}$ and $\delta t$ are small.

Now, in order to solve (\ref{5.10-eq-Q1}), we will apply Implicit Function Theorem
in spaces given in (\ref{2.2-pr-Q1}), near the background solution
given in (\ref{5.10-eq-Zero-Q1}).

For that we first note that the higher regularity of $\pN$ implies that the map $Q_1$
is smooth:
\begin{lem}\label{MapForMapDifferentiable}
Map
\begin{align}\label{2.2-pr-1}
&Q_1:C^{3,\alpha}(\bT^2)\times C^{2,\alpha/2}(\bT^2)\times C^{1,\alpha/2}(\bT^2; \bR^2)
\times(-1,1)
\rightarrow C^{1,\alpha/2}(\bT^2; \bR^2),
\end{align}
defined by (\ref{2.2}), is continuously Frechet-differentiable.
\end{lem}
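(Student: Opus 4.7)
The plan is to decompose $Q_1$ into several pieces, each of which is visibly $C^1$ between the appropriate Banach spaces, and then apply the chain and product rules. Writing
\[
Q_1(\pN, \pNP, w, \delta t)(x) = f^{-1}(x)\, \Phi_{f^{-1}}(w)(x)\, \nabla \pNP(x) - w(x) - G(\pN, \delta t)\bigl(x + w(x)\bigr),
\]
with the auxiliary notations $\Phi_g(w)(x) \defd g(x + w(x))$ and $G(\pN, \delta t)(y) \defd f^{-2}(y)\, R_{f(y)\delta t}\, \nabla \pN(y)$, the regularity of the ``easy'' pieces is immediate: $\pNP \mapsto \nabla \pNP$ is a bounded linear map $C^{2,\alpha/2}\to C^{1,\alpha/2}$; pointwise multiplication is bilinear and bounded on the Banach algebra $C^{1,\alpha/2}(\bT^2)$; and $\delta t \mapsto R_{f(\cdot)\delta t}$ is $C^\infty$ as a curve into $C^{k,\alpha}(\bT^2; \mathrm{Mat}_{2\times 2})$ because sines and cosines of a $C^{k,\alpha}$ function remain $C^{k,\alpha}$. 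Combining these, $G: C^{3,\alpha}(\bT^2)\times(-1,1) \to C^{2,\alpha}(\bT^2; \bR^2)$ is $C^\infty$.

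The only nontrivial ingredient is a composition lemma that I would establish separately: the map
\[
\Phi: C^{2,\alpha}(\bT^2) \times C^{1,\alpha/2}(\bT^2; \bR^2) \to C^{1,\alpha/2}(\bT^2), \qquad (g, w) \mapsto g \circ (Id + w),
\]
is continuously Frechet differentiable, with partial derivatives $D_g \Phi(g,w)\hat g(x) = \hat g(x + w(x))$ and $D_w \Phi(g,w)v(x) = \nabla g(x + w(x)) \cdot v(x)$. I would prove this from the Taylor remainder identity
\[
R(x) \defd g(x + w + v) - g(x+w) - \nabla g(x+w) \cdot v = v(x)^T \int_0^1\!\!\int_0^s D^2 g(x+w+\tau v)\, d\tau\, ds\, v(x),
\]
together with an analogous formula for $DR$, by showing $\|R\|_{C^{1,\alpha/2}} = o(\|v\|_{C^{1,\alpha/2}})$. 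The crucial point is the strict gap $\alpha/2 < \alpha$ between the Hölder exponents of inner and outer functions: when estimating the $\alpha/2$-Hölder seminorm of expressions like $D^2 g \circ (Id + w + \tau v)$ or $\nabla g \circ (Id + w + v) - \nabla g \circ (Id + w)$, the extra regularity $\alpha$ of $g$ lets one trade increments of its argument of size $|v|$ against factors $|v|^{\alpha - \alpha/2}$, producing contributions of strictly higher order than $\|v\|_{C^{1,\alpha/2}}$.

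Assuming this composition lemma, differentiability of $Q_1$ follows by chain and product rules: the first term is a product of three functions of $(\pNP, w)$ that lie in $C^{1,\alpha/2}$ and depend in $C^1$ fashion on their arguments (using $\Phi$ with $g = f^{-1} \in C^{k,\alpha}$, $k \ge 2$); $-w$ is trivially linear; and the third term is $\Phi(G(\pN, \delta t), w)$, the composition of the $C^\infty$ map $G$ with the $C^1$ operator $\Phi$. The main obstacle is the composition lemma itself, or more precisely the careful bookkeeping of Hölder norms of compositions and their Taylor remainders. The whole choice of function spaces in the statement — requiring $\pN \in C^{3,\alpha}$ so that $G(\pN, \delta t) \in C^{2,\alpha}$, while allowing $w \in C^{1,\alpha/2}$ and $\pNP \in C^{2,\alpha/2}$ — is engineered precisely to provide this gap, so that the composition operator is Frechet differentiable and the chain-rule outputs land in the advertised target $C^{1,\alpha/2}(\bT^2; \bR^2)$ without loss of regularity.
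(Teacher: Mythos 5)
Your proposal is correct and takes essentially the same route as the paper: the composition lemma $\Phi$ you isolate is exactly the paper's Lemma \ref{CompositionFrechetDif} (specialized to $\beta=\alpha/2$), and your Taylor-remainder-plus-interpolation argument, exploiting the gap $\alpha-\alpha/2$, is precisely how the paper proves that lemma in the Appendix. The paper's proof of the present statement is a one-line citation of Lemma \ref{CompositionFrechetDif}; you simply make explicit the easy bilinear, linear, and $\delta t$-smoothness facts that the paper leaves implicit.
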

\begin{proof}
Lemma follows directly from the expression (\ref{2.2}) and Lemma \ref{CompositionFrechetDif},
proved in Appendix.
\end{proof}

Now we prove existence of solution of $(\ref{5.10-eq-Q1})$ near $(\pN,\pN, w_0, 0)$.
For $\pN_0\in C^{3,\alpha}(\bT^2)$ and $\eps>0$, denote:
\begin{equation}\label{V-W-def}
\begin{split}
V_{\eps}(\pN_0)&\defd\{(\pN, \pNP)\in  C^{3,\alpha}(\bT^2)\times C^{2,\alpha/2}(\bT^2)\;|
\;\|\pN-\pN_0\|_{3,\alpha},
\|\pNP-\pN_0\|_{2,\alpha/2}<\eps\}\\
&\subset  C^{3,\alpha}(\bT^2)\times C^{2,\alpha/2}(\bT^2);\\
W_{\eps}&\defd\{w\in C^{1,\alpha/2}(\bT^2; \bR^2)\;|
\;\|w\|_{1,\alpha/2}<\eps\}\subset C^{1,\alpha/2}(\bT^2; \bR^2).
\end{split}
\end{equation}

\begin{lem}\label{2.11}
For any $\pN_0\in U_1\cap C^{3,\alpha}(\bT^2)$ there exist $\eps_1, \epsTT>0$ such that
for any $(\pN,\pNP, \delta t)\in V_{\eps_1}(\pN_0)\times(-\eps_1, \eps_1)$
there exists a unique $w\in W_{\epsTT}$ such that $(\pN,\pNP,w, \delta t)$
satisfy (\ref{5.10-eq-Q1}). The map
$\Mp:V_{\eps_1}(\pN_0)\times(-\eps_1, \eps_1)\to W_{\epsTT}$, defined by
$\Mp(\pN, \pNP, \delta t)=w$, is continuously Frechet-differentiable.
\end{lem}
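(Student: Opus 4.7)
The plan is to apply the standard Implicit Function Theorem in Banach spaces to the equation $Q_1(\pN, \pNP, w, \delta t) = 0$ about the base point $(\pN_0, \pN_0, w_0, 0)$, where $w_0$ is the zero map. Continuous Frechet differentiability of $Q_1$ is supplied by Lemma \ref{MapForMapDifferentiable}, and (\ref{5.10-eq-Zero-Q1}) shows the base point is already a solution. Thus the only remaining hypothesis to verify is that the partial derivative $D_w Q_1(\pN_0, \pN_0, 0, 0)$ is a bounded linear isomorphism from $C^{1,\alpha/2}(\bT^2; \bR^2)$ to itself.

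Differentiating the explicit expression (\ref{2.2-Q1}) in $w$ along $\eta \in C^{1,\alpha/2}(\bT^2;\bR^2)$ and evaluating at the base point (where $R_{f\delta t} = I$, $\pN = \pNP = \pN_0$, and $w \equiv 0$) is essentially the computation carried out inside the proof of Lemma \ref{injectivity}. Using $\nabla(f^{-2}) = 2 f^{-1} \nabla(f^{-1})$, the three contributions combine as
\begin{equation*}
D_w Q_1(\pN_0, \pN_0, 0, 0)[\eta](x) = M(x)\,\eta(x), \qquad M(x) \defd -\bigl(I + f^{-1} D(f^{-1} D \pN_0)\bigr)(x),
\end{equation*}
after recognizing $f^{-1} D(f^{-1} D \pN_0) = f^{-1} \nabla \pN_0 \otimes \nabla(f^{-1}) + f^{-2} D^2 \pN_0$. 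Since $\pN_0 \in U_1$, we have $M(x) < -\tfrac{c_0}{2} I$ pointwise, so $M(x)$ is invertible with $\|M(x)^{-1}\| \le 2/c_0$ uniformly on $\bT^2$. Because $f \in C^{k,\alpha}$ with $k \ge 2$ and $\pN_0 \in C^{3,\alpha}$, the matrix field $M$ lies in $C^{1,\alpha}(\bT^2;\bR^{2\times 2})$, hence by Cramer's rule so does $M^{-1}$; consequently the pointwise multiplication $\eta \mapsto M^{-1}\eta$ is a bounded linear inverse of $D_w Q_1(\pN_0, \pN_0, 0, 0)$ acting on $C^{1,\alpha/2}(\bT^2; \bR^2)$.

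With the isomorphism property established, the Implicit Function Theorem produces $\eps_1, \epsTT > 0$ and a continuously Frechet-differentiable map $\Mp: V_{\eps_1}(\pN_0) \times (-\eps_1, \eps_1) \to W_{\epsTT}$ such that $w = \Mp(\pN, \pNP, \delta t)$ is the unique element of $W_{\epsTT}$ solving (\ref{5.10-eq-Q1}) for each admissible $(\pN, \pNP, \delta t)$, which is exactly the statement of the lemma. The main (and really only) delicate point in the argument is the invertibility of the linearization, and this is precisely the content of the Cullen--Purser-type convexity condition (\ref{1.6}) packaged into the definition of $U_1$; everything else follows mechanically from the abstract theorem combined with Lemma \ref{MapForMapDifferentiable}.
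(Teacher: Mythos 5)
Your proof is correct and takes essentially the same route as the paper's: apply the Implicit Function Theorem at the base solution $(\pN_0,\pN_0,w_0,0)$ given by (\ref{5.10-eq-Zero-Q1}), invoke Lemma \ref{MapForMapDifferentiable} for continuous Frechet differentiability, compute the $w$-linearization as pointwise multiplication by $-\bigl(I+f^{-1}D(f^{-1}D\pN_0)\bigr)$, and conclude invertibility on $C^{1,\alpha/2}(\bT^2;\bR^2)$ from the pointwise nondegeneracy coming from $\pN_0\in U_1$ together with the $C^{1,\alpha}$ regularity of the matrix field. The paper's proof is just a more compressed version of the same argument.
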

\begin{proof}
This follows directly from the Implicit Function Theorem in Banach spaces.
Indeed, by Lemma \ref{MapForMapDifferentiable}, the map $Q_1$ is
continuously Frechet-differentiable, and (\ref{5.10-eq-Zero-Q1}) holds.
Using (\ref{2.2-Q1}), we find that the linear map
\begin{equation}\label{Q1-W-Deriv}
D_w Q_1(\pN_0, \pN_0, w_0, 0):
C^{1,\alpha/2}(\bT^2; \bR^2)
\rightarrow C^{1,\alpha/2}(\bT^2; \bR^2)
\end{equation}
is given, for $h\in C^{1,\alpha/2}(\bT^2; \bR^2)$, by
\begin{equation*}
\begin{split}
\big(D_w Q_1(\pN_0, \pN_0, w_0, 0)\big) h&=
\big(-I-\big(f^{-1}\nabla \pN_0\otimes\nabla (f^{-1})+f^{-2}D^2\pN_0\big)\big)h\\
&= \big(-I-\big(f^{-1}D(f^{-1}D\pN_0)\big)\big)h.
\end{split}
\end{equation*}
Since the matrix $\big(-I-\big(f^{-1}D(f^{-1}D\pN_0)\big)\big)(x)$ is nondegenerate for
each $x\in \bR^2$ (which holds because $\pN\in U_1$),
and since $-I-\big(f^{-1}D(f^{-1}D\pN_0)\big) \in C^{1,\alpha}(\bT^2; \bR^2)$,
it follows that the map (\ref{Q1-W-Deriv}) is a linear isomorphism.
Now the lemma follows from the Implicit Function Theorem.
\end{proof}
\begin{rem}\label{identitySolnRemark}
From Lemma \ref{2.11} and (\ref{5.10-eq-Zero-Q1}), it follows that
$$
\Mp(\pN, \pN, 0)=w_0 \FORA (\pN, \pN)\in V_{\eps_1}(\pN_0).
$$
Then, since $\Mp:V_{\eps_1}(\pN_0)\times(-\eps_1, \eps_1)\to W_{\epsTT}$
is continuous, it follows that for any $\epsTT'\in (0, \epsTT)$
there exists $\eps_1'\in (0, \eps_1)$ such that
$\Mp(\pN, \pNP,\delta t)\in W_{\epsTT'}$ if
$(\pN, \pNP,\delta t)\in V_{\eps_1'}(\pN_0)\times(-\eps_1', \eps_1')$.
\end{rem}
\begin{lem}\label{mapIsDiffeo}
For any $\pN_0\in U_1\cap C^{3,\alpha}(\bT^2)$ there exist $\eps_1'\in (0, \eps_1]$
such that
for each $(\pN,\pNP, \delta t)\in V_{\hat\eps_1'}(\pN_0)\times(-\hat\eps_1', \hat\eps_1')$
the map
$z=Id+\Mp(\pN,\pNP, \delta t):\bR^2\to\bR^2$ is a diffeomorphism.
\end{lem}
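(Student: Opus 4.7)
The plan is to use Remark \ref{identitySolnRemark} to make the map $w=\Mp(\pN,\pNP,\delta t)$ small in $C^{1,\alpha/2}$, and then argue by a contraction/inverse function theorem combination that $z=Id+w$ is a $C^{1,\alpha/2}$ bijection of $\bR^2$ with everywhere-invertible derivative.

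Concretely, by Remark \ref{identitySolnRemark}, I may shrink $\eps_1$ to a new constant $\eps_1'\in(0,\eps_1]$ so that for every $(\pN,\pNP,\delta t)\in V_{\eps_1'}(\pN_0)\times(-\eps_1',\eps_1')$ the associated $w=\Mp(\pN,\pNP,\delta t)$ lies in $W_{\epsTT'}$ with a chosen $\epsTT'<\tfrac12$. In particular, $\|Dw\|_{L^\infty(\bR^2)}\le \|w\|_{1,\alpha/2}<\tfrac12$, so $Dz(x)=I+Dw(x)$ satisfies $Dz(x)\ge \tfrac12 I$ and is invertible for every $x\in\bR^2$. Hence $z\in C^{1,\alpha/2}_p(\bR^2;\bR^2)$ is a local $C^{1,\alpha/2}$ diffeomorphism everywhere, by the Inverse Function Theorem.

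Next I would verify injectivity and surjectivity globally on $\bR^2$. For injectivity, if $z(x_1)=z(x_2)$ then $x_1-x_2=-(w(x_1)-w(x_2))$ and the mean value inequality gives $|x_1-x_2|\le \|Dw\|_\infty|x_1-x_2|<\tfrac12|x_1-x_2|$, forcing $x_1=x_2$. For surjectivity, fix $y\in\bR^2$ and consider the map $T_y(x)=y-w(x)$: since $w$ is Lipschitz with constant less than $1$, $T_y$ is a contraction on the complete space $\bR^2$ and has a unique fixed point $x$, which satisfies $z(x)=y$. Thus $z:\bR^2\to\bR^2$ is a bijection.

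Combining bijectivity with the local diffeomorphism property, $z^{-1}$ exists globally as a map $\bR^2\to\bR^2$ that is $C^{1,\alpha/2}$ on each coordinate neighborhood (by the Inverse Function Theorem, using $Dz\ge \tfrac12 I$), and so $z^{-1}\in C^{1,\alpha/2}_{loc}(\bR^2;\bR^2)$. The periodicity $z\in C^{1,\alpha/2}_p(\bR^2;\bR^2)$ transfers to $z^{-1}(y+h)=z^{-1}(y)+h$ for $h\in\bZ^2$, so $z^{-1}$ also belongs to $C^{1,\alpha/2}_p(\bR^2;\bR^2)$. No step is really an obstacle here; the only delicate point is to make sure we have taken $\eps_1'$ small enough that the Lipschitz norm of $w$ is strictly less than $1$, which is precisely what Remark \ref{identitySolnRemark} allows.
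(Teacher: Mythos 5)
Your proof is correct but takes a genuinely different route from the paper's. The paper proves surjectivity by an open-and-closed argument: it applies the Implicit Function Theorem at each point, using the monotonicity estimate $D_w\hat Q<-\tfrac{c_0}{4}I$ from Lemma \ref{injectivity}, to show $z(\bR^2)$ is open, then uses the uniform bound $\|z-Id\|_{L^\infty}\le\epsTT'$ to show $z(\bR^2)$ is closed, and concludes $z(\bR^2)=\bR^2$ by connectedness; injectivity is drawn from Lemma \ref{injectivity}(\ref{injectivity-i2}), which again exploits monotonicity of $\hat Q(x,\cdot)$ on a small ball. You instead extract the full $C^{1,\alpha/2}$ smallness of $w=\Mp(\pN,\pNP,\delta t)$ from Remark \ref{identitySolnRemark}, which gives $\|Dw\|_{L^\infty}<\tfrac12$; injectivity is then immediate by the mean value inequality, surjectivity comes from the Banach fixed point theorem applied to $T_y(x)=y-w(x)$, and local smooth invertibility is the Inverse Function Theorem for $Dz=I+Dw$. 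Both proofs hinge on Remark \ref{identitySolnRemark}, but yours exploits the stronger $C^1$ smallness it provides rather than just $L^\infty$ smallness, and thereby bypasses all further use of the structure of $\hat Q$. This makes your argument more self-contained and elementary; the paper's version is more uniform with the surrounding development, since it recycles Lemma \ref{injectivity}, already needed for Lemma \ref{perOfMap}. One minor imprecision in your writeup: the inequality ``$Dz(x)\ge\tfrac12 I$'' should be understood as $|Dz(x)v|\ge\tfrac12|v|$ for all $v$ (a lower bound on singular values), since $Dz(x)$ need not be symmetric; the conclusion — invertibility of $Dz(x)$ with $\|Dz(x)^{-1}\|\le 2$ — is of course unaffected.
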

\begin{proof}
Let $\eps_1$ and $\epsTT$ are so small that the map $\Mp$ is defined by
 Lemma \ref{2.11}.

 Then, by (\ref{2.2-pr-00}) and (\ref{2.2-pr-Q1})
 \begin{equation}\label{5.10-eq0-11}
\hat Q_{\pN, \pNP, \delta t}(x,z(x)) = 0 \FORA x\in\bR^2.
\end{equation}

We show that $z(\bR^2)=\bR^2$
for each $(\pN,\pNP, \delta t)\in V_{\eps_1}(\pN_0)\times(-\eps_1, \eps_1)$.

First we note that,  after possibly reducing $\eps_1$, we have
 $z(\bR^2)$ is an open set.
Indeed, using Remark \ref{identitySolnRemark},
we find that for any
$\epsTT'\in (0, \epsTT)$
there exists $\eps_1'\in (0, \eps_1)$ such that, if
$(\pN, \pNP,\delta t)\in V_{\eps_1'}(\pN_0)\times(-\eps_1', \eps_1')$,
we get
\begin{equation}\label{5.10-eq0-111}
\|z-Id\|_{L^\infty(\bR^2)}= \|\Mp(\pN,\pNP, \delta t)\|_{L^\infty(\bR^2)}\le \epsTT'.
\end{equation}
Then choosing $\epsTT'$ smaller than $\eps/2$ in
Lemma \ref{injectivity}(\ref{injectivity-i1}) and choosing the corresponding
 $\eps_1'$, we get
$D_w\hat Q_{\pN, \pNP, \delta t}(x,z(x))<-\frac{c_0}{4}I$ for each $x\in \bR^2$
if $(\pN, \pNP,\delta t)\in V_{\eps_1'}(\pN_0)\times(-\eps_1', \eps_1')$.
Then, fixing $\hat x\in\bR^2$, we obtain by Implicit Function Theorem that
there exists a neighborhood $B_r(z(\hat x))$ of $z(\hat x)$,
where $r>0$, and a $C^{1,\alpha/2}$ map
$g: B_r(z(\hat x)) \to \bR^2$ with $g( z(\hat x))=\hat x$, such that
 \begin{equation}\label{5.10-eq0-12}
\hat Q_{\pN, \pNP, \delta t}(g(v),v) = 0 \FORA v\in {\mathcal N}.
\end{equation}
Since
$$\|g(z(\hat x)) - z(\hat x)\|=\|\hat x - z(\hat x)\|\le \epsTT'<\eps/2,$$
then reducing $r$,
we get
$\|g(v) - v\| <\eps$ for each $v\in B_r(z(\hat x))$.
Then, by (\ref{5.10-eq0-11}), (\ref{5.10-eq0-111}), (\ref{5.10-eq0-12}) and
Lemma \ref{injectivity}(\ref{injectivity-i2}), it follows that
$v=z(g(v)$, i.e. $B_r(z(\hat x))\subset z(\bR^2)$. Thus, the set $z(\bR^2)$ is open.
Also, from now on we set $\hat \eps_1$  to be equal to $\eps_1'$
chosen above.

Next, we show that the set $z(\bR^2)$ is closed. If $z(x_i)\to \hat v\in\bR^2$
for some points $x_i\in\bR^2$, then from (\ref{5.10-eq0-111}), it follows that
there exists a positive $N$ such that
$x_i\in B_{2\epsTT'}(\hat v)$ for all $i>N$. Thus there exists a convergent subsequence
$x_{i_j}\to \hat x\in\bR^2$. Since $z(\cdot)$ is continuous, then $z(\hat x)=\hat v$,
thus $z(\bR^2)$ is closed.

Now, $z(\bR^2)$ is an open, closed, and non-empty set, thus $z(\bR^2)=\bR^2$.
Also, by Lemma \ref{injectivity}(\ref{injectivity-i2}), $z(\cdot)$ is injective
on $\bR^2$. Thus the map $z^{-1}:\bR^2\to \bR^2$ is uniquely defined. Also, locally this
map is determined by the implicit function theorem as we discussed above:
$z^{-1}=g$ locally, where $g(\cdot)$ is from (\ref{5.10-eq0-12}).
Thus $z^{-1}\in C^{1,\alpha/2}_{loc}$.
\end{proof}


\section{Solving iteration equations}
\label{constApprSol-Sect}

Let $\hat\eps_1, \pNP,\pN, \delta t$ be as in Lemma \ref{mapIsDiffeo}.
Then we can define the map $F(x)=(Id+\Mp(\pN,\pNP, \delta t))^{-1}$,
so that $F^{-1}(x)=Id+\Mp(\pN,\pNP, \delta t)$. Then
$(\pNP,\pN, F^{-1}, \delta t$ satisfy
equation (\ref{3.2}),
by Lemma \ref{2.11} and (\ref{2.2-Q1}).

To make $F^{-1}$ measure preserving, we solve  equation (\ref{3.3}),
with $F^{-1}= Id+\Mp(\pN,\pNP, \delta t)$,  for $\pNP$.
We will use Implicit Function Theorem in the setting described below.
\begin{lem}\label{l5.1}
Let $\pN_0\in U_1$.
There exists $\epsTwo\in (0, \hat\eps_1)$, such that for any
$(p,q)\in V_{\epsTwo}(p_0)$, $|\delta t|<\epsTwo$, one has
$\|A\|_0, \|B\|_0<\frac{c_0}{8}$. Here $A(x),B(x)$ are given in (\ref{1.32}),(\ref{1.33}) with $p=p_n$, $q=p_{n+1}$,
and $F_{n+1}^{-1}=id+\Mp(p,q,\delta t)$.
\end{lem}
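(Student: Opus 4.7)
The plan is to make all the relevant quantities small by combining two sources of smallness: the continuity of the map $\Mp$ (which forces $F^{-1}-Id$ to be small), and the smallness of $\delta t$ (which forces $R_{f\delta t}-I$ to be small). First I would fix $\pN_0\in U_1$ and use Lemma \ref{2.11} together with Remark \ref{identitySolnRemark}: shrinking $\epsTwo\in(0,\hat\eps_1]$ if necessary, we may assume that whenever $(\pN,\pNP)\in V_{\epsTwo}(\pN_0)$ and $|\delta t|<\epsTwo$, the bound $\|F^{-1}-Id\|_{C^0(\bR^2)}=\|\Mp(\pN,\pNP,\delta t)\|_{C^0}\le\sigma$ holds for any prescribed $\sigma>0$. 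Simultaneously, $\|\pN\|_{3,\alpha}$ and $\|\pNP\|_{2,\alpha/2}$ are controlled by $\|\pN_0\|_{3,\alpha}+1$, so every $C^2$ norm of $\pN,\pNP$ is bounded by a constant $C$ depending only on $\pN_0$ and $f$.

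Next I would estimate $A$ directly from (\ref{1.32}): both summands contain the factor $f^{-1}(F^{-1}(x))-f^{-1}(x)$, which is bounded by $\|\nabla f^{-1}\|_0\,\|F^{-1}-Id\|_0\le C\sigma$. Multiplying by the uniformly bounded quantities $\nabla\pNP\otimes\nabla f^{-1}$ and $f^{-1}D^2\pNP$ yields $\|A\|_0\le C_1\sigma$. For the first summand of $B$ in (\ref{1.33}), a triangle inequality gives
\[
\bigl|(f^{-1}\nabla \pN)(F^{-1}(x))-(f^{-1}\nabla\pNP)(x)\bigr|
\le \|f^{-1}\nabla\pN\|_{C^1}\|F^{-1}-Id\|_0+\|f^{-1}\|_0\|\nabla\pN-\nabla\pNP\|_0,
\]
so after multiplying by $\nabla f^{-1}$ this summand is bounded by $C(\sigma+\epsTwo)$, using $\|\pN-\pNP\|_{C^1}\le 2\epsTwo$.

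The one place that needs a touch of care is the remaining term $D[f^{-2}(R_{f\delta t}-I)\nabla\pN](F^{-1}(x))$. Here I would use that $R_a-I=O(|a|)$ uniformly in $a$ in a bounded interval, and that, as a function of $y$, $D_y R_{f(y)\delta t}=JR_{f(y)\delta t}(\nabla f)(y)\,\delta t=O(\delta t)$. Expanding the derivative by the product rule into three summands, each summand carries at least one factor of either $R_{f\delta t}-I$ or $D R_{f\delta t}$, both of size $O(\delta t)$; multiplying by the uniformly bounded factors $D(f^{-2})$, $f^{-2}$, $\nabla\pN$, $D^2\pN$ yields a bound $C|\delta t|\le C\epsTwo$ for this summand.

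Combining the three estimates gives $\|A\|_0+\|B\|_0\le C_2(\sigma+\epsTwo)$ for a constant $C_2$ depending only on $\pN_0,f$. Choosing first $\sigma\le c_0/(32 C_2)$, then (via Remark \ref{identitySolnRemark}) shrinking $\epsTwo$ so that both $\|\Mp(\pN,\pNP,\delta t)\|_0\le\sigma$ and $\epsTwo\le c_0/(32 C_2)$, delivers $\|A\|_0,\|B\|_0<c_0/8$ as desired. The main (minor) obstacle is purely bookkeeping around the rotation-matrix derivative; the rest is direct continuity of composition with a map close to the identity.
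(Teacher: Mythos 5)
Your proposal is correct and follows essentially the same route as the paper: invoke Remark~\ref{identitySolnRemark} (via Lemma~\ref{2.11}) to make $\|\Mp\|_0$ arbitrarily small, bound $A$ through the common factor $f^{-1}(F^{-1}(x))-f^{-1}(x)$, split the first term of $B$ by adding and subtracting an intermediate quantity, and absorb the rotation term into $O(\delta t)$. Your version spells out the derivative of $R_{f(\cdot)\delta t}$ more explicitly than the paper (which simply writes the bound $|\delta t|\,\|p\|_{C^2}$), and your intermediate term in the $B$-decomposition is $(f^{-1}\nabla p)(x)$ rather than the paper's $(f^{-1}\nabla q)(F^{-1}(x))$, but these are cosmetic differences leading to the same estimate.
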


\begin{proof}
Let $0<\eps<\hat\eps_1$ be small and assume $(p,q)\in V_{\eps}(p_0)$. We first estimate the
matrix $A$. From (\ref{1.32}), for some constant $C$ which depends only on $f$ we get:
$$\|A\|_0\leq C\|q\|_{2,\alpha/2}\|\Mp\|_0\leq C(\|q_0\|_{2,\alpha/2}+1)\|\Mp\|_0.
$$
Next we can write the expression of $B$ in (\ref{1.33}) as
\begin{equation*}
\begin{split}
B=&\big[\big(f^{-1}\nabla (p-q)\big)(F^{-1}(x))+\big(f^{-1}\nabla q\big)(F^{-1}(x))
-\big(f^{-1}\nabla q\big)(x)\big]\otimes\nabla(f^{-1})(F^{-1}(x))\\
&+D[f^{-2}(R_{f\delta t}-I)\nabla p](F^{-1}(x)).
\end{split}
\end{equation*}
Since one has $\|q-p\|_{2,\alpha/2}\leq \epsTwo$, $|\delta t|\le \epsTwo$, one can estimate
\begin{equation*}
\begin{split}
\|B\|_0  &\leq  C[\|p-q\|_{C^1}+\|f\|_{C^1}\|q\|_{C^2}\|\Mp\|_0+|\delta t\|p\|_{C^2}],\\
& \leq C[\epsTwo+(\|p_0\|_{C^2}+1)(\|\Mp\|_0+\epsTwo)].
\end{split}
\end{equation*}
Now from Remark \ref{identitySolnRemark}, $\|\Mp(p,q,\delta t)\|_0$ can be made as small as we want as long
as $(p,q)\in V_{\epsTwo}(p_0)$ and $|\delta t|<\epsTwo$ with $\epsTwo$ chosen small enough. It follows that as long as $\eps$ is chosen small enough,
we can make $\|A\|_0,\|B\|_0<\frac{c_0}{8}$.
\end{proof}

 Denote
$$
C_0^{k,\alpha}\defd\{\varphi\in C^{k,\alpha}(\bT^2)\;:\;\int_{[0,1)^2}\varphi(x)\,dx= 0\}.
 $$
For the rest of this section, we
fix  $\pN_0\in U_1\cap C^{k+2,\alpha}$ for some $k\geq 2$.

Given $p$ near $p_0$ and small $\delta t$, we solve the equation (\ref{3.3}) (with $F$ defined by
$F^{-1}= Id+\Mp(\pN,\pNP, \delta t)$) for $q$
using implicit function theorem.
 Consider the following open subset $U_2$ of $C_0^{2,\alpha/2}$
\begin{equation}\label{4.4b}
U_2=\{w\in C_0^{2,\alpha/2}:I+f^{-1}D(f^{-1}Dw)>\frac{c_0}{2}, \|w-p_0\|_{2,\alpha/2}<\epsTwo\}.
\end{equation}
where $\epsTwo$ is chosen in Lemma \ref{l5.1}. Let $\tilde{U_2}\subset C_0^{3,\alpha}$ be defined similarly, namely
\begin{equation}\label{4.5-0-1}
\tilde{U_2}=\{w\in C_0^{3,\alpha}:I+f^{-1}D(f^{-1}Dw)>\frac{c_0}{2}, \|w-p_0\|_{3,\alpha}<\epsTwo\}.
\end{equation}
Lemma \ref{l5.1} implies that
\begin{equation}\label{4.5-1}
\begin{split}
&(I+f^{-1}\nabla p\otimes\nabla(f^{-1})+f^{-2}D^2p)\circ(id+\Mp(p,q,\delta t))+B \ge \frac {c_0} 4\\
&\quad\mbox{ for all }\;(q,p,\delta t)\in U_2\times\tilde{U_2}\times(-\epsTwo,\epsTwo),
\end{split}
\end{equation}
where $B$ is as in Lemma \ref{l5.1}. Also, by (\ref{V-W-def}),
\begin{equation}\label{4.5-Veps}
U_2\times \tilde{U_2} \subset V_{\epsTwo}.
\end{equation}

Then we can define the following map:
\begin{equation}\label{3.4}
\begin{split}
P: & U_2\times\tilde{U_2}\times(-\epsTwo,\epsTwo)\rightarrow C^{0,\alpha/2} \quad\mbox{ by}\\
& P(q,p,\delta t)=\frac{\det[I+f^{-1}\nabla q\otimes\nabla(f^{-1})+f^{-2}D^2q+A]}
{\det[(I+f^{-1}\nabla p\otimes\nabla(f^{-1})+f^{-2}D^2p)\circ(id+\Mp(p,q,\delta t))+B]}-1,
\end{split}
\end{equation}
where $A$ and $B$ are as in Lemma \ref{l5.1}.
\begin{lem}\label{l5.2}
Map (\ref{3.4}) has the following properties:
\begin{enumerate}[(i)]
\item\label{l5.2-i1}
$\displaystyle\int_{[0,1)^2}\big(P(q,p,\delta t)\big)(x)\,dx=0$ for any
$(q,p,\delta t)\in U_2\times\tilde{U_2}\times(-\epsTwo,\epsTwo)$.
Thus $P(q,p,\delta t)\in C_0^{0,\alpha/2}$, which means that
$P$ acts in the following spaces:
\begin{equation}\label{3.4-pr}
P:  U_2\times\tilde{U_2}\times(-\epsTwo,\epsTwo)\rightarrow C_0^{0,\alpha/2}
\end{equation}
\item\label{l5.2-i1.1}
$ (q,p,\delta t)\in U_2\times\tilde{U_2}\times(-\epsTwo,\epsTwo)$
satisfies equations (\ref{3.2}), (\ref{3.3}) with
$F^{-1}=Id+\Mp(\pN,\pNP, \delta t)$ if and only if
$P(q,p,\delta t)=0$ on $\bT^2$.
\item\label{l5.2-i2}
P is continuously Frechet differentiable in the spaces given in (\ref{3.4}), or
equivalently in (\ref{3.4-pr}).
\end{enumerate}
\end{lem}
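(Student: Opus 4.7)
The key observation is that, by construction, the map $F^{-1}=Id+\Mp(p,q,\delta t)$ satisfies (\ref{3.2}); differentiating (\ref{3.2}) in $x$ reproduces precisely the matrix identity (\ref{1.31}) (with $p_n=p$, $p_{n+1}=q$, $F_{n+1}=F$), which was the computation done in Section \ref{TimeSteppingProcSect} to derive (\ref{1.31}). Taking determinants of (\ref{1.31}), and using that the bracketed right-hand matrix is positive definite and its determinant bounded below by $(c_0/4)^2$ by (\ref{4.5-1}), I obtain the clean identity
\begin{equation*}
P(q,p,\delta t)(x)=\det DF^{-1}(x)-1.
\end{equation*}
This is the engine of the whole lemma. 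Assertion (\ref{l5.2-i1.1}) follows immediately: since (\ref{3.2}) holds by the very definition of $\Mp$, equation (\ref{3.3}) is equivalent to $\det DF^{-1}\equiv 1$, which is equivalent to $P(q,p,\delta t)\equiv 0$ on $\bT^2$.

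For assertion (\ref{l5.2-i1}), I write $F^{-1}(x)=x+w(x)$ with $w\defd \Mp(p,q,\delta t)\in C^{1,\alpha/2}(\bT^2;\bR^2)$ and expand
\begin{equation*}
\det(I+Dw)=1+\divg w+\det(Dw).
\end{equation*}
The middle term integrates to zero over $\bT^2$ by $\bZ^2$-periodicity of $w$. The last term is a null-Lagrangian in two dimensions: direct expansion yields
$\det(Dw)=\partial_1(w_1\partial_2 w_2)-\partial_2(w_1\partial_1 w_2)$,
exhibiting $\det(Dw)$ as the divergence of the $\bZ^2$-periodic vector field $(w_1\partial_2 w_2,-w_1\partial_1 w_2)$, hence it too integrates to zero. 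Therefore $\int_{\bT^2}P(q,p,\delta t)\,dx=0$.

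For assertion (\ref{l5.2-i2}) I would decompose $P$ into Frechet-differentiable building blocks and apply the chain rule. The linear operators $p\mapsto\nabla p,\,D^2p$ and $q\mapsto\nabla q,\,D^2q$ are bounded linear between the relevant Hölder spaces; pointwise sums, products, the $2\times 2$ determinant, and (since the denominator is bounded below by a positive constant via (\ref{4.5-1})) the reciprocal are real-analytic in their arguments. Every occurrence of $F^{-1}$ in $A$, $B$, and in the denominator appears as a composition $\psi\circ(Id+w)$ where $w=\Mp(p,q,\delta t)$ and $\psi$ is a Hölder function built out of $p$, $q$ and $f$. By Lemma \ref{2.11}, $(p,q,\delta t)\mapsto w$ is $C^1$ into $C^{1,\alpha/2}(\bT^2;\bR^2)$, and by Lemma \ref{CompositionFrechetDif} of the Appendix, $(\psi,w)\mapsto\psi\circ(Id+w)$ is $C^1$ with target in $C^{0,\alpha/2}$. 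Chaining these observations gives $P\in C^1$ in the spaces of (\ref{3.4}).

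The main technical obstacle is keeping the regularity arithmetic consistent. Since $w$ lies only in $C^{1,\alpha/2}$, each composition with $Id+w$ costs essentially one derivative, and it is the drop from the exponent $\alpha$ to $\alpha/2$ that absorbs the Hölder loss so that the composition of a $C^{0,\alpha/2}$ factor with a $C^{1,\alpha/2}$ perturbation of the identity still lands in $C^{0,\alpha/2}$. This asymmetry is exactly why the domain of $P$ requires $p\in C^{3,\alpha}$ but only $q\in C^{2,\alpha/2}$.
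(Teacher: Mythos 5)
Your identification of the identity $P(q,p,\delta t)=\det DF^{-1}-1$ as the engine of the lemma matches the paper exactly, and your treatment of assertions (ii) and (iii) follows essentially the same route the authors take (direct consequence of the construction of $\Mp$, and reduction to the appendix composition lemmas \ref{CompositionFrechetDif}, \ref{l9.2}, \ref{l9.4}, \ref{c9.3}).

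Where you genuinely diverge is assertion (i). The paper proves $\int_{[0,1)^2}\det DG\,dx=1$ by a change of variables for the diffeomorphism $G=Id+\Mp(p,q,\delta t)$, invoking Lemma \ref{mapIsDiffeo} for invertibility and Lemma \ref{chgVarOnTorus} to handle the periodic tiling. You instead expand $\det(I+Dw)=1+\divg w+\det(Dw)$ and kill both extra terms using periodicity and the null-Lagrangian identity $\det(Dw)=\partial_1(w_1\partial_2 w_2)-\partial_2(w_1\partial_1 w_2)$. Your route is attractive because it does not require knowing that $Id+w$ is a global diffeomorphism, which the paper's argument does. What it buys you, however, comes at the cost of a regularity subtlety: your divergence-form identity for $\det(Dw)$ requires, as written, that $w_1\partial_2 w_2$ and $w_1\partial_1 w_2$ be classically differentiable, i.e. $w\in C^2$, whereas the $w$ here is only $C^{1,\alpha/2}(\bT^2;\bR^2)$. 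The conclusion $\int_{\bT^2}\det(Dw)\,dx=0$ is still true for $C^1$ periodic vector fields, but to make your argument rigorous you would need to mollify $w$ (which preserves $\bZ^2$-periodicity), apply the identity to $w*\rho_\varepsilon$, and pass to the limit using uniform $C^1$ convergence; alternatively, interpret the divergence distributionally. As written there is a genuine gap at exactly this step, though it is a gap in rigor rather than in the underlying idea, and it is easily closed. The paper's change-of-variables route sidesteps this issue entirely because the area formula for $C^1$ diffeomorphisms needs no second derivatives.
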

\begin{proof}
First we show $P$ maps into $C_0^{0,\alpha/2}$.

Fix $ (q,h,\delta t)\in U_2\times\tilde{U_2}\times(-\epsTwo,\epsTwo)$.
From (\ref{1.31}) with $p_n=p$, $p_{n+1}=q$, $F_{n+1}^{-1}=id+\Mp(p,q,\delta t)$,
one sees that the right hand side of (\ref{3.4}) at $x\in\bR^2$ is
exactly $\det D_x(id+(\Mp(p,q,\delta t)(x)))-1$.  Denote $G:=id+\Mp(h,q,\delta t)$.
Then
$G: \bR^2\to \bR^2$ is a diffeomorphism by
Lemma \ref{mapIsDiffeo}, and
the right hand side of (\ref{3.4}) is  $\det DG (x)-1$. Then we calculate,
changing variables:
\begin{equation*}
\begin{split}
\int_{[0,1)^2}\det DG(x)\,dx=
\int_{G([0,1]^2)}dy=
\int_{[0,1]^2} dx=1,
\end{split}
\end{equation*}
where the second equality follows from $\bZ^2$-periodicity of
$G-id= \Mp(h,q,\delta t)$, and from the fact that
$G: \bR^2\to \bR^2$ is a diffeomorphism, see Lemma \ref{chgVarOnTorus}
(applied now with $h\equiv 1$).
This completes the proof of assertion (\ref{l5.2-i1}) of Lemma.

\medskip
Assertion (\ref{l5.2-i1.1}) of Lemma
follows from Lemma \ref{2.11} and (\ref{3.4}).

\medskip
Now we prove assertion (\ref{l5.2-i2}) of Lemma.

Using (\ref{4.5-1}),
Lemma \ref{l9.2} and Corollary \ref{c9.3} we see  that it is sufficient to show
that for any $(i,j)\in\{1,2\}^2$, the following
maps acting in the spaces $U_2\times\tilde{U_2}\times(-\epsTwo,\epsTwo)\rightarrow C^{0,\alpha/2}$
are continuously Frechet differentiable:
\begin{align}\label{firstMapInFrechetDiff-proof}
&(p,q,\delta t)\longmapsto \delta_{ij}+f^{-1}\partial_ip\cdot\partial_j(f^{-1})+f^{-2}\partial_{ij}q
+A_{ij},\\
\label{secndMapInFrechetDiff-proof}
&(p,q,\delta t)\longmapsto\delta_{ij}+[f^{-1}\partial_ip\cdot\partial_j(f^{-1})+f^{-2}
\partial_{ij}p](id+\Mp(p,q,\delta t))+B_{ij}.
\end{align}
Here $A_{ij}$ and $B_{ij}$ are elements of the matrices $A(x)$ and $B(x)$ which  given in
(\ref{1.32}),(\ref{1.33}) with $p=p_n$, $q=p_{n+1}$,
and $F_{n+1}^{-1}=id+\Mp(p,q,\delta t)$.
We now show differentiability of maps (\ref{firstMapInFrechetDiff-proof}), (\ref{secndMapInFrechetDiff-proof}).
From Lemma \ref{l9.4} with Lemma \ref{2.11}, the terms $f^{-1}(id+\Mp(p,q,\delta t)$
and $\nabla(f^{-1})(id+\Mp(p,q,\delta t))$ are Frechet differentiable, where we include
the terms in expressions of $A$ and $B$. Then by Lemma \ref{l9.2}, one can see that
the mapping (\ref{firstMapInFrechetDiff-proof})
is Frechet differentiable. From Lemma \ref{CompositionFrechetDif}(ii),
the terms  $D^2p(id+\Mp(p,q,\delta t))$ are also differentiable. Then we obtain differentiability of
the map (\ref{secndMapInFrechetDiff-proof}).
\end{proof}

Now we will show that the partial Frechet derivative
 $D_qP(p_0, p_0,0):C_0^{2,\alpha/2}\rightarrow C_0^{0,\alpha/2}$ is invertible.

First we can calculate from (\ref{2.2-Q1}), (\ref{5.10-eq-Q1}) amd Lemma \ref{2.11}:
\begin{equation}\label{3.6}
D_q\Mp(p_0,p_0,0)h_1=
[I+f^{-2}D^2p_0+f^{-1}\nabla p_0\otimes\nabla (f^{-1})]^{-1}(f^{-2}\nabla h_1).
\end{equation}
Then by explicit calculation, we find that $D_qP$ is
\begin{equation}\label{3.7}
\begin{split}
D_qP(p_0,p_0,0)&:C_0^{2,\alpha/2}\rightarrow C_0^{0,\alpha/2}
\\&h\longmapsto L(h),
\end{split}
\end{equation}
where
\begin{equation}\label{3.8}
\begin{split}
L(h)&= \frac{\sum_{i,j=1}^2
M_{ij}[f^{-2}\partial_{ij}h-\partial_j(f^{-1}\nabla(f^{-1}\partial_i p_0))
\cdot(D_q\Mp(p_0,p_0,0)h)]}
{\det(I+f^{-1}D(f^{-1}Dp_0))}.
\end{split}
\end{equation}

Here $M=M_{ij}$ is the cofactor matrix of $I+f^{-1}D(f^{-1}D\pN_0)$, which is strictly positive definite due to (\ref{4.5-0-1}).
Notice we already computed $D_q\Mp(p_0,p_0,0)$ in (\ref{3.6}).
\begin{rem}
Note that  the operator (\ref{3.8}) acts in spaces given in (\ref{3.7}),
i.e. that $\displaystyle\int_{[0,1)^2}\big(L(h)\big)(x)\,dx=0$ for any
$h\in C_0^{2,\alpha/2}$. This follows from \eqref{3.4-pr}, since $L=D_q P(p_0,p_0,0)$.
\end{rem}

Next we argue the linear operator $L$ defined above is invertible and the inverse is a bounded linear operator. First we
observe that $L$ can be put in the form
$$L(h)=a_{ij}\partial_{ij}h+b_i\partial_ih.
$$
with coefficients $a_{ij},b_i\in C^{\alpha/2}(\bT^2)$, with the norms depending only on $\|\pN_0\|_{C^{3,\alpha}}$
and $\frac 1{c_0}$.
Also it follows from (\ref{3.8}), $L$ is uniformly elliptic,
precisely $\displaystyle a_{ij}=\frac{M_{ij}}{f^2\det(I+f^{-1}D(f^{-1}Dp_0))}$ and thus,
ellipticity follows from (\ref{4.5-0-1}) and regularity of $p_0, f, f^{-1}$.

Now invertibility of $D_qP(p_0, p_0,0):C_0^{2,\alpha/2}\rightarrow C_0^{0,\alpha/2}$
follows from the following lemma.

\begin{lem}\label{3.9}
Let $L(h)=a_{ij}\partial_{ij}h+b_i\partial_ih$
be a uniformly elliptic operator
on $\bT^2$, with coefficients $a_{ij},b_i\in C^{\alpha}(\bT^2)$. Suppose that
coefficients $a_{ij},b_i$ satisfy the following additional property:
$L(h)\in C_0^{\alpha}(\bT^2)$ for any $h\in C_0^{2,\alpha}(\bT^2)$.
Then $L:C_0^{2,\alpha/2}\rightarrow C_0^{0,\alpha/2}$
is isomorphism.
\end{lem}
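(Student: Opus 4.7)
The plan is to establish that $L: C_0^{2,\alpha/2}\to C_0^{0,\alpha/2}$ is bounded, injective, and surjective; continuity of $L^{-1}$ will then follow from the open mapping theorem (or more concretely from the a priori estimate proved along the way). Boundedness is immediate from the Hölder coefficient norms and the standard product rule, together with the hypothesis that $L$ preserves the mean-zero subspace (extended from $C_0^{2,\alpha}$ to $C_0^{2,\alpha/2}$ by density and continuity of $L$). Injectivity follows from the strong maximum principle: if $h\in C_0^{2,\alpha/2}$ satisfies $Lh=0$, then since $L$ has no zero-order term, the maximum and minimum of $h$ are attained at points where $h$ is constant, so $h\equiv \mathrm{const}$, and the mean-zero condition forces $h=0$.

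The next step, which is the technical core, is the a priori estimate
\[
\|h\|_{2,\alpha/2}\le C\,\|Lh\|_{0,\alpha/2}\quad\text{for all }h\in C_0^{2,\alpha/2}.
\]
Interior Schauder estimates on the torus (applied on overlapping balls covering $\bT^2$, or directly using that $\bT^2$ is a compact manifold) yield
\[
\|h\|_{2,\alpha/2}\le C\bigl(\|Lh\|_{0,\alpha/2}+\|h\|_0\bigr).
\]
I would then absorb the $\|h\|_0$ term by a standard compactness-and-contradiction argument: if no such $C$ existed, one could find $h_n\in C_0^{2,\alpha/2}$ with $\|h_n\|_0=1$ and $\|Lh_n\|_{0,\alpha/2}\to 0$; the previous inequality bounds $\|h_n\|_{2,\alpha/2}$, so by Arzelà–Ascoli a subsequence converges in $C^2$ to some $h_\infty\in C_0^{2,\alpha/2}$ with $\|h_\infty\|_0=1$ and $Lh_\infty=0$, contradicting injectivity.

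Finally, I would deduce surjectivity by the method of continuity. Consider the family
\[
L_t\defd (1-t)\Delta+tL,\qquad t\in[0,1],
\]
each $L_t$ being uniformly elliptic with $C^\alpha$ coefficients and mapping $C_0^{2,\alpha/2}$ into $C_0^{0,\alpha/2}$ (for the mean-zero property one checks that $\int_{\bT^2}\Delta h=0$ and uses the hypothesis on $L$). The argument of the previous paragraph applies uniformly in $t$, giving a uniform a priori estimate $\|h\|_{2,\alpha/2}\le C\|L_t h\|_{0,\alpha/2}$. Since $L_0=\Delta$ is an isomorphism from $C_0^{2,\alpha/2}$ onto $C_0^{0,\alpha/2}$ by standard elliptic theory on the torus (solvability of the Poisson equation with zero mean-value data), the method of continuity then gives that $L_1=L$ is an isomorphism as well. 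The main obstacle is the uniform a priori estimate, and within it the compactness-contradiction step requires one to verify that the limit $h_\infty$ still lies in the mean-zero space and that one can pass to the limit in $L_t h_n$ after extracting a subsequence; the first is preserved under $C^0$ convergence, and the second uses the $C^{2}$ convergence of $h_n$ together with the $C^\alpha$ regularity of the coefficients.
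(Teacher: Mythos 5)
Your proposal is correct and follows essentially the same route as the paper's proof: injectivity via the strong maximum principle and the mean-zero normalization, surjectivity via the method of continuity for $L_t=(1-t)\Delta+tL$ starting from the Laplacian, and the uniform a priori estimate obtained from Schauder together with a compactness-and-contradiction argument to absorb the $\|h\|_0$ term. The only cosmetic difference is that the paper spells out the solvability of $\Delta h=k$ on the mean-zero space by minimizing the Dirichlet functional, whereas you cite it as standard.
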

\begin{proof}
The injectivity follows from strong maximum principle.
Indeed, if $L(h)=0$ for some $h\in C^{2,\alpha}$, then by strong maximum principle,
 $h$ must be a constant. Since $h\in C_0^{2,\alpha}$, i.e.
 $\displaystyle\int_{[0,1)^2} h \,dx=0$,
 this constant must be zero.

To show surjectivity, we use the method of continuity. We consider the following family of operators:
\begin{equation}\label{3.10}
\begin{split}
&L_t:C_0^{2,\alpha}\rightarrow C_0^{0,\alpha}\textrm{ with $t\in[0,1]$}
\\&L_t(h)=(1-t)\Delta h+tL(h).
\end{split}
\end{equation}

When $t=0$, $L_0=\Delta$. Equation
$\Delta h=k$ has a unique solution in $C_0^{2,\alpha}$ with any $k\in C_0^{0,\alpha}$.
Uniqueness is again the result of strong maximum principle.
Existence can be obtained by minimizing the functional
$I[v]=\int_{\mathbb{T}^2}\frac{1}{2}|\nabla v|^2+kv$ over the space
$H_0^1(\mathbb{T}^2):=\{v\in H_{loc}^1(\mathbb{R}^2):\textrm{ $v$
is  $\mathbb{Z}^2$-periodic, and }\int_{\mathbb{T}^2}v=0\}$.

By Theorem 5.20 in \cite{GT}, to see that $L_1$ is surjective, we just have to show
the estimate
\begin{equation}\label{3.11}
\|h\|_{2,\alpha}\leq C\|L_th\|_{0,\alpha}\;\;\textrm { for all $ t\in[0,1]$
and $h\in C_0^{2,\alpha}$}.
\end{equation}
By Schauder`s estimate, we have
\begin{equation}\label{3.12}
\|h\|_{2,\alpha}\leq C(\|h\|_0+\|L_th\|_{0,\alpha}).
\end{equation}
Here $C$ depends on the $C^{\alpha}$ norm of the coefficients and the ellipticity
constant of operator $L$. Both are independent of $t$. So we just need to show
\begin{equation}\label{3.13}
\|h\|_0\leq C\|L_th\|_{0,\alpha}\textrm{  $h\in C_0^{2,\alpha}$ and
$\forall t\in[0,1]$}.
\end{equation}

We use compactness and argue by contradiction.

If (\ref{3.13}) were false, then for any $n\geq1$, there exists $t_n\in[0,1]$,
$h_n\in C_0^{2,\alpha}$, such that $\|h_n\|_0\geq n\|L_{t_n}h_n\|_{0,\alpha}$.
After normalization, we can assume
$\|h_n\|_0\equiv1$ and $\|L_{t_n}h_n\|_{0,\alpha}\rightarrow0$.
By Schauder`s estimate, $h_n$ is bounded in $C^{2,\alpha}$. So up to a subsequence,
we can assume $t_n\rightarrow t_*\in[0,1]$, $h_n\rightarrow h_*$ in $C^2$,
and $h_*\in C_0^{2,\alpha}$. Then we will have $L_{t_*}h_*=0$. By strong maximum
principle, we have $h_*\equiv0$. On the other hand $h_n\rightarrow h_*$ uniformly,
we have $\|h_*\|_0=1$. This is a contradiction.

\end{proof}

 Hence we can conclude the following:
\begin{prop}\label{5.4spm}
There exist $\epsThree, \epsFour\in (0, \epsTwo]$ with
$\epsFour\le \epsThree$, such that for any $p\in C^{3,\alpha}_0(\bT^2)$ with
$\|p-p_0\|_{3,\alpha}<\epsFour$ and $\delta t\in (-\epsFour, \epsFour)$, there exists a unique
$q\in C^{2,\alpha/2}(\bT^2)$ which solves (\ref{3.3})
with $F^{-1}=Id+\Mp(\pN,\pNP, \delta t)$
 and satisfies
$\|q-p_0\|_{2,\alpha/2}<\epsThree$.

Thus, denoting $q:=\mathcal{H}(p,\delta t)$ and
$U_3=\left\{p\in C^{3,\alpha}_0(\bT^2)\;:\; \|p-p_0\|_{3,\alpha}<\epsFour\right\}$,
we obtain a map
$$\mathcal{H}:U_3\times(-\epsFour,\epsFour)\rightarrow U_2,
$$
such that for each for any $(\pN, \delta t)\in U_3\times(-\epsFour,\epsFour)$,
defining $q=\mathcal{H}(\pN,\delta t)$ and
$F^{-1}=Id+\Mp(\pN,\mathcal{H}(\pN,\delta t), \delta t)$,
we get solution $(p, q, F^{-1}, \delta t)$ of (\ref{3.3}).
\end{prop}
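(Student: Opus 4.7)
The plan is to apply the Implicit Function Theorem in Banach spaces to the map $P$ defined in (\ref{3.4}), viewing $P(q,p,\delta t)=0$ as implicitly defining $q$ as a function of $(p,\delta t)$ near the background triple $(p_0,p_0,0)$. Once the implicit function $\mathcal H$ is produced, Lemma \ref{l5.2}(\ref{l5.2-i1.1}) immediately translates the zero-set equation back into (\ref{3.3}) together with the map definition $F^{-1}=Id+\Mp(p,q,\delta t)$, so that is the whole content of the proposition.

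The three hypotheses of the Implicit Function Theorem should be verified as follows. For the background identity $P(p_0,p_0,0)=0$: Lemma \ref{2.11} together with (\ref{5.10-eq-Zero-Q1}) gives $\Mp(p_0,p_0,0)=w_0$, and then inspection of (\ref{1.32})--(\ref{1.33}) yields $A\equiv 0\equiv B$; both numerator and denominator in (\ref{3.4}) collapse to $\det[I+f^{-1}D(f^{-1}Dp_0)]$, so the ratio is $1$ and $P$ vanishes. Continuous Frechet differentiability of $P$ in the spaces (\ref{3.4-pr}) is exactly Lemma \ref{l5.2}(\ref{l5.2-i2}). The partial derivative $D_q P(p_0,p_0,0)=L$ displayed in (\ref{3.8}) is an isomorphism $C_0^{2,\alpha/2}\to C_0^{0,\alpha/2}$: its coefficients lie in $C^{\alpha/2}(\bT^2)$ with norms controlled by $\|p_0\|_{3,\alpha}$ and $1/c_0$, it is uniformly elliptic by (\ref{4.5-0-1}) together with positive definiteness of the cofactor matrix $M$, and hence Lemma \ref{3.9} applies.

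With these ingredients in place, the Implicit Function Theorem produces a continuously Frechet differentiable map $\mathcal H$ on some neighborhood of $(p_0,0)$ in $C_0^{3,\alpha}(\bT^2)\times\bR$ taking values in a $C^{2,\alpha/2}$-neighborhood of $p_0$, uniquely characterized there by $P(\mathcal H(p,\delta t),p,\delta t)=0$ and $\mathcal H(p_0,0)=p_0$. To conclude, I would first choose $\epsThree\in(0,\epsTwo]$ small enough that the $C^{2,\alpha/2}$-ball of radius $\epsThree$ around $p_0$ lies inside $U_2$; this is possible because $p_0$ satisfies the strict bound $c_0$ in (\ref{1.6}) and the condition $I+f^{-1}D(f^{-1}Dq)>\frac{c_0}{2}$ is an open condition in the $C^{2,\alpha/2}$ topology. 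Then choose $\epsFour\in(0,\epsThree]$ so that $\mathcal H$ is defined on $U_3\times(-\epsFour,\epsFour)$ with image inside this ball, which is possible by continuity of $\mathcal H$ at $(p_0,0)$; shrinking $\epsThree$ further if necessary so that the entire $\epsThree$-ball lies within the IFT uniqueness neighborhood gives the required uniqueness statement.

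The main obstacle here is conceptual rather than technical, and has already been overcome in Sections \ref{constMaps-Sect} and in the earlier part of the present section: identifying the asymmetric function spaces ($p\in C^{3,\alpha}$ but $q\in C^{2,\alpha/2}$) in which $P$ is smooth and $D_q P$ is invertible. The loss of a derivative from composition with $Id+\Mp(p,q,\delta t)$ in the denominator of (\ref{3.4}) is absorbed by dropping the Holder exponent from $\alpha$ to $\alpha/2$; once this accounting is in place, the present proposition is a textbook application of the Implicit Function Theorem.
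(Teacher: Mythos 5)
Your proposal is correct and takes essentially the same route as the paper: the paper's proof is a one-line invocation of the Implicit Function Theorem using Lemma~\ref{l5.2}(\ref{l5.2-i1.1}) for the equivalence with (\ref{3.3}), Lemma~\ref{l5.2}(\ref{l5.2-i2}) for differentiability of $P$, and (\ref{3.7})--(\ref{3.8}) with Lemma~\ref{3.9} for invertibility of $D_qP(p_0,p_0,0)$. You have simply unpacked that citation chain, including the routine verification that $P(p_0,p_0,0)=0$ (which follows from Remark~\ref{identitySolnRemark} forcing $A\equiv B\equiv 0$) and the bookkeeping with $\epsThree,\epsFour$.
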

\begin{proof}
  This follows from
Lemma \ref{l5.2}(\ref{l5.2-i1.1}), and (\ref{3.7}), (\ref{3.8})  with Lemma \ref{3.9} by
 implicit function theorem.

\end{proof}

\begin{rem}\label{3.14}
If $p_0$ is chosen in a compact subset of $C^{3,\alpha}(\bT^2)$, one can see such choice of $\epsFour$ is actually uniform. In particular, this choice of $\epsFour$ is uniform on any bounded subsets in $C^{4,\alpha}$.
\end{rem}

We also prove the following lemma which will be used below:
\begin{lem}\label{5.4spm-lem}
If $\epsThree$ hence $\epsFour$ are sufficiently small,
 then $C(x)\geq\frac{c_0}{4}$
for any $p$ and $\delta t$ as in Proposition \ref{5.4spm} and
$q=\mathcal{H}(p,\delta t)$.
Here $C(x)$ is:
\begin{equation}\label{FirstC}
\begin{split}
C(x)=&I+\int_0^1D[f^{-2}Dp]((1-\theta)F^{-1}(x)+\theta x)d\theta\\
&-f^{-1}(x)\int_0^1\nabla(f^{-1})((1-\theta)F^{-1}(x)+\theta x)d\theta\otimes\nabla p,\\
&\text{with }\;F^{-1}=Id+\Mp(\pN,\pNP, \delta t).
\end{split}
\end{equation}
\end{lem}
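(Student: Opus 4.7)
The plan is a perturbation argument around the ``frozen'' matrix $C_0(x) \defd C(x)|_{F^{-1}(x) = x}$, whose symmetric part I will identify with that of $I + f^{-1}D(f^{-1}Dp)(x)$.

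First, I would compute $C_0$ explicitly. Using $\partial_j f^{-2} = 2 f^{-1}\partial_j f^{-1}$, direct differentiation gives
$$
D[f^{-2}\nabla p] = 2f^{-1}\nabla p\otimes\nabla(f^{-1}) + f^{-2}D^2p.
$$
Set $M \defd f^{-1}\nabla p\otimes\nabla(f^{-1})$, so that $f^{-1}D(f^{-1}Dp) = M + f^{-2}D^2p$ and, by the convention $(A\otimes B)_{ij}=A_i B_j$, one has $f^{-1}\nabla(f^{-1})\otimes\nabla p = M^T$. Then
$$
C_0 = I + 2M - M^T + f^{-2}D^2p, \qquad \tfrac{1}{2}(C_0 + C_0^T) = I + \tfrac{1}{2}(M + M^T) + f^{-2}D^2p = \operatorname{sym}\bigl(I + f^{-1}D(f^{-1}Dp)\bigr).
$$
Since $v^T A v = v^T \operatorname{sym}(A) v$ for every matrix $A$, and since the hypothesis $\|p-p_0\|_{3,\alpha} < \epsFour$ (after shrinking $\epsFour$ if needed so that $I + f^{-1}D(f^{-1}Dp) > \tfrac{c_0}{2}I$ holds by continuity in $p$) yields the corresponding lower bound, we conclude $v^T C_0(x) v \geq \tfrac{c_0}{2}|v|^2$ for all $v, x \in \bR^2$.

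Next, I would estimate $C(x) - C_0(x)$. Writing $y_\theta \defd (1-\theta) F^{-1}(x) + \theta x$, we have $|y_\theta - x| \leq |F^{-1}(x) - x|$, and both $D[f^{-2}\nabla p]$ and $\nabla(f^{-1})$ are Lipschitz with constants depending only on $\|p_0\|_{3,\alpha}$, $\|f\|_{2,\alpha}$, and $\inf f$ (uniform over admissible $p$, since $\|p\|_{3,\alpha} \leq \|p_0\|_{3,\alpha} + \epsFour$). This yields
$$
\|C - C_0\|_0 \leq K \|F^{-1} - Id\|_0,
$$
with $K$ independent of $(p, q, \delta t)$. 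By Proposition~\ref{5.4spm} combined with Remark~\ref{identitySolnRemark}, and the identity $F^{-1} - Id = \Mp(p, q, \delta t)$, the quantity $\|F^{-1} - Id\|_0$ can be made arbitrarily small by further shrinking $\epsFour$ (and hence $\epsThree$). Choosing these small enough that $K\|F^{-1} - Id\|_0 \leq c_0/4$, we obtain for every $x, v$
$$
v^T C(x) v \geq v^T C_0(x) v - \|C - C_0\|_0\,|v|^2 \geq \bigl(\tfrac{c_0}{2} - \tfrac{c_0}{4}\bigr)|v|^2 = \tfrac{c_0}{4}|v|^2,
$$
as claimed.

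The main technical observation is the algebraic identity in the first step: the factor of $2$ appearing in $D[f^{-2}\nabla p]$ and the transposed tensor product in the second term of $C(x)$ combine, upon symmetrization, to reproduce exactly the matrix in the convexity hypothesis (\ref{1.6}). Once this is in hand, what remains is a standard continuity estimate powered by the quantitative control on $F^{-1} - Id$ developed in Section~\ref{constMaps-Sect}.
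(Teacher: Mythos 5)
Your proof is correct and follows the same basic strategy as the paper: compare $C(x)$ to its frozen value $C_0(x)\defd C(x)|_{F^{-1}(x)=x}$, bound $\|C-C_0\|_0$ in terms of $\|F^{-1}-Id\|_0=\|\Mp(p,q,\delta t)\|_0$ using the Lipschitz regularity of the integrands, and invoke Remark~\ref{identitySolnRemark} to make that perturbation less than $c_0/4$ by shrinking $\epsFour$. The one point of genuine difference is the treatment of $C_0$. The paper identifies $C_0$ directly with $I+f^{-1}D(f^{-1}Dp)$ and then uses $p\in\tilde U_2$; you instead computed $C_0 = I + 2M - M^T + f^{-2}D^2p$ with $M=f^{-1}\nabla p\otimes\nabla(f^{-1})$, noticed this differs from $I+M+f^{-2}D^2p$ by the antisymmetric matrix $M-M^T$, and closed the gap by passing to the symmetric part. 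Both routes are valid, and your computation actually exposes a transposition slip in (\ref{FirstC})/(\ref{3.28}): if one retraces the derivation of $C(x)$ from (\ref{3.26})--(\ref{3.27}), the term $-f^{-1}(x)\big(f^{-1}(F^{-1}(x))-f^{-1}(x)\big)\nabla p(x)$ factors as a matrix applied to $F^{-1}(x)-x$ with $(i,j)$ entry $-f^{-1}(x)\,\partial_i p(x)\int_0^1\partial_j(f^{-1})\,d\theta$, i.e.\ the last term of $C(x)$ should read $-f^{-1}(x)\,\nabla p\otimes\int_0^1\nabla(f^{-1})\,d\theta$ rather than $-f^{-1}(x)\int_0^1\nabla(f^{-1})\,d\theta\otimes\nabla p$. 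With that correction $C_0=I+f^{-1}D(f^{-1}Dp)$ holds exactly and the paper's identity is literally true. Your symmetrization step is the right fix if the formula is taken as written; it is also the more robust argument, since an antisymmetric discrepancy never affects the quadratic form and hence cannot affect the conclusion under either reading.
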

\begin{proof}
\medskip
We start by observing that
\begin{equation*}
\begin{split}
C(x)-(I+f^{-1}D(f^{-1}Dp))(x)=&\int_0^1\big[D(f^{-2}Dp)((1-\theta)F^{-1}(x)+\theta x)-
D(f^{-2}Dp)(x)\big]d\theta\\
&-f^{-1}(x)\int_0^1\big[\nabla(f^{-1})((1-\theta)F^{-1}(x)+\theta x)-
\nabla(f^{-1})(x)\big]d\theta\otimes\nabla p.
\end{split}
\end{equation*}
Hence, recalling $F^{-1}=Id+\Mp(\pN,\pNP, \delta t)$, we get:
\begin{equation*}
\begin{split}
|C(x)-(I+f^{-1}D(f^{-1}Dp)(x)|&\leq\|D^2(f^{-2}Dp)\|_0
\|\Mp(p,q,\delta t)\|_0+\|f^{-1}\|_0\|D^2(f^{-1})\|_0\|\Mp(p,q,\delta t)\|_0\\
&\leq \big(10\|f^{-2}\|_{C^2}(\|p_0\|_{3,\alpha}+\epsThree)+\|f^{-1}\|_0\|\nabla(f^{-1})\|_0\big)\|\Mp(p,q,\delta t)\|_0.
\end{split}
\end{equation*}
By Remark \ref{identitySolnRemark}, we can make $\|\Mp(p,q,\delta t)\|_0$ as small
as we wish as long as we choose $\epsThree$ small. In particular, we can make above
line $\leq\frac{c_0}{4}$. Now since $p\in\tilde{U_2}$,
we know $I+f^{-1}D(f^{-1}Dp)>\frac{c_0}{2}$, it follows that $C(x)\geq\frac{c_0}{4}$.
\end{proof}

From now on, we fix $\epsThree$ and $\epsFour$ such that Lemma \ref{5.4spm-lem} holds.

\section{Estimates of solutions on time steps}
\label{constApprSol-1-Sect}

 Suppose the initial data satisfies
 $p_0\in C^{k+2,\alpha}_0$ and $I+f^{-1}D(f^{-1}Dp)>c_0$.


Fix $\delta t\in (0, \epsFour)$, and define $p_1, p_2, \dots$  as following.
Assume that for $n=0, 1, \dots$,
we have defined
 $p_n\in U_3$, where $U_3$ is from Proposition \ref{5.4spm}.
Then we can define $p_{n+1}:=\mathcal{H}(p_n,\delta t)$.
 Thus
we have $p_{n+1}\in U_2$, and by Lemma \ref{mapIsDiffeo}
and (\ref{4.5-Veps}) with $\epsTwo$ determined by Lemma \ref{l5.1}, we  can define the flow map $F_{n+1}$
  which is a diffeomorphism and solves
\begin{align}\label{3.1}
&x+f^{-1}(x)f^{-1}(F_{n+1}^{-1}(x))\nabla p_{n+1}(x)=
F_{n+1}^{-1}(x)+(f^{-2}R_{f\delta t}\nabla p_n)(F_{n+1}^{-1}(x));\\
\label{3.1new}
&\det DF_{n+1}=1,
\end{align}
where (\ref{3.1new}) follows from (\ref{3.3}) written in the form
(\ref{1.34}). By the definition of $U_2$, we have $|A_{n+1}(x)|\leq\frac{c_0}{4}$.
   Hence $I+f^{-1}D(f^{-1}Dp_{n+1})+A_{n+1}>\frac{c_0}{4}$.

In order to continue the process, we need to show that $p_{n+1}\in U_3$.
We will show that this is true if $n\delta t$ is sufficiently small,
i.e. if $n\delta t \le T$, where $T>0$ does not depend on $\delta t$.
In order to show this, we establish some estimates for the approximate
solutions $p_n$.

\begin{lem}\label{3.16}
Let $p_{n+1}=\mathcal{H}(p_n,\delta t)$ with $\|p_n-p_0\|_{3,\alpha}<\epsFour$,
$|\delta t|<\epsFour$. Then
\begin{equation}\label{3.17}
\|p_{n+1}\|_{k+2,\alpha}\leq C_0(\|p_n\|_{k+2,\alpha}).
\end{equation}
\end{lem}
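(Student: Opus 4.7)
The proof is a Schauder bootstrap applied to the Monge--Amp\`ere type equation (\ref{3.3}) satisfied by $p_{n+1}$. From Proposition~\ref{5.4spm}, $p_{n+1}\in C^{2,\alpha/2}$ with $\|p_{n+1}-p_0\|_{2,\alpha/2}<\epsThree$; from Lemma~\ref{mapIsDiffeo}, $F_{n+1}^{-1}=Id+\Mp(p_n,p_{n+1},\delta t)$ is a $C^{1,\alpha/2}$-diffeomorphism of $\bT^2$; and from Lemma~\ref{5.4spm-lem} combined with $p_{n+1}\in U_2$, the matrix $M_1+A_{n+1}:=I+f^{-1}D(f^{-1}Dp_{n+1})+A_{n+1}$ is bounded below by $\tfrac{c_0}{4}I$. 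Taking $\log$ of both sides of (\ref{3.3}), differentiating once in $x_\ell$ and using Jacobi's formula $\partial_\ell\log\det M=\mathrm{tr}(M^{-1}\partial_\ell M)$, I get a linear uniformly elliptic equation
\[
a_{ij}(x)\,\partial_{ij}(\partial_\ell p_{n+1})+b_i(x)\,\partial_i(\partial_\ell p_{n+1})=g_\ell(x)
\]
with principal part $a_{ij}=f^{-2}(M_1+A_{n+1})^{-1}_{ij}$ having ellipticity $\sim c_0$; the source $g_\ell$ and lower-order term $b_i$ are polynomial in $\nabla p_{n+1}$, $D^2p_{n+1}$, $\nabla p_n$, $D^2p_n$, $D^3p_n$, $F_{n+1}^{-1}$, $\nabla F_{n+1}^{-1}$ and $f$, and $g_\ell$ has zero mean on $\bT^2$ by the same calculation as in Lemma~\ref{l5.2}(\ref{l5.2-i1}), so that Lemma~\ref{3.9} applies.

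\textbf{Inductive bootstrap in the $\alpha/2$ scale.}
I argue by induction on $m\in\{0,1,\dots,k\}$, proving
\[
\|p_{n+1}\|_{m+2,\alpha/2}\le C_m\bigl(\|p_n\|_{m+2,\alpha}\bigr),\qquad
\|F_{n+1}^{-1}-Id\|_{m+1,\alpha/2}\le \widetilde C_m\bigl(\|p_n\|_{m+2,\alpha}\bigr).
\]
The base $m=0$ is already in hand. For the step $m\to m+1$, differentiating the linearized equation above $m$ more times produces an elliptic equation for $D^{m+1}p_{n+1}$ whose coefficients lie in $C^{m,\alpha/2}$ (by the chain rule applied to $M_1+A_{n+1}$) and whose source lies in $C^{m,\alpha/2}$ with norm controlled by $\|p_n\|_{m+3,\alpha}\le\|p_n\|_{k+2,\alpha}$. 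The composition terms $(\cdot)\circ F_{n+1}^{-1}$ appearing in $B_{n+1}$ and on the right-hand side of (\ref{3.3}) are handled by the composition estimates of Lemma~\ref{CompositionFrechetDif} and Lemma~\ref{l9.4}. A higher-order Schauder estimate upgrades $p_{n+1}$ to $C^{m+3,\alpha/2}$, and differentiating the implicit equation (\ref{3.1}) $m+1$ times (invoking Lemma~\ref{2.11}) upgrades $F_{n+1}^{-1}$ to $C^{m+2,\alpha/2}$. After $k$ iterations, $p_{n+1}\in C^{k+2,\alpha/2}$ and $F_{n+1}^{-1}\in C^{k+1,\alpha/2}$.

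\textbf{Final H\"older upgrade and main obstacle.}
Since $C^{k+2,\alpha/2}\subset C^{k+1,\alpha}$ (the boundedness of $D^{k+2}p_{n+1}$ forces $D^{k+1}p_{n+1}$ to be Lipschitz, hence $C^{0,\alpha}$), one has $p_{n+1}\in C^{k+1,\alpha}$. Feeding this and $p_n\in C^{k+2,\alpha}$ into (\ref{3.1}) and applying Lemma~\ref{2.11} upgrades $F_{n+1}^{-1}$ to $C^{k,\alpha}$, so the right-hand side of (\ref{3.3})---a polynomial in $\nabla p_n$, $D^2 p_n$, $F_{n+1}^{-1}$, $\nabla p_{n+1}$, and $f$---lands in $C^{k,\alpha}$. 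The linearized equation for $\partial_\ell p_{n+1}$ then has $C^{k-1,\alpha}$ coefficients and $C^{k-1,\alpha}$ source, and a final higher-order Schauder estimate yields $\partial_\ell p_{n+1}\in C^{k+1,\alpha}$, i.e.\ $p_{n+1}\in C^{k+2,\alpha}$, with constant depending on $\|p_n\|_{k+2,\alpha}$ as claimed in (\ref{3.17}). The main technical hurdle throughout is tracking how H\"older exponents propagate through the compositions with $F_{n+1}^{-1}$: the main bootstrap is forced to work in the $\alpha/2$ scale because $F_{n+1}^{-1}$ starts at only $C^{1,\alpha/2}$, and the final recovery of the full $\alpha$ exponent relies on the H\"older inclusion $C^{k+2,\alpha/2}\subset C^{k+1,\alpha}$ together with the extra regularity $p_n\in C^{k+2,\alpha}$ of the previous iterate.
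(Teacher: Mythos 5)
Your proposal follows the paper's proof closely: both bootstrap Schauder estimates for $D^m p_{n+1}$ in the $\alpha/2$-scale by differentiating (\ref{3.3}) (ellipticity supplied by Lemma~\ref{l5.1}), alternating with upgrades of $\Mp=F_{n+1}^{-1}-id$ via (\ref{1.31}), and then recover the full $\alpha$-exponent. Your $\log\det$ linearization via Jacobi's formula is equivalent to the paper's direct cofactor-matrix computation (they differ only by the positive scalar $\det(\cdot)$). The one genuine difference is in the recovery step: the paper drops to $C^{2,\alpha}$ (via $C^{k+2,\alpha/2}\subset C^{2,\alpha}$, $k\geq 1$) and re-runs the entire bootstrap in the $\alpha$-scale, whereas you use the sharper inclusion $C^{k+2,\alpha/2}\subset C^{k+1,\alpha}$ and close with one higher-order Schauder step; your route saves some repetition. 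Two small remarks. First, invoking the zero-mean property and Lemma~\ref{3.9} is unnecessary here: you are proving an a priori estimate for a solution you already have, so the Schauder inequality $\|h\|_{2,\alpha}\leq C(\|h\|_0+\|Lh\|_{0,\alpha})$, with $\|Dp_{n+1}\|_0$ already controlled, suffices without any surjectivity argument. Second, the assertion that the differentiated source lands in $C^{k-1,\alpha}$ in the final step deserves scrutiny: differentiating $A_{n+1}$ and $B_{n+1}$ produces factors $D^2(f^{-1})$, which for $f\in C^{k,\alpha}$ is a priori only $C^{k-2,\alpha}$, so the bookkeeping of how derivatives of $f^{-1}$ distribute between the principal coefficients and the source must be done carefully; this is a subtlety the paper's own proof also passes over lightly, so you have inherited rather than introduced the imprecision.
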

\begin{proof}
First we show
\begin{equation}\label{4.20p}
\|p_{n+1}\|_{k+2,\alpha/2}\leq \CTwelve(\|p_n\|_{k+2,\alpha/2}).
\end{equation}
This follows from differentiating (\ref{3.3}). Indeed, we have from our assumption and the definition of the map $\mathcal{H}$ that $\|p_{n+1}\|_{2,\alpha/2}\leq\|p_0\|_{2,\alpha/2}+1$. Also it follows from Lemma \ref{2.11} that  $\|\Mp(p_n,p_{n+1},\delta t)\|_{1,\alpha/2}\leq 1$. Now one differentiate (\ref{3.3}) to see $Dp_{n+1}$ solves an elliptic equation with main coefficients given by $M_{ij}$, the $(i,j)$ entry of the cofactor matrix of $I+f^{-1}\nabla p_{n+1}\otimes\nabla(f^{-1})+f^{-2}D^2p_{n+1}(x)+A_{n+1}(x)$. The main coefficients are uniformly elliptic, with ellipticity constant depending on $c_0$ and $\|p_0\|_{2,\alpha/2}$, because by Lemma \ref{l5.1},  $I+f^{-1}D(f^{-1}Dp_{n+1})+A_{n+1}>\frac{c_0}{4}$. One also sees all the coefficients of this equation are in $C^{0,\alpha/2}$, with norm bounded by $\|p_n\|_{3,\alpha/2}$ and $\|p_{n+1}\|_{2,\alpha/2}$. This follows from calculation based on (\ref{1.32}),(\ref{1.33}). So one can apply Schauder estimates to conclude $Dp_{n+1}$ is bounded in $C^{2,\alpha/2}$, or $p_{n+1}$ bounded in $C^{3,\alpha/2}$. Now one looks at (\ref{1.31}) to conclude $\|\Mp(p,q,\delta t)\|_{2,\alpha/2}$ can be bounded by $\|p_{n+1}\|_{3,\alpha/2}$, $\|p_n\|_{3,\alpha/2}$. Then differentiate (\ref{3.3}) twice to see $D^2p_{n+1}$ solves a uniformly elliptic equation with coefficients bounded in $C^{0,\alpha/2}$ by $\|p_{n+1}\|_{3,\alpha/2}$ and $\|p_n\|_{4,\alpha/2}$. One can further differentiate (\ref{3.3}) and use Schauder`s estimates again and again to get (\ref{4.20p}).

Then (\ref{4.20p}) gives a bound for $\|p_{n+1}\|_{2,\alpha}$, since $k\geq1$. Therefore by looking at (\ref{1.31}), one sees that $\|\Mp(p_{n},p_{n+1},\delta t)\|_{1,\alpha}$ can be bounded by $\|p_{n+1}\|_{2,\alpha}$ and $\|p_n\|_{2,\alpha}$. So the same argument as in previous paragraph gives the desired conclusion.

\end{proof}
\begin{lem}\label{3.18}
Under conditions of Lemma \ref{3.16},
\begin{equation}\label{3.19}
\|p_{n+1}-p_n\|_{k+1,\alpha}\leq C_1\delta t,
\end{equation}
where the constant $C_1=C_1(\|p_{n+1}\|_{k+2,\alpha},\|p_n\|_{k+2,\alpha})$.
\end{lem}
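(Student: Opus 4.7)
The plan is to show that $w := p_{n+1} - p_n$ satisfies a uniformly elliptic linear equation with right-hand side of order $O(\delta t)$ in $C^{k-1,\alpha}$, and then to apply Schauder estimates. The starting observation is that at $\delta t = 0$ the pair $q = p$, $F^{-1} = \mathrm{id}$ solves (\ref{3.2})--(\ref{3.3}) trivially (consistent with $\Mp(p,p,0)=w_0$ from Remark \ref{identitySolnRemark}), so $\mathcal{H}(p,0) = p$. The continuous Frechet differentiability of $\mathcal{H}$ from Proposition \ref{5.4spm} then produces the seed bound
\begin{equation*}
\|w\|_{2,\alpha/2} \le C\,\delta t
\end{equation*}
with $C$ depending on $\|p\|_{3,\alpha}$; this is what will be bootstrapped up to $C^{k+1,\alpha}$.

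To derive the equation for $w$, set $M_\varphi := I + f^{-1}D(f^{-1}D\varphi)$, subtract $\det(M_p + A)$ from both sides of (\ref{3.3}), and use the integral identity
\begin{equation*}
\det X - \det Y = \int_0^1 \mathrm{tr}\bigl[\mathrm{cof}(Y + \tau(X-Y))\,(X-Y)\bigr]\,d\tau.
\end{equation*}
Since $M_q - M_p = f^{-1}D(f^{-1}Dw)$, the left-hand side becomes $\mathcal{L}(w)$ for a linear second-order operator whose leading coefficient is an integral average of $\mathrm{cof}(M_p + A + \tau f^{-1}D(f^{-1}Dw))$. By Lemma \ref{l5.1} combined with the $C^{k+2,\alpha}$ bound (\ref{3.17}), $\mathcal{L}$ is uniformly elliptic with $C^{k,\alpha}$ coefficients. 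The right side splits as
\begin{equation*}
\mathcal{R} = [\det(M_p\!\circ\! F^{-1} + B) - \det(M_p\!\circ\! F^{-1})] + [\det(M_p\!\circ\! F^{-1}) - \det(M_p)] - [\det(M_p + A) - \det(M_p)],
\end{equation*}
and each bracket is again, by the same identity, linear in $B$, in $v := F^{-1} - \mathrm{id}$, or in $A$ respectively, with $C^{k-1,\alpha}$-bounded kernels.

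Now I inspect where the smallness comes from. From (\ref{1.32})--(\ref{1.33}), the matrix $A$ carries the factor $f^{-1}\!\circ\! F^{-1} - f^{-1} = O(v)$, while $B$ decomposes into a piece of order $O(v) + O(\nabla w)$ and a piece $D[f^{-2}(R_{f\delta t} - I)\nabla p]\!\circ\! F^{-1}$; the latter is genuinely $O(\delta t)$ in $C^{k-1,\alpha}$ because $R_{f\delta t} - I = \delta t\, f J + O(\delta t^2)$ and its spatial derivatives retain the $\delta t$ prefactor. On the other hand, equation (\ref{3.2}) yields the implicit relation
\begin{equation*}
v = f^{-2}\nabla w + (\text{quadratic in }v,w) + O(\delta t),
\end{equation*}
from which, by a contraction argument as in Lemma \ref{2.11}, $\|v\|_{k-1,\alpha} \le C(\|w\|_{k,\alpha} + \delta t)$ for $\delta t$ small. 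Collecting these, $\mathcal{R}$ reduces to $O(\delta t)$ plus lower-order terms linear in $\nabla w$ that can be absorbed into a modified first-order part of $\mathcal{L}$, preserving ellipticity thanks to the $c_0/4$ coercivity from Lemmas \ref{l5.1} and \ref{5.4spm-lem}.

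The conclusion then follows from Schauder's estimate applied to $\tilde{\mathcal{L}}(w) = O(\delta t)$ in $C^{k-1,\alpha}$:
\begin{equation*}
\|w\|_{k+1,\alpha} \le C\bigl(\|w\|_0 + \delta t + \|w\|_{k,\alpha}\bigr) \le C\,\delta t,
\end{equation*}
the last step by interpolation to absorb $\|w\|_{k,\alpha}$ together with the seed estimate for $\|w\|_0$. In practice I would execute this in stages, successively raising both the derivative count and the H\"older exponent (starting from $C^{2,\alpha/2}$), exactly as in the proof of Lemma \ref{3.16}, so that the coefficients of the equation have the regularity one uses at each stage. The main technical obstacle is the coupled nature of the system---$w$ drives $v$ through (\ref{3.2}) while $v$ feeds back into the right side of the Monge--Amp\`ere type equation for $w$---so at every level of regularity one must verify that the feedback of $v$ into $\mathcal{R}$ remains linear in $\nabla w$ with a coefficient small enough to be absorbed by the elliptic principal part rather than competing with it.
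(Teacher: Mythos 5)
Your proposal follows essentially the same route as the paper's proof: subtract to get a linear elliptic equation for $w=p_{n+1}-p_n$, express the $A$, $B$, and composition terms via factors of $v=F^{-1}-\mathrm{id}$, use equation (\ref{3.2}) together with the coercivity of the matrix $C(x)$ from Lemma~\ref{5.4spm-lem} to write $v$ in terms of $\nabla w$ plus an $O(\delta t)$ remainder, substitute back, and close with Schauder (cf.\ the paper's (\ref{3.20})--(\ref{3.28})). One genuine refinement in your write-up is the way you handle the $\|w\|_0$ term in the Schauder estimate: you extract a seed bound $\|w\|_{2,\alpha/2}\le C\,\delta t$ from the continuous Frechet differentiability of $\mathcal{H}$ and the identity $\mathcal{H}(p,0)=p$, whereas the paper leaves this implicit (relying on the zero-average normalization and a compactness argument à la Lemma~\ref{3.9} for an operator without zero-order term); your route is cleaner here. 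Two small cautions: the relation you write as ``$v=f^{-2}\nabla w+(\text{quadratic in }v,w)+O(\delta t)$'' is not literally quadratic — the precise structure is $C(x)\,v=f^{-1}f^{-1}(F^{-1})\nabla w+O(\delta t)$ with $C(x)$ depending on $F^{-1}$, and the key point is $C(x)\ge c_0/4$; and you should make explicit (as the paper does in its first paragraph) the preliminary bootstrap from (\ref{1.31}) giving $\|v\|_{k+1,\alpha}\le C$, which is what guarantees the linearized equation actually has $C^{k-1,\alpha}$ coefficients.
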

\begin{proof}
 In this argument, all the constants $C$ depends only on $\|p_{n+1}\|_{k+2,\alpha}$,
 $\|p_n\|_{k+2,\alpha}$ and may change line from line.
 Write $q(x)=p_{n+1}(x)$, $p(x)=p_n(x)$. First we observe that
 $\|\Mp(p,q,\delta t)\|_{k+1,\alpha}=\|F^{-1}-id\|_{k+1,\alpha}\leq C$. Here $C$ has
 the dependence as stated in the lemma. To see this, we need to recall (\ref{1.31}),
 and this estimate follows from differentiating (\ref{1.31}) and a bootstrap argument.
 Indeed, first from
Lemma \ref{2.11}, we know that $\|\Mp(p,q,\delta t)\|_{1,\alpha/2}\leq\eps_2\leq 1$.
Also it follows from  (\ref{4.5-1}) that
$(I+f^{-1}\nabla p\otimes\nabla(f^{-1})+f^{-2}D^2p)(F^{-1})+B\geq\frac{c_0}{4}$,
therefore we can invert and obtain
\begin{equation}\label{new}
DF^{-1}=\big[(I+f^{-1}\nabla p\otimes\nabla(f^{-1})+f^{-2}D^2p)(F^{-1})+
B\big]^{-1}\cdot\big[I+f^{-1}\nabla q\otimes\nabla(f^{-1})+f^{-2}D^2q+A\big].
\end{equation}

Since we already have $p,q\in C^{2,\alpha}$, the formula in (\ref{1.33}) (\ref{1.34})
for $A$ and $B$ with (\ref{new}) gives $D_x\Mp(p,q,\delta t)(x)\in C^{0,\alpha}$,
with a $C^{0,\alpha}$ bound having the stated dependence. This shows
$\Mp(p,q,\delta t)\in C^{1,\alpha}$. Now since $k\geq 2$, we know actually
$p,q\in C^{3,\alpha}$. This implies the right hand side of (\ref{new}) is in
$C^{1,\alpha}$. Therefore we obtain from (\ref{new}) that $\Mp(p,q,\delta t)$ is in
$C^{2,\alpha}$. If it happens that $p,q\in C^{4,\alpha}$, then we know the right
hand side of (\ref{new}) is in $C^{2,\alpha}$, and hence it gives
$\Mp(p,q,\delta t)\in C^{3,\alpha}$. One can repeat this argument and it gives
in general that if $p,q\in C^{k+2,\alpha}$, then $\Mp(p,q,\delta t)\in C^{k+1,\alpha}$,
with an estimate on the $C^{k+1,\alpha}$ norm which has the dependence stated in the
lemma.

We  subtract from both sides of (\ref{3.3}) the quantity $\det(I+f^{-1}D(f^{-1}Dp))$,
and write the resulting equation as a linear equation for $q-p$.
Then the the left hand side of  the resulting equation can be written as
\begin{equation}\label{3.20}
a_{ij}[f^{-2}(x)\partial_{ij}(q-p)+f^{-1}(x)\partial_i(q-p)\partial_j(f^{-1})+A_{ij}],
\end{equation}
where
$$a_{ij}=\int_0^1M_{ij}[(1-\theta)(M_1)+\theta(M_2)]d\theta.
$$
Here $M_{ij}$ denotes the $(i,j)$ entry of the cofactor matrix, and $M_1=I+f^{-1}D(f^{-1}Dq)+A$, $M_2=I+f^{-1}D(f^{-1}Dp)$, $A_{ij}$ is the element of the matrix $A(x)$ from (\ref{1.32}).
The right hand side becomes
\begin{equation}\label{3.21}
\begin{split}
b_{ij}[(f^{-1}\partial_i p& \partial_j(f^{-1})(F^{-1}(x))-f^{-1}\partial_ip\partial_j(f^{-1})(x))+(f^{-2}\partial_{ij}p(F^{-1}(x))-f^{-2}\partial_{ij}p(x))+B_{ij}]
\\&=b_{ij}[\int_0^1\partial_l(f^{-1}\partial_ip\partial_j(f^{-1})+f^{-2}\partial_{ij}p)((1-\theta)x+\theta F^{-1}(x))d\theta(F^{-1}_l-x_l)+B_{ij}],
\end{split}
\end{equation}
where
$$
b_{ij}=\int_0^1M_{ij}[(1-\theta)(M_1^{\prime})+\theta(M_2)]d\theta.
$$
Here $M_1^{\prime}=I+f^{-1}D(f^{-1}Dp)(F^{-1}(x))+B$.
Now observe we can write
\begin{equation}\label{3.22}
A(x)=\int_0^1\partial_k(f^{-1})((1-\theta)F^{-1}(x)+\theta x)d\theta(F_l^{-1}-x_l)\cdot(\nabla q\otimes\nabla (f^{-1})+f^{-1}D^2q).
\end{equation}
As for $B(x)$, the first term can be rewritten as
\begin{equation}\label{3.23}
\begin{split}
&\,\,\,\,\,\,[(f^{-1}\nabla p)(F^{-1}(x))-f^{-1}\nabla q(x)]\otimes\nabla(f^{-1})(F^{-1}(x))\\
&=[(f^{-1}\nabla p)(F^{-1}(x))-f^{-1}\nabla p(x)]\otimes\nabla(f^{-1})(F^{-1}(x))+f^{-1}\nabla(p-q)\otimes\nabla(f^{-1})(F^{-1}(x))\\
&=\int_0^1\partial_l(f^{-1}\nabla p)(\theta F^{-1}(x)+(1-\theta)x)d\theta\otimes\nabla(f^{-1})(F^{-1}(x))(F^{-1}_l-x_l)\\
&+f^{-1}\nabla(p-q)\otimes\nabla(f^{-1})(F^{-1}(x)).
\end{split}
\end{equation}
The second term of $B(x)$ can be rewritten as
\begin{equation}\label{3.24}
D[f^{-2}(R_{f\delta t}-I)\nabla p](F^{-1}(x))=g \delta t,
\end{equation}
with $\|g\|_{k,\alpha}\leq C$.

To summarize, the difference $q-p$ satisfies an equaiton of the following form
\begin{equation}\label{3.25}
a_{ij}\partial_{ij}(q-p)+b_i\partial_i(q-p)+c_l(F_l^{-1}-x_l)=g\delta t,
\end{equation}
with $\|a_{ij}\|_{k,\alpha}, \|b_i\|_{k,\alpha}, \|c_l\|_{k-1,\alpha}\leq C$,

Next we represent $F^{-1}(x)-x$ in terms of $\nabla(q-p)$, with an error term controlled by $\delta t$. For this we need to go back to (\ref{3.2}). Subtract from both sides of (\ref{3.2}) $x+f^{-2}(x)\nabla p(x)$, we obtain
\begin{equation}\label{3.26}
\begin{split}
 f^{-1}(x)(f^{-1}(F^{-1}&(x))\nabla q(x)-f^{-1}(x)\nabla p(x))=F^{-1}(x)-x\\&+(f^{-2}\nabla p)(F^{-1}(x))-f^{-2}\nabla p(x)
 +[f^{-2}(R_{f\delta t}-I)]\nabla p(F^{-1}(x)).
\end{split}
\end{equation}
After rearranging terms, we get
\begin{equation}\label{3.27}
f^{-1}(x)f^{-1}(F^{-1}(x))\nabla(q-p)-[f^{-2}(R_{f\delta t}-I)\nabla p](F^{-1}(x))=C(x)(F^{-1}(x)-x).
\end{equation}
where
\begin{equation}\label{3.28}
\begin{split}
C(x)=&I+\int_0^1D[f^{-2}Dp]((1-\theta)F^{-1}(x)+\theta x)d\theta\\
&-f^{-1}(x)\int_0^1\nabla(f^{-1})((1-\theta)F^{-1}(x)+\theta x)d\theta\otimes\nabla p.
\end{split}
\end{equation}
From Proposition \ref{5.4spm}, we know $C(x)\geq\frac{c_0}{4}$. Now
in (\ref{3.27}) we have $f^{-1}(F^{-1}(x))\in C^{k+1,\alpha}$, with norms controlled
by $\|p\|_{k+2,\alpha}$, $\|q\|_{k+2,\alpha}$. Also the
term $f^{-2}(R_{f\delta t}-I)\nabla p(F^{-1}(x))=m\delta t$ with $\|m\|_{k,\alpha}$
bounded by $C(\|p\|_{k+2,\alpha},\;\|q\|_{k+2,\alpha})$.
Therefore in the equation (\ref{3.25}), $c_l(F_l^{-1}-x_l)$ can be dispensed of
and the result follows from Schauder`s estimate.
\end{proof}

\begin{lem}\label{3.29}
Let $(p_n, p_{n+1}, \delta t)$ be as in Lemma \ref{3.16},
and
\begin{equation}\label{3.30}
\|F^{-1}-id\|_{k,\alpha}\leq C_2\delta t.
\end{equation}
Here $C_2=C_2(\|p_{n+1}\|_{k+2,\alpha},\|p_n\|_{k+2,\alpha})$.
\end{lem}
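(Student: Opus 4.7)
The plan is to read off the estimate directly from the explicit formula \eqref{3.27}, which was established in the course of proving Lemma \ref{3.18}. Writing $q=p_{n+1}$, $p=p_n$, $F=F_{n+1}$, and $w(x)\defd F^{-1}(x)-x$, equation \eqref{3.27} rearranges to
\begin{equation*}
w(x)=C(x)^{-1}\Bigl[f^{-1}(x)f^{-1}(F^{-1}(x))\nabla(q-p)(x)-\bigl[f^{-2}(R_{f\delta t}-I)\nabla p\bigr](F^{-1}(x))\Bigr],
\end{equation*}
where $C(x)$ is as in \eqref{3.28}/\eqref{FirstC}. First I would invoke Lemma \ref{5.4spm-lem} to get $C(x)\ge\frac{c_0}{4}$, so that $C(x)^{-1}$ exists pointwise; combined with the regularity $p\in C^{k+2,\alpha}$, $f\in C^{k,\alpha}$ and the a priori bound $\|F^{-1}-id\|_{k+1,\alpha}\le C(\|p_n\|_{k+2,\alpha},\|p_{n+1}\|_{k+2,\alpha})$ obtained in the bootstrap of Lemma \ref{3.16} (cf.\ \eqref{new}), one sees $C(\cdot)\in C^{k,\alpha}$ with the same type of bound, and hence $C(\cdot)^{-1}$ is bounded in $C^{k,\alpha}$ by a constant $C=C(\|p_n\|_{k+2,\alpha},\|p_{n+1}\|_{k+2,\alpha})$.

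Next I would extract the $\delta t$-smallness of the two terms in the bracket. For the first term, Lemma \ref{3.18} gives $\|\nabla(q-p)\|_{k,\alpha}\le C_1\delta t$. Composing $f^{-1}$ with $F^{-1}$ and multiplying by the $C^{k,\alpha}$-bounded factor $f^{-1}(x)$ preserves $C^{k,\alpha}$ regularity (using the composition lemmas in the appendix together with the prior $C^{k+1,\alpha}$ bound on $F^{-1}-id$), so this contribution is bounded in $C^{k,\alpha}$ by $C\delta t$. For the second term, a Taylor expansion of the rotation matrix \eqref{1.22pr} yields
\begin{equation*}
R_{f\delta t}-I=(f\delta t)J+O((f\delta t)^2),
\end{equation*}
where the remainder is $C^{k,\alpha}$-bounded by $C\delta t^{2}$ (with $C$ depending on $\|f\|_{k,\alpha}$). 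Therefore $f^{-2}(R_{f\delta t}-I)\nabla p$ has $C^{k,\alpha}$ norm $\le C\delta t$, and composition with $F^{-1}$ preserves this bound.

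Multiplying by $C(x)^{-1}$ then yields $\|w\|_{k,\alpha}=\|F^{-1}-id\|_{k,\alpha}\le C_2\delta t$ with $C_2=C_2(\|p_n\|_{k+2,\alpha},\|p_{n+1}\|_{k+2,\alpha})$, as claimed. The main technical point is ensuring the bootstrap from Lemma \ref{3.16}/\ref{3.18} has already placed $F^{-1}-id$ in $C^{k+1,\alpha}$ before we invoke the composition lemmas; granting this, everything else is an explicit bookkeeping of $\delta t$ factors and no further elliptic estimate is needed, since \eqref{3.27} solves algebraically for $w$ rather than as a PDE.
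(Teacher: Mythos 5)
Your proposal is correct and follows essentially the same route as the paper: both solve \eqref{3.27} algebraically for $F^{-1}-id$ to get \eqref{3.31}, use the lower bound $C(x)\ge\frac{c_0}{4}$ from Lemma~\ref{5.4spm-lem} together with $C\in C^{k,\alpha}$ to control $C^{-1}$ in $C^{k,\alpha}$, and then bound the bracket by $C\delta t$ using Lemma~\ref{3.18} (for $\nabla(q-p)$) and the Taylor expansion of $R_{f\delta t}-I$ (which the paper also carries out inside the proof of Lemma~\ref{3.18}). One small attribution slip: the $C^{k+1,\alpha}$ bound on $F^{-1}-id$ and equation \eqref{new} come from the bootstrap in the proof of Lemma~\ref{3.18}, not Lemma~\ref{3.16}.
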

\begin{proof}
We use (\ref{3.27}),(\ref{3.28}) to get
\begin{equation}\label{3.31}
F^{-1}(x)-x=C(x)^{-1}[f^{-1}(x)f^{-1}(F^{-1}(x))\nabla(q-p)-f^{-2}(R_{f\delta t}-I)\nabla p(F^{-1}(x))].
\end{equation}
We use the inequality $\|fg\|_{k,\alpha}\leq C_k\|f\|_{k,\alpha}\|g\|_{k,\alpha}$.
Notice that $C(x)\in C^{k,\alpha}$, hence $C^{-1}\in C^{k,\alpha}$, with
$C^{k,\alpha}$ norm controlled by $\|p_{i}\|_{k+2,\alpha}$, $i=n,n+1$,
and $\frac{1}{c_0}$, by Proposition \ref{5.4spm}. The $C^{k,\alpha}$ norm of the
square bracket is controlled by $C\delta t$ by Lemma \ref{3.18}.
\end{proof}
For immediate use, we prove the following lemma,
\begin{lem}\label{3.41}
Let $\alpha\in[0, 1)$.
Let $k\geq1$, $G\in C^{k,\alpha}(\bT^2)$, and let $F^{-1}:\bR^2\rightarrow \bR^2$
be a map which satisfies $F^{-1}(x+h)=F^{-1}(x)+h$ for any $h\in \bZ^2$
and $\|F^{-1}-id\|_{k,\alpha}\leq C^*\delta t$, then
\begin{equation}\label{3.42}
\|G\circ F^{-1}\|_{k,\alpha}\leq \|G\|_{k,\alpha}+C\delta t,
\end{equation}
where $C$ depends on $\|G\|_{k,\alpha}$, $C^*$, $\alpha$ and $k$.
\end{lem}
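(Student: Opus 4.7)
My plan is to argue by induction on $k$, writing $W=F^{-1}-\mathrm{id}$ and exploiting that $\|W\|_{k,\alpha}\le C^*\delta t$ while also observing that $W$ is $\bZ^2$-periodic (which follows from the hypothesis $F^{-1}(x+h)=F^{-1}(x)+h$), so the composition $G\circ F^{-1}$ inherits the periodicity of $G$ and the norms $\|\cdot\|_{k,\alpha}=\|\cdot\|_{k,\alpha,\bR^2}$ are well-defined on the resulting periodic function.

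The base case $k=1$ is the essential step. The $C^0$ bound $\|G\circ F^{-1}\|_{0}\le\|G\|_0$ is trivial. By the chain rule,
\[
D(G\circ F^{-1})(x)=(DG)(F^{-1}(x))\cdot DF^{-1}(x)=(DG)(F^{-1}(x))+(DG)(F^{-1}(x))\cdot DW(x).
\]
For the first term, note that $\|(DG)\circ F^{-1}\|_0\le \|DG\|_0$, and for any $x,y\in\bR^2$,
\[
|(DG)(F^{-1}(x))-(DG)(F^{-1}(y))|\le [DG]_\alpha|F^{-1}(x)-F^{-1}(y)|^\alpha\le [DG]_\alpha(1+C^*\delta t)^\alpha|x-y|^\alpha,
\]
so $[(DG)\circ F^{-1}]_\alpha\le [DG]_\alpha+C\delta t$ with $C=C(\|G\|_{1,\alpha},C^*,\alpha)$. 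For the correction term, use the product rule for $C^{0,\alpha}$ norms:
\[
\|(DG)\circ F^{-1}\cdot DW\|_{0,\alpha}\le C\|(DG)\circ F^{-1}\|_{0,\alpha}\|DW\|_{0,\alpha}\le C(\|G\|_{1,\alpha},C^*,\alpha)\,\delta t.
\]
Combining these gives $\|G\circ F^{-1}\|_{1,\alpha}\le\|G\|_{1,\alpha}+C\delta t$.

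For the inductive step, assume the estimate holds at level $k-1$. Given $G\in C^{k,\alpha}$, apply the chain rule as above:
\[
D(G\circ F^{-1})=(DG)\circ F^{-1}+(DG)\circ F^{-1}\cdot DW.
\]
Since $DG\in C^{k-1,\alpha}$ and $F^{-1}$ still satisfies the hypotheses of the lemma with the same $C^*$, the inductive hypothesis (applied componentwise to $DG$) yields
\[
\|(DG)\circ F^{-1}\|_{k-1,\alpha}\le\|DG\|_{k-1,\alpha}+C\delta t.
\]
For the correction term, use the algebra property $\|fg\|_{k-1,\alpha}\le C_k\|f\|_{k-1,\alpha}\|g\|_{k-1,\alpha}$ together with $\|DW\|_{k-1,\alpha}\le C^*\delta t$:
\[
\|(DG)\circ F^{-1}\cdot DW\|_{k-1,\alpha}\le C_k\big(\|DG\|_{k-1,\alpha}+C\delta t\big)\,C^*\delta t\le C\delta t,
\]
where $C$ depends on $\|G\|_{k,\alpha}$, $C^*$, $\alpha$ and $k$. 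Adding the trivial $C^0$ bound $\|G\circ F^{-1}\|_0\le\|G\|_0$ to the resulting estimate for $\|D(G\circ F^{-1})\|_{k-1,\alpha}$ gives $\|G\circ F^{-1}\|_{k,\alpha}\le\|G\|_{k,\alpha}+C\delta t$, completing the induction.

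The only mild subtlety is tracking that the constant $C$ in the inductive step depends only on $\|G\|_{k,\alpha}$, $C^*$, $\alpha$, $k$ and not on $\delta t$; this is transparent because the only place $\delta t$ enters the constants is through the a priori bound $|\delta t|\le 1$ (implicit in the statement), which is absorbed into $C$. No other significant obstacle is expected: everything reduces to the chain rule plus the standard algebra property for Hölder norms.
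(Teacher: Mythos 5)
Your base case $k=1$ is fine, and the chain-rule decomposition with $W=F^{-1}-\mathrm{id}$ is the right starting point, but the inductive step has a genuine gap in the reassembly of the norms. You invoke the identity $\|v\|_{k,\alpha}=\|v\|_0+\|Dv\|_{k-1,\alpha}$, but with the paper's norm (a sum over all multi-indices) this fails for $k\ge 2$ on $\bT^2$: writing $\|Dv\|_{k-1,\alpha}=\sum_i\|D_iv\|_{k-1,\alpha}$, each mixed derivative $D^\beta v$ with $\beta_i\ge 1$ for more than one $i$ is counted once per such $i$, so $\|v\|_0+\sum_i\|D_iv\|_{k-1,\alpha}$ is \emph{strictly larger} than $\|v\|_{k,\alpha}$ whenever mixed derivatives are present (already for $k=2$: $\|v\|_0+\sum_i\|D_iv\|_{1,\alpha}=\|v\|_{2,\alpha}+\|D_{12}v\|_0+[D_{12}v]_\alpha$). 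Your inductive step therefore only yields $\|G\circ F^{-1}\|_{k,\alpha}\le \|G\|_0+\sum_i\|D_iG\|_{k-1,\alpha}+C\delta t$, and the middle quantity exceeds $\|G\|_{k,\alpha}$; what you can conclude is $\|G\circ F^{-1}\|_{k,\alpha}\le c_k\|G\|_{k,\alpha}+C\delta t$ for some combinatorial constant $c_k>1$, not the sharp estimate with coefficient $1$. This coefficient $1$ is not cosmetic: Lemma \ref{3.41} is used in Lemma \ref{3.32} to produce $\|\nu_{n+1}\|_{k,\alpha}\le\|\nu_n\|_{k,\alpha}+C\delta t$, which is iterated $O(1/\delta t)$ times in Proposition \ref{approxSolut-est-Prop}; a coefficient $c_k>1$ there would give exponential blow-up and ruin the time-stepping.

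The fix is to strengthen what you carry through the induction: prove the termwise bounds $\|D^\beta(G\circ F^{-1})\|_0\le\|D^\beta G\|_0+C\delta t$ for $1\le|\beta|\le k$ and $[D^\beta(G\circ F^{-1})]_\alpha\le[D^\beta G]_\alpha+C\delta t$ for $|\beta|=k$, and only then sum over $\beta$. The cleanest way to organize this is the paper's route: establish by induction on $|\beta|$ the structural identity
\begin{equation*}
D^\beta(G\circ F^{-1})=(D^\beta G)\circ F^{-1}+\sum_{1\le|\gamma|\le|\beta|}\sum_{j} a_{\beta,\gamma,j}\,D^\gamma W_j,
\end{equation*}
with $a_{\beta,\gamma,j}$ bounded polynomial expressions, and then bound $\|(D^\beta G)\circ F^{-1}\|_0\le\|D^\beta G\|_0$, $[(D^\beta G)\circ F^{-1}]_\alpha\le[D^\beta G]_\alpha(1+\|DW\|_0)^\alpha$, and the correction sum by $C\delta t$. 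This avoids the double-counting entirely because the final sum over $\beta$ is taken with the paper's definition directly. Once patched this way, your proof and the paper's are essentially the same argument.
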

\begin{proof}
In the following argument, constant $C$ has dependence as in the lemma, and may
change from expression
to expression.

We assume $\alpha\in(0,1)$, since the case $\alpha=0$ is simpler, and follows from
the argument below.

It is obvious that
\begin{equation}\label{NormZero}
\|G\circ F^{-1}\|_0\leq \|G\|_0.
\end{equation}
 Thus we need to estimate terms
$D^\beta(G\circ F^{-1})$ for multi-indices $\beta$ satisfying
$1\le|\beta|\le k$.

Using that $D_i(G\circ F^{-1})=\sum_{j=1}^2((D_jG)\circ F^{-1})D_iF_j^{-1}$, we obtain
\begin{equation}\label{ChainRuleId}
D_i(G\circ F^{-1})
=(D_iG)\circ F^{-1}+\sum_{j=1}^2((D_jG)\circ F^{-1})D_i(F^{-1}-id)_j.
\end{equation}
Next we show that for any multi-index $\beta$
with $|\beta|\ge 1$,
\begin{equation}\label{ChainRuleIdHigher}
D^\beta(G\circ F^{-1})
=(D^\beta G)\circ F^{-1}+\sum_{1\le|\gamma|\le|\beta|}\sum_{j=1}^2
a_{\beta, \gamma, j} D^\gamma(F^{-1}-id)_j,
\end{equation}
where $a_{\beta, \gamma, j}$ is a polynomial expression of
$\{(D^{\gamma'} G)\circ F^{-1}\}_{1\le|\gamma'|\le|\beta|}$,
$\{D^{\gamma'} F^{-1}\}_{1\le|\gamma'|\le|\beta|-1}$.

Indeed, we prove (\ref{ChainRuleIdHigher}) by induction over $m=|\beta|$.
Case $m=1$ follows from (\ref{ChainRuleId}). Next, assume that $m\ge 1$ and
that (\ref{ChainRuleIdHigher}) is proved for all $|\beta|\le m$.
Let $|\beta'|=m+1$, then $D^{\beta'}=D_iD^\beta$ for some
$i\in\{1,2\}$ and $|\beta|=m$.
Now, taking $D_i$-derivative of (\ref{ChainRuleIdHigher}) for $\beta$, and applying
(\ref{ChainRuleId}) with $D^\beta G$ instead of $G$ to handle the derivative of
the first
term in the right-hand side of (\ref{ChainRuleIdHigher}),
we obtain (\ref{ChainRuleIdHigher}) for $\beta'$.

Now from (\ref{ChainRuleIdHigher}), recalling the structure
of $a_{\beta, \gamma, j}$, we see that if $1\le|\beta|\le k$, then
\begin{equation}\label{ChainRuleIdHigher-NormZero}
\|D^\beta(G\circ F^{-1})\|_0\le
\|D^\beta G\|_0+C\delta t,
\end{equation}
where dependence of $C$ is as in the lemma.

In order to estimate $[D^\beta(G\circ F^{-1})]_\alpha$ for
$|\beta|=k$, we first estimate
this seminorm for the first term in the right-hand side of
(\ref{ChainRuleIdHigher}):
\begin{align*}
[(D^\beta G)\circ F^{-1}]_\alpha & =
\sup_{x,y}\frac{|(D^\beta G)\circ F^{-1}(x)-(D^\beta G)\circ F^{-1}(y)|}{|x-y|^\alpha} \\
&\leq  [D^\beta G]_\alpha
\sup_{x,y}\frac{\big(|(F^{-1}(x)-x)-(F^{-1}(y)-y)|+
|x-y|\big)^\alpha}{|x-y|^\alpha}\\
&=  [D^\beta G]_\alpha
\left(1+\sup_{x,y}\frac{|(F^{-1}(x)-x)-(F^{-1}(y)-y)|}{|x-y|}
\right)^\alpha\\
&\leq  [D^\beta G]_\alpha (1+\|D(F-Id)\|_{0})^\alpha \\
&\le [D^\beta G]_\alpha (1+C\|F-Id\|_{1})\le [D^\beta G]_\alpha (1+C\delta t).
\end{align*}
Then by (\ref{ChainRuleIdHigher}),
$$
[D^\beta (G\circ F^{-1})]_\alpha\le [D^\beta G]_\alpha (1+C\delta t) +C
\delta t\le [D^\beta G]_\alpha +C\delta t.
$$
Combining this with (\ref{NormZero}) and (\ref{ChainRuleIdHigher-NormZero}),
we conclude the proof of (\ref{3.42}).
\end{proof}

Define
\begin{equation}\label{3.15}
\nu_m:
=\det(I+f^{-1}D(f^{-1}Dp_m)).
\end{equation}
\begin{lem}\label{3.32}
\begin{equation}\label{3.33}
\|\nu_{n+1}\|_{k,\alpha}\leq\|\nu_n\|_{k,\alpha}+C_3\delta t.
\end{equation}
Here $C_3=C_3(\|p_{n+1}\|_{k+2,\alpha},\|p_n\|_{k+2,\alpha})$. Recall $\nu_n=\det(I+f^{-1}D(f^{-1}Dp_n))$.
\end{lem}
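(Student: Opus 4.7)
The strategy is to use the Monge--Amp\`ere equation (\ref{3.3}) to express $\nu_{n+1}$ as $\nu_n \circ F^{-1}$ plus error terms of order $\delta t$ in $C^{k,\alpha}$. Writing $M_1 = I + f^{-1}\nabla q\otimes \nabla(f^{-1}) + f^{-2}D^2 q$ and $M_2 = I + f^{-1}\nabla p\otimes \nabla(f^{-1}) + f^{-2}D^2 p$, so that $\nu_{n+1} = \det M_1$ and $\nu_n = \det M_2$, the iteration equation (\ref{3.3}) reads $\det(M_1 + A) = \det(M_2\circ F^{-1} + B)$. Rearranging,
\begin{equation*}
\nu_{n+1} = \nu_n \circ F^{-1} + \bigl[\det(M_2\circ F^{-1} + B) - \det(M_2\circ F^{-1})\bigr] - \bigl[\det(M_1+A) - \det M_1\bigr].
\end{equation*}
Lemma \ref{3.41}, together with the bound $\|F^{-1} - id\|_{k,\alpha}\le C\delta t$ from Lemma \ref{3.29}, controls the first term by $\|\nu_n\|_{k,\alpha} + C\delta t$. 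For the two bracketed determinant differences, I use the $2\times 2$ identity $\det(M+E) - \det M = \mathrm{tr}(\mathrm{cof}(M)^{T} E) + \det E$ and the $C^{k,\alpha}$ product inequality. This reduces the problem to showing $\|A\|_{k,\alpha}, \|B\|_{k,\alpha} \le C\delta t$ with constants depending on $\|p_n\|_{k+2,\alpha}$ and $\|p_{n+1}\|_{k+2,\alpha}$, quantities already controlled by Lemma \ref{3.16}.

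The technical core is the $C^{k,\alpha}$ bounds on $A$ and $B$. Using the representation
\begin{equation*}
f^{-1}\circ F^{-1}(x) - f^{-1}(x) = \Bigl(\int_0^1 \nabla(f^{-1})(\theta F^{-1}(x) + (1-\theta)x)\,d\theta\Bigr)\cdot(F^{-1}(x) - x),
\end{equation*}
Lemma \ref{3.41} applied to $\nabla(f^{-1})$ keeps the integrand bounded in $C^{k,\alpha}$, while Lemma \ref{3.29} gives $\|F^{-1} - id\|_{k,\alpha}\le C\delta t$; hence this difference is $O(\delta t)$ in $C^{k,\alpha}$. Since every term in expression (\ref{1.32}) for $A$ carries this factor, the product rule gives $\|A\|_{k,\alpha}\le C\delta t$. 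For $B$, the first summand of (\ref{1.33}) splits as $(f^{-1}\nabla p)\circ F^{-1} - f^{-1}\nabla p$ plus $f^{-1}\nabla(p-q)$; the former is $O(\delta t)$ by the same composition argument, the latter by Lemma \ref{3.18}. The second summand is $O(\delta t)$ because $R_{f\delta t} - I$ has Taylor expansion $Jf\,\delta t + O(\delta t^2)$ with $C^{k,\alpha}$-bounded coefficients, and composition with $F^{-1}$ is handled by Lemma \ref{3.41}.

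The main obstacle is organizing these composition estimates for $A$ and $B$ without losing derivatives, and choosing the intermediate quantity $M_2\circ F^{-1}$ so that Lemma \ref{3.41} applies directly to the scalar $\nu_n$ (rather than to the matrix $M_2$ termwise, which would needlessly couple the estimates). The conclusion then follows from
\begin{equation*}
\|\nu_{n+1}\|_{k,\alpha} \le \|\nu_n\circ F^{-1}\|_{k,\alpha} + C\bigl(\|M_1\|_{k,\alpha}\|A\|_{k,\alpha} + \|A\|_{k,\alpha}^2 + \|M_2\circ F^{-1}\|_{k,\alpha}\|B\|_{k,\alpha} + \|B\|_{k,\alpha}^2\bigr) \le \|\nu_n\|_{k,\alpha} + C_3\delta t,
\end{equation*}
absorbing the multiplicative factors into $C_3$ via Lemma \ref{3.16} and Lemma \ref{3.41}.
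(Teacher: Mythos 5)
Your proof is correct and follows essentially the same route as the paper: you split $\nu_{n+1}$ through the three intermediate quantities $\nu_n\circ F^{-1}$, $\det(M_2\circ F^{-1}+B)$, and $\det(M_1+A)$ (the paper's $\nu_n'$, $\nu_n''$, $\nu_{n+1}'$), bound the composition via Lemmas \ref{3.41} and \ref{3.29}, and reduce the remaining differences to $\|A\|_{k,\alpha},\|B\|_{k,\alpha}\le C\delta t$ exactly as in (\ref{3.34})--(\ref{3.36}). The only cosmetic difference is that you use the closed-form $2\times2$ determinant expansion $\det(M+E)-\det M=\mathrm{tr}(\mathrm{cof}(M)^T E)+\det E$ where the paper uses the integral representation in (\ref{3.37}); these are equivalent.
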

\begin{proof}
Write $q(x)=p_{n+1}(x)$, $p(x)=p_n(x)$. Define
\begin{equation*}
\begin{split}
&\nu_n^{\prime}=\det(I+f^{-1}D(f^{-1}Dp)(F^{-1}(x)),\\
&\nu_n^{\prime\prime}=\det(I+f^{-1}D(f^{-1}Dp)(F^{-1}(x))+B(x)),\\
&\nu^{\prime}_{n+1}=\det(I+f^{-1}D(f^{-1}Dq)+A(x)).
\end{split}
\end{equation*}
By the equation, we have $\nu_{n+1}^{\prime}=\nu_n^{\prime\prime}$. \\
We will prove the lemma by showing
\begin{align}\label{3.34}
&\|\nu_n^{\prime}\|_{k,\alpha}\leq\|\nu_n\|_{k,\alpha}+C\delta t,\\
\label{3.35}
&\|\nu_{n+1}^{\prime}-\nu_{n+1}\|_{k,\alpha}\leq C\delta t,\\
\label{3.36}
&\|\nu_n^{\prime}-\nu_n^{\prime\prime}\|_{k,\alpha}\leq C\delta t.
\end{align}
First we observe that (\ref{3.34}) is a direct consequence of Lemma \ref{3.41} applied to $G=\det(I+f^{-1}D(f^{-1}Dp))$ and Lemma \ref{3.29}.

To see (\ref{3.35}), we can write, upon noticing $p_{n+1}=q$,
\begin{equation}\label{3.37}
\nu_{n+1}^{\prime}-\nu_{n+1}=\int_0^1M_{ij}(I+f^{-1}D(f^{-1}Dq)+\theta A(x))d\theta A_{ij}(x).
\end{equation}
Hence
\begin{equation}\label{3.38}
\|\nu_{n+1}^{\prime}-\nu_{n+1}\|_{k,\alpha}\leq C_k\|\int_0^1I+f^{-1}D(f^{-1}Dq)+\theta A(x))d\theta\|_{k,\alpha}\|A_{ij}\|_{k,\alpha}.
\end{equation}
But $\|A_{ij}\|_{k,\alpha}\leq C\delta t$ by (\ref{3.22}) and Lemma \ref{3.29}. So (\ref{3.35}) is proved.
(\ref{3.36}) is proved in a similar way as (\ref{3.35}), by noticing that
$\|B_{ij}\|_{k,\alpha}\leq C\delta t$. This follows from (\ref{3.23}), (\ref{3.24}),
Lemma \ref{3.29}.
\end{proof}
\begin{lem}\label{3.39}
 For $n=0, 1, \dots$
\begin{equation}\label{3.40}
\|p_{n}\|_{k+2,\alpha}\leq C_4(\|\nu_{n}\|_{k,\alpha}+1).
\end{equation}
Here $C_4=C_4(\|p_{n}\|_{2,\alpha})$.

\end{lem}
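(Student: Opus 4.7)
The plan is to treat \eqref{3.15} as a Monge-Amp\`ere-type equation for $p_n$ and to extract higher regularity by linearizing (i.e., differentiating the equation once) and then bootstrapping Schauder estimates on $\bT^2$.

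First, from $p_n\in U_3\subset\tilde U_2$, the matrix $M(x)\defd I+f^{-1}D(f^{-1}Dp_n)(x)$ satisfies $M(x)\ge\frac{c_0}{2}I$ on $\bT^2$. In two dimensions the cofactor matrix $M^{\mathrm{cof}}=(\det M)M^{-1}$ of a strictly positive definite symmetric $M$ is itself strictly positive definite, with two-sided ellipticity bounds controlled by $c_0$ and $\|M\|_0\le C(\|p_n\|_{2,\alpha})$. Next I would differentiate the identity $\det M=\nu_n$ in $x_l$ via the Jacobi formula $\partial_l\det M=\sum_{i,j}M^{\mathrm{cof}}_{ij}\partial_l M_{ij}$ and expand $\partial_l M_{ij}=f^{-2}\partial_{lij}p_n+(\text{terms in }f,\nabla f,\nabla^2 f,\nabla p_n,\nabla^2 p_n)$. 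Collecting the third-order term, $u\defd\partial_l p_n$ satisfies a linear second-order equation
\begin{equation*}
a_{ij}(x)\partial_{ij}u+b_i(x)\partial_i u=g_l(x)
\end{equation*}
on $\bT^2$, where $a_{ij}=f^{-2}M^{\mathrm{cof}}_{ij}$ is uniformly elliptic, the drift $b_i$ depends only on $f,\nabla f$ and $D^2 p_n$, and $g_l=\partial_l\nu_n+R_l$ with $R_l$ a polynomial expression in $f,Df,D^2 f$ and $p_n$ up to $D^2 p_n$. Consequently $a_{ij},b_i\in C^\alpha(\bT^2)$ with norms bounded by $C(\|p_n\|_{2,\alpha})$, and $\|g_l\|_\alpha\le C(\|p_n\|_{2,\alpha})(\|\nu_n\|_{1,\alpha}+1)$.

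For the base case $k=1$, the Schauder estimate on the torus gives $\|u\|_{2,\alpha}\le C(\|u\|_0+\|g_l\|_\alpha)$ with $C$ depending on the ellipticity constant and $\|a_{ij}\|_\alpha,\|b_i\|_\alpha$; since $\|u\|_0\le\|p_n\|_{2,\alpha}$, this yields $\|p_n\|_{3,\alpha}\le C(\|p_n\|_{2,\alpha})(\|\nu_n\|_{1,\alpha}+1)$. For general $k\ge 2$ I would proceed by induction on $j$: assuming $\|p_n\|_{j+2,\alpha}\le C_j(\|p_n\|_{2,\alpha})(\|\nu_n\|_{j,\alpha}+1)$ for some $1\le j\le k-1$, the coefficients $a_{ij},b_i$ then belong to $C^{j,\alpha}$ with norms controlled by $\|p_n\|_{j+2,\alpha}$, and $g_l\in C^{j,\alpha}$ with $\|g_l\|_{j,\alpha}$ controlled by $\|\nu_n\|_{j+1,\alpha}$ and $\|p_n\|_{j+2,\alpha}$. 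The higher-order Schauder estimate $\|u\|_{j+2,\alpha}\le C(\|u\|_0+\|g_l\|_{j,\alpha})$ then upgrades to
\begin{equation*}
\|p_n\|_{j+3,\alpha}\le C_{j+1}(\|p_n\|_{2,\alpha})(\|\nu_n\|_{j+1,\alpha}+1),
\end{equation*}
after absorbing the inductive dependence, and iterating up to $j=k-1$ yields the claim.

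The main obstacle is the bookkeeping of Schauder constants across the induction: at each bootstrap step the constant depends on the $C^{j,\alpha}$ norms of $a_{ij},b_i$, which in turn depend on $\|p_n\|_{j+2,\alpha}$ from the previous step. One must verify inductively that these intermediate higher-order $\|p_n\|$-norms can be absorbed into a single constant depending only on $\|p_n\|_{2,\alpha}$ and the fixed data $f,c_0$, with the dependence on the Monge-Amp\`ere right-hand side entering only through the multiplicative factor $\|\nu_n\|_{k,\alpha}+1$ as in the stated bound.
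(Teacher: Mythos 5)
Your approach is the same as the paper's: the published proof of Lemma \ref{3.39} is exactly the two sentences ``We differentiate both sides of (\ref{3.15}) \ldots and obtain linear elliptic equations for derivatives of $p_n$. We can then use Schauder estimate to conclude.'' The obstacle you flag at the end is genuine, and worth spelling out: in the inductive step the Schauder constant for $\|u\|_{j+2,\alpha}\leq C\big(\|u\|_0+\|g\|_{j,\alpha}\big)$ depends on $\|a_{ij}\|_{j,\alpha}$, hence on $\|p_n\|_{j+2,\alpha}$, and by the inductive hypothesis that already carries a dependence on $\|\nu_n\|_{j,\alpha}$; so the bootstrap as written produces an estimate of the form $\|p_n\|_{k+2,\alpha}\leq\Phi\big(\|p_n\|_{2,\alpha},\|\nu_n\|_{k,\alpha}\big)$ for some monotone $\Phi$, rather than the exact factored form $C_4(\|p_n\|_{2,\alpha})\big(\|\nu_n\|_{k,\alpha}+1\big)$. (Trying to absorb the intermediate norms via interpolation runs into the same product $\|a\|_{j,\alpha}\|u\|_{2,\alpha}$ and does not cleanly restore linearity in $\|\nu_n\|_{k,\alpha}$.) This is, however, a discrepancy of form rather than of substance: the only place Lemma \ref{3.39} is invoked, in the proof of Proposition \ref{approxSolut-est-Prop}, is to turn the hypotheses $\|\nu_n\|_{k,\alpha}\leq M$ and $\|p_n\|_{2,\alpha}\leq\|p_0\|_{2,\alpha}+1$ into a fixed bound $C_5$ on $\|p_n\|_{k+2,\alpha}$, and $\Phi(\|p_0\|_{2,\alpha}+1,M)$ serves equally well as $C_5$. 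So treat the stated linear dependence as shorthand; your bootstrap via differentiated Monge--Amp\`ere equations and Schauder is the intended argument and is sound for the paper's purposes.
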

\begin{proof}
We differentiate both sides of (\ref{3.15}) with $m$ replaced by $n$ and obtain linear
elliptic equations for derivatives of $p_{n}$. We can then use Schauder estimate to
conclude.
\end{proof}

Now we apply these estimates to the approximate solutions.

\begin{prop}\label{approxSolut-est-Prop}
For any $p_0\in C^{k+2,\alpha}$, $k\geq2$, satisfying $I+f^{-1}D(f^{-1}Dp_0)>c_0$,
there exists $T_1>0$, $\eps>0$, depending only on $\|p_0\|_{k+2,\alpha}$, $f$, $c_0$ and $k$,
such that for any $|\delta t|<\eps$, one can construct a sequence
$(p_n, F_n)$ of solutions to (\ref{3.1}), (\ref{3.1new}) for $n\leq \frac{T_1}{\delta t}-1$,
such that $p_n\in C_0^{k+2,\alpha}$ and $F_n:\bT^2\to \bT^2$ is a measure preserving
diffeomorphism.
Moreover, $(p_n, F_n)$ satisfy the following estimates:
\begin{align}\label{3.55}
&\|p_n\|_{k+2,\alpha}\leq C,\\
\label{3.56old}
&\|p_{n+1}-p_n\|_{k+1,\alpha}\leq C\delta t,\\
\label{3.57}
&\|F_{n+1}^{-1}-id\|_{k,\alpha}\leq C\delta t,\\
\label{3.58}
&\|F_{n+1}-id\|_{k,\alpha}\leq C\delta t.
\end{align}
 for $0\leq n\leq \frac{T_1}{\delta t}-1$.
Here $C$ depends only on $\|p_0\|_{k+2,\alpha}$, $f$, $c_0$ and $k$.
\end{prop}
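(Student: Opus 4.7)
The plan is to run a bootstrap (continuity) argument in $n$ that simultaneously controls two quantities: the $C^{3,\alpha}$-distance of $p_n$ from $p_0$ (needed so that the map $\mathcal{H}$ of Proposition \ref{5.4spm} remains defined at step $n+1$) and the $C^{k+2,\alpha}$-norm of $p_n$ (needed so that the constants appearing in Lemmas \ref{3.16}--\ref{3.32} are uniform in $n$). Concretely, fix a constant $M>0$ to be chosen, let $N^\ast$ be the largest integer with $N^\ast\delta t\le 1$ such that for every $n\le N^\ast$,
\begin{equation*}
\|p_n-p_0\|_{3,\alpha}\le \epsFour \AND \|p_n\|_{k+2,\alpha}\le M,
\end{equation*}
and set $T_1=N^\ast\delta t$. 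Proposition \ref{5.4spm} guarantees that $p_{n+1}=\mathcal{H}(p_n,\delta t)$ is defined for $n\le N^\ast$, and the diffeomorphism $F_{n+1}$ from Lemma \ref{mapIsDiffeo} is measure-preserving by Lemma \ref{l5.2}(\ref{l5.2-i1.1}).

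The core of the argument is that on $n\le N^\ast$ the constants $C_1, C_3$ in Lemmas \ref{3.18} and \ref{3.32} depend only on $M$, so telescoping yields
\begin{equation*}
\|p_n-p_0\|_{k+1,\alpha}\le \sum_{j=0}^{n-1}\|p_{j+1}-p_j\|_{k+1,\alpha}\le C_1(M)\,n\,\delta t\le C_1(M)\,T_1,
\end{equation*}
\begin{equation*}
\|\nu_n\|_{k,\alpha}\le \|\nu_0\|_{k,\alpha}+C_3(M)\,n\,\delta t\le \|\nu_0\|_{k,\alpha}+C_3(M)\,T_1.
\end{equation*}
Since $k+1\ge 3$, the first of these controls $\|p_n-p_0\|_{3,\alpha}$ and hence in particular keeps $\|p_n\|_{2,\alpha}\le\|p_0\|_{2,\alpha}+\epsFour$, so the constant $C_4$ in Lemma \ref{3.39} is bounded by some $C_4^\ast$ depending only on $p_0$ (not on $M$). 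Hence
\begin{equation*}
\|p_n\|_{k+2,\alpha}\le C_4^\ast\bigl(\|\nu_n\|_{k,\alpha}+1\bigr)\le C_4^\ast\bigl(\|\nu_0\|_{k,\alpha}+1\bigr)+C_4^\ast C_3(M)\,T_1.
\end{equation*}

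Now choose $M\defd 4C_4^\ast(\|\nu_0\|_{k,\alpha}+1)$ and then choose
\begin{equation*}
T_1=\min\!\left\{\frac{\epsFour}{2C_1(M)},\ \frac{\|\nu_0\|_{k,\alpha}+1}{2C_3(M)},\ 1\right\}>0,
\end{equation*}
which depends only on $\|p_0\|_{k+2,\alpha}$, $c_0$, $f$, $k$. With these choices the two displayed estimates become $\|p_n-p_0\|_{3,\alpha}\le \epsFour/2$ and $\|p_n\|_{k+2,\alpha}\le M/2$ for every $n\le N^\ast$, so by a standard continuity argument the defining inequalities for $N^\ast$ were never saturated, forcing $N^\ast\delta t\ge T_1$ (after possibly decreasing $\eps$ so that $\eps\le \epsFour$ and $\delta t$ is small enough to step past any candidate maximal $n$). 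This proves \eqref{3.55} and, via the telescoping estimate, \eqref{3.56old}. Estimate \eqref{3.57} is immediate from Lemma \ref{3.29} with the uniform $M$. For \eqref{3.58}, write $F_{n+1}^{-1}=\mathrm{id}+w_{n+1}$ with $\|w_{n+1}\|_{k,\alpha}\le C\delta t$ by \eqref{3.57}; since $F_{n+1}$ is a diffeomorphism close to the identity, the standard inverse-of-near-identity estimate (differentiating $F_{n+1}\circ(\mathrm{id}+w_{n+1})=\mathrm{id}$ and bootstrapping as in the proof of Lemma \ref{3.16}) yields $\|F_{n+1}-\mathrm{id}\|_{k,\alpha}\le C\delta t$.

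The main obstacle is that a naive iteration of Lemma \ref{3.16} only gives $\|p_{n+1}\|_{k+2,\alpha}\le C_0(\|p_n\|_{k+2,\alpha})$, which typically blows up in finite $n$ independently of $\delta t$. The essential point is that the pair of Lemmas \ref{3.32} and \ref{3.39} replaces this by a linear-in-time growth of $\|\nu_n\|_{k,\alpha}$ (the high-order information) while the low-order norm $\|p_n\|_{2,\alpha}$ is controlled independently by the $C^{k+1,\alpha}$ telescoping bound; this decoupling is what lets the bootstrap close on a $\delta t$-independent interval.
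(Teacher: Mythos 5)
Your proof is correct and follows essentially the same strategy as the paper's: combine the crude one-step bound of Lemma \ref{3.16} with the linear-in-time growth of $\|\nu_n\|_{k,\alpha}$ from Lemma \ref{3.32} and the elliptic estimate of Lemma \ref{3.39}, so that the high-order norm is controlled through $\nu_n$ rather than by iterating $C_0(\cdot)$; you run it as a maximal-time bootstrap rather than an explicit induction on $n$, which is an equivalent packaging, and your closing paragraph identifies exactly the decoupling that makes the argument close.

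One point is worth pinning down rather than deferring to the parenthetical "possibly decreasing $\eps$". You state that $C_1(M)$, $C_3(M)$ "depend only on $M$" and use them to define $T_1$; those constants apply to a step $j\to j+1$ only once you know \emph{both} $\|p_j\|_{k+2,\alpha}\leq M$ and $\|p_{j+1}\|_{k+2,\alpha}\leq M$. That is fine for $j\leq N^*-1$ by the definition of $N^*$, so the telescoping estimates on $[0,N^*]$ are legitimate. But to extend past $N^*$ you must first apply Lemma \ref{3.16} to get only the crude bound $\|p_{N^*+1}\|_{k+2,\alpha}\leq C_0(M)$, and the relevant one-step constants in Lemmas \ref{3.18}, \ref{3.32} there are $C_1(C_0(M))$, $C_3(C_0(M))$, not $C_1(M)$, $C_3(M)$. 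This is precisely why the paper introduces $\CThirt$ (the crude bound) before defining $\CSix$, $\CSeven$ relative to it, and defines $T_1$ in terms of the latter. Your argument can be salvaged either by doing the same, or by keeping $T_1$ as you defined it and adding an explicit smallness requirement $\delta t < \epsFour/(2C_1(C_0(M)))$ (and similarly for $C_3$), noting that the one extra step contributes only $O(\delta t)$ and hence does not disturb the telescoping bounds; as written, the claim that $N^*\delta t\geq T_1$ does not quite follow from the displayed estimates alone.
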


\begin{proof}

 Let $M=4\|\nu_0\|_{k,\alpha}$.

Let $C_5=C_4(\|p_0\|_{2,\alpha}+1)(M+1)$. This is the bound for $\|p_n\|_{k+2,\alpha}$ given by Lemma \ref{3.39} if we have the bound $\|\nu_n\|_{k,\alpha}\leq M$ and the bound $\|p_n\|_{2,\alpha}\leq\|p_0\|_{2,\alpha}+1$.

Define $\CThirt$ to be the bound for $\|p_{n+1}\|_{k+2,\alpha}$ given by Lemma \ref{3.16} if we have $\|p_n-p_0\|_{3,\alpha}<\epsFour$, $\|p_n\|_{k+2,\alpha}\leq C_5$, and $p_{n+1}=\mathcal{H}(p_n,\delta t)$.

 Now let $\CSix$ be the constant given by Lemma \ref{3.32} such that $\|\nu_{n+1}\|_{k,\alpha}\leq \|\nu_n\|_{k,\alpha}+\CSix\delta t$ if we have the bound $\|p_n\|_{k+2,\alpha},\|p_{n+1}\|_{k+2,\alpha}\leq \CThirt$.

 Similarly let $\CSeven$ be the constant given by Lemma \ref{3.18} such that $\|p_{n+1}-p_n\|_{k+1,\alpha}\leq \CSeven\delta t$ if we have the bound $\|p_n\|_{k+2,\alpha},\|p_{n+1}\|_{k+2,\alpha}\leq \CThirt$

Finally let $\CEight=\min(C_5,\CThirt)$.

We will show by induction that
\begin{equation}\label{3.48-ind}
\|\nu_n\|_{k,\alpha}\leq\|\nu_0\|_{k,\alpha}+n\CSix\delta t,
\quad\|p_n\|_{k+2,\alpha}\leq \CEight,
\quad\|p_n-p_0\|_{k+1,\alpha}\leq nC_7\delta t
\end{equation}
 as long as $\delta t\in(0,\epsFour)$
and
\begin{equation}\label{3.48}
n\delta t\leq\min(\frac{\epsFour}{2\CSeven},\frac{M}{2\CSix}):=T_1,
\end{equation}
where $\epsFour$ is chosen according to Proposition \ref{5.4spm}.

We note that (\ref{3.48-ind}) obviously holds for $n=0$. Suppose
(\ref{3.48-ind}) is true for any
$j\leq n$, suppose also (\ref{3.48}) holds for $n+1$. By the
induction hypothesis, we have $\|p_n\|_{k+2,\alpha}\leq \CEight$,  and also
$\|p_n-p_0\|_{3,\alpha}\leq n \CSix\delta t\leq \epsFour/2$,
where we used (\ref{3.48}) in the last inequality.  Hence, by Proposition \ref{5.4spm},
 we can define
$p_{n+1}=\mathcal{H}(p_n,\delta  t)$.
It remains to show (\ref{3.48-ind}) with $n+1$ instead of $n$.

 First by induction hypothesis and Lemma \ref{3.16}, since $\|p_n\|_{k+2,\alpha}\leq \CEight\leq C_5$, we can conclude
\begin{equation}\label{3.49}
\|p_{n+1}\|_{k+2,\alpha}\leq \CThirt.
\end{equation}
By induction hypothesis, we also have $\|p_n\|_{k+2,\alpha}\leq \CEight\leq \CThirt$. Hence we can
apply Lemma \ref{3.32} and the induction hypotheses (\ref{3.48-ind}),
 to conclude
\begin{equation}\label{50}
\|\nu_{n+1}\|_{k,\alpha}\leq\|\nu_n\|_{k,\alpha}+\CSix\delta t
\leq \|\nu_0\|_{k,\alpha}+(n+1)\CSix\delta t.
\end{equation}
Also, from (\ref{50}), using that (\ref{3.48})
holds for $n+1$, and that $M=4\|\nu_0\|_{k,\alpha}$,
we have $\|\nu_{n+1}\|_{k,\alpha}\leq \frac{M}{4}+\frac{M}{2}\leq M$.
Now we can use Lemma \ref{3.39} to conclude
\begin{equation*}
\|p_{n+1}\|_{k+2,\alpha}\leq C_4(\|p_0\|_{2,\alpha}+1)\cdot(M+1)=C_5
\end{equation*}
Combining this with (\ref{3.49}), we get
\begin{equation}\label{3.51}
\|p_{n+1}\|_{k+2,\alpha}\leq\min(C_5,\CThirt)=\CEight.
\end{equation}
Furthermore, from (\ref{3.48-ind}) and (\ref{3.49}) we have
$\|p_n\|_{k+2,\alpha}, \|p_{n+1}\|_{k+2,\alpha}\leq \CThirt$, thus
we can apply Lemma \ref{3.18} to get
\begin{equation}\label{3.52}
\|p_{n+1}-p_n\|_{k+1,\alpha}\leq \CSeven\delta t.
\end{equation}
 Hence
\begin{equation}\label{3.53}
\|p_{n+1}-p_0\|_{2,\alpha}\leq\|p_{n+1}-p_n\|_{2,\alpha}+
nC_7\delta t\leq(n+1)\CSeven\delta t
\end{equation}
since $k\geq 1$.
From (\ref{50}), (\ref{3.51}) and (\ref{3.53}) the induction is complete.

Thus for any $\delta t\in (0, \eps_5)$ we showed existence of
$(F_n, p_n)$ integer $n\ge 1$ satisfying  (\ref{3.1}), (\ref{3.1new}).
Moreover, estimates (\ref{3.48-ind}) hold. We note that
(\ref{3.48-ind}) directly implies (\ref{3.55}), (\ref{3.56old}).
Also, (\ref{3.57}) is proved in Lemma \ref{3.29}.

To see (\ref{3.58}), first observe there is no loss of generality to assume
$C\delta t<\frac{1}{200}$ so that we get
$\|F_{n+1}^{-1}-id\|_{k,\alpha}\leq\frac{1}{200}$ from (\ref{3.57}).  Here $C$ is

the maximum of $C_5$ up to $C_9$ appearing in this proof. Then (\ref{3.58})
follows from Lemma \ref{lem6.8b} below by taking $G=F_{n+1}^{-1}$.

Now Proposition
is proved with $T=T_1$.
\end{proof}
\begin{lem}\label{lem6.8b}
Let $G:\mathbb{R}^2\rightarrow\mathbb{R}^2$ be a diffeomorphism, such that $G(x+h)=G(x)+h$, for any $x\in\mathbb{R}^2$ and $h\in\mathbb{Z}^2$.
 Suppose that $G\in C^{k,\alpha}_{loc}(\mathbb{R}^2;\mathbb{R}^2)$ for some $k\geq1$ and $0<\alpha<1$, with $\|G-id\|_{k,\alpha}\leq\frac{1}{200}$. Then $G^{-1}\in C^{k,\alpha}_{loc}(\mathbb{R}^2;\mathbb{R}^2)$ and $\|G^{-1}-id\|_{k,\alpha}\leq C\|G-id\|_{k,\alpha}$.
Here $C$ depends only on $k,\,\alpha$.
\end{lem}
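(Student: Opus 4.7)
My plan is to set $u:=G-id$ and $v:=G^{-1}-id$, both of which are $\bZ^2$-periodic (the periodicity of $v$ follows from $G(x+h)=G(x)+h$). Since $\|Du\|_0\le 1/200$, the matrix $I+Du$ is uniformly invertible on $\bR^2$ with $\|(I+Du)^{-1}\|_0\le 200/199$; in particular the inverse function theorem gives $G^{-1}\in C^{k,\alpha}_{loc}$ immediately, so the real task is the quantitative estimate $\|v\|_{k,\alpha}\le C\|u\|_{k,\alpha}$ with $C=C(k,\alpha)$. Two identities will drive the argument: applying to $G(G^{-1}(y))=y$ gives $v=-u\circ G^{-1}$, and differentiating yields
\[
Dv=(A-I)\circ G^{-1}, \qquad A:=(I+Du)^{-1}.
\]

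I would proceed by induction on $k$. For $k=1$, the Neumann series gives $\|A-I\|_0\le 2\|Du\|_0$, hence $\|Dv\|_0\le 2\|u\|_{1,0}$; a direct H\"older difference-quotient computation applied to $Dv=(A-I)\circ G^{-1}$, together with $\|Du\|_0\le 1/200$, bounds $[Dv]_\alpha$ by $C\|u\|_{1,\alpha}$, while $\|v\|_0=\|u\circ G^{-1}\|_0=\|u\|_0$ because $G^{-1}$ is a bijection of $\bR^2$.

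For the inductive step, I would assume $\|v\|_{j,\alpha}\le C_j\|u\|_{j,\alpha}$ for $j\le k-1$ and first establish the auxiliary bound $\|A-I\|_{k-1,\alpha}\le C(k)\|u\|_{k,\alpha}$ by iterating the matrix identity $A-I=-A\cdot Du$, using $\|Du\|_{k-1,\alpha}\le\|u\|_{k,\alpha}\le 1/200$ to absorb the $\|A\|_{k-1,\alpha}$ factor and keep the estimate linear in $\|u\|_{k,\alpha}$. I would then apply Lemma \ref{3.41} with $A-I$ playing the role of ``$G$'' there and $G^{-1}$ playing the role of ``$F^{-1}$''—the hypothesis that $\|G^{-1}-id\|_{k-1,\alpha}$ is small being exactly the inductive hypothesis—to obtain
\[
\|Dv\|_{k-1,\alpha}=\|(A-I)\circ G^{-1}\|_{k-1,\alpha}\le\|A-I\|_{k-1,\alpha}+C\|v\|_{k-1,\alpha}\le C_k\|u\|_{k,\alpha}.
\]
Combined with the $C^0$ bound $\|v\|_0\le\|u\|_0$, this gives $\|v\|_{k,\alpha}\le C_k\|u\|_{k,\alpha}$ and closes the induction.

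The main obstacle I anticipate is ensuring the constant really depends only on $k$ and $\alpha$ and that the estimate is \emph{linear} in $\|u\|_{k,\alpha}$, rather than picking up polynomial factors. The smallness hypothesis $\|u\|_{k,\alpha}\le 1/200$ is crucial in three places: it makes $I+Du$ uniformly invertible; it allows the Neumann-type iteration for $A-I=-A\cdot Du$ to produce a linear rather than polynomial bound; and it controls $\|A-I\|_{k-1,\alpha}$ uniformly in $k,\alpha$, which in turn bounds the (a priori $\|A-I\|_{k-1,\alpha}$-dependent) constant arising from Lemma \ref{3.41} by a quantity depending only on $k$ and $\alpha$. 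Without this smallness the argument would yield only polynomial control, not the desired linear estimate.
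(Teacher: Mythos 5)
Your argument is correct and rests on the same identity as the paper's: $D(G^{-1})-I = D(G^{-1})\cdot\bigl(-(Du)\circ G^{-1}\bigr)$, which in your notation is $Dv=(A-I)\circ G^{-1}$ with $A=(I+Du)^{-1}$ (note $A\circ G^{-1}=D(G^{-1})$, so this is the same factorization). The organizational difference is that you run an explicit induction on $k$ and invoke Lemma~\ref{3.41} for the composition estimate, using the inductive hypothesis to control $\|G^{-1}-id\|_{k-1,\alpha}$; the paper instead first establishes the universal a priori bound $\|D(G^{-1})\|_{k-1,\alpha}\le C(k,\alpha)$ by bootstrapping $D(G^{-1})=(DG)^{-1}\circ G^{-1}$, and then estimates $\|(Du)\circ G^{-1}\|_{k-1,\alpha}\le C\|G-id\|_{k,\alpha}$ by a direct chain-rule expansion, with no induction on the linear estimate itself. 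Both routes are valid. One point worth tightening: for the auxiliary bound $\|A-I\|_{k-1,\alpha}\le C(k,\alpha)\|u\|_{k,\alpha}$, the Neumann-style iteration of $A-I=-A\cdot Du$ closes under absorption only when the product-rule constant $C_{k-1}$ satisfies $C_{k-1}\|Du\|_{k-1,\alpha}<1$, i.e.\ $C_{k-1}<200$; since these constants grow with $k$, the fixed smallness threshold $1/200$ cannot support the absorption uniformly in $k$. It is cleaner (and closer to the paper's first paragraph) to bound $\|A\|_{k-1,\alpha}\le C(k,\alpha)$ directly from $A=(\det DG)^{-1}\mathrm{adj}(DG)$ using $\det DG\ge\tfrac12$ and $\|DG\|_{k-1,\alpha}\le 1+\tfrac1{200}$, after which $\|A-I\|_{k-1,\alpha}=\|A\cdot Du\|_{k-1,\alpha}\le C(k,\alpha)\|u\|_{k,\alpha}$ follows immediately with no absorption required.
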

\begin{proof}
First we observe that $G^{-1}\in C_{loc}^{k,\alpha}$, with $\|D(G^{-1})\|_{k-1,\alpha}\leq C$, with $C$ depend only on $k,\,\alpha$.
Indeed, from $\|G-id\|_{k,\alpha}\leq\frac{1}{200}$, we know $\frac{1}{2}\leq\det DG\leq\frac{3}{2}$.
From implicit function theorem, we know $G^{-1}$ is $C^1$ and $D(G^{-1})=(DG)^{-1}(G^{-1})$. From this we get a universal $C^{0,\alpha}$ bound for $D(G^{-1})$. One keeps differentiating and get $\|D(G^{-1})\|_{k-1,\alpha}\leq C$, with $C$ depends only on $k,\,\alpha$.

To see $\|G^{-1}-id\|_{k,\alpha}\leq C\|G-id\|_{k,\alpha}$, we just need to show $\|D(G^{-1})-I\|_{k-1,\alpha}\leq C\|G-id\|_{k,\alpha}$. Observe $D(G^{-1})-I=D(G^{-1})\big(I-(D(G^{-1}))^{-1}\big)=D(G^{-1})(I-DG(G^{-1}))$. Therefore,
\begin{equation*}
\begin{split}
\|D(G^{-1})-I\|_{k-1,\alpha}&\leq C\|D(G^{-1})\|_{k-1,\alpha}\|I-DG\circ G^{-1}\|_{k-1,\alpha}\\
&\leq C\|I-DG\circ G^{-1}\|_{k-1,\alpha}.
\end{split}
\end{equation*}
Here $C$ depends only on $k,\,\alpha$, and we used the calculation $\|D(G^{-1})\|_{k-1,\alpha}\leq C$.
So we just need to show that $\|I-DG\circ G^{-1}\|_{k-1,\alpha}\leq C\|G-id\|_{k,\alpha}$. But if we calculate the $(k-1)-$th derivative of $I-DG\circ G^{-1}$, it will be sum of terms of the form $D^{\beta'}(I-DG)\circ G^{-1}$ multiplied by derivatives of $G^{-1}$ of order$\leq k-1$, with $|\beta'|\leq k-1$. Hence we have the claimed estimate.
\end{proof}

\begin{rem}\label{3.54}
We remark that the quantity $T_1$ in (\ref{3.48}) can be chosen to be uniform for all initial data $p_0$ in some bounded set of $C^{k+2,\alpha}$. Indeed, the quantity $M$ and hence $C_5$ depends only on $\|p_0\|_{k+2,\alpha}$, $c_0$, and $f$, so will be $\CThirt$. Hence the same will be true for $\CSix,\CSeven,\CEight$. As for $\epsFour$, we have seen in Remarked \ref{3.14} that as long as $p_0$ is taken within some compact set in $\tilde{U_2}\subset C_0^{3,\alpha}$, then we can choose $\epsFour$ uniformly. But we assumed $k\geq 2$, hence any bounded set in $C_0^{k+2,\alpha}$ will be precompact in $C^{3,\alpha}$.
\end{rem}


 Finally,  we estimate the continuity in time of the flow map. Define the flow map at $n-$th step to be:
\begin{equation}\label{3.59}
\phi(n\delta t,\cdot)=F_n\circ F_{n-1}\cdots F_0: \bR^2\to \bR^2,
\end{equation}
where $F_0=Id$.
Notice that $\phi(n\delta t,\cdot)(x+h)=\phi(n\delta t,x)+h$ for any $h\in\bZ^2$ and $\phi(n\delta t,\cdot)$ is measure preserving.

We wish to get a flow map which is Lipschitz in $C^{k,\alpha}$ norm. For this we need to estimate the continuity of the discrete flow map in time. We start with the following elementary lemma:
\begin{lem}\label{3.60}

Let $G_1, G_2\in C_{loc}^{k,\alpha}(\bR^2;\bR^2)$, $F\in C_{loc}^{k,\alpha}(\bR^2;\bR^2)$,
with $G_i(x+h)=G_i(x)+h$, $i=1,2$ and $F(x+h)=F(x)+h$ for any $h\in\bZ^2$.
Then $\|G_1\circ F-G_2\circ F\|_{k,\alpha}\leq \CTen(\|F\|_{k,\alpha})\|G_1-G_2\|_{k,\alpha}$.
\end{lem}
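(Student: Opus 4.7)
The plan is to reduce the statement to a Faà di Bruno--type composition estimate for a genuinely $\bZ^2$-periodic map, and then bound each term using only $\|F\|_{k,\alpha}$ and $\|G_1-G_2\|_{k,\alpha}$.

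First I would observe the crucial periodicity reduction: although $G_1, G_2$ are only periodic-plus-identity, the difference $H\defd G_1 - G_2$ is genuinely $\bZ^2$-periodic, i.e. $H\in C^{k,\alpha}(\bT^2;\bR^2)$, because the identity shifts cancel. Writing $G_1\circ F - G_2\circ F = H\circ F$, it therefore suffices to prove
$$
\|H\circ F\|_{k,\alpha}\le \CTen(\|F\|_{k,\alpha})\,\|H\|_{k,\alpha}
$$
for any $H\in C^{k,\alpha}(\bT^2;\bR^2)$. The sup-norm bound is immediate from periodicity of $H$: $\|H\circ F\|_0\le \|H\|_0$.

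Next I would handle the derivative bounds by induction on $|\beta|\le k$, exactly as in the proof of Lemma~\ref{3.41}. By the chain rule and induction,
$$
D^\beta(H\circ F)=\sum_{1\le|\gamma'|\le|\beta|}\bigl((D^{\gamma'}H)\circ F\bigr)\cdot P_{\beta,\gamma'}(DF,\ldots,D^{|\beta|}F),
$$
where $P_{\beta,\gamma'}$ is a universal polynomial in derivatives of $F$ of order at most $|\beta|$. Taking sup norms,
$$
\|D^\beta(H\circ F)\|_0\le C\bigl(\|F\|_{k,0}\bigr)\,\|H\|_{|\beta|,0}
\le C\bigl(\|F\|_{k,\alpha}\bigr)\,\|H\|_{k,\alpha},
$$
which handles all derivatives up to order $k$ in sup norm.

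The main technical point, and what I expect to be the main (though routine) obstacle, is controlling the Hölder seminorm of $D^\beta(H\circ F)$ for $|\beta|=k$. I would use two standard facts: (i) the product rule $[fg]_\alpha\le \|f\|_0[g]_\alpha+\|g\|_0[f]_\alpha$, applied to each summand above; and (ii) the composition estimate
$$
[(D^{\gamma'}H)\circ F]_\alpha \le [D^{\gamma'}H]_\alpha\,\bigl(\mathrm{Lip}\,F\bigr)^{\alpha}\le [D^{\gamma'}H]_\alpha\,\|DF\|_0^\alpha,
$$
which follows directly from $|F(x)-F(y)|\le \|DF\|_0|x-y|$. The polynomial factors $P_{\beta,\gamma'}(DF,\ldots,D^k F)$ involve at most $k$-th derivatives of $F$, whose $C^\alpha$ seminorm is bounded by $\|F\|_{k,\alpha}$. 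Combining (i) and (ii), every term in the expansion of $[D^\beta(H\circ F)]_\alpha$ is bounded by a polynomial expression in $\|F\|_{k,\alpha}$ times $\|H\|_{k,\alpha}$. Summing, we obtain $[D^\beta(H\circ F)]_\alpha\le \CTen(\|F\|_{k,\alpha})\,\|H\|_{k,\alpha}$, which together with the sup-norm bounds from the previous step yields the claim with $H=G_1-G_2$.
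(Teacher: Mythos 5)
Your proposal is correct and takes essentially the same approach as the paper: reduce to the genuinely $\bZ^2$-periodic difference $H=G_1-G_2$, write $G_1\circ F-G_2\circ F=H\circ F$, expand $D^\beta(H\circ F)$ by the chain rule into terms of the form $(D^{\beta'}H)\circ F$ multiplied by derivatives of $F$ of order at most $k$, and bound sup and H\"older norms term by term. You spell out the H\"older seminorm estimate more explicitly than the paper, but the underlying argument is identical.
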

\begin{proof}
First notice the obvious estimate:
$\max_{x\in\bT^2}|G_1\circ F(x)-G_2\circ F(x)|\leq\max_{x\in\bT^2}|G_1(x)-G_2(x)|$.

For the derivatives, notice that if $\beta$ is a multi-index with $|\beta|=k$,
then $D^\beta(G_1\circ F)-D^\beta(G_2\circ F)$ is  a sum  of
the terms of the form $D^{\beta'}(G_1-G_2)\circ F$ multiplied by derivatives of $F$ of order $\leq k$, with $1\leq|\beta'|\leq k$.
\end{proof}

With this lemma, we are ready to prove:
\begin{prop}\label{iterMapLip}
There exist constants $T\in(0, T_1]$, $\CElev>0$
depending
only on $\|p_0\|_{k+2,\alpha}$, $f$, $c_0$ and $k$,  such that,
 for any $|\delta t|<\epsFour$ and any corresponding sequence of approximate solutions constructed
 in Proposition \ref{approxSolut-est-Prop}, the following estimate holds for each  $1\leq n\leq\frac{T}{\delta t}$:
\begin{equation}\label{3.64}
\|\phi(n\delta t,\cdot)-\phi((n-1)\delta t)\|_{k,\alpha}\leq \CElev\delta t.
\end{equation}
\end{prop}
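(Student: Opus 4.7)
The key identity is that, for each $n\geq 1$,
\[
\phi(n\delta t,\cdot)-\phi((n-1)\delta t,\cdot)=(F_n-Id)\circ\phi((n-1)\delta t,\cdot),
\]
which follows from $\phi(n\delta t,\cdot)=F_n\circ\phi((n-1)\delta t,\cdot)$. The plan is to apply Lemma~\ref{3.60} (with $G_1=F_n$, $G_2=Id$, and $F=\phi((n-1)\delta t,\cdot)$) to get
\[
\|\phi(n\delta t,\cdot)-\phi((n-1)\delta t,\cdot)\|_{k,\alpha}\le \CTen\!\big(\|\phi((n-1)\delta t,\cdot)\|_{k,\alpha}\big)\,\|F_n-Id\|_{k,\alpha},
\]
then use the uniform bound $\|F_n-Id\|_{k,\alpha}\leq C\delta t$ from (\ref{3.58}) of Proposition~\ref{approxSolut-est-Prop}. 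The output will be the asserted estimate (\ref{3.64}), provided one can produce a uniform bound for the factor $\CTen(\|\phi((n-1)\delta t,\cdot)\|_{k,\alpha})$.

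The obstacle, and the only real content, is therefore to control $\|\phi(n\delta t,\cdot)\|_{k,\alpha}$ uniformly in $n$ on a time interval $[0,T]$ with $T$ independent of $\delta t$. I would set $g_n\defd\phi(n\delta t,\cdot)-Id$, write
\[
g_n=g_{n-1}+(F_n-Id)\circ(Id+g_{n-1}),
\]
and apply Lemma~\ref{3.60} a second time to obtain
\[
\|g_n\|_{k,\alpha}\le \|g_{n-1}\|_{k,\alpha}+\CTen\!\big(1+\|g_{n-1}\|_{k,\alpha}\big)\,\|F_n-Id\|_{k,\alpha}.
\]
Combining this with (\ref{3.58}) gives a discrete Gronwall-type inequality
\[
\|g_n\|_{k,\alpha}\le \|g_{n-1}\|_{k,\alpha}+\CTen\!\big(1+\|g_{n-1}\|_{k,\alpha}\big)\,C\,\delta t.
\]
Choose an a priori bound $M=1$ and let $C^*\defd\CTen(2)\cdot C$, then set
\[
T\defd \min\!\Big(T_1,\;\frac{1}{C^*}\Big).
\]
A straightforward induction on $n$ with $n\delta t\le T$ shows that if $\|g_{j}\|_{k,\alpha}\le 1$ for all $j\le n-1$, then $\|g_n\|_{k,\alpha}\le n\,C^*\,\delta t\le C^*\,T\le 1$, closing the induction. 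Note $g_0=0$ so the base case is trivial.

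With this uniform bound in hand we return to the first estimate: since $\|\phi((n-1)\delta t,\cdot)\|_{k,\alpha}\le 1+\|g_{n-1}\|_{k,\alpha}\le 2$ for all $n\delta t\le T$, we may replace $\CTen(\|\phi((n-1)\delta t,\cdot)\|_{k,\alpha})$ by the fixed constant $\CTen(2)$, and (\ref{3.64}) follows with $\CElev\defd \CTen(2)\cdot C$, where $C$ is the constant from (\ref{3.58}). All constants and $T$ depend only on $\|p_0\|_{k+2,\alpha}$, $f$, $c_0$, and $k$, as required.
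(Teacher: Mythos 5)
Your proof follows essentially the same route as the paper's: apply Lemma~\ref{3.60} with $G_1=F_n$, $G_2=Id$, $F=\phi((n-1)\delta t,\cdot)$, combine with the estimate $\|F_n-Id\|_{k,\alpha}\le C\delta t$ from \eqref{3.58}, and then establish a uniform bound on $\|\phi(n\delta t,\cdot)\|_{k,\alpha}$ on a time interval $[0,T]$ independent of $\delta t$, which makes the constant $\CTen(\cdot)$ uniform. The paper does the last step by summing the one-step estimate \eqref{3.61} to get \eqref{3.62} and then "choosing $T'$ small"; your explicit induction on $g_n=\phi(n\delta t,\cdot)-Id$ with the discrete Gronwall inequality is a cleaner, fully rigorous version of the same bootstrap. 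One minor notational point: $\|\phi_{n-1}\|_{k,\alpha}\le\|Id\|_{k,\alpha}+\|g_{n-1}\|_{k,\alpha}$, not $1+\|g_{n-1}\|_{k,\alpha}$, so the constant should be $\CTen(\|Id\|_{k,\alpha}+1)$ (the paper uses $2+\|id\|_{k,\alpha}$); this does not affect the argument.
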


\begin{proof}
Applying previous lemma to $G_1=F_n$, $G_2=id$, and $F=\phi((n-1)\delta t$, we can conclude
\begin{equation}\label{3.61}
\|\phi(n\delta t,\cdot)-\phi((n-1)\delta t)\|_{k,\alpha}\leq \CTen(\|\phi((n-1)\delta t)\|_{k,\alpha})
\CNine\delta t.
\end{equation}
Here $C_{10}$ is the constant given by Proposition \ref{approxSolut-est-Prop}. Notice that $\phi(0)=id$, and put $\phi(-\delta t)=id$. Let $T^{\prime}>0$, and $n_0$ be the largest integer with $n_0\delta t\leq T^{\prime}$, we sum (\ref{3.61}) from 0 to $n_0$ and obtain
\begin{equation}\label{3.62}
\sup_{0\leq n\leq\frac{T^{\prime}}{\delta t}}\|\phi(n\delta t,\cdot)-id\|_{k,\alpha}\leq
\CTen(\sup_{1\leq n\leq\frac{T^{\prime}}{\delta t}}
\|\phi(n\delta t,\cdot)\|_{k,\alpha})\CNine\delta t\cdot\frac{T^{\prime}}{\delta t}.
\end{equation}
for any $T^{\prime}>0$.
Let $\hat C$ be the constant $C_{11}$ given by Lemma \ref{3.60} when $F$ has  the bound
$\|F\|_{k,\alpha}\leq 2+\|id\|_{k,\alpha}$.
Then, choosing
$T'$ is small
so that $\hat C C_{10}T^{\prime}\leq 2$,
 we see by  (\ref{3.62}) that
 \begin{equation}\label{3.63}
\|\phi(n\delta t,\cdot)\|_{k,\alpha}\leq 2+\|id\|_{k,\alpha},
\end{equation}
for $0\leq n\leq \frac{T^{\prime}}{\delta t}$.
Let  $T=\min(T_1,T^{\prime})$. Then $T$ depends
only on $\|p_0\|_{k+2,\alpha}$, $f$, $c_0$ and $k$,
and (\ref{3.63}) holds
for $1\leq n\leq\frac{T}{\delta t}$.
For such $n$,  denoting $\CElev:=\hat C C_{10}$, we get from (\ref{3.61})
\begin{equation*}
\|\phi(n\delta t,\cdot)-\phi((n-1)\delta t)\|_{k,\alpha}\leq \CElev\delta t.
\end{equation*}
Hence the proof is complete.
\end{proof}

\section{Passage to the limit}
\label{passageLim-Sect}
\subsection{Determine flow map and pressure by taking limit of time-stepping approximations}
\label{passageLim-Sect-sub1}

Here we make use of the estimates obtained in (\ref{3.55})--(\ref{3.58}), (\ref{3.63}),
(\ref{3.64}) and pass to the limit in the following equation
\begin{equation}\label{4.1}
x+f^{-1}(x)f^{-1}(F_{n+1}^{-1}(x))\nabla p_{n+1}(x)=F_{n+1}^{-1}(x)+f^{-2}R_{f\delta t}(\nabla p_n)(F_{n+1}^{-1}(x)).
\end{equation}

For $t\in[0,T]$, let $\phi^{\delta t}_t:\mathbb{R}^2\rightarrow\mathbb{R}^2$ be a map
defined by:
\begin{equation}\label{7.2n}
\phi^{\delta t}_t(x)=\phi(n\delta t,x),\;\textrm{ if }\; n\delta t\leq t<(n+1)\delta t,
\end{equation}
where $\phi(n\delta t,\cdot)$ is defined by (\ref{3.59}).
We also define:
\begin{equation}\label{7.3n}
\bar{\phi}^{\delta t}_t(x)=\frac{t-n\delta t}{\delta t}\,\phi((n+1)\delta t,x)+
\frac{(n+1)\delta t-t}{\delta t}\,\phi(n\delta t,x),\;\textrm{ if }\;
n\delta t\leq t<(n+1)\delta t.
\end{equation}

The estimates (\ref{3.64}) show that the map $\bar{\phi}^{\delta t}_t$ is
Lipschitz in time under the norm $C^{k,\alpha}$. Indeed, it follows from
(\ref{3.64}) that
$\|\bar{\phi}^{\delta t}_t-\bar{\phi}^{\delta t}_s\|_{k,\alpha}\leq \CElev|t-s|$,
for any $0\leq s\leq t\leq T$, and $|\delta t|<\epsFour$.  Now we can use
Arzela-Ascoli Theorem applied to $\bar{\phi}^{\delta t}_t$ restricted to $[0,1]^2$
to get a subsequence of $\delta t_j\rightarrow0$, and a limit map $\phi_t$
Lipschitz in time under the norm $C^{k,\alpha}$, such that
\begin{equation}\label{4.2-0}
\bar{\phi}^{\delta t_j}_t\rightarrow\phi_t\textrm{   under the norm $C^{k,\beta}$
for each $0<\beta<\alpha$, uniformly for $t\in[0,T]$}.
\end{equation}
Also one observes that for $n\delta t\leq t<(n+1)\delta t$, one has
$$
\|\bar{\phi}^{\delta t}_t-\phi^{\delta t}_t\|_{k,\alpha}\leq
\|\phi(n\delta t,\cdot)-\phi((n-1)\delta t,\cdot)\|_{k,\alpha}\leq \CElev\delta t.
$$
Therefore the convergence obtained in (\ref{4.2-0}) will hold also for
$\phi^{\delta t}(t)$:
\begin{equation}\label{4.2}
\phi^{\delta t_j}_t\rightarrow\phi_t\textrm{   under the norm $C^{k,\beta}$
for each $0<\beta<\alpha$, uniformly for $t\in[0,T]$}.
\end{equation}

Same argument works also when we use $\phi(n\delta t,\cdot)^{-1}$
in (\ref{7.2n}), (\ref{7.3n}), to define families of maps $\psi^{\delta t}_t$ and
$\bar{\psi}^{\delta t}_t$.
Then we can pass to the limit as
in (\ref{4.2}), and conclude the limiting
map $\psi_t$ will also be Lipschitz in time under the norm $C^{k,\alpha}$,
and that ${\psi}^{\delta t_j}_t\rightarrow\psi_t$.
Since the argument for convergence of ${\psi}^{\delta t_j}$ can be performed by taking a subsequence
of the sequence $\{\delta t_j\}$ used in (\ref{4.2}), we can assume without loss of generality
that convergence ${\phi}^{\delta t_j}_t\rightarrow\phi_t$ and
${\psi}^{\delta t_j}_t\rightarrow\psi_t$ holds along the same sequence $\delta t_j\to 0$.
Then it is not difficult to observe that $\psi_t=\phi_t^{-1}$ for each $t\in[0, T]$.
Indeed, for each
$n$ and $j$, with $n\delta t_j\leq T$, one has
$\phi^{\delta t_j}(n\delta t_j,\psi^{\delta t_j}(n\delta t_j,x))=x$.
Fix any $t\in[0,T]$ and $j\in\bN$, find $n$ such that
$n\delta t_j\leq t<(n+1)\delta t_j$. Now one can conclude that $\phi(t,\psi(t,x))=x$, which follows from the uniform
convergence obtained above and the Lipschitz continuity in time of the
limit. Same argument also shows
$\psi(t,\phi(t,x))=x$. Hence the maps $\psi(t)$ and $\phi(t)$ obtained above
are inverse to each other. Thus we obtained that
$\phi_t:\bT^2\to\bT^2$ is a diffeomorphism  for each $t\in[0, T)$, and
\begin{equation}\label{flowMapDiffeo}
\|\phi_t, \phi^{-1}_t\|_{k,\alpha}\leq C, \textrm{for each $t\in[0,T]$.}
\end{equation}

Also notice that from (\ref{3.64}),
\begin{equation}\label{insert}
\|\phi_t-\phi_s\|_{k,\alpha}\leq C|t-s|, \textrm{for any $s,t\in[0,T]$.}
\end{equation}
 Also since the map $\phi(n\delta t,\cdot)$ in (\ref{3.59})
 is measure-preserving, then by (\ref{7.2n}) and (\ref{4.2}),
\begin{equation}\label{flowMapMeas}
\textrm{map $\phi_t:\bT^2\to\bT^2$ is $\Lm^2$-measure preserving for each $t\in[0,T]$.}
\end{equation}

Similarly we may consider $p^{\delta t}(t)$. By the estimate (\ref{3.55})-(\ref{3.56old}), and similar argument as above, we can use Arzela-Ascoli to get a limit function $p_t$, which is Lipschitz in time under the norm $C^{k+1,\alpha}$ such that
\begin{equation}\label{4.3}
p^{\delta t_j}_t\rightarrow p_t \textrm{  under the norm
$C^{k+1,\alpha}(\bT^2)$,  uniformly in $t\in[0,T]$ }.
\end{equation}
Then, by (\ref{3.56old}),
\begin{equation}\label{p-Lip}
\|p_t-p_s\|_{k+1,\alpha}\leq C|t-s|, \textrm{for any $s,t\in[0,T]$.}
\end{equation}

Denote $\mathbf{u}_g(n\delta t,x)=f^{-1}(x)J\nabla p(n\delta t,x)$. In Lagrangian coordinates, define
\begin{equation}\label{4.4}
\mathbf{v}_{g}^{\delta t}( t,x)=
f^{-1}(\phi^{\delta t}(t,x))J\nabla p^{\delta t}(t,\phi^{\delta t}( t,x)),
\quad
\mathbf{v}_{g}( t,x)=
f^{-1}(\phi(t,x))J\nabla p(t,\phi( t,x)).
\end{equation}
Then $\mathbf{v}_{g}^{\delta t}$ is a piece-wise constant in time function.
From (\ref{4.2}), (\ref{4.3}) we conclude:
\begin{equation}\label{4.5}
\begin{split}
\mathbf{v}_{g}^{\delta t_j}(t,\cdot)\rightarrow \mathbf{v}_g(t,\cdot)\;
&\textrm{ under the norm $C^{k,\beta}(\bT^2;\bR^2)$ for each $0<\beta<\alpha$,}\\
&\textrm{ uniformly in $t\in[0,T]$}.
\end{split}
\end{equation}
Also, by (\ref{insert}) and (\ref{p-Lip})
\begin{equation}\label{4.5-0}
\mathbf{v}_g(t,\cdot) \textrm{ is Lipschitz in time under the norm $C^{k,\alpha}(\bT^2)$.}
\end{equation}

With above preparation, we are ready to derive the equations in the limit.
\subsection{Derive the equation in the limit}
\label{passageLim-Sect-sub2}

In (\ref{4.1}), replace $x$ by $\phi^{\delta t}((n+1)\delta t,x)$, and
recall $\phi^{\delta t}((n+1)\delta t,\cdot)=F_{n+1}\circ\phi^{\delta t}(n\delta t,\cdot)$.
Then,  
recalling the notation $\phi_t(x)=\phi(t,x)$,
we get for each $x\in\bR^2$:
\begin{equation*}
\begin{split}
\phi^{\delta t}_{(n+1)\delta t}+&f^{-1}(\phi^{\delta t}_{(n+1)\delta t})f^{-1}(\phi^{\delta t}_{n\delta t})\nabla p^{\delta t}_{n+1}(\phi^{\delta t}_{(n+1)\delta t})\\
&=\phi^{\delta t}_{n\delta t}+\big[f^{-2}R_{f\delta t}(\nabla p^{\delta t}_n)\big](\phi^{\delta t}_{n\delta t}).
\end{split}
\end{equation*}
In the last term on the right hand side, all the functions are evaluated at $\phi_{n\delta t}^{\delta t}(x)$. Using (\ref{4.4}),
we rewrite the last equality as
\begin{equation}\label{4.7}
\begin{split}
\phi^{\delta t}_{(n+1)\delta t}-&\phi^{\delta t}_{n\delta t}+
f^{-1}(\phi^{\delta t}_{n\delta t})(-J)(\mathbf{v}_{g,(n+1)\delta t}^{\delta t}-
\mathbf{v}_{g,n\delta t}^{\delta t})\\
&=\big[f^{-1}
(R_{f\delta t}-I)\big](\phi^{\delta t}_{n\delta t})(-J\mathbf{v}_{g,n\delta t}^{\delta t}).
\end{split}
\end{equation}
Fix $0\leq s<t< T^{\prime}$, and suppose $k_1\delta t\leq s<(k_1+1)\delta t$,
and $k_2\delta t\leq t<(k_2+1)\delta t$. With $\delta t$ sufficiently small,
we have $k_1<k_2$. Sum (\ref{4.7}) from $k_1$ to $k_2-1$, we get  for each $x\in\bR^2$:
\begin{equation}\label{4.8}
\begin{split}
\phi^{\delta t}_{k_2\delta t}-&\phi^{\delta t}_{k_1\delta t}+
\sum_{n=k_1}^{k_2-1}f^{-1}(\phi^{\delta t}_{n\delta t})(-J)(\mathbf{v}_{g,(n+1)\delta t}^{\delta t}-\mathbf{v}_{g,n\delta t}^{\delta t})\\
&=\sum_{n=k_1}^{k_2-1}\big[f^{-1}
(R_{f\delta t}-I)\big](\phi^{\delta t}_{n\delta t})(-J\mathbf{v}_{g,n\delta t}^{\delta t}).
\end{split}
\end{equation}

Now we take the limit in (\ref{4.8}) along the sequence $\delta t_j\rightarrow 0$
 from (\ref{4.2}) and (\ref{4.3}). We will suppress the index $j$,
 and write $\delta t\rightarrow 0$ meaning the above sequence.

As $\delta t\rightarrow 0$, we get
$\phi^{\delta t}_{k_2\delta t}-\phi^{\delta t}_{k_1\delta t}\rightarrow\phi_t-\phi_s$ in $C^{k,\beta}(\bT^2)$
for any $\beta<\alpha$ by (\ref{4.2}).

For the term on the right hand side of  (\ref{4.8}), one can get  for each $x\in\bR^2$:
\begin{equation}\label{4.9}
\begin{split}
&\qquad\sum_{n=k_1}^{k_2-1}\big[f^{-1}
(R_{f\delta t}-I)\big](\phi^{\delta t}_{n\delta t})(-J\mathbf{v}_{g,n\delta t}^{\delta t})\\
&=\int_{k_1\delta t}^{k_2\delta t}\big[f^{-1}\frac{R_{f\delta t}-I}{\delta t}\big]
(\phi^{\delta t}_{\tau})(-J\mathbf{v}_{g,\tau}^{\delta t})d\tau
\rightarrow\int_s^tf^{-1}(\phi_{\tau}) Jf(\phi_{\tau})(-J\mathbf{v}_{g,\tau})d\tau\\
&=\int_s^t\mathbf{v}_{g,\tau}d\tau\textrm{ as $\delta t\rightarrow0$},
\end{split}
\end{equation}
where the convergence is obtained as following. By (\ref{4.2}), (\ref{4.5})
there exists $C$ such that $\|\phi^{\delta t_j}_t\|_{k, \alpha/2, \bT^2}\le C$
and  $\|\mathbf{v}_{g,t}^{\delta t_j}\|_{k, \alpha/2, \bT^2}\le C$ for each $t\in[0,T]$,
$j=1,2,\dots$.
Then, using (\ref{4.2}), (\ref{4.5}) again, we see that for each $x\in \bT^2$
the integrands (as functions of $t\in[0,T]$) are uniformly with respect to $j$
bounded, and converge pointwise on $[0,T]$
to $f^{-1}(\phi_{\tau}) Jf(\phi_{\tau})(-J\mathbf{v}_{g,\tau})$.
Then we apply the Dominated Convergence Theorem.

It only remains to deal with the second term on the left hand side of (\ref{4.8}).
For this we define
$(\mathcal{D}_t\mathbf{v}_g)_{\delta t}(t,\cdot)=
\frac{\mathbf{v}_g^{\delta t}((n+1)\delta t,\cdot)-\mathbf{v}_g^{\delta t}(n\delta t,\cdot)}{\delta t}$
 for $t\in[n\delta t,(n+1)\delta t)$. Then $(\mathcal{D}_t\mathbf{v}_g)_{\delta t}(t)$ is
 uniformly bounded in $C^{k,\alpha}$ for $0\leq\delta t<\epsFour$ and $t\in[0, T]$.
 This follows from our definition (\ref{4.4}) and estimates (\ref{3.56old}) and (\ref{3.64}). Then we can write
\begin{equation}\label{4.10}
\begin{split}
&\qquad\sum_{n=k_1}^{k_2-1}f^{-1}(\phi^{\delta t}_{n\delta t})(\mathbf{v}_{g,(n+1)\delta t}^{\delta t}-\mathbf{v}_{g,n\delta t}^{\delta t})\\
&=\int_{k_1\delta t}^{k_2\delta t}f^{-1}(\phi^{\delta t}_{\tau})(\mathcal{D}_t\mathbf{v}_g)_{\delta t}(\tau,\cdot)dt\\
&=\int_{k_1\delta t}^{k_2\delta t}[f^{-1}(\phi^{\delta t}_{\tau})-f^{-1}(\phi_{\tau})](\mathcal{D}_t\mathbf{v}_g)_{\delta t}(\tau,\cdot)dt+\int_{k_1\delta t}^{k_2\delta t}f^{-1}(\phi_{\tau})(\mathcal{D}_t\mathbf{v}_g)_{\delta t}(\tau,\cdot)dt.
\end{split}
\end{equation}
In the first term  above,  we have
$f^{-1}(\phi^{\delta t}_{\tau})-f^{-1}(\phi_{\tau})\to 0$  in $C^{k,\beta}(\bT^2)$
for any $\beta<\alpha$ uniformly in $\tau\in[0, T]$
by (\ref{4.2}), and $(\mathcal{D}_t\mathbf{v}_g)_{\delta t}(\tau,\cdot)$ is bounded
in $C^{k,\alpha}(\bT^2)$ uniformly in $\tau\in[0, T]$ as we showed above. Thus the first term
converges to zero for each $x\in\bT^2$.

So we can focus on the second term. We do an "integration by parts":
\begin{equation}\label{4.11}
\begin{split}
&\quad\int_{k_1\delta t}^{k_2\delta t}f^{-1}(\phi_{\tau})(\mathcal{D}_t\mathbf{v}_g)_{\delta t}(\tau,\cdot)d\tau=\sum_{n=k_1}^{k_2-1}\int_0^{\delta t}f^{-1}(\phi_{\tau+n\delta t})\frac{\mathbf{v}_{g,(n+1)\delta t}^{\delta t}-\mathbf{v}_{g,n\delta t}^{\delta t}}{\delta t}d\tau\\
&=\frac{1}{\delta t}\big[\sum_{n=k_1+1}^{k_2}\int_0^{\delta t}f^{-1}(\phi_{\tau+(n-1)\delta t})\mathbf{v}_{g,n\delta t}^{\delta t}-\sum_{n=k_1}^{k_2-1}   \int_0^{\delta t}f^{-1}(\phi_{\tau+n\delta t})\mathbf{v}_{g,n\delta t}^{\delta t}d\tau\big]\\
&=\delta t^{-1}[\int_0^{\delta t}f^{-1}(\phi_{\tau+(k_2-1)\delta t})d\tau\cdot\mathbf{v}_{g,k_2\delta t}^{\delta t}-\int_0^{\delta t}f^{-1}(\phi_{\tau+k_1\delta t})dt\cdot\mathbf{v}_{g,k_1\delta t}^{\delta t}]\\
&+\sum_{n=k_1+1}^{k_2-1}\int_0^{\delta t}\frac{1}{\delta t}[f^{-1}(\phi_{\tau+(n-1)\delta t})-f^{-1}(\phi_{\tau+n\delta t})]dt\mathbf{v}_{g,n\delta t}^{\delta t}.
\end{split}
\end{equation}
The first two terms will converge to $f^{-1}(\phi_t)\mathbf{v}_{g,t}$ and
$f^{-1}(\phi_s)\mathbf{v}_{g,s}$ in $C^{k,\beta}(\bT^2)$ for any $\beta<\alpha$ respectively. For the last term, we fix some $x\in\mathbb{T}^2$ and have
\begin{equation}\label{4.12}
\begin{split}
&\quad\sum_{n=k_1+1}^{k_2-1}\int_0^{\delta t}\frac{1}{\delta t}
[f^{-1}(\phi_{\tau+(n-1)\delta t})-f^{-1}(\phi_{\tau+n\delta t})]dt\mathbf{v}_{g,n\delta t}^{\delta t}\\
&=\sum_{n=k_1+1}^{k_2-1}\int_{(n-1)\delta t}^{n\delta t}\frac{f^{-1}(\phi_{\tau})-f^{-1}(\phi_{\tau+\delta t})}{\delta t}d\tau\mathbf{v}_{g,n\delta t}^{\delta t}\\
&=\int_{k_1\delta t}^{(k_2-1)\delta t}\frac{f^{-1}(\phi_{\tau})-f^{-1}(\phi_{\tau+\delta t})}{\delta t}\mathbf{v}_{g,\tau+\delta t}d\tau.
\end{split}
\end{equation}
Now we wish to take the limit of this term as $\delta t\rightarrow0$. We have seen
in (\ref{insert}) that $\phi_t$ is Lipschitz in $t$ under $C^{k,\alpha}(\bT^2)$ norm,
hence $f^{-1}(\phi_t)$ will be Lipschitz, and for a.e $\tau\in[0,T)$, we have
$\frac{f^{-1}(\phi_{\tau})-f^{-1}(\phi_{\tau+\delta t})}{\delta t}
\rightarrow\partial_t(f^{-1}(\phi_{\tau})$.
Then, using also (\ref{4.5-0}), we find that
 the integrand will converge to
$-\partial_t(f^{-1}(\phi_{\tau}))\mathbf{v}_{g,\tau}$ for a.e $\tau\in [s,t]$.
Also we know the integrand is bounded on $[0, T]$ by some constant independent of $\delta t$,
by the said Lipschitz continuity of $f^{-1}(\phi_t))$ and also the boundedness of
$\mathbf{v}_{g}^{\delta t}$.
Hence we can apply the dominated convergence theorem to conclude that the integral
converges to $-\int_s^t\partial_t(f^{-1}(\phi(t,x))\mathbf{v}_g(t,x)dt$.

Combining all calculations above, we conclude that in the limit we have for any fixed $x\in\mathbb{T}^2$:
\begin{equation}\label{4.13}
\begin{split}
&\phi_t-\phi_s+(-J)[f^{-1}(\phi_t)\mathbf{v}_{g,t}-f^{-1}(\phi_{s})\mathbf{v}_g(s)\\
&\qquad\qquad-\int_s^t\partial_{\tau}(f^{-1}(\phi_{\tau}))\mathbf{v}_{g,\tau}d\tau]=\int_s^t\mathbf{v}_{g,\tau}d\tau.
\end{split}
\end{equation}
Or equivalently
\begin{equation}\label{4.14}
\phi_t-\phi_s+
(-J)\int_s^tf^{-1}(\phi_{\tau})\partial_{\tau}\mathbf{v}_g(\tau,\cdot)d\tau=
\int_s^t\mathbf{v}_{g,\tau}d\tau.
\end{equation}

Next we take the limit as $t\rightarrow s$. In the limit, we obtain the following:
\begin{prop}\label{p7.1}
For $\mathcal{L}^1-a.e$ $s\in[0,T]$, and for any $x\in\mathbb{T}^2$, it holds:
\begin{equation}\label{7.21new}
\partial_t\phi(s,\cdot)+(-J)f^{-1}(\phi_s)\partial_t\mathbf{v}_g(s,\cdot)=\mathbf{v}_g(s,\cdot),\\
\end{equation}
\begin{equation}\label{7.22new}
\begin{split}
\partial_t\phi(s,x)=\big[I+f^{-2}D^2p+f^{-1}\nabla p\otimes\nabla(f^{-1})\big]^{-1}&(s,\phi(s,x))\cdot(f^{-1}J\nabla p)(s,\phi(s,x))\\
&-f^{-2}\partial_t\nabla p(s,\phi(s,x)).
\end{split}
\end{equation}
\end{prop}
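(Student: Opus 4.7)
The plan is to pass to the limit $t\to s$ in the integral identity (\ref{4.14}) to obtain (\ref{7.21new}), and then to derive (\ref{7.22new}) by an algebraic manipulation that uses the chain-rule expression of $\partial_t\mathbf{v}_g$ in terms of $\partial_t\phi$, $\partial_t\nabla p$, and spatial derivatives of $f^{-1}J\nabla p$.

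For (\ref{7.21new}), fix $x\in\bT^2$ and view both sides of (\ref{4.14}) as $\bR^2$-valued functions of the time variable. By (\ref{insert}) and (\ref{4.5-0}), the maps $t\mapsto\phi_t$ and $t\mapsto\mathbf{v}_g(t,\cdot)$ are Lipschitz in time with values in $C^{k,\alpha}(\bT^2;\bR^2)$, and evaluating at a fixed $x$ preserves the Lipschitz property pointwise. Hence for $\mathcal{L}^1$-a.e. $s\in[0,T]$ the pointwise derivatives $\partial_t\phi(s,x)$ and $\partial_t\mathbf{v}_g(s,x)$ exist, and such an $s$ is a Lebesgue point of the bounded integrands $\tau\mapsto\mathbf{v}_g(\tau,x)$ and $\tau\mapsto f^{-1}(\phi_\tau(x))\partial_\tau\mathbf{v}_g(\tau,x)$. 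At such $s$, dividing (\ref{4.14}) by $(t-s)$ and letting $t\to s$ yields (\ref{7.21new}). The uniform-in-$x$ Lipschitz bounds and the continuity in $x$ of both sides (at any good $s$) allow the exceptional $s$-set to be taken independently of $x$, so the identity holds for every $x\in\bT^2$.

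For (\ref{7.22new}), recall that $\mathbf{v}_g(t,x)=f^{-1}(\phi(t,x))J\nabla p(t,\phi(t,x))$. Since $f$ is time-independent, the chain rule in $t$ at a good $s$ gives
\begin{equation*}
\partial_t\mathbf{v}_g \;=\; f^{-1}(\phi)\,J\,\partial_t\nabla p(t,\phi) \;+\; \bigl[(J\nabla p)\otimes\nabla(f^{-1}) + f^{-1}J\,D^2 p\bigr](t,\phi)\,\partial_t\phi.
\end{equation*}
Substituting this into (\ref{7.21new}), using $J^2=-I$ to simplify $Jf^{-1}(\phi)\cdot(\,\cdot\,)$, and moving all terms containing $\partial_t\phi$ to the left-hand side yields
\begin{equation*}
\bigl[I + f^{-1}\nabla p\otimes\nabla(f^{-1}) + f^{-2}D^2 p\bigr](t,\phi)\,\partial_t\phi \;=\; (f^{-1}J\nabla p)(t,\phi) - f^{-2}(\phi)\,\partial_t\nabla p(t,\phi).
\end{equation*}
The matrix on the left equals $I+f^{-1}D(f^{-1}Dp)$ evaluated at $\phi(s,x)$, which is uniformly positive-definite since $p(s,\cdot)\in\tilde{U_2}$ (pass (\ref{3.48-ind}) to the limit in $C^{k+1,\alpha}$ via (\ref{4.3})). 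Inverting it gives (\ref{7.22new}).

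The only genuine obstacle is the mild measure-theoretic point of arranging that the exceptional $s$-set in (\ref{7.21new}) is independent of $x$; this is handled by the uniform-in-$x$ time-Lipschitz estimates on $\phi(\cdot,x)$ and $\mathbf{v}_g(\cdot,x)$, combined with continuity in $x$ of the pointwise time derivatives at any fixed good $s$. Once (\ref{7.21new}) is known, the derivation of (\ref{7.22new}) is purely algebraic, and the invertibility of the matrix is inherited from the convexity condition that survives the $\delta t\to 0$ limit.
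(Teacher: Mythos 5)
Your proposal is correct and follows essentially the same route as the paper: differentiate the integrated Lagrangian identity (\ref{4.14}) in $t$ at a.e.\ $s$, then expand $\partial_t\mathbf{v}_g$ by the chain rule and solve the resulting linear system for $\partial_t\phi$ using the positive-definiteness of $I+f^{-1}D(f^{-1}Dp)$ inherited from the $\delta t\to 0$ limit. One small point worth making explicit: the phrase ``continuity in $x$ of the pointwise time derivatives at any fixed good $s$'' is precisely what needs proof, and the way the paper (and implicitly you) obtains it is via a Fubini step (for a.e.\ $s$ the derivative exists for a.e.\ $x$) followed by a compactness/uniqueness argument on the difference quotients $\tfrac{\phi_t-\phi_s}{t-s}$, which are uniformly bounded in $C^{k,\alpha}$ and hence precompact in $C^{k,\beta}$; since any subsequential limit agrees with the a.e.-defined derivative, it is unique, so $\partial_t\phi(s,\cdot)$ exists everywhere and lies in $C^{k,\alpha}$. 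Your Lebesgue-point formulation of the limit in the $\int_s^t f^{-1}(\phi_\tau)\partial_\tau\mathbf{v}_g\,d\tau$ term is an acceptable alternative to the paper's decomposition into $f^{-1}(\phi_s)(\mathbf{v}_{g,t}-\mathbf{v}_{g,s})$ plus an $o(t-s)$ remainder; both exploit the Lipschitz-in-time bounds in the same way.
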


\begin{proof}
From (\ref{4.5-0}) we conclude that
$\lim_{t\rightarrow s+}\frac{\mathbf{v}_g(t,x)-\mathbf{v}_g(s,x)}{t-s}$ exists
for $\mathcal{L}^3$-a.e $(s,x)\in[0,T)\times\bT^2$.
Now we fix some $s\in[0,T)$, such that the above limit exists
for ${\mathcal L}^2$-a.e  $x\in\times \bT^2$. Note that this property
holds for a.e.  $s\in[0,T)$, by Fubini's theorem.

For $s$ chosen above and $t\ne s$,
we divide (\ref{4.14}) by $t-s$, and  take a limit as $t\to s$.
Then the first term in the left-had side of
(\ref{4.14}) will tend to $\partial_t\phi(s,x)$,
for $\mathcal{L}^2-$a.e $x\in\mathbf{T}^2$ because of our choice of $s$,
while the term on the right hand side of (\ref{4.14}) will tend to $\mathbf{v}_g(s,x)$,
because of the Lipschitz continuity of $\mathbf{v}_g$ with respect to time-variable.
It remains to consider the integral in the left-hand side.
For this purpose,  we calculate for each $x\in\bT^2$, and any $0<s<t<T$:
\begin{equation*}
\begin{split}
\int_s^tf^{-1}(\phi_{\tau})\partial_{\tau}\mathbf{v}_g(\tau,\cdot)d\tau
=&\int_s^t\big(f^{-1}(\phi_{\tau})-f^{-1}(\phi_s)\big)\partial_{\tau}\mathbf{v}_g(\tau,\cdot)
d\tau\\
+&f^{-1}(\phi_s)\int_s^t\partial_{\tau}\mathbf{v}_g(\tau,\cdot)
d\tau\\
=\int_s^t\big(f^{-1}(\phi_{\tau})-f^{-1}(\phi_s)\big)\partial_{\tau}&\mathbf{v}_g(\tau,\cdot)
d\tau+f^{-1}(\phi_s)(\mathbf{v}_{g,t}-\mathbf{v}_{g,s}).
\end{split}
\end{equation*}
In the last line above, we used the fact that for any
$x\in\bT^2$, $t\longmapsto\mathbf{v}_g(t,x)$ is Lipschitz in $t$, and so the usual
Newton-Leibnitz formula holds.
Divide las line above by $t-s$, and let $t\rightarrow s$.
We observe that the first term will tend to zero, because $f^{-1}(\phi(t,x))$ is
Lipschitz in $t$, with a Lipschitz constant independent of $x$,
while $\partial_{\tau}\mathbf{v}_g(\tau,\cdot)$ is bounded.
The second term will tend to $\partial_t\mathbf{v}_g(s,x)$,
by the choice of $(s, x)$.
Therefore, after dividing by $t-s$ and letting $t\rightarrow s$,
we see above will tend to $f^{-1}(\phi_s)\partial_t\mathbf{v}_g(s,\cdot)$.

Now we can conclude that for $\mathcal{L}^1-a.e$ $s\in[0,T]$:
\begin{equation}\label{4.15}
\partial_t\phi(s,\cdot)+(-J)f^{-1}(\phi_s)\partial_t\mathbf{v}_g(s,\cdot)=\mathbf{v}_g(s,\cdot),\textrm{ for $\mathcal{L}^2-$a.e $x\in\mathbb{T}^2$}.
\end{equation}

On the other hand, $\frac{\phi_t-\phi_s}{t-s}$ is uniformly bounded in
$C^{k,\alpha}$ for any $t>s$ by (\ref{insert}).
Hence for any sequence $t_j\rightarrow s$ there exists a subsequence $t_{j_m}$
such that
$\frac{\phi_{t_{j_m}}-\phi_s}{t_{j_m}-s}$ converges in $C^{k,\beta}$ for any
$\beta<\alpha$, and the limit function
is equal to $\partial_t\phi(s,x)$ for a.e. $x\in\bT^2$
by the choice of $s$. Thus $C^{k,\beta}$-limit on $\bT^2$ does not depend on the
choice of the sequence and subsequence.
This shows that $\partial_t\phi(s,x)$
exists for all $x\in\bT^2$. Thus we proved that
\begin{equation}\label{4.15-xx}
\partial_t\phi(s,\cdot)\in C^{k,\alpha}(\bT^2) \;\textrm{ with }\;
\|\partial_t\phi(s,\cdot)\|_{C^{k,\alpha}}\le C
\;\textrm{ for a.e. }s\in[0, T],
\end{equation}
where $C$ does not depend on $s$.
We can apply similar arguments to $p_t$ using (\ref{p-Lip}), thus obtain:
\begin{equation}\label{4.15-xx-p}
\begin{split}
&\textrm{ for a.e. }s\in[0, T], \;
\partial_tp(s,x) \textrm{
exists for all $x\in\bT^2$ and }\\
&\partial_t p(s,\cdot)\in C^{k+1,\alpha}(\bT^2) \;\textrm{ with }\;
\|\partial_t p(s,\cdot)\|_{C^{k+1,\alpha}}\le C,
\end{split}
\end{equation}
where $C$ does not depend on $s$.
From (\ref{4.15-xx}), (\ref{4.15-xx-p}), we conclude that
for a.e. $s\in[0, T)$,
$\partial_t\nabla p(s,\cdot), \mathbf{v}_g(s,\cdot)\in C^{k,\alpha}$, and the usual chain
rule holds.
Then we obtain from (\ref{4.4}):
\begin{equation}
\begin{split}\label{chainrule}
\partial_t\mathbf{v}_g(s,x)=&J\big[f^{-1}(\phi(s,x))\partial_t\nabla p(s,\phi(s,x))+
f^{-1}(\phi(s,x))D^2p(s,\phi(s,x))\partial_t\phi(s,x)\\
&\qquad+\nabla p(s,\phi(s,x))\otimes\nabla(f^{-1})(\phi(s,x))\partial_t\phi(s,x)\big],\\
&\qquad\textrm{for $\mathcal{L}^2-$a.e $x\in\mathbb{T}^2$.}
\end{split}
\end{equation}
Observe that each term in both sides of (\ref{chainrule}) are at least $C^{k,\alpha}$, we see that (\ref{chainrule}) holds for every $x\in\mathbb{T}^2$.
Similar arguments also works for (\ref{4.15}), and we can conclude that for $\mathcal{L}^1-a.e$ $s\in[0,T]$, (\ref{4.15}) holds for every $x\in\mathbb{T}^2$.

By plugging (\ref{chainrule}) into (\ref{4.15}), we obtain for $\mathcal{L}^1-a.e$ $s\in[0,T)$:
\begin{equation}
\begin{split}
\big[I+f^{-2}D^2p+f^{-1}\nabla p\otimes\nabla(f^{-1})\big]
&(s,\phi(s,x))\partial_t\phi(s,x)\\
&=(f^{-1}J\nabla p)(s,\phi(s,x))-f^{-2}\partial_t\nabla p(s,\phi(s,x)),\\
&\textrm{ for every $x\in\mathbb{T}^2$}.
\end{split}
\end{equation}

By our construction, we have $I+f^{-2}D^2p(s,\cdot)+f^{-1}\nabla p(s,\cdot)\otimes\nabla (f^{-1})>\frac{c_0}{2}$, so we can invert this matrix, and obtain
\begin{equation}
\partial_t\phi(s,x)=\big[I+f^{-2}D^2p+f^{-1}\nabla p\otimes\nabla(f^{-1})\big]^{-1}(s,\phi(s,x))\cdot(f^{-1}J\nabla p)(s,\phi(s,x))-f^{-2}\partial_t\nabla p(s,\phi(s,x)).
\end{equation}
\end{proof}

\subsection{Regularity in time}
\label{passageLim-Sect-sub3}

In this section we show that the limiting equations (\ref{7.21new}), (\ref{7.22new}) hold
for any $(s,x)\in[0,T)\times\mathbb{T}^2$. For that, we need to show regularity of
$p$ and $\phi$ in time.

Fix any $\zeta\in C^1(\mathbb{T}^2)$. Using the measure-preserving property of
the
 map $\phi_t$ in (\ref{flowMapMeas}), we obtain
\begin{equation}
\begin{split}
&0=\left(\frac{d}{dt}\int_{\mathbb{T}^2}\zeta(\phi(t,x))dx\right)_{|t=s}=\int_{\mathbb{T}^2}
\nabla\zeta(\phi(s,x))\partial_t\phi(s,x)dx\\
&=\int_{\mathbb{T}^2}\nabla\zeta(\phi(s,x))\big[I+f^{-2}D^2p+f^{-1}\nabla p\otimes\nabla(f^{-1})\big]^{-1}\cdot(f^{-1}J\nabla p-f^{-2}\partial_t\nabla p)(s,\phi(s,x))dx\\
&=\int_{\mathbb{T}^2}\nabla\zeta(x)\big[I+f^{-2}D^2p+f^{-1}\nabla p\otimes\nabla(f^{-1})\big]^{-1}\cdot(f^{-1}J\nabla p-f^{-2}\partial_t\nabla p)(s,x)dx.
\end{split}
\end{equation}
That is, $\partial_tp(t,\cdot)$ solves the following uniformly elliptic equation of divergence
form for a.e $t\in[0,T)$:
\begin{equation}\label{7-23}
\nabla\cdot[(I+f^{-1}D(f^{-1}Dp))^{-1}f^{-2}\nabla (\partial_t p)]=\nabla\cdot[(I+f^{-1}D(f^{-1}Dp))^{-1}(f^{-1}J\nabla p)].
\end{equation}
Writing this equation in the form
\begin{equation}\label{7-23a}
\nabla\cdot[A(t,x)\nabla (\partial_t p)]=
\nabla\cdot F(t,x),
\end{equation}
Here $A(t,x)=(I+f^{-1}D(f^{-1}Dp))^{-1}(t,x)$. We have that $\|A(t, \cdot), F(t, \cdot)\|_{C^{k, \alpha}(\bT^2)}\le C$ for a.e. $t\in[0,T)$.
Then the standard estimates for linear elliptic equations imply
$\|\partial_t p(t, \cdot)\|_{C^{k+1, \alpha}(\bT^2)}\le C$ for a.e. $t\in[0,T)$. Next we want to obtain higher regularity in time.
\begin{prop}
For any $(s,x)\in[0,T)\times\mathbb{T}^2$, and $1\leq m\leq k+1$,
$\partial_t^mp(s,x)$ exists,
$\partial_t^mp\in L^{\infty}([0,T);C^{k+2-m,\alpha}(\mathbb{T}^2))$, and
(\ref{7.21new}), (\ref{7.22new}) hold for any $(s,x)\in[0,T)\times\mathbb{T}^2$.
\end{prop}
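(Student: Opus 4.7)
The plan is to upgrade the a.e.\ identities (\ref{7.21new}), (\ref{7.22new}) and the elliptic equation (\ref{7-23}) to hold pointwise everywhere, and then bootstrap temporal regularity by differentiating (\ref{7-23a}) in $t$.

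\textbf{Step 1 (pointwise existence of $\partial_t p$).} For each fixed $t\in[0,T)$, define $w(t,\cdot)$ to be the unique zero-mean solution in $C^{k+1,\alpha}(\bT^2)$ of the divergence-form elliptic equation (\ref{7-23}), with coefficients and right-hand side built from $p(t,\cdot)$. Since $p(t,\cdot)\in C^{k+2,\alpha}$ uniformly in $t$ and (\ref{1.7}) gives uniform ellipticity of the coefficient matrix, Schauder estimates yield $\|w(t,\cdot)\|_{C^{k+1,\alpha}}\le C$ for every $t$. By (\ref{7-23}), $w(t,\cdot)=\partial_t p(t,\cdot)$ for a.e.\ $t$, where the zero-mean identification uses $\int_{\bT^2}p(t,x)\,dx=0$ (preserved under the time-stepping scheme by construction). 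To upgrade to every $t$, observe that $p\in\mathrm{Lip}([0,T];C^{k+1,\alpha})\cap L^\infty([0,T];C^{k+2,\alpha})$ implies, by interpolation, that $p(\cdot,x)$ is continuous in $t$ under $C^{k+2,\alpha'}$ for any $\alpha'<\alpha$; hence the coefficients and right-hand side of (\ref{7-23}) are continuous in $t$ under $C^{k,\alpha'}$ and $C^{k+1,\alpha'}$ respectively. Continuous dependence of solutions of linear elliptic equations on their data then gives $w\in C([0,T);C^{k+1,\alpha'})$. Since the Lipschitz function $t\mapsto p(t,x)$ has a.e.\ derivative equal to the continuous function $t\mapsto w(t,x)$, it follows that $\partial_t p(t,x)=w(t,x)$ pointwise for every $(t,x)\in[0,T)\times\bT^2$.

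\textbf{Step 2 (bootstrap).} From Step~1 and continuity in $t$, the coefficient $A$ and source $F$ in (\ref{7-23a}) inherit Lipschitz continuity in $t$ under $C^{k,\alpha}$-norm. Differentiating (\ref{7-23a}) formally in $t$ gives, for a.e.\ $t$,
\begin{equation*}
\nabla\cdot[A(t,\cdot)\nabla\partial_t^2 p]=\nabla\cdot[\partial_t F-(\partial_t A)\nabla\partial_t p],
\end{equation*}
whose right-hand side is controlled in $C^{k-1,\alpha}$ in space using Step~1. Repeating the argument of Step~1 (solve the elliptic equation, identify the solution with the a.e.\ derivative via Lipschitz structure, use continuity in $t$ to upgrade pointwise) yields $\partial_t^2 p\in L^\infty([0,T);C^{k,\alpha})$ with pointwise existence on $[0,T)\times\bT^2$. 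An induction on $m$: assuming $\partial_t^j p\in L^\infty(C^{k+2-j,\alpha})$ with Lipschitz-in-time structure for $j<m$, a formal $m$-th time derivative of (\ref{7-23a}) produces an elliptic equation for $\partial_t^m p$ with the same principal part $A$ and a right-hand side (a polynomial in lower $\partial_t^j p$ and spatial derivatives thereof, paired with derivatives of the coefficients) bounded in $C^{k+1-m,\alpha}$. Schauder plus the Step~1 argument gives $\partial_t^m p\in L^\infty(C^{k+2-m,\alpha})$ pointwise; the induction continues as long as $k+1-m\ge 0$, i.e.\ up to $m=k+1$.

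\textbf{Step 3 (pointwise validity of (\ref{7.21new}), (\ref{7.22new})).} By Proposition~\ref{p7.1}, identities (\ref{7.21new}) and (\ref{7.22new}) hold for a.e.\ $s\in[0,T)$ and every $x\in\bT^2$. After Steps~1--2, every term in these identities is continuous in $(s,x)\in[0,T)\times\bT^2$: $\partial_t\phi$ is continuous by (\ref{insert}) and the continuous dependence established through (\ref{7.22new}) itself combined with continuity of $\partial_t\nabla p$ and $\nabla p$; $\mathbf{v}_g$ and its time derivative are continuous via (\ref{chainrule}) and Step~1. Hence both identities extend to every $(s,x)$ by density of the a.e.\ set. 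The main obstacle is the bookkeeping in Step~1: one must carefully distinguish the pointwise-a.e.\ derivative $\partial_t p$ (from the Lipschitz structure) from the continuous representative $w$ obtained by solving the elliptic equation, and invoke the zero-mean normalization to pin down the constant of integration.
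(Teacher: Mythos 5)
Your approach is essentially a reorganization of the paper's argument: where the paper establishes (\ref{p-seconDiffQuot-est}) by subtracting (\ref{7-23}) at two times $t_1,t_2$ and running elliptic estimates on the resulting difference quotient (equation (\ref{7-24})), you instead solve (\ref{7-23}) for each fixed $t$ to get a candidate $w(t,\cdot)$, prove $t\mapsto w(t,\cdot)$ is continuous by continuous dependence on coefficients, and identify $w$ with the a.e.\ time derivative of $p$ via the Lebesgue form of the fundamental theorem of calculus. Both routes rest on the same ingredients (elliptic regularity for (\ref{7-23}), the a priori Lipschitz-in-time bound (\ref{p-Lip}), a bootstrap), and your Step 1 is clean.

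The gap is in Step 2. You write down the formally time-differentiated equation and treat it as the defining equation for $\partial_t^2 p$, but it is not automatic that the a.e.\ second time derivative (which exists once you show $\partial_t p$ is Lipschitz in $t$) actually satisfies that equation. The phrase ``identify the solution with the a.e.\ derivative via Lipschitz structure'' presupposes this. To justify it you still need the finite-difference step: subtract (\ref{7-23}) at $t_1,t_2$, divide by $t_1-t_2$, estimate (this is exactly the paper's (\ref{7-24})--(\ref{p-seconDiffQuot-est})), and then pass to the limit to see that the a.e.\ derivative of $\partial_t p$ solves the differentiated equation. Without that bridge, your induction does not close. Two further exponent slips that do not affect the conclusion but should be corrected: the RHS of (\ref{7-23}) lives in $C^{k,\alpha}$, not $C^{k+1,\alpha}$, since it is a product of a $C^{k,\alpha}$ matrix with a $C^{k+1,\alpha}$ vector field; and $A$, $F$ in (\ref{7-23a}) are Lipschitz in $t$ only under $C^{k-1,\alpha}$, not $C^{k,\alpha}$, because they involve $D^2p$ and $p$ is Lipschitz in $t$ only under $C^{k+1,\alpha}$ (you do then correctly arrive at $C^{k-1,\alpha}$ for the differentiated right-hand side, which is what matters).
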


\begin{proof}
Formally we can differentiate (\ref{7-23}) in $t$,
and obtain a similar elliptic equation for $\partial_t^2p_t$, and apply the $C^{k,\alpha}$ estimates. To proceed rigorously, we take any $t_1<t_2$ for which (\ref{7-23}) holds true, and take their difference:
\begin{equation}\label{7-24}
\begin{split}
&\nabla\cdot\bigg(A(t_1)f^{-2}\nabla\big(\frac{\partial_tp
(t_1)-\partial_tp(t_2)}{t_1-t_2}\big)\bigg)=-\nabla\cdot\bigg(\frac{A(t_1)-A(t_2)}{t_1-t_2}f^{-2}\partial_t\nabla p(t_2)\bigg)\\
&+\nabla\cdot\bigg(A(t_1)f^{-1}J\frac{\nabla p(t_1)-\nabla p(t_2)}{t_1-t_2}+\frac{A(t_1)-A(t_2)}{t_1-t_2}f^{-1}J\nabla p(t_2)\bigg),
\end{split}
\end{equation}
where $A(t)(x)=A(t,x)$ for $A(t,x)$ defined above.
Recall that we have the estimate $A(t_1)$ in $C^{k,\alpha}(\bT^2)$. Also,
by  (\ref{p-Lip})we have an
estimate for
$\frac{A(t_1)-A(t_2)}{t_1-t_2}$ in $C^{k-1,\alpha}$
independent of $t_1, t_2\in [0,T]$. Thus the right hand side
of (\ref{7-24}) has form $\nabla\cdot G(x)$, where
$\|G(\cdot)\|_{C^{k-1,\alpha}}\le C$
for $C$ independent of $t_1, t_2\in [0,T]$. By $C^{1,\alpha}$ estimates
for elliptic equations, we
obtain
\begin{equation}\label{p-seconDiffQuot-est}
\|\frac{\partial_tp
(t_1)-\partial_tp(t_2)}{t_1-t_2}\|_{C^{k,\alpha}(\bT^2)}\le C
\;\textrm{ for a.e. }\; t_1, t_2\in [0,T),
\end{equation}
where $C$ is independent of $t_1, t_2\in [0,T)$.


Hence  $\partial_tp$ exists for all
$(t,x)\in[0,T)\times\mathbb{T}^2$: indeed fix $x\in\bT^2$.
From (\ref{p-seconDiffQuot-est}), functions $g_h(t)=\frac{p(t+h, x)-p(t,x)}h$
defined on $[0, T-|h|]$
for $|h|<T/2$, are equiLipschitz on their common domains,
thus from any sequence
 $h_i\to 0$ one can extract a subsequence $h_{i_k}$ so that
 $g_{h_{j_k}}(\cdot)$
converge to a uniform limit $g^\infty\in C([0,T)$
 on compact subsets of $[0, T)$. Also, (\ref{4.15-xx-p})
 implies that $g^\infty(t)=\partial_tp(t,x)$ for a.e.
 $t$, thus  $g^\infty(t)$ is independent of the subsequence $h_{i_j}$
 for such $t$.
It follows that the uniform limit $g^\infty(\cdot)$ is independent of the subsequence
$h_{i_k}$, i.e. $\lim_{h\to 0}\frac{p(t+h, x)-p(t,x)}h$ exists for each
$t\in [0, T)$.
Thus we proved that for each $x\in \bT^2$, $\partial_tp(\cdot, x)$ exists for all
$t\in[0,T)$. Also, now (\ref{p-seconDiffQuot-est}) holds for
all $t_1, t_2\in [0,T)$.

Here we can see (\ref{7.22new}) holds for any $(s,x)\in[0,T)\times\mathbb{T}^2$, since the right hand side of (\ref{7.22new}) is continuous in $s$.
The continuity of the term $\partial_t\nabla p$ comes from the estimate (\ref{p-seconDiffQuot-est}).
From (\ref{chainrule}), we know $\partial_t\mathbf{v}_g$ is also continuous in $s$ for each fixed $x$. Hence (\ref{7.21new}) is true for all $s$.

Then we let $t_2\rightarrow t_1$
in   (\ref{p-seconDiffQuot-est}), and repeating the
proof of
 (\ref{4.15-xx})--(\ref{4.15-xx-p}), we obtain that
for a.e. $t\in[0,T)$ the
$\partial_t^2p(t,x)$ exists for all $x\in\mathbb{T}^2$, and
$\|\partial_t^2p(t,\cdot)\|_{C^{k,\alpha}(\bT^2)}\le C$ for those $t$,
with $C$ independent of $t$.
Then we can repeat the previous argument and obtain the claimed regularity for
$\partial_t^{k+2-m}p(t,\cdot)$ for $0\leq m\leq k+1$.
From (\ref{7.22new}) we see that
$\partial_t\phi(t)\in L^{\infty}(0,T;C^{k,\alpha}(\bT^2))$.
Also, recall that $\phi(t):\bT^2\to \bT^2$ is a diffeomorphism for each $t$,
and (\ref{flowMapDiffeo}) holds.
Hence we define
\begin{equation*}\mathbf{u}(t, x)=\partial_t\phi(t,\phi^{-1}(t,x)),\end{equation*}
and obtain
$\mathbf{u}\in L^{\infty}(0,T;C^{k,\alpha})$.
\end{proof}
In (\ref{4.15}), we replace $x$ by
$\phi^{-1}(t,x)$, thus get
\begin{equation}\label{4.16}
\mathbf{u}+(-J)f^{-1}(x)D_t\mathbf{u}_g=\mathbf{u}_g.
\end{equation}

This gives equation (\ref{1.2}), (\ref{1.3}). Also, we have remarked in (\ref{flowMapMeas}) that $\phi_t$ is measure preserving.
Then it follows that $\mathbf{u}$ is divergence free, i.e. (\ref{1.4}) holds.

\section{Extension to SG Shallow Water with variable $f$}
\label{varCoriolisPar-sect}
In this section, we will extend previous approach  to the SG shallow water equations with periodic boundary conditions. The SGSW equation is the following:
\begin{align}\label{5.1}
&(u_g,v_g)=f^{-1}(-\frac{\partial h}{\partial y},\frac{\partial h}{\partial x}),\\
\label{5.2}
&D_tu_g+\frac{\partial h}{\partial x}-fv=0,\\
\label{5.3}
&D_tv_g+\frac{\partial h}{\partial y}+fu=0,\\
\label{5.4}
&\frac{\partial h}{\partial t}+\nabla \cdot(h\mathbf{u})=0.
\end{align}
With initial data $h|_{t=0}=h_0$. Recall the assumptions made in Theorem \ref{1.9}.
\begin{equation}\label{5.5}
h_0\in C^{k+2,\alpha}(\bT^2) \textrm{ for some $k\geq2$};\,\,\,\,
I+f^{-1}D(f^{-1}Dh_0)>c_0;\,\,\,\, \int_{[0,1)^2} h_0=1;\,\,\,h_0\geq c_1.
\end{equation}

Then as in Section \ref{TimeSteppingProcSect}, we can discretize in time and
thus obtain a time-difference equation, and we need to solve:
\begin{equation}\label{5.6}
x+f^{-1}(x)f^{-1}(F^{-1}(x))\nabla q(x)=F^{-1}(x)+f^{-2}R_{f\delta t}(\nabla h)(F^{-1}(x)).
\end{equation}
In the above, $q(x)=h_{n+1}(x)$, $h(x)=h_n(x)$, and $F=F_{n+1}$. This is the same
equation as (\ref{1.30}). Now we require $\int_{[0,1]^2} q=\int_{[0,1]^2} h=1$.
In the shallow water setting, $F$ does not preserve Lebesgue measure, but should satisfy the following push-forward condition:
\begin{equation}\label{5.7}
F_\#(hdx)=qdy,
\end{equation}
which, in case of  continuous $h, q$ and $C^1$ diffeomorphism $F$, is equivalent to
\begin{equation}\label{5.8}
\det DF^{-1}=\frac{q(x)}{h(F^{-1}(x))}.
\end{equation}
Therefore, recalling (\ref{1.31}), (\ref{1.34}) (or its version (\ref{3.3}))
 we see the equation we need to solve is now
\begin{equation}\label{5.9}
\frac{\det(I+f^{-1}(x)\nabla q\otimes\nabla(f^{-1})+f^{-2}D^2q+A(x))}{\det[(I+f^{-1}\nabla h\otimes\nabla (f^{-1})+f^{-2}D^2h)(F^{-1}(x))+B(x)]}=\frac{q(x)}{h(F^{-1}(x))}.
\end{equation}
where $A(x)$ and $B(x)$ is the same (\ref{1.32}),(\ref{1.33}), with $p_{n+1}$ replaced by $q$, $p_n$ replaced by $h$.
Thus we need to solve (\ref{5.6}), (\ref{5.9}) for $\pNP$ and $F$ and show that $F$ is a
diffeomorphism.

Since equation (\ref{5.6}) differs from (\ref{3.2}) only by notations
($p$ changed to $h$), then all results of Section \ref{constMaps-Sect},
with $p$ replaced by $h$, holds for equation (\ref{5.6}). In particular,
map $\Mp$ is defined as in Lemma \ref{2.11}, and satisfies properties
in Lemmas \ref{2.11}, \ref{mapIsDiffeo}, and the map
$F=(Id+\Mp(h,\pNP, \delta t))^{-1}$ solves  equation (\ref{5.6})
for given
$(h,\pNP, \delta t)\in V_{\hat\eps_1'}(\pN_0)\times(-\hat\eps_1', \hat\eps_1')$,
where $\eps_1'$ is defined in Lemma \ref{mapIsDiffeo}.
Also, Lemma \ref{l5.1} holds.

Next we use implicit function theorem to solve (\ref{5.9}) for $\pNP$,
with $F$ defined by
$F^{-1}= Id+\Mp(\pN,\pNP, \delta t)$. Denote by $C_1^{2,\alpha}$ the
following affine
subspace of $C^{2,\alpha}(\bT^2)$:
$$
C_1^{2,\alpha}=\big\{w\in C^{2,\alpha}(\bT^2)\;:\;\int_{[0,1)^2} w=1\,\big\}.
$$
Consider the following subset $U_2$ of $C_1^{2,\alpha/2}$:
\begin{equation}\label{5.10}
U_2=\{w\in C_1^{2,\alpha/2}:\;I+f^{-1}D(f^{-1}Dw)>\frac{c_0}{2},\;\; w>\frac{c_1}{2},
\;\; \|w-h_0\|_{2,\alpha/2}<\epsTwo\}.
\end{equation}
Here $\epsTwo$ is from Lemma \ref{l5.1}.
Then $\Mp(h,q,\delta t)$ is defined, and $id+\Mp(h,q,\delta t)$ is a
$C^{1,\alpha/2}_{loc}$ diffeomorphism of $\bR^2$, as long as $\|h-h_0\|_{3,\alpha}<\epsTwo$, $\|q-h_0\|_{2,\alpha/2}<\epsTwo$, and $|\delta t|<\epsTwo$.

Let $\tilde{U_2}\subset C_1^{3,\alpha}$ be defined by
$$
\tilde{U_2}=\{w\in C_1^{3,\alpha}:\;I+f^{-1}D(f^{-1}Dw)>\frac{c_0}{2},\;\; w>\frac{c_1}{2},
\;\; \|w-h_0\|_{3,\alpha}<\epsTwo\}.
$$
 By Lemmas \ref{2.11} and \ref{mapIsDiffeo},
this guarantees that the map $F^{-1}=id+\Mp(h,q,\delta t)$ can
be defined and invertible if $q\in U_2$ and $h\in\tilde{U_2}$, and that
$F$ is a
Frechet differentiable map. Define a map
\begin{equation}\label{5.11}
\begin{split}
&\qquad\quad P:U_2\times\tilde{U_2}\times(-\epsTwo,\epsTwo)\rightarrow C_0^{0,\alpha/2}\\
&(q,h,\delta t)\longmapsto \frac{h\circ(id+\Mp(h,q,\delta t))\,\det(I+
f^{-1}\nabla q\otimes\nabla(f^{-1})+
f^{-2}D^2q+A))}{\det[(I+f^{-1}\nabla h\otimes\nabla (f^{-1})+
f^{-2}D^2h)\circ(id+\Mp(h,q,\delta t))+B]}-q,
\end{split}
\end{equation}
where the matrices $A$ and $B$ are given by (\ref{1.32}) and (\ref{1.33}) with
$(p_n,\,p_{n+1})$ replaced by $(h, q)$, and
$F^{-1}_{n+1}$ replaced by $id+\Mp(h,q,\delta t)$. It is clear from above formula
that $P$ maps into $C^{0,\alpha/2}$.

Now we need to show that the right hand
side of (\ref{5.11}) has integral zero. This is similar to
assertion (\ref{l5.2-i1}) of Lemma \ref{l5.2}. We first show the
related property in a more general setting for the application below.
Namely, fix
 $h_0\in U_1\cap C^{k+2,\alpha}$, and define
the versions of the sets $U_2$ and $\tilde U_2$ replacing
spaces $C^{m,\alpha}_1$ by spaces $C^{m,\alpha}$ (i.e. removing the condition
 $\int q_{[0,1]^2}=1$ in the definitions):
\begin{align*}
&U_2'=\{q\in C^{2,\alpha/2}:I+f^{-1}D(f^{-1}Dq)>\frac{c_0}{2}, \|q-p_0\|_{2,\alpha/2}<\epsTwo\};
\\
&\tilde{U_2}'=\{q\in C^{3,\alpha}:I+f^{-1}D(f^{-1}Dq)>\frac{c_0}{2}, \|q-p_0\|_{3,\alpha}<\epsTwo\}.
\end{align*}
Then, for $ (q,h,\delta t)\in U_2'\times\tilde{U_2}'\times(-\epsTwo,\epsTwo)$,
the expression defining $P$ in (\ref{5.11}) is well-defined by Lemmas \ref{2.11},
and \ref{l5.1}. Then we prove the following:

\begin{lem}\label{noZeroAverage}
Denote $Q(q,h,\delta t):=P(q,h,\delta t)+q$, i.e.
$$
Q(q,h,\delta t)= \frac{h\circ(id+\Mp(h,q,\delta t))\,\det(I+
f^{-1}\nabla q\otimes\nabla(f^{-1})+
f^{-2}D^2q+A))}{\det[(I+f^{-1}\nabla h\otimes\nabla (f^{-1})+
f^{-2}D^2h)\circ(id+\Mp(h,q,\delta t))+B]},
$$
where $A$ and $B$ are as in (\ref{5.11}). Then
for any $ (q,h,\delta t)\in U_2'\times\tilde{U_2}'\times(-\epsTwo,\epsTwo)$,
\begin{equation}\label{integralRatioZero}
\int_{[0,1)^2}Q(q,h,\delta t)(x)\,dx=\int_{[0,1)^2}h(x)dx.
\end{equation}
\end{lem}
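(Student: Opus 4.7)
The plan is to mimic the proof of Lemma~\ref{l5.2}(\ref{l5.2-i1}), the only new ingredient being the factor $h\circ(Id+\Mp(h,q,\delta t))$ in the numerator. The point is that the ratio of determinants appearing in $Q$ is nothing but the Jacobian of $F^{-1}\defd Id+\Mp(h,q,\delta t)$, so after a change of variables the desired identity reduces to the $\bZ^2$-periodic change-of-variables formula already used in Lemma~\ref{l5.2}.

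Concretely, I first note that equation (\ref{1.31}), applied with $p_n=h$, $p_{n+1}=q$ and $F_{n+1}^{-1}=Id+\Mp(h,q,\delta t)$, may be rewritten as
\begin{equation*}
\bigl[(I+f^{-1}\nabla h\otimes\nabla(f^{-1})+f^{-2}D^2h)\circ F^{-1}+B\bigr]\,DF^{-1}
=I+f^{-1}\nabla q\otimes\nabla(f^{-1})+f^{-2}D^2q+A.
\end{equation*}
By Lemma~\ref{l5.1} the matrix on the left bracket is $\geq \frac{c_0}{4}I$, so I may take determinants and solve for the Jacobian to obtain
\begin{equation*}
\det DF^{-1}(x)=\frac{\det\bigl(I+f^{-1}\nabla q\otimes\nabla(f^{-1})+f^{-2}D^2q+A\bigr)(x)}
{\det\bigl[(I+f^{-1}\nabla h\otimes\nabla(f^{-1})+f^{-2}D^2h)\circ F^{-1}+B\bigr](x)}.
\end{equation*}
Substituting this into the definition of $Q$ yields the identity $Q(q,h,\delta t)(x)=h\bigl(F^{-1}(x)\bigr)\,\det DF^{-1}(x)$.

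Next I integrate over $[0,1)^2$ and change variables $y=F^{-1}(x)$. By Lemma~\ref{mapIsDiffeo}, $F^{-1}$ is a $C^{1,\alpha/2}$ diffeomorphism of $\bR^2$, and by Lemma~\ref{perOfMap} (i.e.\ $F^{-1}\in C^{1,\alpha/2}_p(\bR^2;\bR^2)$) we have $F^{-1}(x+k)=F^{-1}(x)+k$ for every $k\in\bZ^2$. Hence the periodic change-of-variables lemma (Lemma~\ref{chgVarOnTorus}, applied with the periodic integrand $h$ in place of the constant $1$) gives
\begin{equation*}
\int_{[0,1)^2}h\bigl(F^{-1}(x)\bigr)\,\det DF^{-1}(x)\,dx
=\int_{F^{-1}([0,1]^2)}h(y)\,dy
=\int_{[0,1)^2}h(y)\,dy,
\end{equation*}
which is exactly (\ref{integralRatioZero}).

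There is essentially no obstacle: the only thing one must be careful about is that the set-up of Lemma~\ref{noZeroAverage} uses the \emph{unrestricted} spaces $U_2'$, $\tilde U_2'$ (where no normalization on the mean is imposed), so the existence and regularity of $\Mp(h,q,\delta t)$ and the positivity bound of Lemma~\ref{l5.1} must be invoked in that setting; but these results were proved under exactly the hypotheses $\|h-h_0\|_{3,\alpha}<\epsTwo$, $\|q-h_0\|_{2,\alpha/2}<\epsTwo$, $|\delta t|<\epsTwo$, with no mean assumption, so they apply verbatim. Once $F^{-1}$ is a $\bZ^2$-equivariant diffeomorphism, the computation above goes through.
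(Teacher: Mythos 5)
Your proof is correct and takes essentially the same route as the paper: you observe that the ratio of determinants is $\det DF^{-1}$ via equation (\ref{1.31}), so $Q=(h\circ F^{-1})\det DF^{-1}$, and then apply the periodic change-of-variables formula (Lemma \ref{chgVarOnTorus}). The paper's proof is identical in structure, just more terse; your extra remarks about Lemma \ref{l5.1} and the applicability in the unnormalized spaces $U_2'$, $\tilde U_2'$ are correct and harmless.
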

\begin{proof}
Fix $ (q,h,\delta t)\in U_2'\times\tilde{U_2}'\times(-\epsTwo,\epsTwo)$.

From (\ref{1.31}), we see the map $Q$ can be written as
$$
Q(q,h,\delta t)= h\circ(id+\Mp(h,q,\delta t))\det D_x(id+\Mp(h,q,\delta t)).
$$
Also, $id+\Mp(h,q,\delta t):\bR^2\to \bR^2$ is a diffeomorphism by
Lemma \ref{mapIsDiffeo}.

Denote $G:=id+\Mp(h,q,\delta t)$, then, as we showed,
$G: \bR^2\to \bR^2$ is a diffeomorphism
and
$
Q(q,h,\delta t)= (h\circ G)\det D G$.
Now we calculate changing variables:
\begin{align*}
\int_{[0,1)^2}h(G(x))\det DG(x)\,dx=
\int_{G([0,1)^2)}h(y)dy=
\int_{[0,1)^2}h(x)dx,
\end{align*}
where the last equality follows from $\bZ^2$-periodicity of $G-id= \Mp(h,q,\delta t)$
and $h$, and from the fact that
$G: \bR^2\to \bR^2$ is a diffeomorphism, see Lemma \ref{chgVarOnTorus} for details.
\end{proof}

Now, for $(q,h,\delta t)\in U_2\times\tilde{U_2}\times(-\epsTwo,\epsTwo)$,
  the right hand
side of (\ref{5.11}) has integral zero by Lemma \ref{noZeroAverage}
and since $\int_{[0,1)^2} h=\int_{[0,1)^2} q=1$. Thus map $P$ indeed maps
$U_2\times\tilde{U_2}\times(-\epsTwo,\epsTwo)$
into $C_0^{0,\alpha/2}$.
Furthermore, similar to Lemma \ref{l5.2}(\ref{l5.2-i2}), one can show that the map $P$
in (\ref{5.11}) is continuously Frechet differentiable.

To apply the implicit function theorem, we need to show the linearized map
$D_qP(h_0,h_0,0): C_0^{2,\alpha/2}\rightarrow C_0^{0,\alpha/2}$ is invertible.
By the argument similar to the calculations before Lemma \ref{3.9}, we obtain
\begin{equation}\label{5.12}
D_qP(h_0,h_0,0)\hOne=a_{ij}\partial_{ij}\hOne+b_i\partial_i\hOne-\hOne:=L(\hOne).
\end{equation}
where the coefficients $a_{ij},b_i\in C^{0,\alpha/2}(\bT^2)$.
The injectivity follows from the strong maximum principle since the coefficient of
the zero-order term is negative.

To see that $L: C_0^{2,\alpha/2}\rightarrow C_0^{0,\alpha/2}$ is surjective,
we first we notice
$L:C^{2,\alpha/2}(\bT^2)\rightarrow C^{0,\alpha/2}(\bT^2)$ is
injective by the strong maximum principle. Then we show that $L$
surjective as a map
from $C^{2,\alpha/2}(\bT^2)$ to $C^{0,\alpha/2}(\bT^2)$. Indeed,
this follows
from the method of continuity
similar to the proof of Lemma \ref{3.9}, except that now we
use $L_0$ defined by $L_0\hOne=\Delta \hOne-\hOne$,
and show existence of solution to $L_0w=k$ by minimizing the functional
$I[v]=\int_{\mathbb{T}^2}\frac{1}{2}(|\nabla v|^2+v^2)+kv$ over the space
$H^1(\mathbb{T}^2):=\{v\in H_{loc}^1(\mathbb{R}^2):\textrm{ $v$
is  $\mathbb{Z}^2$-periodic}\}$.
Now, given $k_1\in C_0^{0,\alpha/2}$, we can find $w\in C^{2,\alpha/2}$ such
that $L(\hOne)=k_1$. It only remains to show $\hOne\in C_0^{2,\alpha/2}$,
i.e. that $\int_{\mathbb{T}^2}\hOne =0$.
We first note that $\int_{\mathbb{T}^2}L(\hOne)=\int_{\mathbb{T}^2}k_1=0$.
Thus, using the form of $L$ in (\ref{5.12}), it remains to show that
$\int_{\mathbb{T}^2} a_{ij}\partial_{ij}\hOne+b_i\partial_i\hOne=0$.
This can be seen as following.
Let $Q(q,h,\delta t)=P(q,h,\delta t)+q$ as in Lemma \ref{noZeroAverage}.
Then, from Lemma \ref{noZeroAverage} and since
$\int_{[0,1)^2} h=1$, we see that
 $Q$ maps $U_2'\times\tilde{U_2}'\times(-\epsTwo,\epsTwo)$
into $C^{0,\alpha/2}_1$. From (\ref{5.5}) and (\ref{5.10}), $h_0+\eps w\in U_2'$
if $|\eps|$ sufficiently
small.
Also, $a_{ij}\partial_{ij}\hOne+b_i\partial_i\hOne= D_qQ(h_0,h_0,0)\hOne$.
Hence
$$
\int_{\mathbb{T}^2}a_{ij}
\partial_{ij}\hOne+b_i\partial_i\hOne=
\left(\frac{d}{d\eps}\int_{\mathbb{T}^2}Q(h_0+\eps \hOne,h_0,0)\right)_{|\eps=0}
=0.
$$
This proves $\hOne\in C_0^{2,\alpha/2}$.

Above discussion shows the following proposition, which is
similar to Proposition \ref{5.4spm} in the SG case:
\begin{prop}\label{5.4spmsw}
There exist $\epsThree, \epsFour\in (0, \epsTwo]$ with
$\epsFour\le \epsThree$, such that for any $h\in C^{3,\alpha}_0(\bT^2)$ with
$\|h-h_0\|_{3,\alpha}<\epsFour$ and $|\delta t|<\epsFour$, there exists a unique
$q\in C^{2,\alpha/2}(\bT^2)$ which solves (\ref{3.3})
with $F^{-1}=Id+\Mp(\pN,\pNP, \delta t)$
 and satisfies
$\|q-h_0\|_{2,\alpha/2}<\epsThree$.

Thus, denoting $q:=\mathcal{H}(p,\delta t)$, we define a map
$$\mathcal{H}:\{\|p-p_0\|_{3,\alpha}<\epsFour\}\times(-\epsFour,\epsFour)\rightarrow U_2.
$$
\end{prop}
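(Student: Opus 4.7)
The plan is to apply the Implicit Function Theorem in Banach spaces to the map $P$ defined in (\ref{5.11}), exactly as in the proof of Proposition \ref{5.4spm}. All the ingredients are already in place in the text preceding the statement: (a) $P$ acts between the Banach spaces $U_2 \times \tilde{U}_2 \times (-\epsTwo,\epsTwo) \to C_0^{0,\alpha/2}$ (this used Lemma \ref{noZeroAverage} to ensure the zero-average property of the range); (b) $P$ is continuously Frechet differentiable, by an argument parallel to Lemma \ref{l5.2}(\ref{l5.2-i2}) relying on the regularity of $\Mp$ from Lemma \ref{2.11} and the composition lemmas in the appendix; (c) the partial derivative $D_qP(h_0,h_0,0): C_0^{2,\alpha/2} \to C_0^{0,\alpha/2}$ computed in (\ref{5.12}) has the form $Lw = a_{ij}\partial_{ij}w + b_i\partial_i w - w$ with strictly negative zero-order term, and the surjectivity--injectivity analysis presented in the paragraphs just above the statement identifies $L$ as a linear isomorphism.

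First I would verify the base-point identity $P(h_0,h_0,0) = 0$. When $q = h = h_0$ and $\delta t = 0$, the equation $Q_1(h_0,h_0,w_0,0) = 0$ is solved by $w_0 \equiv 0$ (see (\ref{5.10-eq-Zero-Q1})), so by uniqueness in Lemma \ref{2.11} we get $\Mp(h_0,h_0,0) = w_0$, hence $F^{-1} = \mathrm{Id}$ and $A \equiv B \equiv 0$. Substituting into (\ref{5.11}) gives
\begin{equation*}
P(h_0,h_0,0) = \frac{h_0\,\det(I + f^{-1}D(f^{-1}Dh_0))}{\det(I + f^{-1}D(f^{-1}Dh_0))} - h_0 = 0.
\end{equation*}

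Next I would apply the Implicit Function Theorem at the point $(h_0, h_0, 0)$ to the equation $P(q, h, \delta t) = 0$ solved for $q$ in terms of $(h, \delta t)$. By (a)--(c) above and the verified base-point identity, the hypotheses are satisfied, producing $\epsThree, \epsFour \in (0, \epsTwo]$ with $\epsFour \le \epsThree$ and a continuously Frechet differentiable map
\begin{equation*}
\mathcal{H} : \{h \in C_1^{3,\alpha}(\bT^2) : \|h - h_0\|_{3,\alpha} < \epsFour\} \times (-\epsFour, \epsFour) \to U_2
\end{equation*}
such that $q = \mathcal{H}(h, \delta t)$ is the unique element of $U_2$ (in particular, with $\|q - h_0\|_{2,\alpha/2} < \epsThree$) satisfying (\ref{5.9}) with $F^{-1} = \mathrm{Id} + \Mp(h, q, \delta t)$. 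The normalisation $\int_{[0,1)^2} q\,dx = 1$ is built into $U_2$ and is automatic from Lemma \ref{noZeroAverage} together with $P(q,h,\delta t) = 0$ and $\int_{[0,1)^2} h\,dx = 1$.

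The only non-routine point is the surjectivity of the linearisation $L: C_0^{2,\alpha/2} \to C_0^{0,\alpha/2}$, specifically the verification that the solution $w \in C^{2,\alpha/2}(\bT^2)$ of $Lw = k_1$ has zero mean whenever $k_1 \in C_0^{0,\alpha/2}$. This is precisely where the shallow water argument differs from the SG case: one cannot conclude from the strong maximum principle alone. However, this step is already handled in the text preceding the statement via the identity $\int_{\bT^2}(a_{ij}\partial_{ij}w + b_i \partial_i w)\,dx = \frac{d}{d\eps}\big|_{\eps=0}\int_{\bT^2} Q(h_0+\eps w, h_0, 0)\,dx = 0$, which follows from Lemma \ref{noZeroAverage}. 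With this in hand, the proof of the proposition reduces to quoting the Implicit Function Theorem.
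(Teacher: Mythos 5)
Your proof is correct and takes essentially the same approach as the paper: the paper's "proof" is the phrase "Above discussion shows the following proposition," meaning exactly the assembly of ingredients you spelled out — the well-definedness of $P$ into $C_0^{0,\alpha/2}$ via Lemma \ref{noZeroAverage}, Frechet differentiability, and the invertibility of the linearization $L(w)=a_{ij}\partial_{ij}w+b_i\partial_iw-w$, with the zero-mean property of the solution to $Lw=k_1$ handled via the differentiation-of-$Q$ identity. Your explicit verification of the base-point identity $P(h_0,h_0,0)=0$ and your correct identification of the domain as the mean-one affine space $C_1^{3,\alpha}$ (rather than the $C_0^{3,\alpha}$ appearing as a typo in the statement) are welcome clarifications, but they do not change the route.
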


With this, we can set up the iteration as
in the beginning of Section \ref{constApprSol-1-Sect}.

Next we  establish estimates for the approximate solutions,
similar to Section \ref{constApprSol-1-Sect}. The proofs essentially repeat the corresponding proofs in
Section \ref{constApprSol-1-Sect}, thus below we will only include the parts of
the proofs which
differ from the SG case.
\begin{lem}\label{5.13}
Suppose $h_{n+1}=\mathcal{H}(h_n,\delta t)$ with $\|h_n-h_0\|_{3,\alpha}<\epsFour$, then
\begin{equation}\label{5.14}
\|h_{n+1}\|_{k+2,\alpha}\leq C_0(\|h_n\|_{k+2,\alpha}).
\end{equation}
\end{lem}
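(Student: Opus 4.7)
The plan is to mimic the bootstrap/Schauder iteration in the proof of Lemma \ref{3.16}, the only new feature being the zeroth-order factors $q(x)$ and $h\circ F^{-1}(x)$ on the two sides of \eqref{5.9}. Write $q=h_{n+1}$, $h=h_n$, $F^{-1}=id+\Mp(h,q,\delta t)$. By construction $q\in U_2$, so $\|q-h_0\|_{2,\alpha/2}<\epsThree$ and $q\ge c_1/2$; in particular $\|q\|_{2,\alpha/2}\le\|h_0\|_{2,\alpha/2}+1$. From Lemma \ref{2.11} one has $\|\Mp(h,q,\delta t)\|_{1,\alpha/2}\le 1$, and by Lemma \ref{l5.1}, $I+f^{-1}D(f^{-1}Dq)+A\ge c_0/4$. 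Since $h\in\tilde U_2$, also $h\circ F^{-1}\ge c_1/2$.

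First I would clear denominators in \eqref{5.9} and differentiate once in $x_\ell$. Using the Jacobi formula for the derivative of a determinant, the principal part on the left-hand side becomes
\begin{equation*}
(h\circ F^{-1})\cdot\mathrm{cof}(I+f^{-1}D(f^{-1}Dq)+A)_{ij}\,f^{-2}\partial_{ij}(\partial_\ell q),
\end{equation*}
which is uniformly elliptic with ellipticity constant depending only on $c_0$, $c_1$, $f$, and $\|h_0\|_{2,\alpha/2}$. All remaining terms---in particular the new zeroth-order pieces $(\partial_\ell q)\det(\cdots)$ and $(\partial_\ell(h\circ F^{-1}))\det(\cdots)$ produced by differentiating the factors $q$ and $h\circ F^{-1}$---involve at most $D^2q$, derivatives of $h$ up to order three, and $DF^{-1}$, hence lie in $C^{0,\alpha/2}$ with norm controlled by the preliminary bounds. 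The global Schauder estimate on $\bT^2$ then yields $\|\partial_\ell q\|_{2,\alpha/2}\le C$, i.e.\ $q\in C^{3,\alpha/2}$.

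Next I would iterate. Plugging this improved bound into \eqref{1.31} (which still holds in the shallow water setting verbatim---only the scalar condition on $\det DF^{-1}$ changes) gives $\|\Mp\|_{2,\alpha/2}\le C$. Differentiating \eqref{5.9} twice then produces a uniformly elliptic equation of the same form for $\partial_{\ell m}q$, with $C^{0,\alpha/2}$ coefficients and right-hand side bounded by $\|h\|_{4,\alpha/2}$, $\|q\|_{3,\alpha/2}$, $\|\Mp\|_{2,\alpha/2}$ and $1/c_1$. Iterating this argument a total of $k$ times yields $\|q\|_{k+2,\alpha/2}\le C(\|h\|_{k+2,\alpha/2})$. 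Finally, since $k\ge 2$, this already bounds $\|q\|_{2,\alpha}$; then \eqref{1.31} controls $\|\Mp\|_{1,\alpha}$ in terms of $\|h\|_{2,\alpha}$ and $\|q\|_{2,\alpha}$, and re-running the same bootstrap on the $\alpha$-scale produces \eqref{5.14}.

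The main obstacle compared to the SG case is keeping careful track of the positivity $h\circ F^{-1}\ge c_1/2$: it is precisely this lower bound that allows one to divide through by $h\circ F^{-1}$ (and, by symmetry, by $q$) to extract a clean uniformly elliptic equation for $q$ after differentiation. Without it, the extra nonlinearity introduced by the ratio $q/(h\circ F^{-1})$ in \eqref{5.9} would prevent the Schauder bootstrap from closing. Since all such divisions are legitimate by the definitions of $U_2$ and $\tilde U_2$, apart from bookkeeping the argument is a direct transcription of Lemma \ref{3.16}.
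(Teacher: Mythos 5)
Your proof is correct and takes essentially the same approach as the paper, whose proof of Lemma~\ref{5.13} is just the sentence ``Similar to the proof of Lemma~\ref{3.16}.''\ You carry out the Schauder bootstrap that Lemma~\ref{3.16} spells out and correctly identify the one genuinely new ingredient in the shallow-water case, namely that the factors $q$ and $h\circ F^{-1}$ introduced by the right-hand side of \eqref{5.9} are bounded away from zero (by $c_1/2$, from the definitions of $U_2$ and $\tilde U_2$), so that after clearing denominators and differentiating one still obtains a uniformly elliptic equation for $\partial_\ell q$ in non-divergence form with controlled $C^{0,\alpha/2}$ coefficients.
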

\begin{proof}
Similar to the proof of Lemma \ref{3.16}.
\end{proof}
\begin{lem}\label{5.15}
\begin{equation}\label{5.16}
\|h_{n+1}-h_n\|_{k+1,\alpha}\leq C_1\delta t.
\end{equation}
Here the constant $C_1=C_1(\|h_{n+1}\|_{k+2,\alpha},\|h_n\|_{k+2,\alpha})$.
\end{lem}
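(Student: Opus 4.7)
\medskip
\noindent\textbf{Plan.} The proof will mirror Lemma \ref{3.18} closely, with the complications coming from the ratio structure of (\ref{5.9}) and from the fact that $F$ no longer preserves Lebesgue measure. Write $h=h_n$, $q=h_{n+1}$, $F^{-1}=Id+\Mp(h,q,\delta t)$, and let $M_h=I+f^{-1}D(f^{-1}Dh)$, $M_q=I+f^{-1}D(f^{-1}Dq)$. Throughout, constants $C$ depend only on $\|h_n\|_{k+2,\alpha}$ and $\|h_{n+1}\|_{k+2,\alpha}$ and may vary line to line. First, I would establish the a priori bound $\|\Mp(h,q,\delta t)\|_{k+1,\alpha}\le C$ by the identical bootstrap argument used for (\ref{new}) in Lemma \ref{3.18}: the matrix $(M_h\circ F^{-1})+B$ is bounded below by $\tfrac{c_0}{4}$ by Lemma \ref{l5.1}, so we can solve (\ref{1.31}) for $DF^{-1}$ and iterate the regularity.

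Next, clear the denominator in (\ref{5.9}) to rewrite it as
\begin{equation*}
h(F^{-1}(x))\det(M_q+A)\;-\;q(x)\det(M_h\circ F^{-1}+B)\;=\;0,
\end{equation*}
and then add and subtract $h(x)\det(M_h(x))$. Decomposing
\begin{align*}
h(F^{-1})\det(M_q+A)-h(x)\det(M_h)&=[h(F^{-1})-h(x)]\det(M_q+A)+h(x)[\det(M_q+A)-\det(M_h)],\\
q\,\det(M_h\circ F^{-1}+B)-h(x)\det(M_h)&=(q-h)\det(M_h\circ F^{-1}+B)+h(x)[\det(M_h\circ F^{-1}+B)-\det(M_h)],
\end{align*}
and representing each determinant difference via the cofactor integral as in (\ref{3.20})--(\ref{3.21}), I obtain an equation of the form
\begin{equation*}
a_{ij}\partial_{ij}(q-h)+b_i\partial_i(q-h)-c(x)(q-h)+c_l(F^{-1}_l-x_l)=g\delta t,
\end{equation*}
where the leading coefficients $a_{ij}=h(x)f^{-2}M^{ij}$ are uniformly elliptic (with $M^{ij}$ the cofactor matrix of an interpolated determinant), $c(x)=\det(M_h\circ F^{-1}+B)\ge(\tfrac{c_0}{4})^2>0$, and the $c_l(F^{-1}-x)_l$ absorbs the $A_{ij}$, $B_{ij}$ and $[h(F^{-1})-h(x)]$ contributions via the integral representations (\ref{3.22})--(\ref{3.24}), while $\|g\|_{k-1,\alpha}\le C$.

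Since equation (\ref{5.6}) is identical to (\ref{3.2}), the algebraic rearrangement leading to (\ref{3.27}) goes through verbatim and gives
\begin{equation*}
F^{-1}(x)-x=C(x)^{-1}\bigl[f^{-1}(x)f^{-1}(F^{-1})\nabla(q-h)-f^{-2}(R_{f\delta t}-I)\nabla h(F^{-1})\bigr],
\end{equation*}
with $C(x)\ge\tfrac{c_0}{4}$ by Lemma \ref{5.4spm-lem} (which applies since the argument there depended only on (\ref{5.6}), not on the Monge--Amp\`ere-type equation). Substituting this into the $c_l(F^{-1}-x)_l$ term folds it into a modification of $b_i\partial_i(q-h)$ plus a further $O(\delta t)$ contribution to $g$, yielding a uniformly elliptic equation for $q-h$ with a strictly negative zero-order coefficient and right-hand side bounded by $C\delta t$ in $C^{k-1,\alpha}$. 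By the maximum principle $\|q-h\|_0\le C\delta t$, and Schauder estimates upgrade this to $\|q-h\|_{k+1,\alpha}\le C\delta t$.

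\medskip
\noindent\textbf{Main obstacle.} Unlike the SG case, the SGSW equation is not of pure Monge--Amp\`ere form, so subtracting $\det(M_h)$ alone does not linearize it; the required add/subtract trick introduces the zero-order term $-(q-h)\det(M_h\circ F^{-1}+B)$, and the bookkeeping must be arranged so that (i) this term sits on the correct side to give a definite-sign zeroth-order coefficient (allowing the maximum principle), (ii) the $[h(F^{-1})-h(x)]\det(M_q+A)$ contribution is rewritten via its integral representation and absorbed into the $F^{-1}-x$ remainder (not into the elliptic part), and (iii) after invoking the analog of (\ref{3.27}) the resulting operator remains uniformly elliptic with $C^{k-1,\alpha}$ coefficients so that Schauder applies to give the claimed $C^{k+1,\alpha}$ regularity.
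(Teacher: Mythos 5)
Your proposal is correct and follows essentially the same route as the paper's proof. The paper keeps the equation in the divided form $\frac{\det(M_q+A)}{q}=\frac{\det(M_h\circ F^{-1}+B)}{h(F^{-1})}$ and subtracts $\frac{\det M_h}{h}$ from both sides, while you clear all denominators and add/subtract $h(x)\det M_h(x)$; these differ only by the overall positive factor $q\,h(F^{-1})$, and both yield the same structure — elliptic operator in $q-h$, a zero-order term $-c(q-h)$ with $c>0$, an $F^{-1}-id$ remainder eliminated via the analog of (3.27), and $O(\delta t)$ forcing — followed by the maximum principle and Schauder estimates.
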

\begin{proof}
As in the proof of Lemma \ref{3.18}, the result follows from linearization.

We will write $q=h_{n+1}$, $h=h_n$. We start by rewritting the equation (\ref{5.9}) to be
\begin{equation}\label{5.17}
\begin{split}
&\quad\frac{\det(I+f^{-1}(x)\nabla q\otimes\nabla(f^{-1})+f^{-2}D^2q+A(x))}{q(x)}\\
&=\frac{\det[(I+f^{-1}\nabla h\otimes\nabla (f^{-1})+f^{-2}D^2h)(F^{-1}(x))+B(x)]}{h(F^{-1}(x))}.
\end{split}
\end{equation}
We subtract from both sides the quantity $\frac{\det(I+f^{-1}\nabla h\otimes\nabla(f^{-1})+f^{-2}D^2h)}{h(x)}$
The left hand side becomes
$$\frac{\det(I+f^{-1}D(f^{-1}Dq)+A(x))-\det(I+f^{-1}D(f^{-1}Dh))}{q(x)}-\frac{q-h}{qh}\cdot\det(I+f^{-1}D(f^{-1}Dh)).
$$

We can deal with the first term in the same way as we did in the proof of Lemma \ref{3.18}. That is, we can write the left hand side as $a_{ij}\partial_{ij}(q-h)+b_i\partial_i(q-h)-c(q-h)$, where $a_{ij},b_i,c\in C^{k,\alpha}$ and $c>0$. (Indeed $c=\frac{1}{qh}\det(I+f^{-1}D(f^{-1}Dh))$)

The right hand side becomes
\begin{equation*}
\begin{split}
&\frac{\det[(I+f^{-1}D(f^{-1}Dh)(F^{-1}(x))+B(x)]-\det(I+f^{-1}D(f^{-1}Dh))}{h(F^{-1}(x))}+\\
&\qquad\det(I+f^{-1}D(f^{-1}Dh))\cdot(\frac{1}{h(F^{-1}(x))}-\frac{1}{h(x)}).
\end{split}
\end{equation*}

In the same way as in the proof of Lemma \ref{3.18}, we can write the first term above as $\tilde{b}_i\partial_i(q-h)+\delta tf$, with $\tilde{b}_i\in C^{k-1,\alpha},f\in C^{k,\alpha}$. The second term can be reprensented in terms of $F^{-1}(x)-x$, hence in terms of $\nabla q-\nabla h$, as was done in the proof of Lemma \ref{3.11}. To summarize, we will get an elliptic equation of the form
\begin{equation*}
a_{ij}\partial_{ij}(q-h)+c_i\partial_i(q-h)-c(q-h)=\delta t f.
\end{equation*}
with $a_{ij}, c\in C^{k,\alpha}$, $c_i\in C^{k-1,\alpha}$,
$f\in C^{k,\alpha}$, and $c>0$. Noting the sign of the zero-order term,
we obtain the desired estimate.
\end{proof}
\begin{lem}\label{5.20}
\begin{equation}\label{5.21}
\|F^{-1}-id\|_{k,\alpha}\leq C_2\delta t.
\end{equation}
Here $C_2=C_2(\|h_{n+1}\|_{k+2,\alpha},\|h_n\|_{k+2,\alpha})$.
\end{lem}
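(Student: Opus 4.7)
The plan is to mimic the proof of Lemma \ref{3.29} almost verbatim, since equation (\ref{5.6}) coincides in form with (\ref{3.2}): only the variable is renamed ($p\leadsto h$, $p_{n+1}\leadsto q$), and the derivation of the key identity (\ref{3.27}) used nothing beyond equation (\ref{3.2}) itself — in particular, the measure-preserving constraint on $F$ plays no role here. Write $q=h_{n+1}$, $h=h_n$. Subtracting $x+f^{-2}(x)\nabla h(x)$ from both sides of (\ref{5.6}), exactly as in the derivation of (\ref{3.26})--(\ref{3.27}), I obtain
\begin{equation*}
f^{-1}(x)f^{-1}(F^{-1}(x))\nabla(q-h)(x)-[f^{-2}(R_{f\delta t}-I)\nabla h](F^{-1}(x))=\tilde C(x)\bigl(F^{-1}(x)-x\bigr),
\end{equation*}
where
\begin{equation*}
\tilde C(x)=I+\int_0^1 D[f^{-2}Dh]((1-\theta)F^{-1}(x)+\theta x)\,d\theta
-f^{-1}(x)\int_0^1\nabla(f^{-1})((1-\theta)F^{-1}(x)+\theta x)\,d\theta\otimes\nabla h
\end{equation*}
is the shallow-water analogue of the matrix $C(x)$ in (\ref{3.28}), with $p$ replaced by $h$.

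The first step is to check that $\tilde C(x)\ge c_0/4$. This is the exact analogue of Lemma \ref{5.4spm-lem} in the present setting, and the proof carries over unchanged, since it only uses that $h\in \tilde U_2$ (so that $I+f^{-1}D(f^{-1}Dh)\ge c_0/2$) together with the smallness of $\|\Mp(h,q,\delta t)\|_0$ guaranteed by Remark \ref{identitySolnRemark} via Proposition \ref{5.4spmsw}. Consequently $\tilde C(x)^{-1}\in C^{k,\alpha}(\bT^2)$ with a norm controlled by $\|h\|_{k+2,\alpha}$, $\|q\|_{k+2,\alpha}$ and $1/c_0$.

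Solving for $F^{-1}-id$, I obtain
\begin{equation*}
F^{-1}(x)-x=\tilde C(x)^{-1}\Bigl[f^{-1}(x)f^{-1}(F^{-1}(x))\nabla(q-h)(x)-\bigl(f^{-2}(R_{f\delta t}-I)\nabla h\bigr)(F^{-1}(x))\Bigr].
\end{equation*}
Two standard facts now finish the argument. First, $\|f^{-2}(R_{f\delta t}-I)\nabla h\|_{k,\alpha}\le C\delta t$ since $R_{f\delta t}-I=O(\delta t)$ and $h\in C^{k+2,\alpha}$; composing with $F^{-1}$ preserves this bound by Lemma \ref{3.41} (note $\|F^{-1}-id\|_{k,\alpha}$ is already known to be small by Lemma \ref{2.11}). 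Second, Lemma \ref{5.15} (the shallow-water analogue of Lemma \ref{3.18}) gives $\|q-h\|_{k+1,\alpha}\le C\delta t$, so $\|\nabla(q-h)\|_{k,\alpha}\le C\delta t$. Applying the Banach-algebra property $\|fg\|_{k,\alpha}\le C_k\|f\|_{k,\alpha}\|g\|_{k,\alpha}$ to the product on the right-hand side yields $\|F^{-1}-id\|_{k,\alpha}\le C_2\delta t$ with $C_2=C_2(\|h_{n+1}\|_{k+2,\alpha},\|h_n\|_{k+2,\alpha})$.

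The only step that is not purely mechanical is verifying $\tilde C(x)\ge c_0/4$; the obstacle there is merely bookkeeping — ensuring that the smallness thresholds $\epsThree$, $\epsFour$ produced by the shallow-water version of the implicit function argument (Proposition \ref{5.4spmsw}) are, as in the SG case, small enough for the perturbation estimate of Lemma \ref{5.4spm-lem} to go through. Once that is in place, no new ideas are needed.
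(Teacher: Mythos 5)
Your proposal matches the paper's proof in substance: the paper disposes of Lemma~\ref{5.20} with the single line ``exactly the same proof as in Lemma~\ref{3.29},'' and your write-up correctly reconstructs that proof — derive the analogue of \eqref{3.27}--\eqref{3.28} from \eqref{5.6} (which, as you note, is identical in form to \eqref{3.2}), check $\tilde C\ge c_0/4$ via the analogue of Lemma~\ref{5.4spm-lem}, invert, and estimate the right-hand side using $\|q-h\|_{k+1,\alpha}\le C\delta t$ from Lemma~\ref{5.15} and the Banach-algebra property of $C^{k,\alpha}$.

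One citation is off in a way that creates an apparent circularity. To bound $\|[f^{-2}(R_{f\delta t}-I)\nabla h]\circ F^{-1}\|_{k,\alpha}\le C\delta t$ you invoke Lemma~\ref{3.41}, whose hypothesis is precisely $\|F^{-1}-id\|_{k,\alpha}\le C^*\delta t$ — the estimate Lemma~\ref{5.20} is trying to establish — and you justify that hypothesis ``by Lemma~\ref{2.11},'' which only controls $\|F^{-1}-id\|_{1,\alpha/2}$ (and gives smallness, not $O(\delta t)$). The correct ingredient here is the a priori \emph{bound} (not $\delta t$-smallness) $\|F^{-1}-id\|_{k+1,\alpha}\le C$, established by the bootstrap argument in the first paragraph of the proof of Lemma~\ref{5.15} (the shallow-water analogue of the first paragraph of Lemma~\ref{3.18}'s proof); together with a composition estimate of the type in Lemma~\ref{3.60}, this gives $\|G\circ F^{-1}\|_{k,\alpha}\le C\|G\|_{k,\alpha}$, which is all you need since $\|f^{-2}(R_{f\delta t}-I)\nabla h\|_{k,\alpha}\le C\delta t$ already. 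Swapping in that justification removes the circularity; the rest of the argument stands.
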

\begin{proof}
Exactly the same proof as in Lemma \ref{3.29}.
\end{proof}
Denote
$$
\nu_n=\frac{\det(I+f^{-1}D(f^{-1}h_n))}{h_n}.
$$
\begin{lem}\label{5.22}
\begin{equation}\label{5.23}
\|\nu_{n+1}\|_{k,\alpha}\leq \|\nu_n\|_{k,\alpha}+C_3\delta t.
\end{equation}
Here $C_3=C_3(\|h_{n+1}\|_{k+2,\alpha},\|h_n\|_{k+2,\alpha})$. Recall $\nu_n=\frac{\det(I+f^{-1}D(f^{-1}Dh_n))}{h_n}$.

\end{lem}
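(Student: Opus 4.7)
The plan is to mirror the proof of Lemma \ref{3.32} from the incompressible SG case. Write $q = h_{n+1}$, $h = h_n$, and introduce
\begin{align*}
\nu_n' &= \frac{\det\bigl((I+f^{-1}D(f^{-1}Dh))(F^{-1}(x))\bigr)}{h(F^{-1}(x))},\\
\nu_n'' &= \frac{\det\bigl((I+f^{-1}D(f^{-1}Dh))(F^{-1}(x))+B(x)\bigr)}{h(F^{-1}(x))},\\
\nu_{n+1}' &= \frac{\det\bigl(I+f^{-1}D(f^{-1}Dq)+A(x)\bigr)}{q(x)}.
\end{align*}
The iteration equation in the form (\ref{5.17}) is precisely the identity $\nu_{n+1}' = \nu_n''$. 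Hence by the triangle inequality it suffices to prove the three bounds
\begin{equation*}
\|\nu_n'\|_{k,\alpha} \leq \|\nu_n\|_{k,\alpha} + C\delta t,\quad \|\nu_n'' - \nu_n'\|_{k,\alpha} \leq C\delta t,\quad \|\nu_{n+1}' - \nu_{n+1}\|_{k,\alpha} \leq C\delta t.
\end{equation*}

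The first estimate follows from $\nu_n' = \nu_n \circ F^{-1}$ by applying Lemma \ref{3.41} with $G = \nu_n$, invoking Lemma \ref{5.20} to control $\|F^{-1}-\mathrm{id}\|_{k,\alpha}$ by $C\delta t$. For this I need $\nu_n \in C^{k,\alpha}$, which is immediate from $\|h_n\|_{k+2,\alpha} \leq C$ together with $h_n \geq c_1/2$ (guaranteed since $h_n \in U_2$, see (\ref{5.10})), so that $1/h_n$ lies in $C^{k,\alpha}$. The second and third estimates both have the form $\|(\det(M+E)-\det M)/D\|_{k,\alpha}$ where $D \in \{h\circ F^{-1}, q\}$ is bounded below by $c_1/2$ and bounded in $C^{k,\alpha}$ (applying Lemma \ref{3.41} once more to handle $h\circ F^{-1}$), and where $E \in \{A, B\}$ satisfies $\|E\|_{k,\alpha} \leq C\delta t$. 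These latter estimates on $A$ and $B$ follow from the representations (\ref{3.22})--(\ref{3.24}) combined with Lemma \ref{5.20}, exactly as in the SG case. Expanding $\det(M+E) - \det M = \int_0^1 \mathrm{tr}\bigl(\mathrm{cof}(M+\theta E)\cdot E\bigr)\,d\theta$ and applying the product inequality $\|fg\|_{k,\alpha} \leq C_k\|f\|_{k,\alpha}\|g\|_{k,\alpha}$ gives both bounds.

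The only structural difference from Lemma \ref{3.32} is the presence of the denominators $h$, $q$, $h\circ F^{-1}$ in the definitions of the $\nu$'s, but these are harmless because the iteration is confined to $U_2$, which enforces the uniform lower bound $\geq c_1/2 > 0$ throughout; consequently all reciprocals lie in $C^{k,\alpha}$ with norms controlled by $\|h_n\|_{k+2,\alpha}$, $\|h_{n+1}\|_{k+2,\alpha}$, $c_1^{-1}$, and $f$. I do not anticipate a substantive obstacle here; the computation is a routine adaptation of the SG proof in which one just has to be careful to track the reciprocal factors when applying the product rule.
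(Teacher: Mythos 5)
Your proposal is correct and follows the paper's own proof essentially verbatim: the same auxiliary quantities $\nu_n'$, $\nu_n''$, $\nu_{n+1}'$, the same identity $\nu_{n+1}'=\nu_n''$ from the iteration equation, the same three-term decomposition, and the same appeal to Lemma \ref{3.41} (with $G=\nu_n$) and the $C\delta t$ bounds on $A$, $B$, with the reciprocal factors controlled via the lower bound $h_n, h_{n+1}\ge c_1/2$ enforced on $U_2$, $\tilde U_2$. The explicit cofactor-integral expansion you write out is what the paper implicitly reuses from the proof of Lemma \ref{3.32}.
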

\begin{proof}
We follow the proof of Lemma \ref{3.32}.

We use notations $q(x)=h_{n+1}(x)$, $h(x)=h_n(x)$. Define
\begin{equation*}
\begin{split}
&\nu_{n+1}^{\prime}=\frac{\det(I+f^{-1}D(f^{-1}Dq)+A(x))}{q(x)},\\
&\nu_n^{\prime}=\frac{\det[(I+f^{-1}D(f^{-1}Dh))(F^{-1}(x))]}{h(F^{-1}(x))},\\
&\nu_n^{\prime\prime}=\frac{\det[(I+f^{-1}D(f^{-1}Dh))(F^{-1}(x))+B(x)]}{h(F^{-1}(x))}.
\end{split}
\end{equation*}
By the equation, $\nu_{n+1}^{\prime}=\nu_n^{\prime\prime}$. We prove the result by showing
\begin{align}\label{5.24}
&\|\nu_{n+1}^{\prime}-\nu_{n+1}\|_{k,\alpha}\leq C\delta t,\\
\label{5.25}
&\|\nu_n^{\prime}\|_{k,\alpha}\leq\|\nu_n\|_{k,\alpha}+C\delta t,\\
\label{5.26}
&\|\nu_n^{\prime}-\nu_n^{\prime\prime}\|_{k,\alpha}\leq C\delta t.
\end{align}
(\ref{5.24}), (\ref{5.26}) are proved in exactly the same way as in Lemma \ref{3.32}. (\ref{5.25}) is the consequence of Lemma \ref{3.41} applied to $G=\frac{\det(I+f^{-1}D(f^{-1}Dp))}{p}$.\end{proof}
\begin{lem}\label{5.27}
\begin{equation}\label{5.28}
\|h_{n+1}\|_{k+2,\alpha}\leq C_4(\|\nu_{n+1}\|_{k,\alpha}+1).
\end{equation}
Here $C_4=C_4(\|p_{n+1}\|_{2,\alpha})$.
\end{lem}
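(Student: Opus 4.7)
The plan is to treat $\nu_{n+1}\cdot h_{n+1}=\det\bigl(I+f^{-1}D(f^{-1}Dh_{n+1})\bigr)$ as a fully nonlinear elliptic equation for $h_{n+1}$, exploit the uniform ellipticity coming from $h_{n+1}\in U_2$, differentiate it in space, and bootstrap via Schauder estimates. This mirrors the approach in Lemma \ref{3.39}, with an extra factor of $h_{n+1}$ on the right-hand side that is easily absorbed because $h_{n+1}\geq c_1/2>0$.

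First I would record the structural consequence of $h_{n+1}\in U_2$: the matrix $I+f^{-1}D(f^{-1}Dh_{n+1})$ is bounded below by $\frac{c_0}{2}I$, so its cofactor matrix $M=(M_{ij})$ is uniformly positive definite with entries in $C^{0,\alpha}$ and $C^{0,\alpha}$-norm controlled by $\|h_{n+1}\|_{2,\alpha}$ and $\|f\|_{1,\alpha}$. Differentiating the equation once in $x_l$ gives
\begin{equation*}
\sum_{i,j}M_{ij}\bigl[f^{-2}\partial_{ijl}h_{n+1}+f^{-1}\partial_{il}h_{n+1}\,\partial_j(f^{-1})\bigr]
=\partial_l\bigl(\nu_{n+1}h_{n+1}\bigr)+R_l(x),
\end{equation*}
where $R_l$ collects terms involving only derivatives of $h_{n+1}$ up to order two. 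This is a linear uniformly elliptic equation for $w_l:=\partial_l h_{n+1}$ whose coefficients lie in $C^{0,\alpha}$ with norms depending only on $\|h_{n+1}\|_{2,\alpha}$ and $\|f\|_{1,\alpha}$, and whose right-hand side is in $C^{0,\alpha}$ with norm controlled by $\|\nu_{n+1}\|_{1,\alpha}$ and $\|h_{n+1}\|_{2,\alpha}$. Schauder estimates therefore yield $\|h_{n+1}\|_{3,\alpha}\leq C(\|\nu_{n+1}\|_{1,\alpha}+1)$ with $C$ depending on $\|h_{n+1}\|_{2,\alpha}$.

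The bootstrap is then straightforward: assuming inductively that $\|h_{n+1}\|_{m+2,\alpha}$ has been bounded in terms of $\|\nu_{n+1}\|_{m,\alpha}$ for some $1\leq m\leq k$, differentiate the equation $m+1$ times in space. One obtains a linear uniformly elliptic equation for $\partial^{m+1}h_{n+1}$ whose coefficients now lie in $C^{0,\alpha}$ with bounds coming from the already-established control of $\|h_{n+1}\|_{m+2,\alpha}$, and whose right-hand side is in $C^{0,\alpha}$ with norm controlled by $\|\nu_{n+1}\|_{m+1,\alpha}$ and $\|h_{n+1}\|_{m+2,\alpha}$. Another application of Schauder then gives $\|h_{n+1}\|_{m+3,\alpha}\leq C(\|\nu_{n+1}\|_{m+1,\alpha}+1)$; iterating to $m=k$ completes the proof.

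The main obstacle is really only verifying at each step that the cofactor matrix remains uniformly elliptic and that the extra zero-order contribution $\partial^m(\nu_{n+1}h_{n+1})$ behaves correctly; both are immediate from $h_{n+1}\in U_2$ and the strict positivity $h_{n+1}\geq c_1/2$. No new ideas beyond those already used in Lemma \ref{3.39} are needed.
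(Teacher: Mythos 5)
Your approach — treat $\nu_{n+1}\cdot h_{n+1}=\det\bigl(I+f^{-1}D(f^{-1}Dh_{n+1})\bigr)$ as a fully nonlinear elliptic equation, differentiate, use the uniform ellipticity of the cofactor matrix inherited from $h_{n+1}\in U_2$, and bootstrap with Schauder — is exactly what the paper intends; Lemma \ref{5.27} is stated without proof and is the shallow-water analogue of Lemma \ref{3.39}, whose proof is precisely "differentiate and apply Schauder," and you correctly handle the additional factor $h_{n+1}$ using $h_{n+1}\geq c_1/2$. The only (shared) looseness is that a truly linear bound in $\|\nu_{n+1}\|_{k,\alpha}$ with $C_4$ depending on $\|h_{n+1}\|_{2,\alpha}$ alone needs an interpolation step to control the quadratic lower-order terms appearing when one differentiates the determinant more than once, but the paper's own proof of Lemma \ref{3.39} is equally terse on this point, so your argument matches the paper's.
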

So we can obtain exactly same estimates as in SG case.
Then we repeat the proof of Proposition \ref{approxSolut-est-Prop},
to get the following

\begin{prop}\label{approxSolut-est-Prop-sgsw}
For any $h_0\in C^{k+2,\alpha}$, $k\geq2$, satisfying $I+f^{-1}D(f^{-1}Dh_0)>c_0$,
there exists $T_1>0$, $\eps>0$, depending only on $\|h_0\|_{k+2,\alpha}$, $f$, $c_0$ and $k$,
such that for any $|\delta t|<\eps$, one can construct a sequence
$(h_n, F_n)$ of solutions to (\ref{5.6}), (\ref{5.8})
(with $h=h_n$, $q=h_{n+1}$ and $F=F_{n+1}$) for $n\leq \frac{T_1}{\delta t}-1$,
such that $h_n\in C_1^{k+2,\alpha}$ and $F_n:\bT^2\to \bT^2$ is a diffeomorphism satisfying $\det DF_{n+1}=\frac{h_n}{h_{n+1}(F_{n+1})}$.
Moreover, $(h_n, F_n)$ satisfy the following estimates:
\begin{align}\label{3.55-sgsw}
&\|h_n\|_{k+2,\alpha}\leq C,\\
\label{3.56new-sgsw}
&\|h_{n+1}-p_n\|_{k+1,\alpha}\leq C\delta t,\\
\label{3.57-sgsw}
&\|F_{n+1}^{-1}-id\|_{k,\alpha}\leq C\delta t,\\
\label{3.58-sgsw}
&\|F_{n+1}-id\|_{k,\alpha}\leq C\delta t.
\end{align}
 for $0\leq n\leq \frac{T_1}{\delta t}-1$.
Here $C$ depends only on $\|h_0\|_{k+2,\alpha}$, $f$, $c_0$ and $k$.
\end{prop}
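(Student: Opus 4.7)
The plan is to repeat the induction scheme of Proposition \ref{approxSolut-est-Prop} essentially verbatim, with the SG lemmas replaced by their SGSW analogues \ref{5.13}, \ref{5.15}, \ref{5.20}, \ref{5.22}, \ref{5.27}. Fix $h_0$ satisfying (\ref{5.5}) and set $M=4\|\nu_0\|_{k,\alpha}$ with $\nu_n=\det(I+f^{-1}D(f^{-1}Dh_n))/h_n$. Define constants $C_5=C_4(\|h_0\|_{2,\alpha}+1)(M+1)$ from Lemma \ref{5.27}; let $C_6$ bound $\|h_{n+1}\|_{k+2,\alpha}$ via Lemma \ref{5.13} whenever $\|h_n\|_{k+2,\alpha}\le C_5$ and $\|h_n-h_0\|_{3,\alpha}<\epsFour$; let $C_7, C_8$ be the constants controlling $\|\nu_{n+1}\|_{k,\alpha}-\|\nu_n\|_{k,\alpha}$ and $\|h_{n+1}-h_n\|_{k+1,\alpha}$ from Lemmas \ref{5.22}, \ref{5.15} respectively under the bound $\le C_6$; and let $C_9=\min(C_5,C_6)$.

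With these constants fixed, set $T_1=\min(\epsFour/(2C_8), M/(2C_7))$ and, given $\delta t\in(0,\epsFour)$, use Proposition \ref{5.4spmsw} to define $h_{n+1}=\mathcal{H}(h_n,\delta t)$ together with $F_{n+1}^{-1}=Id+\Mp(h_n,h_{n+1},\delta t)$ for $n\le T_1/\delta t-1$. The induction hypothesis is the triple bound
\begin{equation*}
\|\nu_n\|_{k,\alpha}\le \|\nu_0\|_{k,\alpha}+nC_7\delta t,\quad
\|h_n\|_{k+2,\alpha}\le C_9,\quad
\|h_n-h_0\|_{k+1,\alpha}\le nC_8\delta t.
\end{equation*}
The inductive step proceeds exactly as in Proposition \ref{approxSolut-est-Prop}: the third bound with (\ref{3.48}) keeps $h_n$ inside the domain of $\mathcal{H}$, Lemma \ref{5.13} then gives $\|h_{n+1}\|_{k+2,\alpha}\le C_6$, Lemma \ref{5.22} upgrades the bound on $\nu_{n+1}$, which in turn feeds Lemma \ref{5.27} to sharpen the bound on $\|h_{n+1}\|_{k+2,\alpha}$ to $C_5$, so $\|h_{n+1}\|_{k+2,\alpha}\le C_9$; finally Lemma \ref{5.15} closes the third inequality. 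Estimate (\ref{3.57-sgsw}) is Lemma \ref{5.20}, and (\ref{3.58-sgsw}) follows by applying Lemma \ref{lem6.8b} to $G=F_{n+1}^{-1}$ after shrinking $\delta t$ so that $\|F_{n+1}^{-1}-id\|_{k,\alpha}\le 1/200$.

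The push-forward property $\det DF_{n+1}=h_n/(h_{n+1}\circ F_{n+1})$ is equivalent to (\ref{5.9}), which is the defining equation of $\mathcal{H}$ in Proposition \ref{5.4spmsw}, so it holds by construction. Likewise, $F_{n+1}$ is a diffeomorphism of $\bT^2$ by Lemma \ref{mapIsDiffeo} (applied to $h$ in place of $p$) combined with the measure-preservation-type calculation performed in Lemma \ref{noZeroAverage}, which ensures $\int h_{n+1}=\int h_n=1$; so the constraint $h_n\in C_1^{k+2,\alpha}$ is automatically propagated.

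The one genuinely new obstacle compared to the SG case is the lower bound $h_n\ge c_1/2$ required for membership in $U_2$ (see (\ref{5.10})) and for the division by $h_n$ in the definition of $\nu_n$ to remain uniformly controlled. This is handled as an auxiliary fourth induction hypothesis $h_n\ge \tfrac{3c_1}{4}$: by (\ref{3.56new-sgsw}) one has $\|h_n-h_0\|_0\le nC_8\delta t\le C_8 T_1$, so by further shrinking $T_1$ (depending only on $c_1$) one preserves $h_n\ge 3c_1/4>c_1/2$, ensuring $h_n\in\tilde U_2$ at every step. After all constants have been fixed, we set $T=T_1$ and the resulting $T$ depends only on $\|h_0\|_{k+2,\alpha}$, $c_0$, $c_1$, $f$ and $k$, completing the proof.
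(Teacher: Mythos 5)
Your proposal is correct and follows essentially the same route as the paper, which disposes of this proposition with the single remark ``we repeat the proof of Proposition~\ref{approxSolut-est-Prop}'' after establishing Lemmas~\ref{5.13}--\ref{5.27}; you have simply unfolded that remark into the explicit induction. One small simplification: your fourth induction hypothesis $h_n\ge 3c_1/4$ is already implied by the third, since $\|h_n-h_0\|_{k+1,\alpha}\le nC_8\delta t\le C_8T_1$ controls $\|h_n-h_0\|_0$ and $h_0\ge c_1$; in any case the point is well taken that the lower bound on $h_n$ (and hence membership of $h_n$ in $\tilde U_2$) must be propagated, and you correctly observe that the final constant $T_1$ therefore depends on $c_1$ as well, a dependence the paper's statement of the proposition omits though Theorem~\ref{1.9} lists it.
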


Define the time-discretized map $\phi(n\delta t,\cdot)=F_n\circ\cdots \circ F_1$ as
in (\ref{3.59}). Then similar to SG case, we can show that $\phi(n\delta t,x+h)=\phi(n\delta t,x)+h$ for $h\in \bZ^2$ and the following estimate holds:
\begin{equation}\label{5.34}
\|\phi(n\delta t,\cdot)-\phi((n-1)\delta t,\cdot)\|_{k,\alpha}\leq C\delta t.
\end{equation}
for $1\leq n\leq\frac{T}{\delta t}$. Here $T$ depends only on $\|h_0\|_{k+2,\alpha}$, $f$, $c_0,c_1$ and $k$.

 Also $\phi(n\delta t,\cdot)_\#(h_0dx)=h_ndx$. All the convergence facts obtained
in Section \ref{passageLim-Sect-sub1}, i.e. (\ref{4.2}) and (\ref{4.3})
for $h$ instead of $p$,
hold true here. Similar to (\ref{4.4}), we define
\begin{equation*}
\mathbf{v}_{g,\delta t}( t,x)=
f^{-1}(\phi_{\delta t}(t,x))J\nabla h_{\delta t}(t,\phi_{\delta t}( t,x)),
\quad
\mathbf{v}_{g}( t,x)=
f^{-1}(\phi(t,x))J\nabla h(t,\phi( t,x)).
\end{equation*}
Then we obtain (\ref{4.5}) and (\ref{4.5-0}). With this,
 following the argument in Sections \ref{passageLim-Sect-sub2}, \ref{passageLim-Sect-sub3},
we obtain  as Proposition \ref{p7.1}:
\begin{equation}\label{5.35}
\partial_t\phi(t,x)+(-J)f^{-1}(\phi_t(x))\partial_t\mathbf{v}_g(t,x)=
\mathbf{v}_g(t,x),\textrm{ for any $(t,x)\in[0,T)\times\mathbb{T}^2$.}
\end{equation}
Back to Eulerian coordinates, this equation is equivalent to
\begin{equation}\label{5.36}
\mathbf{u}_t(x)+(-J)f^{-1}(x)D_t\mathbf{u}_g(t,x)=\mathbf{u}_g(t,x).
\end{equation}
This gives (\ref{5.2}),(\ref{5.3}). It only remains to show (\ref{5.4}). To show this, we only need to show
\begin{equation}\label{5.37}
\phi_t\# h_0=h_t.
\end{equation}
Since $\phi_t$ is a diffeomorphism, this is equivalent to
\begin{equation}\label{8.32new}
\det D\phi_t=\frac{h_0}{h_t(\phi_t)}.
\end{equation}
To see (\ref{8.32new}), notice $\phi_t$ is the limit of $\phi^{\delta t}_t$ under the norm $C^{k,\beta}$ for every $\beta<\alpha$, and $h(t)$ is the limit of $h^{\delta t}(t)$ under the norm $C^{k+1,\beta}$, with any $\beta<\alpha$. Under discrete setting, it holds: $\det D\phi(n\delta t,\cdot)=\frac{h_0}{h_n(\phi(n\delta t,\cdot))}$. Now one can use the Lipshitz continuity in time of $\phi_n$ and $h_n$ and above convergence to pass to limit and obtain (\ref{8.32new}).

\section{Uniqueness}
\label{uniq-Sect}
In this section, we prove uniqueness of smooth solutions to SG and SG shallow water. In the following, $C$ will denote a generic constant which may change from line to line, but its dependence will be made clear.
\subsection{Uniqueness for SG}
\label{uniqSG-Sect}
In this subsection, we prove the uniqueness part of Theorem \ref{1.5}.
Suppose we have two solutions of SG, $(p_1,\mathbf{u}_{1,g},\mathbf{u}_1)$ and $(p_2,\mathbf{u}_{2,g},\mathbf{u}_2)$ defined on $[0,T_0]\times\bT^2$,
with regularity $p_i\in L^{\infty}(0,T_0;C^{3}(\bT^2))$ and $\partial_tp_i\in L^{\infty}(0,T_0;C^2(\bT^2))$.
Then by (\ref{4.16}), we see $\mathbf{u}_i\in L^{\infty}(0,T_0;C^1(\bT^2))$.
This regularity allows us to define the flow map $\phi_i(t,x)$$(i=1,2)$ which
satisfies $\phi_{i}(t,x+h)=\phi_i(t,x)+h$ for any $h\in \bZ^2$.
 Also, from (\ref{1.7}) for $p_1$ and $p_2$, there exists a constant $\lambda >0$
 such that
\begin{equation}\label{assumptOnSOlnUniqSG}
 I+f^{-1}D(f^{-1}Dp_i)\ge \lambda I   \;\textrm{ for }\;i=1,2
 \;\textrm{ on }\;[0,T_0]\times\bT^2.
\end{equation}

In the following,
the argument is made for either one of the solutions if there is no subscript.
Also we will denote $\phi_t(x)=\phi(t,x)$, $\phi_{1,t}(x)=\phi_1(t,x)$ and similar notations for other functions.

Recall $\mathbf{v}_g=f^{-1}(\phi_t(x))J\nabla p_t(\phi_t(x))$, i.e,
$\mathbf{v}_g$ is
$\mathbf{u}_g$ expressed in Lagrangian variables. Then we obtain from (\ref{4.15})
(which holds for all $(x,t)$ by (\ref{4.16})) that
\begin{equation}\label{6.1}
\partial_t\phi_t(x)+f^{-1}(\phi_t(x)))\partial_t(f^{-1}(\phi_t(x))\nabla p_t(\phi_t(x))=f^{-1}J\nabla p(\phi_t(x)).
\end{equation}
We can solve
\begin{equation}\label{6.2}
\partial_t\phi_t(x)=(I+f^{-1}D(f^{-1}Dp))^{-1}(\phi_t(x))(f^{-1}J\nabla p_t-f^{-2}\partial_t\nabla p_t)(\phi_t(x)).
\end{equation}
On the other hand, we can rewrite (\ref{6.1}) as
\begin{equation}\label{6.3}
\partial_t\phi_t(x)+\partial_t(f^{-2}\nabla p_t(\phi_t(x)))=f^{-1}\nabla p_t\otimes\nabla(f^{-1})(\phi_t(x))\partial_t\phi_t(x)+f^{-1}J\nabla p_t(\phi_t(x)).
\end{equation}
Integrate in $t$ to get
\begin{equation*}
\begin{split}
\phi_t(x)
&+f^{-2}\nabla p_t(\phi_t(x))=x+f^{-2}\nabla p_0(x)\\
&+\int_0^tf^{-1}\nabla p_s\otimes\nabla(f^{-1})(\phi_s(x))\partial_s\phi_s(x)ds+
\int_0^tf^{-1}J\nabla p_s(\phi_s(x))ds.
\end{split}
\end{equation*}
Differentiating on both sides, we obtain
\begin{equation}\label{6.5}
\begin{split}
&(I+D(f^{-2}Dp_t)(\phi_t))D\phi_t(x)=(I+D(f^{-2}Dp_0))(x)\\
&\qquad +\int_0^tf^{-1}\nabla p_s\otimes\nabla(f^{-1})(\phi_s(x))\partial_sD\phi_s(x)ds
+\int_0^tD(f^{-1}J\nabla p_s)(\phi_s(x))D\phi_s(x)ds\\
&\qquad +\int_0^t\sum_{k,m}\partial_m(f^{-1}\partial_ip_s\partial_k(f^{-1}))(\phi_s(x))
\partial_{x_j}\phi_s^m\partial_s\phi_s^kds.
\end{split}
\end{equation}
We do an integration by parts in the first integral of right hand side:
\begin{equation*}
\begin{split}
&
\int_0^tf^{-1}\nabla p_s\otimes\nabla(f^{-1})(\phi_s(x))\partial_sD\phi_s(x)ds\\
&\quad=f^{-1}\nabla p_t\otimes\nabla(f^{-1})(\phi_t(x))D\phi_t(x)
-f^{-1}\nabla p_0\otimes\nabla(f^{-1})(x)\\
&\qquad-\int_0^t\partial_s[f^{-1}\nabla p_s\otimes\nabla(f^{-1})(\phi_s(x))]D\phi_s(x)ds.
\end{split}
\end{equation*}
Plugging this into (\ref{6.5}), we obtain
\begin{equation}\label{6.7}
\begin{split}
&(I+f^{-1}D(f^{-1}Dp_t))(\phi_t(x))D\phi_t(x)=(I+f^{-1}D(f^{-1}Dp_0))(x)\\
&\qquad-\int_0^tf^{-1}\partial_s\nabla p_s\otimes\nabla(f^{-1})(\phi_s(x))D\phi_s(x)ds\\
&\qquad+\int_0^tD(f^{-1}J\nabla p_s)(\phi_s(x))D\phi_s(x)ds\\
&\qquad+\int_0^t\sum_{k,m}\partial_m(f^{-1}\partial_ip_s\partial_k(f^{-1}))(\phi_s(x))
(\partial_{x_j}\phi_s^m\partial_s\phi_s^k-\partial_{x_j}\phi_s^k\partial_s\phi_s^m)ds.
\end{split}
\end{equation}
Define
\begin{align}\label{6.8}
&E^1(s,x)=-f^{-1}\partial_s\nabla p_s\otimes\nabla(f^{-1})(x),\\
\label{6.9}
&E^2(s,x)=D(f^{-1}J\nabla p_s)(x),\\
\label{6.10}
\begin{split}
&E^3_{ik}(s,x)=\sum_{m}\partial_s\phi_s^m(\phi_s^{-1}(x))[-\partial_m(f^{-1}\partial_ip_s\partial_k(f^{-1}))(x)\\
&\qquad\qquad\quad+\partial_k(f^{-1}\partial_ip_s\partial_m(f^{-1})(x))]
\\
&\qquad\qquad=\sum_{m}\mathbf{u}^m(x)[-\partial_m(f^{-1}\partial_ip_s\partial_k(f^{-1}))(x)+\partial_k(f^{-1}\partial_ip_s\partial_m(f^{-1})(x))].
\end{split}
\end{align}
and let $E(s,x)=E^1+E^2+E^3$. Notice that we can use (\ref{6.3}) to plug in (\ref{6.10}) for $\partial_s\phi_s^m$. So $E(s,x)$ involves space derivatives of $p$ up to second order and also $\partial_t\nabla p_t$. We can write (\ref{6.7}) as
\begin{equation}\label{6.11}
(I+f^{-1}D(f^{-1}Dp_t))(\phi_t(x))D\phi_t(x)=(I+f^{-1}D(f^{-1}Dp_0))(x)+\int_0^tE(s,\phi_s(x))D\phi_s(x)ds.
\end{equation}
Multiplying (\ref{6.11}) by $E(t,\phi_t(x))(I+f^{-1}D(f^{-1}Dp))^{-1}(\phi_t(x))$
from the left on both sides, we obtain
\begin{equation}\label{6.12}
\begin{split}
E(t,\phi_t(x))D\phi_t&(x)=E(t,\phi_t(x))\big(I+f^{-1}D(f^{-1}Dp))^{-1}(\phi_t(x)\big)\cdot\\
&[(I+f^{-1}D(f^{-1}Dp_0))(x)+\int_0^tE(s,\phi_s(x))D\phi_s(x)ds].
\end{split}
\end{equation}
 Denote
\begin{equation}\label{6.13}
H(t,x)=\int_0^tE(s,\phi_s(x))D\phi_s(x)ds.
\end{equation}
We see it solves the following initial value problem.
\begin{align}\label{6.14}
&\frac{dH(t,x)}{dt}=G(t,\phi_t(x))(D(x)+H(t,x)),\\
\label{6.15}
&H(0)=0,
\end{align}
where
\begin{equation}\label{6.16}
G(t,x)=E(t,x)(I+f^{-1}D(f^{-1}Dp))^{-1}(t,x),
\end{equation}
and
\begin{equation}\label{6.17}
D(x)=I+f^{-1}D(f^{-1}Dp_0)(x).
\end{equation}
Finally we conclude from the measure preserving
property of $\phi_t$ and (\ref{6.11}) that
\begin{equation}\label{6.18}
\det(I+f^{-1}D(f^{-1}Dp_t))=\det[I+f^{-1}D(f^{-1}Dp_0)(\phi_t^{-1}(x))+H(t,\phi_t^{-1}(x))].
\end{equation}

Let $\phi_1$, $\phi_2$ be the flow maps corresponding to the two solutions
introduced at the beginning of this section, with initial data $p_0$.
We start with some preliminary estimates:

\begin{lem}\label{6.19}
Let $H(t,x):[0,T]\times\bR^2\rightarrow M_{2\times2}(\bR)$ be the solution to the problem:
\begin{equation*}
\begin{split}
\frac{dH(t,x)}{dt}&=A(t,x)\cdot H(t,x)+B(t,x),\\
&H(0)=0.
\end{split}
\end{equation*}
Here $A(t,x),B(t,x), H(t,x)$ are $2\times 2$ matrix valued functions, periodic with respect to $\bZ^2$, and we assume $\sup_{0\leq t\leq T}\|A(t,\cdot)\|_{C^1}+\|B(t,\cdot)\|_{C^1}\leq M$. Then we have
\begin{equation}\label{6.20}
\|H(t,\cdot)\|_{L^{\infty}}+\|D_xH(t,\cdot)\|_{L^{\infty}}\leq Ct.
\end{equation}
Here $C$ depends only on $M$ and $T$.
\end{lem}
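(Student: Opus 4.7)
The plan is to treat the PDE as a linear matrix ODE in $t$ with parameter $x$, and apply Gronwall's inequality first to bound $H$ pointwise, then to bound $D_x H$ after differentiating the equation in $x$. Since no spatial derivatives of $H$ appear on the right-hand side of the equation for $H$ itself, $x$ genuinely enters only as a parameter, and the entire argument is an ODE estimate carried out uniformly in $x$.

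First I would fix $x\in\bR^2$ and integrate the ODE in $t$ to obtain
\begin{equation*}
H(t,x)=\int_0^t \bigl(A(s,x)H(s,x)+B(s,x)\bigr)\,ds,
\end{equation*}
so that $|H(t,x)|\le \int_0^t M|H(s,x)|\,ds + Mt$. Gronwall's inequality then yields $|H(t,x)|\le Mt\,e^{MT}\le Ct$ for $t\in[0,T]$, with $C$ depending only on $M$ and $T$. Taking supremum over $x$ (using periodicity to keep the estimate uniform) gives the bound on $\|H(t,\cdot)\|_{L^\infty}$.

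Next I would differentiate the ODE with respect to $x_k$ ($k=1,2$). Since $H(\cdot,x)$ is $C^1$ in $x$ (as the solution of a linear ODE with $C^1$ coefficients depending on $x$, standard ODE regularity theory applies), the partial derivative $\partial_{x_k}H$ satisfies
\begin{equation*}
\frac{d}{dt}\partial_{x_k}H(t,x)=A(t,x)\,\partial_{x_k}H(t,x) + \partial_{x_k}A(t,x)\cdot H(t,x)+\partial_{x_k}B(t,x),
\end{equation*}
with initial condition $\partial_{x_k}H(0,x)=0$. Using the bound on $H$ from the first step together with the hypothesis $\|A(t,\cdot)\|_{C^1},\|B(t,\cdot)\|_{C^1}\le M$, the inhomogeneous term $\partial_{x_k}A\cdot H+\partial_{x_k}B$ is bounded in absolute value by $M(Ct)+M \le C'$ uniformly on $[0,T]\times\bR^2$. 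A second application of Gronwall's inequality then gives $|\partial_{x_k}H(t,x)|\le C't\,e^{MT}\le C''t$. Combining the two bounds completes the proof.

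I do not anticipate any real obstacle: the only small point to verify is the $C^1$ regularity of $H$ in $x$, which follows from the standard smooth-dependence-on-parameters theorem for linear ODEs since $A,B\in C^1$ in $x$ with estimates uniform in $t$. The constant $C$ depends only on $M$ and $T$ as required.
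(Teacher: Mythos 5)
Your proof is correct and follows essentially the same route as the paper: Gronwall's inequality for the $L^\infty$ bound on $H$, then differentiate the ODE in $x$ and apply Gronwall again using the already obtained bound on $H$. The paper's proof is just a terse statement of this two-step strategy, so your writeup is a faithful expansion of it.
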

\begin{proof}
To estimate of $\|H(t,\cdot)\|_{L^{\infty}}$ follows from the Gronwall inequality.
To estimate $D_xH$, we first differentiate and then use Gronwall
inequality and the already obtained estimate on $H$.
\end{proof}
We use Lemma \ref{6.19} to estimate $C^1$-norm of the
function $H(t,x)$ defined in (\ref{6.13}):
\begin{lem}\label{6.21}
Let $H$ be as defined in (\ref{6.13}), then:
\begin{align}
\label{6.22-0}
&\|\phi(t,\cdot)\|_{C^1(\bT^2)}\leq C,\\
\label{6.22}
&\|H(t,\cdot)\|_{C^1(\bT^2)}\leq Ct.
\end{align}
Here $C$ depends on $\|p\|_{L^{\infty}(0,T_0;C^3)},\|\partial_tp\|_{L^{\infty}(0,T_0;C^2)},\lambda,T_0$.
\end{lem}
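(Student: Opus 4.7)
The plan is to first establish (\ref{6.22}) for the matrix-valued function $H$ by exploiting that, for each fixed $x\in\bR^2$, equation (\ref{6.14}) is a linear matrix ODE in $t$ along the characteristic $s\mapsto \phi_s(x)$, and then to use the identity (\ref{6.11}) to convert the bound on $H$ into the $C^1$ bound (\ref{6.22-0}) for $\phi_t$. The two estimates are coupled, so the order is crucial: the $L^\infty$ bound on $H$ comes from a scalar Gronwall argument that does not require any a priori control of $D\phi_t$, and only afterwards will we bring $D\phi_t$ into play.

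First I would collect the elementary uniform bounds. From the hypotheses $p\in L^\infty(0,T_0;C^3(\bT^2))$ and $\partial_t p\in L^\infty(0,T_0;C^2(\bT^2))$, together with the ellipticity $I+f^{-1}D(f^{-1}Dp)\ge \lambda I$ (which yields $\|(I+f^{-1}D(f^{-1}Dp))^{-1}\|_{C^1}\le C$), the definitions (\ref{6.8})--(\ref{6.10}) and (\ref{6.16})--(\ref{6.17}) show that $\|E^1(t,\cdot)\|_{C^1}+\|E^2(t,\cdot)\|_{C^1}+\|D(\cdot)\|_{C^1}\le C$. For $E^3$ I first use (\ref{6.2}) to bound $\partial_t\phi_t$, hence $\mathbf{u}$, in $L^\infty$ by $C$, which gives $\|E^3(t,\cdot)\|_{L^\infty}\le C$; a careful reading of (\ref{6.10}) and use of $\mathbf{u}\in L^\infty(0,T_0;C^1)$ (which follows from (\ref{4.16})) yields $\|E^3(t,\cdot)\|_{C^1}\le C$ as well. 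Consequently $\|G(t,\cdot)\|_{C^1}\le C$ for all $t\in[0,T_0]$.

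Next I would prove (\ref{6.22}) in $L^\infty$. Fix $x\in\bR^2$ and regard (\ref{6.14}) as a linear ODE on $[0,T_0]$ for the $2\times 2$ matrix $t\mapsto H(t,x)$ with coefficient matrix $G(t,\phi_t(x))$ bounded by $C$ and forcing term $G(t,\phi_t(x))D(x)$ bounded by $C$. Since $H(0,x)=0$, Gronwall gives
\begin{equation*}
|H(t,x)|\le C\int_0^t e^{C(t-s)}\,ds\le Ct \FORA t\in[0,T_0],
\end{equation*}
with $C$ depending only on the quantities listed. Feeding this into (\ref{6.11}) and using the bound $\|(I+f^{-1}D(f^{-1}Dp_t))^{-1}\|_{L^\infty}\le 1/\lambda$ immediately gives $\|D\phi_t\|_{L^\infty}\le C$, and combining with $\|\phi_t-\mathrm{id}\|_{L^\infty}\le T_0\|\mathbf{u}\|_{L^\infty}\le C$ yields (\ref{6.22-0}).

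Finally, to promote (\ref{6.22}) from an $L^\infty$ bound to a $C^1$ bound, I would differentiate (\ref{6.14}) in $x$ (rigorously, via difference quotients, justified by the already-established $C^1$ regularity of all ingredients): for each unit vector $e\in\bR^2$,
\begin{equation*}
\tfrac{d}{dt}(D_e H)(t,x)=\bigl(DG(t,\phi_t(x))D\phi_t(x)e\bigr)(D(x)+H(t,x))+G(t,\phi_t(x))(D_e D(x)+D_e H(t,x)).
\end{equation*}
By the previous step, the bracketed quantities $\|DG\circ \phi_t\|_{L^\infty}$, $\|D\phi_t\|_{L^\infty}$, $\|D\|_{C^1}$, and $\|H(t,\cdot)\|_{L^\infty}$ are all bounded by $C$ on $[0,T_0]$, so the inhomogeneity is bounded by $C$ and the coefficient of $D_e H$ is bounded by $C$. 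Since $D_e H(0,x)=0$, Gronwall once more gives $\|D_e H(t,\cdot)\|_{L^\infty}\le C(e^{Ct}-1)\le Ct$, completing (\ref{6.22}).

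The main obstacle is ensuring the argument is not circular: $D\phi_t$ appears in the $x$-derivative of the ODE, and $D\phi_t$ is controlled only through $H$ via (\ref{6.11}). The resolution is to obtain the $L^\infty$ bound on $H$ first (which requires no knowledge of $D\phi_t$ since the ODE acts fiberwise in $x$), then bound $D\phi_t$, and only then differentiate in $x$.
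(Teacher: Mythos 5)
Your proposal is correct, but it takes a genuinely different and somewhat more roundabout route than the paper. The paper obtains (\ref{6.22-0}) in one line: since (\ref{4.16}) (equivalently (\ref{6.2})) shows $\mathbf{u}\in L^\infty(0,T_0;C^1(\bT^2))$, the flow map $\phi_t$ generated by $\mathbf{u}$ is automatically bounded in $C^1$ on the finite interval $[0,T_0]$ by standard ODE theory. With $\|\phi_t\|_{C^1}$ and $\|G(t,\cdot)\|_{C^1}$ in hand it follows that $\|G(t,\phi_t(\cdot))\|_{C^1}\le C$, at which point Lemma \ref{6.19} --- which proves the $L^\infty$ and $D_x$ bounds on $H$ simultaneously --- delivers (\ref{6.22}) directly. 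You instead bound $\|H(t,\cdot)\|_{L^\infty}$ fiberwise by Gronwall (which needs no $D\phi_t$), then feed this into (\ref{6.11}) to bound $\|D\phi_t\|_{L^\infty}$, and only then differentiate (\ref{6.14}) in $x$ and apply Gronwall a second time. This works, and it is a legitimate alternative path to (\ref{6.22-0}): it derives the $D\phi_t$ bound from the structural identity (\ref{6.11}) rather than from ODE regularity of the flow of a $C^1$ vector field. Note, however, that your framing of the potential circularity is not accurate: $D\phi_t$ is \emph{not} controlled only through $H$ via (\ref{6.11}); it is controlled directly by $\|\mathbf{u}\|_{L^\infty(0,T_0;C^1)}$, a fact you yourself invoke when bounding $E^3$, so the paper's more direct order of estimates is not circular. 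Your route is correct but essentially re-proves Lemma \ref{6.19} inline and replaces the one-line flow-map estimate by a detour through (\ref{6.11}).
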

\begin{proof}
To show (\ref{6.22-0}), we observe that
due to (\ref{4.16}), $\mathbf{u}_t(\cdot)$ is bounded in $C^1(\bT^2)$ for each $t$,
with a bound
which has the same dependence as in the lemma.
Thus the flow map $\phi_t$ of $\mathbf{u}_t(\cdot)$
has estimate in $C^1(\bT^2)$ for each $t$ by a constant with same dependence.

To prove (\ref{6.22}), according to (\ref{6.14}),(\ref{6.15}) and Lemma \ref{6.19},
 we
only need to show
\begin{equation}\label{6.23}
\sup_{0\leq t\leq T_0}\|G(t,\phi_t(\cdot)\|_{C^1(\bT^2)}\leq C,
\end{equation}
since estimate of $\|D\|_{C^1(\bT^2)}$ follows from
(\ref{6.17}).

To prove (\ref{6.23}), we only need to show
$\sup_{0\leq t\leq T_0}\|G(t,\cdot)\|_{C^1}+\|\phi_t\|_{C^1}\leq C$.
The term $\|\phi_t\|_{C^1}$ is estimated in (\ref{6.22-0}).
For the estimate of $G(t,\cdot)$, we need to go back to (\ref{6.16}),
and (\ref{6.8})-(\ref{6.10}). From the explicit expressions given there
and (\ref{assumptOnSOlnUniqSG}),
we see that the $C^1$ norm of $G(t,\cdot)$ is bounded by a constant which has the
dependence as in the lemma.
\end{proof}

Below, we write $L^2$ and $H^2$ for $L^2(\bT^2)$ and $H^2(\bT^2)$.
\begin{lem}\label{6.24}
For any $t\in[0,T_0]$, we have
\begin{equation}\label{6.25}
\|\phi_{1,t}^{-1}-\phi_{2,t}^{-1}\|_{L^2}\leq C\|\phi_{1,t}-\phi_{2,t}\|_{L^2}.
\end{equation}
and
\begin{equation}\label{6.26}
\|\phi_{1,t}-\phi_{2,t}\|_{L^2}\leq C\|\phi_{1,t}^{-1}-\phi_{2,t}^{-1}\|_{L^2}.
\end{equation}
Here $C$ depends on $\|p_i\|_{L^{\infty}(0,T_0;C^{3}(\bT^2))},
\|\partial_tp_i\|_{L^{\infty}(0,T_0;C^2(\bT^2))}, \lambda,T_0$, $i=1,2$.
\end{lem}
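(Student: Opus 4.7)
The plan is to obtain both estimates from two simple ingredients: a uniform Lipschitz bound on both $\phi_{i,t}$ and $\phi_{i,t}^{-1}$, and the measure-preserving property of these maps, which allows a change of variables with Jacobian $1$.

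First I would record the Lipschitz bounds. By Lemma \ref{6.21}, $\|\phi_{i,t}\|_{C^1(\bT^2)}\le C$ uniformly in $t\in[0,T_0]$ for $i=1,2$, with $C$ having exactly the dependence claimed in the lemma. Since $\phi_{i,t}$ is measure-preserving and $C^1$, we have $\det D\phi_{i,t}\equiv1$, so by the cofactor formula in two dimensions $D\phi_{i,t}^{-1}(x)=(D\phi_{i,t})^{-1}(\phi_{i,t}^{-1}(x))$ is bounded by the same constant; hence $\|\phi_{i,t}^{-1}\|_{C^1(\bT^2)}\le C$ as well. In particular both $\phi_{i,t}$ and $\phi_{i,t}^{-1}$ satisfy $\bZ^2$-translation property and their periodic parts are uniformly Lipschitz on $\bR^2$.

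For the first inequality (\ref{6.25}), I would change variables by $y=\phi_{2,t}^{-1}(x)$, using that $\phi_{2,t}$ preserves $\mathcal{L}^2$ on $\bT^2$:
\begin{equation*}
\|\phi_{1,t}^{-1}-\phi_{2,t}^{-1}\|_{L^2}^2
=\int_{\bT^2}|\phi_{1,t}^{-1}(\phi_{2,t}(y))-y|^2\,dy
=\int_{\bT^2}|\phi_{1,t}^{-1}(\phi_{2,t}(y))-\phi_{1,t}^{-1}(\phi_{1,t}(y))|^2\,dy,
\end{equation*}
where in the last step I inserted $y=\phi_{1,t}^{-1}(\phi_{1,t}(y))$. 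The Lipschitz bound on $\phi_{1,t}^{-1}$ then gives the required estimate by $C^2\|\phi_{1,t}-\phi_{2,t}\|_{L^2}^2$. Inequality (\ref{6.26}) is proved symmetrically: change variables by $y=\phi_{1,t}(x)$, use that $\phi_{1,t}$ is measure-preserving, insert $y=\phi_{2,t}(\phi_{2,t}^{-1}(y))$, and apply the Lipschitz bound on $\phi_{2,t}$.

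Since the argument is essentially a two-line computation once the Lipschitz bounds are in hand, there is no serious obstacle. The only subtle point worth stating carefully is that the integrals above are taken on the torus but the integrands involve $\phi_{i,t}^{-1}-\phi_{j,t}^{-1}$ and $\phi_{i,t}-\phi_{j,t}$, which are $\bZ^2$-periodic because $\phi_{i,t},\phi_{i,t}^{-1}\in C^{1,\alpha}_p(\bR^2;\bR^2)$; hence the change of variables is legitimate on $\bT^2$ (cf. Lemma \ref{chgVarOnTorus} used earlier).
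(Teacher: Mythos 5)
Your proposal is correct and takes essentially the same approach as the paper: the paper also derives the uniform Lipschitz bound on $\phi_{i,t}^{-1}$ from (\ref{6.22-0}) and $\det D\phi_{i,t}=1$, then writes $\|\phi_{1,t}^{-1}-\phi_{2,t}^{-1}\|_{L^2}=\|\phi_{1,t}^{-1}\circ\phi_{2,t}-id\|_{L^2}$ (which is your change of variables $y=\phi_{2,t}^{-1}(x)$ written compactly) and applies the Lipschitz bound on $\phi_{1,t}^{-1}$. The symmetric argument for (\ref{6.26}) is handled the same way in both.
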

\begin{proof}
From  estimate (\ref{6.22-0}) for $\phi_1$ and $\phi_2$, we can bound the
eigenvalues of
$D\phi_{i,t}$, $i=1,2$ from above. Since $\det D\phi_{i,t}=1$, we also
obtain the bound for the eigenvalues from below. This gives estimate for
$\|D\phi_t^{-1}\|_{L^{\infty}}$. Now, since $\phi_{i,t}$ is measure preserving, we have
\begin{equation}\label{6.27}
\|\phi_{1,t}^{-1}-\phi_{2,t}^{-1}\|_{L^2}=\|\phi_{1,t}^{-1}\circ\phi_{2,t}-id\|_{L^2}\leq \|D\phi_{1,t}^{-1}\|_{L^{\infty}}\|\phi_{1,t}-\phi_{2,t}\|_{L^2}.
\end{equation}
This gives (\ref{6.25}). The proof for (\ref{6.26}) is similar.
\end{proof}

\begin{lem}\label{6.28}
For each $t\in[0,T_0]$, we have
\begin{equation}\label{6.29}
\|\partial_t\nabla p_{1,t}-\partial_t\nabla p_{2,t}\|_{L^2}\leq C\|p_{1,t}-p_{2,t}\|_{H^2}.
\end{equation}
Here $C$ depends on $\lambda,\|p_{i,t}\|_{C^2},\|\partial_tp_{i,t}\|_{C^1}$, $i=1,2$.
\end{lem}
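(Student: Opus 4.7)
The plan is to use the divergence-form elliptic equation satisfied by $\partial_t p$, which was derived in Section~\ref{passageLim-Sect-sub3} (equation (\ref{7-23})):
\begin{equation*}
\nabla\cdot\bigl[(I+f^{-1}D(f^{-1}Dp_i))^{-1}f^{-2}\nabla (\partial_t p_i)\bigr]
=\nabla\cdot\bigl[(I+f^{-1}D(f^{-1}Dp_i))^{-1}(f^{-1}J\nabla p_i)\bigr],\quad i=1,2.
\end{equation*}
Since solutions to the SG system satisfy the regularity and convexity assumptions stated at the start of Section~\ref{uniqSG-Sect}, this equation holds for both $p_1$ and $p_2$, and the matrix $A_i\defd (I+f^{-1}D(f^{-1}Dp_i))^{-1}f^{-2}$ is uniformly elliptic with ellipticity constant depending only on $\lambda$ and $f$.

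Setting $w\defd\partial_t p_{1,t}-\partial_t p_{2,t}$, I would subtract the two elliptic equations to obtain
\begin{equation*}
\nabla\cdot(A_1\nabla w)=\nabla\cdot G,
\end{equation*}
where the source term is
\begin{equation*}
G=(A_1-A_2)\nabla\partial_t p_{2,t}-
(A_1-A_2)\cdot f^2\cdot(f^{-1}J\nabla p_{2,t})
+A_1\cdot f^2\cdot f^{-1}J(\nabla p_{1,t}-\nabla p_{2,t}).
\end{equation*}
(I would simplify and re-group terms in a routine manner.) The key point is that each term in $G$ depends either on $p_{1,t}-p_{2,t}$ and its first/second spatial derivatives, or on factors of the form $A_1-A_2$, which involves $D^2(p_{1,t}-p_{2,t})$. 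Using the hypothesis $\|p_{i,t}\|_{C^2}, \|\partial_tp_{i,t}\|_{C^1}\le C$, and the matrix identity $A_1-A_2=A_1(A_2^{-1}-A_1^{-1})A_2$ together with the uniform lower bound on $I+f^{-1}D(f^{-1}Dp_i)\ge\lambda I$, one obtains
\begin{equation*}
\|G\|_{L^2}\le C\|p_{1,t}-p_{2,t}\|_{H^2},
\end{equation*}
with $C$ depending only on the quantities listed in the statement.

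Next I would apply the standard energy estimate. Since $p_1$ and $p_2$ both satisfy (\ref{1.7-1}), we have $\int_{\bT^2}w(x)\,dx=0$, so Poincar\'e's inequality applies. Testing the equation against $w$ and integrating by parts on $\bT^2$ gives
\begin{equation*}
\int_{\bT^2}(A_1\nabla w)\cdot\nabla w\,dx=\int_{\bT^2}G\cdot\nabla w\,dx,
\end{equation*}
so uniform ellipticity and Cauchy--Schwarz yield $\|\nabla w\|_{L^2}\le C\|G\|_{L^2}$. Combining this with the bound on $\|G\|_{L^2}$ gives the desired estimate
\begin{equation*}
\|\partial_t\nabla p_{1,t}-\partial_t\nabla p_{2,t}\|_{L^2}=\|\nabla w\|_{L^2}\le C\|p_{1,t}-p_{2,t}\|_{H^2}.
\end{equation*}

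The only mildly non-routine step is bookkeeping for the various terms in $G$ and verifying that every factor that is not a derivative of $p_{1,t}-p_{2,t}$ is bounded in $L^\infty$ by a constant depending on the allowed quantities; this is where the hypotheses $\|p_{i,t}\|_{C^2}$ and $\|\partial_tp_{i,t}\|_{C^1}$ enter (to bound coefficients and $\nabla\partial_t p_{2,t}$ respectively). Everything else is the usual divergence-form energy estimate, so I do not foresee any serious obstacle.
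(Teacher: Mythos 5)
Your proof is correct and follows essentially the same route as the paper: the paper likewise subtracts the divergence-form elliptic equation (\ref{2.20new}) (which coincides with (\ref{7-23})) written for $p_1$ and $p_2$, tests the resulting equation against $\partial_tp_{1,t}-\partial_tp_{2,t}$, integrates by parts, and invokes the uniform ellipticity from (\ref{assumptOnSOlnUniqSG}) to obtain the $L^2$ gradient bound. Your appeal to Poincar\'e is harmless but unnecessary, since the conclusion only controls $\|\nabla w\|_{L^2}$ and not $\|w\|_{L^2}$.
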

\begin{proof}
We take (\ref{2.20new}) for $p_1,p_2$ and subtract to get
\begin{equation}\label{6.30}
\begin{split}
&\nabla\cdot[(I+f^{-1}D(f^{-1}Dp_{1,t}))^{-1}f^{-2}(\partial_t\nabla p_{1,t}-\partial_t\nabla p_{2,t})]
\\
&\quad =\nabla\cdot[(I+f^{-1}D(f^{-1}p_{1,t}))^{-1}(f^{-1}J\nabla p_{1,t}-f^{-1}J\nabla p_{2,t})]
\\
&\qquad+\nabla\cdot[((I+f^{-1}D(f^{-1}Dp_{1,t}))^{-1}\\
&\qquad-(I+f^{-1}D(f^{-1}Dp_{2,t}))^{-1})(f^{-1}J\nabla p_{2,t}-f^{-2}\partial_t\nabla p_{2,t})].
\end{split}
\end{equation}
Multiplying both sides of (\ref{6.30}) $\partial_tp_{1,t}-\partial_tp_{2,t}$
and integrating by parts, and using (\ref{assumptOnSOlnUniqSG}), we obtain (\ref{6.29})
\end{proof}

\begin{prop}\label{6.32}
For any $t\in[0,T_0]$
\begin{equation*}
\sup_{0\leq s\leq t}\|p_{1,s}-p_{2,s}\|_{H^2}\leq C\sup_{0\leq s\leq t}\|\phi_1-\phi_2\|_{L^2}.
\end{equation*}
Here $C$ depends on $\|p_i\|_{L^{\infty}(0,T_0;C^3(\bT^2))},
\|\partial_tp_i\|_{L^{\infty}(0,T_0;C^2(\bT^2))},\lambda, T_0$, $i=1,2$.
\end{prop}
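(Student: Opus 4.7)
The idea is to turn~(\ref{6.18}) into a Monge--Amp\`ere type identity linking $p_t$ and $(\phi_t, H)$, take the difference between the two solutions, and linearise. Write
\[
A_i(t,x) := I + f^{-1}D(f^{-1}Dp_{i,t})(x), \qquad D(x) := I + f^{-1}D(f^{-1}Dp_0)(x),
\]
and set $w_t := p_{1,t} - p_{2,t}$. Identity~(\ref{6.18}) applied to each solution and then subtracted, followed by linearisation of $\det$ in the difference $A_{1,t}-A_{2,t} = f^{-1}D(f^{-1}Dw_t)$, yields a uniformly elliptic equation of the form
\[
L_t w_t = g_t \quad \text{on } \bT^2,
\]
where $L_t v = M_{t,ij}(x)\, f^{-1}\partial_j(f^{-1}\partial_i v)$, with $M_t$ a convex combination of cofactor matrices of $A_{1,t}$ and $A_{2,t}$ (uniformly positive definite thanks to (\ref{assumptOnSOlnUniqSG})), and the right-hand side $g_t$ equals the difference of the two sides of~(\ref{6.18}).

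Next I would apply the standard $W^{2,2}$ elliptic estimate with $C^{0,\alpha}$ coefficients. Since both $p_{i,t}$ have zero mean by (\ref{1.7-1}), so does $w_t$, and as the only kernel of $L_t$ is the constants, a Fredholm-type argument gives $\|w_t\|_{H^2} \le C\|g_t\|_{L^2}$. To bound $\|g_t\|_{L^2}$, I split
\begin{align*}
g_t &= \bigl\{\det[D(\phi_{1,t}^{-1}) + H_1(\phi_{1,t}^{-1})] - \det[D(\phi_{2,t}^{-1}) + H_1(\phi_{2,t}^{-1})]\bigr\} \\
&\quad + \bigl\{\det[D(\phi_{2,t}^{-1}) + H_1(\phi_{2,t}^{-1})] - \det[D(\phi_{2,t}^{-1}) + H_2(\phi_{2,t}^{-1})]\bigr\}.
\end{align*}
The first bracket is Lipschitz in the composition point (using $D\in C^1$ and $H_i \in C^1$ from Lemma~\ref{6.21}), so its $L^2$-norm is $\le C\|\phi_{1,t}^{-1} - \phi_{2,t}^{-1}\|_{L^2} \le C\|\phi_{1,t} - \phi_{2,t}\|_{L^2}$ by Lemma~\ref{6.24}. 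The second bracket, after a change of variables using that $\phi_{2,t}^{-1}$ is measure-preserving, has $L^2$-norm $\le C\|H_1(t,\cdot) - H_2(t,\cdot)\|_{L^2}$. Hence
\[
\|g_t\|_{L^2} \le C\bigl(\|\phi_{1,t} - \phi_{2,t}\|_{L^2} + \|H_1(t) - H_2(t)\|_{L^2}\bigr).
\]

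The remaining and main task is to control $\|H_1(t) - H_2(t)\|_{L^2}$. I expect the natural obstacle to be that the integral formula~(\ref{6.13}) pointwise features $D\phi_i$, whose difference is only controlled in $H^1$ of $\phi_i$ and not in $L^2$ --- a norm that does not appear in the target estimate. To bypass this, I would use the ODE~(\ref{6.14})--(\ref{6.15}) instead. Taking the difference and inserting intermediate terms,
\[
\partial_t(H_1 - H_2) = G_1(\phi_1)(H_1 - H_2) + [G_1(\phi_1) - G_1(\phi_2)](D + H_2) + (G_1 - G_2)(\phi_2)(D + H_2).
\]
Since $\phi_2$ is measure-preserving, the last term has $L^2$-norm $\le C\|G_1 - G_2\|_{L^2}$. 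Inspecting the explicit expressions (\ref{6.8})--(\ref{6.10}),(\ref{6.16}) and expressing $\mathbf{u}_i$ from~(\ref{2.19new}), I can bound $\|G_1 - G_2\|_{L^2} \le C\|w_t\|_{H^2}$ where the Lemma~\ref{6.28} plays a key role in converting the $\partial_t\nabla p$ difference into an $H^2$-norm of $w_t$. The first and middle terms yield $C\|H_1 - H_2\|_{L^2}$ and $C\|\phi_1 - \phi_2\|_{L^2}$, respectively. Since $H_i(0)=0$, Gronwall gives
\[
\|H_1(t) - H_2(t)\|_{L^2} \le C\int_0^t \bigl(\|\phi_{1,s} - \phi_{2,s}\|_{L^2} + \|p_{1,s} - p_{2,s}\|_{H^2}\bigr)\,ds.
\]

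Combining the three estimates and setting $\Psi(t) := \sup_{0\le s\le t}\|p_{1,s} - p_{2,s}\|_{H^2}$, $\Phi(t) := \sup_{0\le s\le t}\|\phi_{1,s} - \phi_{2,s}\|_{L^2}$, I obtain
\[
\Psi(t) \le C\Phi(t) + C\int_0^t \Psi(s)\,ds,
\]
and a final application of Gronwall's inequality on $[0,T_0]$ produces $\Psi(t) \le C'\Phi(t)$, which is the claim.
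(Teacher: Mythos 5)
Your proposal is correct and follows essentially the same route as the paper's proof: subtract and linearise (\ref{6.18}) to get a uniformly elliptic equation for $p_{1,t}-p_{2,t}$, apply the $H^2$ estimate, split the right-hand side into a $\|\phi_{1,t}-\phi_{2,t}\|_{L^2}$ contribution and an $\|H_1-H_2\|_{L^2}$ contribution, then control $H_1-H_2$ via the ODE (\ref{6.14})--(\ref{6.15}) with an intermediate-term insertion and Lemma \ref{6.28}, and close with two applications of Gronwall. Your observation that one should avoid the integral formula (\ref{6.13}) (which would require controlling $D\phi_1-D\phi_2$) in favour of the ODE is precisely the key point, and your final bookkeeping matches the paper's.
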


\begin{proof}
From (\ref{6.18}) for $p_1$ and $p_2$, by subtraction we get
\begin{equation}\label{6.34}
\begin{split}
a_{ij}f^{-1}\partial_j(f^{-1}\partial_i(p_{1,t}&-p_{2,t}))=
F(x),
\end{split}
\end{equation}
where
\begin{align*}
&a_{ij}=\int_0^1M_{ji}(I+\theta f^{-1}D(f^{-1}Dp_{1,t})+(1-
\theta)f^{-1}D(f^{-1}Dp_{2,t}))d\theta,\\
&F(x)=b_{ij}[f^{-1}\partial_j(f^{-1}\partial_ip_0)(\phi_{1,t}^{-1})-
f^{-1}\partial_i(f^{-1}\partial_jp_0)(\phi_{2,t}^{-1}(x))\\
&\qquad\qquad+H_{1,ij}(t,\phi_{1,t}^{-1}(x))-H_{2,ij}(t,\phi_{2,t}^{-1}(x))],
\end{align*}
with
\begin{equation*}
\begin{split}
b_{ij}=\int_0^1M_{ji}[I&+\theta f^{-1}D(f^{-1}Dp_0)(\phi_{1,t}^{-1}(x))+(1-\theta)f^{-1}D(f^{-1}Dp_0)(\phi_{2,t}^{-1}(x))\\
&+\theta H_1(t,\phi_{1,t}^{-1}(x))+(1-\theta)H_2(t,\phi_{2,t}^{-1}(x))]d\theta.
\end{split}
\end{equation*}
Here $[M_{ij}(A)]$ denotes the cofactor matrix
of the matrix $A$.

We assumed $p_1,p_2$ to be $C^{3}$ smooth, and $f\in C^2$,
from which we conclude that the coefficients on equation (\ref{6.34}),
written in the non-divergence form, are in $C^1(\bT^2)$, and the $C^1$ norms
depend on $\|p_1,p_2\|_{L^{\infty}(0,T_0;C^{3}(\bT^2))}$.
Also,
equation (\ref{6.34}) is uniformly elliptic (by (\ref{assumptOnSOlnUniqSG})),
and has no zero-th order term. Then we apply the $H^2$ estimate to get
\begin{equation}\label{6.37}
\|p_{1,t}-p_{2,t}\|_{H^2}\leq C\|F\|_{L^2}.
\end{equation}
Here and

Next notice that $b_{ij}$ is bounded. Indeed, we just need to observe $H_1,H_2$
are bounded, this follows from Lemma \ref{6.21}.
Then we estimate
\begin{equation*}
\begin{split}
\|F\|_{L^2}&\leq C\|f^{-1}D(f^{-1}Dp_0)(\phi_{1,t}^{-1})-f^{-1}D(f^{-1}Dp_0)(\phi_{2,t}^{-1})\|_{L^2}\\
&\quad+C\|H_{1,ij}(t,\phi_{1,t}^{-1}(\cdot))
-H_{2,ij}(t,\phi_{2,t}^{-1}(\cdot))\|_{L^2}
\\
&\leq C\|\phi_{1,t}-\phi_{2,t}\|_{L^2}+C\|D_xH_1(t,\cdot)\|_{L^{\infty}}\|\phi_{1,t}-\phi_{2,t}\|_{L^2}+C\|H_1-H_2\|_{L^2}.
\end{split}
\end{equation*}

Here we used that $p_0\in C^3$ and Lemma \ref{6.24}. Now we use that
$\|DH_1\|_{L^{\infty}}$ is bounded, which follows from Lemma \ref{6.21}, to obtain
\begin{equation}\label{6.39}
\|F\|_{L^2}\leq C\|\phi_{1,t}-\phi_{2,t}\|_{L^2}+C\|H_1-H_2\|_{L^2}.
\end{equation}
We can deduce from (\ref{6.14}) that
\begin{equation*}
\frac{d}{dt}(H_1-H_2)=G_1(t,\phi_{1,t}(x))(H_1-H_2)+(G_1(t,\phi_{1,t})-G_2(t,\phi_{2,t}))(H_2+D).
\end{equation*}
Thus we have
\begin{equation}\label{6.41}
\begin{split}
&\|H_{1,t}-H_{2,t}\|_{L^2}\leq\sup_{0\leq t\leq T_0}\|G_1(t,\cdot)\|_{L^{\infty}}\int_0^t\|H_{1,s}-H_{2,s}\|_{L^2}\\
&+(\sup_{0\leq t\leq T_0}\|H\|_{L^{\infty}}+\|D\|_{L^{\infty}})\int_0^t\|G_1(s,\phi_{1,s}(\cdot))-G_2(s,\phi_{2,s}(\cdot))\|_{L^2}.
\end{split}
\end{equation}
Again to deal with the second term, we have now
\begin{equation}\label{6.42}
\begin{split}
&\|G_1(s,\phi_{1,s}(\cdot))-G_2(s,\phi_{2,s}(\cdot))\|_{L^2}\\
&\qquad\leq
\|G_1(s,\phi_{1,s}(\cdot))-G_1(s,\phi_{2,s}(\cdot))\|_{L^2}+\|G_1(s,\phi_{2,s}(\cdot))-G_2(s,\phi_{2,s}(\cdot))\|_{L^2}
\\
&\qquad\leq \|DG_1\|_{L^{\infty}}\|\phi_{1,s}-\phi_{2,s}\|_{L^2}+\|G_1-G_2\|_{L^2}.
\end{split}
\end{equation}
Recall (\ref{6.16}) for definition of $G_1$, and the explicit expressions given in (\ref{6.8})-(\ref{6.10}), we see $\|D_xG_1\|_{L^{\infty}}\leq C$. We still need to estimate $\|G_1-G_2\|_{L^2}$. Now
\begin{equation*}
G_1-G_2=(E_1-E_2)(I+f^{-1}D(f^{-1}Dp_1))^{-1}+E_2[(I+f^{-1}D(f^{-1}Dp_1))^{-1}-(I+f^{-1}D(f^{-1}Dp_2))^{-1}].
\end{equation*}
Then we estimate
\begin{equation}\label{6.44}
\begin{split}
\|G_1-G_2\|_{L^2}&\leq\|(I+f^{-1}D(f^{-1}Dp))^{-1}\|_{L^{\infty}}\|E_1-E_2\|_{L^2}\\
&\quad+\|E_2\|_{L^{\infty}}\cdot
\|(I+f^{-1}D(f^{-1}Dp_1))^{-1}-(I+f^{-1}D(f^{-1}Dp_2))^{-1}\|_{L^2}\\
&\leq C\|E_1-E_2\|_{L^2}+C\|p_1-p_2\|_{H^2}.
\end{split}
\end{equation}
It only remains to estimate $\|E_1-E_2\|_{L^2}$. For this we go back to (\ref{6.8})-(\ref{6.10}). One sees easily from (\ref{6.8}) and previous lemma
\begin{equation}\label{6.45}
\|E^1_{1,s}-E^1_{2,s}\|_{L^2}\leq C\|\partial_s\nabla p_1-\partial_s\nabla p_2\|_{L^2}\leq C\|p_1-p_2\|_{H^2}.
\end{equation}
Also from (\ref{6.9}):
\begin{equation}\label{6.46}
\|E_{1,s}^2-E_{2,s}^2\|_{L^2}\leq C\|p_{1,s}-p_{2,s}\|_{H^2}
\end{equation}
One can see from (\ref{6.10}) that
\begin{equation}\label{6.47}
\|E^3_{1,s}-E^3_{2,s}\|_{L^2}\leq C\|\mathbf{u}_{1,s}^m-\mathbf{u}_{2,s}^m\|_{L^2}\|p_1\|_{C^{2}}+C\|\mathbf{u}_{2,s}^m\|_{L^{\infty}}\|p_1-p_2\|_{H^2}
\end{equation}
From (\ref{4.16}) we observe that $\|\mathbf{u}_t\|_{L^{\infty}}$ can be estimated by $\|p\|_{C^{2}}$ and $\|\partial_t p\|_{C^1}$.
On the other hand, from Lemma \ref{6.28}:
\begin{equation}\label{6.48}
\|\mathbf{u}_{1,t}-\mathbf{u}_{2,t}\|_{L^2}\leq C(\|p_1-p_2\|_{H^2}+\|\partial_t\nabla p_1-\partial_t\nabla p_2\|_{L^2})\leq C\|p_{1,t}-p_{2,t}\|_{H^2}.
\end{equation}
Collecting (\ref{6.44})-(\ref{6.48}) to get
\begin{equation*}
\|G_1-G_2\|_{L^2}\leq C\|p_1-p_2\|_{H^2}.
\end{equation*}
Recalling (\ref{6.41}) and (\ref{6.42}), we get
\begin{equation*}
\|H_{1,t}-H_{2,t}\|_{L^2}\leq C\int_0^t\|H_{1,s}-H_{2,s}\|_{L^2}ds+C\int_0^t\|\phi_{1,s}-\phi_{2,s}\|_{L^2}ds+C\int_0^t\|p_{1,s}-p_{2,s}\|_{H^2}ds.
\end{equation*}
Use Gronwall to get
\begin{equation*}
\|H_{1,t}-H_{2,t}\|_{L^2}\leq C\int_0^t\|\phi_{1,s}-\phi_{2,s}\|_{L^2}ds+C\int_0^t\|p_{1,s}-p_{2,s}\|_{H^2}ds.
\end{equation*}
Recalling (\ref{6.37}) and (\ref{6.39}), we have
\begin{equation*}
\|p_{1,t}-p_{2,t}\|_{H^2}\leq C\|\phi_{1,t}-\phi_{2,t}\|_{L^2}+\int_0^t\|\phi_{1,s}-\phi_{2,s}\|_{L^2}ds+\int_0^t\|p_{1,s}-p_{2,s}\|_{H^2}ds.
\end{equation*}
Now, using Gronwall inequality, we conclude the proof.
\end{proof}
We present a final lemma which follows directly from (\ref{6.2}).
\begin{lem}\label{6.53}
\begin{equation*}
\|\phi_{1,t}-\phi_{2,t}\|_{L^2}\leq C\int_0^t(\|p_{1,s}-p_{2,s}\|_{H^2}+\|\partial_t\nabla p_{1,s}-\partial_t\nabla p_{2,s}\|_{L^2})ds.
\end{equation*}
Here $C$ depends on $\|p_i\|_{L^{\infty}(0,T_0;C^3(\bT^2))},
\|\partial_tp_i\|_{L^{\infty}(0,T_0;C^2(\bT^2))},\lambda,T_0$, $i=1,2$.
\end{lem}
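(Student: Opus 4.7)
The plan is to integrate equation (\ref{6.2}) in time, subtract the two resulting identities, and close the estimate by a Gronwall argument. Introduce the shorthand
\begin{equation*}
\mathcal{W}(p)(s,y) := \big(I+f^{-1}D(f^{-1}Dp_s)\big)^{-1}(y)\big(f^{-1}J\nabla p_s - f^{-2}\partial_t\nabla p_s\big)(y),
\end{equation*}
so that (\ref{6.2}) reads $\partial_s\phi_{i,s}(x)=\mathcal{W}(p_i)(s,\phi_{i,s}(x))$. Since $\phi_{i,0}=id$, integrating in $s$, subtracting, and taking the $L^2(\bT^2)$-norm yields
\begin{equation*}
\|\phi_{1,t}-\phi_{2,t}\|_{L^2} \le \int_0^t \big\|\mathcal{W}(p_1)(s,\phi_{1,s}(\cdot)) - \mathcal{W}(p_2)(s,\phi_{2,s}(\cdot))\big\|_{L^2}\,ds.
\end{equation*}

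Next, I would split the integrand into two natural pieces:
\begin{equation*}
\mathcal{W}(p_1)(s,\phi_{1,s}) - \mathcal{W}(p_2)(s,\phi_{2,s}) = \big[\mathcal{W}(p_1)(s,\phi_{1,s}) - \mathcal{W}(p_1)(s,\phi_{2,s})\big] + \big[\mathcal{W}(p_1)(s,\phi_{2,s}) - \mathcal{W}(p_2)(s,\phi_{2,s})\big].
\end{equation*}
The first bracket is controlled via the mean value theorem by $\|D_y\mathcal{W}(p_1)(s,\cdot)\|_{L^\infty}\|\phi_{1,s}-\phi_{2,s}\|_{L^2}$; by the lower bound (\ref{assumptOnSOlnUniqSG}) on $I+f^{-1}D(f^{-1}Dp_1)$, the assumed regularity $p_1\in L^\infty(0,T_0;C^3)$, $\partial_t p_1\in L^\infty(0,T_0;C^2)$, and smoothness of $f$, this operator norm is uniformly bounded by a constant $C$ with the dependence stated in the lemma. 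For the second bracket I use that $\phi_{2,s}\colon\bT^2\to\bT^2$ is Lebesgue measure preserving, so a change of variables gives
\begin{equation*}
\big\|\mathcal{W}(p_1)(s,\phi_{2,s}(\cdot)) - \mathcal{W}(p_2)(s,\phi_{2,s}(\cdot))\big\|_{L^2} = \big\|\mathcal{W}(p_1)(s,\cdot) - \mathcal{W}(p_2)(s,\cdot)\big\|_{L^2}.
\end{equation*}

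To bound the last quantity, I would write out $\mathcal{W}(p_1)-\mathcal{W}(p_2)$ using the algebraic identity $A^{-1}-B^{-1}=A^{-1}(B-A)B^{-1}$ applied to the matrix factor, together with linearity of the vector factor. Since both $(I+f^{-1}D(f^{-1}Dp_i))^{-1}$ and $f^{-1}J\nabla p_i - f^{-2}\partial_t\nabla p_i$ depend on spatial derivatives of $p_i$ up to order two and on $\partial_t\nabla p_i$, this gives the pointwise bound
\begin{equation*}
\big\|\mathcal{W}(p_1) - \mathcal{W}(p_2)\big\|_{L^2} \le C\big(\|p_{1,s}-p_{2,s}\|_{H^2} + \|\partial_t\nabla p_{1,s} - \partial_t\nabla p_{2,s}\|_{L^2}\big),
\end{equation*}
with $C$ as in the statement. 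Combining the three estimates,
\begin{equation*}
\|\phi_{1,t}-\phi_{2,t}\|_{L^2}\le C\int_0^t\|\phi_{1,s}-\phi_{2,s}\|_{L^2}\,ds+C\int_0^t\big(\|p_1-p_2\|_{H^2}+\|\partial_t\nabla p_1-\partial_t\nabla p_2\|_{L^2}\big)\,ds,
\end{equation*}
and Gronwall's inequality absorbs the first integral into the constant, yielding the claim. No substantive obstacle is anticipated; the only points requiring care are the legitimacy of the change of variables in the second bracket (immediate from measure preservation, which is part of the hypotheses via (\ref{1.4})) and the uniform $L^\infty$ bound on $D_y\mathcal{W}(p_1)(s,\cdot)$, which requires expanding the spatial derivative of the matrix inverse and using (\ref{assumptOnSOlnUniqSG}) to keep it bounded uniformly in $s\in[0,T_0]$.
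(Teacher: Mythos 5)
Your proof is correct and follows essentially the same route as the paper: it expresses $\partial_t\phi_{i}$ via (\ref{6.2}), splits the difference into the ``same pressure, different flow'' bracket (handled by a Lipschitz bound on $\mathcal{W}(p_1)$ in the spatial variable, which needs precisely $p_1\in C^3$ and $\partial_t p_1 \in C^2$) and the ``same flow, different pressure'' bracket (handled by the measure-preserving change of variables for $\phi_{2,s}$ together with an $H^2$ bound), and closes with Gronwall. The only cosmetic difference is that the paper states the estimate at the level of $\|\partial_t\phi_{1,t}-\partial_t\phi_{2,t}\|_{L^2}$ before invoking Gronwall, whereas you integrate in time first; these are equivalent.
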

\begin{proof}
From (\ref{6.2}), using (\ref{assumptOnSOlnUniqSG}) and the measure-preserving property
of $\phi_{2,t}$, we have
\begin{equation*}
\begin{split}
\|\partial_t\phi_{1,t}&-\partial_t\phi_{2,t}\|_{L^2}\\
&\leq \|(I+f^{-1}D(f^{-1}Dp_{1,t}))^{-1}(f^{-1}J\nabla p_{1,t}-f^{-2}\partial_t\nabla p_{1,t})(\phi_{1,t})\\
&\quad-(I+f^{-1}D(f^{-1}Dp_{1,t}))^{-1}(f^{-1}J\nabla p_{1,t}-f^{-2}\partial_t\nabla p_{1,t})(\phi_{2,t})\|_{L^2}\\
&\quad+\|(I+f^{-1}D(f^{-1}Dp_{1,t}))^{-1}(f^{-1}J\nabla p_{1,t}-f^{-2}\partial_t\nabla p_{1,t})\\
&\quad-(I+f^{-1}D(f^{-1}Dp_{2,t}))^{-1}(f^{-1}J\nabla p_{2,t}-f^{-2}\partial_t\nabla p_{2,t})\|_{L^2}\\
&\leq\|D[(I+f^{-1}D(f^{-1}Dp_{1,t}))^{-1}
(f^{-1}J\nabla p_{1,t}-f^{-2}\partial_t\nabla p_{1,t})]
\|_{L^{\infty}}\|\phi_{1,t}-\phi_{2,t}\|_{L^2}\\
&\quad+C\|p_{1,t}-p_{2,t}\|_{H^2}+C\|\partial_t\nabla p_{1,t}-\partial_t\nabla p_{2,t}\|_{L^2}.
\end{split}
\end{equation*}
Now, applying the Gronwall inequality, we conclude the proof.
\end{proof}
Combining all above estimates, we can now conclude uniqueness.
Indeed, from Lemma \ref{6.53} and Lemma \ref{6.28}, we get
\begin{equation*}
\|\phi_{1,t}-\phi_{2,t}\|_{L^2}\leq C\int_0^t\|p_{1,s}-p_{2,s}\|_{H^2}ds.
\end{equation*}
Let $a(t)=\sup_{0\leq s\leq t}\|\phi_{1,s}-\phi_{2,s}\|_{L^2}$, and
$b(t)=\sup_{0\leq s\leq t}\|p_{1,s}-p_{2,s}\|_{H^2}$. Therefore
\begin{equation*}
a(t)\leq C\int_0^tb(s)ds.
\end{equation*}
Using Proposition \ref{6.32} we obtain
\begin{equation*}
a(t)\leq C\int_0^tb(s)ds\leq C\int_0^ta(s)ds.
\end{equation*}
Also $a(0)=0$ since $\phi_{i,0}=Id$. Then from Gronwall inequality, $a(t)=0$ for all $t>0$,
that is $\phi_{1,t}=\phi_{2,t}$ for all $t>0$, which implies
$\mathbf{u}_1=\mathbf{u}_2$.
Also, from $a(t)\equiv 0$ and Proposition \ref{6.32}, $b(t)=0$ for all $t>0$,
that is
$p_{1,t}= p_{2,t}$ for all $t>0$. Now uniqueness is proved.
\subsection{Uniqueness for SG Shallow Water with variable $f$}
\label{uniqShWat-Sect}
The uniqueness for SG Shallow Water case is very similar to SG case.
Let $\phi(t)$ be the flow map,  and $h(t,x)$ be the fluid depth field.
Assume that (\ref{1.11}) is satisfied. Then similar to (\ref{6.1}), we have
\begin{equation}\label{6.59}
\partial_t\phi_t(x)+f^{-1}(\phi_t(x))\partial_t(f^{-1}(\phi_t(x)))\nabla h_t(\phi_t(x)))=f^{-1}J\nabla h_t(\phi_t(x)).
\end{equation}
The same calculation as in SG case shows (\ref{6.2})-(\ref{6.7}) still holds with $p_t$ replaced by $h_t$. In particular, we have
\begin{equation}\label{6.60}
(I+f^{-1}D(f^{-1}Dh_t))(\phi_t(x))D\phi_t(x)=(I+f^{-1}D(f^{-1}Dh_0))(x)+H(t,x).
\end{equation}
Here $H$ has the same expression as given by (\ref{6.13}) and solves an
initial value problem similar to (\ref{6.14})-(\ref{6.17}) with $E$ given by
(\ref{6.8})-(\ref{6.10}), the only difference being $p$ replaced by $h$.
Similar to Lemma \ref{6.21}. We can conclude $\|H(t,\cdot)\|_{C^1}\leq Ct$.
Here $C$ depends on $\|h\|_{L^{\infty}(0,T_0;C^3(\bT^2))}$,
$\|\partial_t h\|_{L^{\infty}(0,T_0;C^2(\bT^2))}$, $\lambda$, $T_0$,
where $\lambda>0$ is such number that
$I+f^{-1}D(f^{-1}Dh)\ge\lambda I$ on $[0,T_0]\times\bT^2$,
which exists by (\ref{1.11}).

The difference arises in (\ref{6.18}). Since $\phi_t$ does not preserve Lebesgue
measure, but satisfies $(\phi_t)_\# h_0=h_t$, we have
\begin{equation}\label{6.61}
\det D\phi_t(x)=\frac{h_0(x)}{h_t(\phi_t(x))}.
\end{equation}

Taking determinant on both sides of (\ref{6.60}), we obtain
\begin{equation*}
\frac{\det(I+f^{-1}D(f^{-1}Dh_t)(\phi_t(x)))}{h_t(\phi_t(x))}=\frac{\det(I+f^{-1}D(f^{-1}Dh_0)(x)+H(t,x))}{h_0(x)}.
\end{equation*}
Taking  this at point $\phi_t^{-1}(x)$, we get
\begin{equation}\label{6.63}
\frac{\det(I+f^{-1}D(f^{-1}Dh_t)(x))}{h_t(x)}=
\frac{\det(I+f^{-1}D(f^{-1}Dh_0)(\phi_t^{-1}(x))+H(t,\phi_t^{-1}(x)))}{h_0(\phi_t^{-1}(x))}.
\end{equation}
As in SG case, we solve (\ref{6.59}) for $\partial_t\phi_t$:
\begin{equation*}
\partial_t\phi_t(x)=(I+f^{-1}D(f^{-1}Dh_t))^{-1}(\phi_t(x))[f^{-1}J\nabla h_t-f^{-2}\partial_t\nabla h_t](\phi_t(x)).
\end{equation*}
Or equivalently, writing this at  point $\phi_t^{-1}(x)$, we get expression
for physical velocity:
\begin{equation}\label{6.64}
\mathbf{u}_t=(I+f^{-1}D(f^{-1}Dh_t))^{-1}(f^{-1}J\nabla h_t-f^{-2}\partial_t\nabla h_t).
\end{equation}
Due to the equation $\partial_th+\nabla\cdot(\mathbf{u}_th_t)=0$, we obtain an
elliptic equation for $\partial_th$, namely
\begin{equation}\label{6.66}
\nabla\cdot[(I+f^{-1}D(f^{-1}Dh_t))^{-1}f^{-2}h_t\partial_t\nabla h_t]-\partial_th=\nabla\cdot[(I+f^{-1}D(f^{-1}Dh_t))^{-1}f^{-1}h_tJ\nabla h_t].
\end{equation}

 Suppose we have two solutions of SG shallow water equations,
 $(h_1,\mathbf{u}_{1,g},\mathbf{u}_1)$ and $(h_2,\mathbf{u}_{2,g},\mathbf{u}_2)$,
 which satisfy the assumptions of Theorem \ref{1.9}, and defined on
 $[0,T_0]\times\bR^2$. Then there exist constants $\lambda, \,\mu>0$
 such that
\begin{equation}\label{assumptOnSOlnUniqSGSW}
 I+f^{-1}D(f^{-1}Dh_i)\ge \lambda I \;\textrm{ and }\;
 h_i\ge \mu  \;\textrm{ for }\;i=1,2
 \;\textrm{ on }\;[0,T_0]\times\bT^2.
\end{equation}
 Due to (\ref{6.64}),
 we have $\mathbf{u}_i\in L^{\infty}(0,T_0;C^1)$, hence we can define the flow
 map $\phi_i(t)$.

Next we present the estimates in the shallow water case which lead to uniqueness,
parallel to previous subsection.

As a preparation, we first observe that the analogue of Lemma \ref{6.24} also holds for the shallow water setting.
\begin{lem}\label{9.8nn}
For any $t\in[0,T_0]$, we have
\begin{equation}\label{9.62new}
\|\phi_{1,t}^{-1}-\phi_{2,t}^{-1}\|_{L^2}\leq C\|\phi_{1,t}-\phi_{2,t}\|_{L^2}
\end{equation}
and
\begin{equation}\label{9.63new}
\|\phi_{1,t}-\phi_{2,t}\|_{L^2}\leq C\|\phi_{1,t}^{-1}-\phi_{2,t}^{-1}\|_{L^2}.
\end{equation}
\end{lem}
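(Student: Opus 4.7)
The statement is the shallow water analogue of Lemma \ref{6.24}. The SG proof used two ingredients: a uniform $C^1$ bound on $\phi_{i,t}$ and $\phi_{i,t}^{-1}$, and the measure preserving identity $(\phi_{i,t})_\#\Lm_{\bT^2}=\Lm_{\bT^2}$, which let us change variables freely with Jacobian $1$. In the shallow water case we lose the second item, so the plan is to recover both estimates directly from (\ref{6.61}) together with the bounds $h_{i}\ge \mu$ and $h_{i}\le \|h_i\|_{L^\infty}$ from (\ref{assumptOnSOlnUniqSGSW}) and (\ref{1.11}).

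First I would establish the $C^1$ bounds. The bound $\|\phi_{i,t}\|_{C^1(\bT^2)}\le C$ is obtained exactly as in (\ref{6.22-0}), since (\ref{6.64}) gives $\mathbf{u}_i\in L^\infty(0,T_0;C^1)$ with norm controlled by $\|h_i\|_{C^2}$, $\|\partial_t h_i\|_{C^1}$ and $\lambda$. For $\phi_{i,t}^{-1}$, the issue is that now
\[
\det D\phi_{i,t}(x)=\frac{h_0(x)}{h_{i,t}(\phi_{i,t}(x))}
\]
by (\ref{6.61}). By (\ref{5.5}) and (\ref{assumptOnSOlnUniqSGSW}), the right-hand side is bounded between two positive constants depending on $c_1$, $\mu$ and $\|h_0\|_{L^\infty}$, $\|h_i\|_{L^\infty}$. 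Combined with the already available upper bound on the eigenvalues of $D\phi_{i,t}$, this forces a uniform positive lower bound on those eigenvalues, hence $\|D\phi_{i,t}^{-1}\|_{L^\infty}\le C$ with $C$ of the type stated in the lemma.

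Next, to obtain (\ref{9.62new}) I would change variables with a bounded Jacobian:
\[
\|\phi_{1,t}^{-1}-\phi_{2,t}^{-1}\|_{L^2}^2
= \int_{\bT^2}|\phi_{1,t}^{-1}(\phi_{2,t}(x))-x|^2\,\det D\phi_{2,t}(x)\,dx
\le C\int_{\bT^2}|\phi_{1,t}^{-1}(\phi_{2,t}(x))-\phi_{1,t}^{-1}(\phi_{1,t}(x))|^2\,dx,
\]
using the upper bound on $\det D\phi_{2,t}$. The mean value theorem and the bound on $\|D\phi_{1,t}^{-1}\|_{L^\infty}$ from the previous step then give
\[
\|\phi_{1,t}^{-1}-\phi_{2,t}^{-1}\|_{L^2}^2 \le C\|D\phi_{1,t}^{-1}\|_{L^\infty}^2\,\|\phi_{1,t}-\phi_{2,t}\|_{L^2}^2.
\]
Inequality (\ref{9.63new}) follows from the symmetric computation: change variables $x=\phi_{2,t}^{-1}(y)$ in $\|\phi_{1,t}-\phi_{2,t}\|_{L^2}^2$, note that $\det D\phi_{2,t}^{-1}(y)=\bigl(\det D\phi_{2,t}(\phi_{2,t}^{-1}(y))\bigr)^{-1}$ is bounded above using the lower bound on $\det D\phi_{2,t}$, and then apply the mean value theorem to $\phi_{1,t}$ using $\|D\phi_{1,t}\|_{L^\infty}\le C$.

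The main (very mild) obstacle is simply to identify the correct substitute for the measure-preservation step; once the two-sided bound on $\det D\phi_{i,t}$ is in place, the argument is a direct change of variables and otherwise copies the SG proof.
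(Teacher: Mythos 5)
Your proof is correct and follows essentially the same route as the paper: first derive the two-sided bound on $\det D\phi_{i,t}$ from (\ref{6.61}) together with $c_1\le h_0$, $\mu\le h_i\le\|h_i\|_0$, then deduce uniform $C^1$ bounds on $\phi_{i,t}$ and $\phi_{i,t}^{-1}$, and finally change variables (with bounded Jacobian) and apply the mean value theorem. The only cosmetic difference is that you change variables by $\phi_{2,t}$ and apply the mean value theorem to $\phi_{1,t}^{-1}$, whereas the paper changes variables by $\phi_{1,t}$ and uses $\phi_{2,t}^{-1}$; these are symmetric and equivalent.
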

Here $C$ depends only on $\lambda$, $\mu$, $\|h_{i,t}\|_{L^{\infty}(0,T_0;C^2(\bT^2))}$,
$\|\partial_th_i\|_{L^{\infty}(0,T_0;C^1(\bT^2))}$, $i=1,2$.
\begin{proof}
From (\ref{6.64}), $\mathbf{u}_t\in L^{\infty}(0,T_0;C^1(\bT^2))$.
Since the flow map $\phi_i$ is generated by $\mathbf{u}_i$, it follows that
$\|D\phi_{i,t}\|_{L^{\infty}}$ are uniformly bounded in $t$,
with a bound having the dependence as in the lemma.
Also, from (\ref{6.61}) combined with
 (\ref{1.10}) and (\ref{assumptOnSOlnUniqSGSW}),
it follows that
 $\frac{c_1}{\|h_{i,t}\|_0}\leq\det D\phi_{i,t}\leq\frac{\|h_0\|_0}{\mu}$,
 for $i=1,2$.
From this we conclude $\|D\phi_{i,t}^{-1}\|_{L^{\infty}}$ are also uniformly
bounded in $t\in[0, T_0]$, with a bound having the dependence as in the lemma.

To prove (\ref{9.62new}), we can calculate
\begin{equation*}
\begin{split}
&\int_{[0,1)^2}|\phi_{1,t}^{-1}(x)-\phi_{2,t}^{-1}(x)|^2dx=\int_{[0,1)^2}|x-\phi_{2,t}^{-1}(\phi_{1,t}(x))|^2\det D\phi_{1,t}(x)dx\\
&=\int_{[0,1)^2}|\phi_{2,t}^{-1}(\phi_{2,t}(x))-\phi_{2,t}^{-1}(\phi_{1,t}(x))|^2\det D\phi_{1,t}(x)dx\\
&\leq \|D\phi_{2,t}^{-1}\|_{L^{\infty}}^2\|\det D\phi_{1,t}\|_{L^{\infty}}\|\phi_{1,t}-\phi_{2,t}\|_{L^2}^2.
\end{split}
\end{equation*}
The proof for (\ref{9.63new}) is similar.
\end{proof}

Similar to Lemma \ref{6.28}, we can show
\begin{lem}\label{6.67}
For each $t\in[0,T_0]$, we have
\begin{equation}\label{6.68}
\|\partial_t\nabla h_{1,t}-\partial_t\nabla h_{2,t}\|_{L^2}\leq C\|h_{1,t}-h_{2,t}\|_{H^2}.
\end{equation}
Here $C$ depends on $\lambda, \mu,\|h_{i,t}\|_{L^{\infty}(0,T_0;C^2)},\|\partial_th_i\|_{L^{\infty}(0,T_0;C^1)}$, $i=1,2$.
\end{lem}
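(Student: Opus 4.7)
The plan is to follow the strategy of Lemma \ref{6.28}, but working from the elliptic equation (\ref{6.66}) for $\partial_t h$ instead of (\ref{2.20new}). Writing $A_i \defd (I+f^{-1}D(f^{-1}Dh_{i,t}))^{-1}$ for $i=1,2$, equation (\ref{6.66}) for each $h_i$ reads
\begin{equation*}
\nabla\cdot[A_i f^{-2} h_{i,t}\,\partial_t\nabla h_{i,t}]-\partial_t h_{i,t}
=\nabla\cdot[A_i f^{-1} h_{i,t}\, J\nabla h_{i,t}].
\end{equation*}
I would subtract these two equations, split the leading-order divergence term via
\begin{equation*}
A_1 h_1 \partial_t\nabla h_1 - A_2 h_2 \partial_t\nabla h_2
= A_1 h_1 (\partial_t\nabla h_1-\partial_t\nabla h_2)
+ (A_1 h_1 - A_2 h_2)\partial_t\nabla h_2,
\end{equation*}
and rewrite the resulting equation as a divergence-form equation for $w\defd \partial_t h_{1,t}-\partial_t h_{2,t}$ with principal coefficient $A_1 f^{-2} h_1$, a zero-order term $-w$, and a remainder supported by $h_1-h_2$ and its first and second spatial derivatives.

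Next I would test this equation against $w$ and integrate by parts. The crucial observation is that the zero-order term $-\partial_t h$ in (\ref{6.66}) contributes $\|w\|_{L^2}^2$ to the left-hand side, and by (\ref{assumptOnSOlnUniqSGSW}) the principal coefficient satisfies $A_1 f^{-2} h_1 \ge c\,I$ with $c$ depending only on $\lambda$, $\mu$, and $\|f\|_0$. Thus the left-hand side controls $c\|\nabla w\|_{L^2}^2+\|w\|_{L^2}^2$. The right-hand side is, after integration by parts,
\begin{equation*}
-\int (A_1 h_1-A_2 h_2) f^{-2}\, \partial_t\nabla h_2 \cdot \nabla w\,dx
+\int [A_1 f^{-1} h_1 J\nabla h_1 - A_2 f^{-1} h_2 J\nabla h_2]\cdot \nabla w\,dx.
\end{equation*}

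To estimate these remainders, I would use the pointwise identity $A_1-A_2 = A_1\bigl(f^{-1}D(f^{-1}D(h_2-h_1))\bigr)A_2$, which gives
\begin{equation*}
\|A_1-A_2\|_{L^2}\le C\|h_1-h_2\|_{H^2},\qquad \|A_1 h_1-A_2 h_2\|_{L^2}\le C\|h_1-h_2\|_{H^2},
\end{equation*}
where $C$ has the dependence stated in the lemma; here $\|\partial_t\nabla h_2\|_0$ and $\|\nabla h_i\|_0$ are uniformly bounded by hypothesis, so they may be pulled out in $L^\infty$. A parallel estimate handles the second integrand. Combining these with Cauchy--Schwarz and absorbing $\|\nabla w\|_{L^2}$ on the left via Young's inequality yields
\begin{equation*}
\|\nabla w\|_{L^2}^2+\|w\|_{L^2}^2 \le C\|h_{1,t}-h_{2,t}\|_{H^2}^2,
\end{equation*}
which in particular gives (\ref{6.68}).

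I do not anticipate a serious obstacle: the zero-order term $-\partial_t h$ in (\ref{6.66}) makes coercivity essentially automatic, and the right-hand side estimates are routine since all $L^\infty$-factors are controlled by the assumed $C^2$-regularity of $h_i$ and $C^1$-regularity of $\partial_t h_i$. The only point requiring a bit of care is writing $A_1-A_2$ in a form that yields an $L^2$ bound by $\|h_1-h_2\|_{H^2}$ (rather than attempting an $L^\infty$ bound, which would need one more derivative).
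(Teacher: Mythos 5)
Your proposal is correct and follows essentially the same route as the paper: the paper's own proof is a brief remark that one subtracts the two instances of equation (\ref{6.66}), multiplies by $\partial_t h_{1,t}-\partial_t h_{2,t}$, integrates by parts, and uses the favorable sign of the zero-order term, arguing as in Lemma \ref{6.28}; your write-up simply fills in the details of that same energy estimate. One small inaccuracy worth flagging: the lower bound $A_1 f^{-2}h_1\ge c\,I$ requires not just $\lambda$, $\mu$, $\|f\|_0$ but also an upper bound on $I+f^{-1}D(f^{-1}Dh_1)$, hence on $\|h_1\|_{C^2}$ and $\|f^{-1}\|_{C^1}$ — which is fine, since the lemma permits $C$ to depend on these quantities, but the constant is not as clean as you stated.
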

\begin{proof}
Similar to the proof of Lemma \ref{6.28}, we take equation (\ref{6.66}) for
$h_1$ and $h_2$, and subtract. 
Multiplying both sides of the resulting equation by
$\partial_th_{1,t}-\partial_th_{2,t}$
and integrating by parts, we obtain (\ref{6.68}),
noticing that the  zero-th order term
has the right sign so we can argue in the
same way as in Lemma \ref{6.28}.
\end{proof}
The key proposition holds also in SG shallow water case. It is an analogue to
Proposition \ref{6.32}.
\begin{prop}\label{6.69}
For any $t\in[0,T_0]$
\begin{equation*}
\sup_{0\leq s\leq t}\|h_{1,s}-h_{2,s}\|_{H^2}\leq C\sup_{0\leq s\leq t}\|\phi_{1,s}-\phi_{2,s}\|_{L^2}.
\end{equation*}
Here $C$ depends on
$\lambda, \mu, \|h_i\|_{L^{\infty}(0,T_0;C^3)},\|\partial_th_i\|_{L^{\infty}(0,T_0;C^2)},T_0$, $i=1,2$.
\end{prop}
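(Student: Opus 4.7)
The plan is to mirror the proof of Proposition \ref{6.32} step by step, replacing the determinantal identity (\ref{6.18}) by its shallow-water analogue (\ref{6.63}). First I would subtract (\ref{6.63}) written for $h_1$ and $h_2$. Decomposing
$$\frac{\det(I+f^{-1}D(f^{-1}Dh_{1,t}))}{h_{1,t}}-\frac{\det(I+f^{-1}D(f^{-1}Dh_{2,t}))}{h_{2,t}}=\frac{\det(I+f^{-1}D(f^{-1}Dh_{1,t}))-\det(I+f^{-1}D(f^{-1}Dh_{2,t}))}{h_{1,t}}-\frac{\det(I+f^{-1}D(f^{-1}Dh_{2,t}))}{h_{1,t}h_{2,t}}(h_{1,t}-h_{2,t})$$
and expanding the determinantal difference through an integrated cofactor matrix exactly as in (\ref{6.34}), I obtain a uniformly elliptic second-order linear equation for $w:=h_{1,t}-h_{2,t}$ of the form $a_{ij}\partial_{ij}w+b_i\partial_i w-cw=F$, where the zero-order coefficient $c=h_{1,t}^{-1}h_{2,t}^{-1}\det(I+f^{-1}D(f^{-1}Dh_{2,t}))$ is strictly positive by \eqref{assumptOnSOlnUniqSGSW} and the coefficients $a_{ij},b_i,c$ are bounded in $C^1(\mathbb{T}^2)$ in terms of the norms listed in the proposition.

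Second, I would estimate the right-hand side $F$, which comes from the difference of the right-hand sides of (\ref{6.63}). The new feature compared with the SG case is the factor $1/h_0(\phi_{i,t}^{-1})$, but this is bounded by $1/c_1$ from (\ref{1.10}) and Lipschitz in $\phi_{i,t}^{-1}$. Combining the smoothness of $h_0$, the $C^1$-boundedness of $H_i(t,\cdot)$ (analogue of Lemma \ref{6.21}, valid verbatim), and Lemma \ref{9.8nn}, I obtain
$$\|F\|_{L^2}\le C\,\|\phi_{1,t}-\phi_{2,t}\|_{L^2}+C\,\|H_1(t,\cdot)-H_2(t,\cdot)\|_{L^2}.$$
Applying the $H^2$ estimate to the elliptic equation for $w$ (the sign of the zero-order term is favourable) yields $\|h_{1,t}-h_{2,t}\|_{H^2}\le C\|F\|_{L^2}$.

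Third, I would bound $\|H_1(t,\cdot)-H_2(t,\cdot)\|_{L^2}$ by the Gronwall argument in (\ref{6.41})--(\ref{6.44}), carried over verbatim with $p$ replaced by $h$. The single place where the shallow-water structure really intervenes is when estimating $\|\mathbf{u}_{1,t}-\mathbf{u}_{2,t}\|_{L^2}$: here one uses Lemma \ref{6.67} together with the shallow-water expression \eqref{6.64} to get $\|\mathbf{u}_{1,t}-\mathbf{u}_{2,t}\|_{L^2}\le C\|h_{1,t}-h_{2,t}\|_{H^2}$, which feeds into the $L^2$-estimate of $E_1-E_2$ through (\ref{6.45})--(\ref{6.48}). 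This gives
$$\|H_1(t,\cdot)-H_2(t,\cdot)\|_{L^2}\le C\int_0^t\bigl(\|\phi_{1,s}-\phi_{2,s}\|_{L^2}+\|h_{1,s}-h_{2,s}\|_{H^2}\bigr)\,ds.$$
Inserting this back yields
$$\|h_{1,t}-h_{2,t}\|_{H^2}\le C\,\|\phi_{1,t}-\phi_{2,t}\|_{L^2}+C\int_0^t\|\phi_{1,s}-\phi_{2,s}\|_{L^2}\,ds+C\int_0^t\|h_{1,s}-h_{2,s}\|_{H^2}\,ds,$$
and a final Gronwall inequality in $t$ delivers the claim.

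The main obstacle I anticipate is the closure of the coupled estimate: the elliptic equation for $w$ contains $H_i$ composed with $\phi_{i,t}^{-1}$ on the right, and $H_i$ in turn depends through its ODE on $\partial_t\nabla h_i$, which is recovered only via the elliptic equation (\ref{6.66}) for $\partial_t h$. The shallow-water equation (\ref{6.66}) has an additional $-\partial_t h$ zero-order term and a variable coefficient $h_t$, so one must check that the $H^1$-energy estimate used for Lemma \ref{6.67} still yields $\|\partial_t\nabla h_1-\partial_t\nabla h_2\|_{L^2}\le C\|h_{1,t}-h_{2,t}\|_{H^2}$; the sign of the $-\partial_t h$ term is exactly what makes this work, and it is the only structural point that distinguishes the argument from the SG case.
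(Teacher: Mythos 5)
Your proposal follows the paper's own proof essentially step by step: the same decomposition of the difference of the two sides of (\ref{6.63}) producing a uniformly elliptic operator on $h_{1,t}-h_{2,t}$ with a favourably-signed zero-order term, the $H^2$ elliptic estimate, the $H_1-H_2$ Gronwall bound carried over from (\ref{6.41})--(\ref{6.44}), and Lemma~\ref{6.67} (whose proof in turn relies on the sign of the $-\partial_t h$ term in (\ref{6.66})). The paper states the last two steps more tersely (``we can do a calculation as in the proof of Proposition~\ref{6.32}''), but your filled-in version matches in substance.
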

\begin{proof}
Similar to the proof of Proposition \ref{6.32}, we take equation (\ref{6.63})
for $h_1,h_2$ and subtract. Then we obtain equation
for $h_{1,t}-h_{2,t}$, with the left-hand side
\begin{equation*}
\begin{split}
&\quad\frac{\det(I+f^{-1}D(f^{-1}Dh_{1,t})(x))}{h_{1,t}(x)}-\frac{\det(I+f^{-1}D(f^{-1}Dh_{2,t})(x))}{h_{2,t}(x)}\\
&=\frac{a_{ij}f^{-1}\partial_j(f^{-1}\partial_i(h_{1,t}-h_{2,t}))}{h_{1,t}}-\frac{\det(I+f^{-1}D(f^{-1}Dh_{2,t}))}{h_{1,t}h_{2,t}}(h_{1,t}-h_{2,t}).
\end{split}
\end{equation*}
Here
\begin{equation*}
a_{ij}=\int_0^1M_{ji}(I+\theta f^{-1}D(f^{-1}Dh_{1,t})+(1-\theta)f^{-1}D(f^{-1}Dh_{2,t}))d\theta.
\end{equation*}
Property (\ref{assumptOnSOlnUniqSGSW}) and regularity assumptions for
$h_1$, $h_2$ imply that the equation is uniformly elliptic, and coefficients
are   in $C^1(\bT^2)$, with ellipticity constant and $C^1$ norms depending on
the parameters listed in the lemma.
The sign of the coefficient of zero-th order term is negative, by
property (\ref{assumptOnSOlnUniqSGSW}). Hence we can apply
the $H^2$ estimates to conclude
\begin{equation*}
\|h_{1,t}-h_{2,t}\|_{H^2}\leq C\|F\|_{L^2},
\end{equation*}
where $F$ is the right-hand side of the equation.
Now we can do a calculation as in the proof of Proposition \ref{6.32} to
estimate $\|F\|_{L^2}$.
\end{proof}
Finally,
\begin{lem}\label{6.74}
\begin{equation*}
\|\phi_{1,t}-\phi_{2,t}\|_{L^2}\leq C\int_0^t(\|h_{1,s}-h_{2,s}\|_{H^2}+\|\partial_t\nabla h_{1,s}-\partial_t\nabla h_{2,s}\|_{L^2})ds.
\end{equation*}
Here $C$ depends on $\|h_i\|_{L^{\infty}(0,T_0;C^3)},\|\partial_th_i\|_{L^{\infty}(0,T_0;C^2)}$, $i=1,2$.
\end{lem}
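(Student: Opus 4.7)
The plan is to mimic the proof of Lemma \ref{6.53} from the SG case, but with one careful modification to account for the fact that the flow map $\phi_{i,t}$ in the shallow water setting is no longer Lebesgue measure preserving. Starting point: equation (\ref{6.59}), solved for $\partial_t\phi_{i,t}$, gives the Lagrangian ODE
$$
\partial_t\phi_{i,t}(x)=G_i(t,\phi_{i,t}(x)), \quad i=1,2,
$$
where
$$G_i(t,y)=(I+f^{-1}D(f^{-1}Dh_{i,t}))^{-1}(y)\big[f^{-1}J\nabla h_{i,t}-f^{-2}\partial_t\nabla h_{i,t}\big](y).$$
Both $G_i$ and $DG_i$ are uniformly bounded in $L^\infty$ on $[0,T_0]\times\bT^2$, with a bound depending only on the parameters listed in the lemma, by virtue of (\ref{assumptOnSOlnUniqSGSW}) and the assumed $C^3$ regularity of $h_i$ together with $C^2$ regularity of $\partial_t h_i$. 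Subtracting and integrating from $0$ to $t$ (using $\phi_{1,0}=\phi_{2,0}=\mathrm{Id}$) yields
$$
\phi_{1,t}(x)-\phi_{2,t}(x)=\int_0^t \Big[\big(G_1(s,\phi_{1,s})-G_1(s,\phi_{2,s})\big)+\big(G_1(s,\phi_{2,s})-G_2(s,\phi_{2,s})\big)\Big]\,ds.
$$

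Take $L^2(\bT^2)$ norms. The first bracket is controlled by the mean value inequality:
$$
\|G_1(s,\phi_{1,s})-G_1(s,\phi_{2,s})\|_{L^2}\le \|DG_1\|_{L^\infty}\|\phi_{1,s}-\phi_{2,s}\|_{L^2}.
$$
For the second bracket we must change variables. Unlike in the SG case, $\phi_{2,s}$ is not measure preserving; however $\det D\phi_{2,s}=h_0/h_{2,s}(\phi_{2,s})$ by (\ref{6.61}), and by (\ref{assumptOnSOlnUniqSGSW}) together with $h_0\ge c_1$, the Jacobian $\det D\phi_{2,s}^{-1}$ is uniformly bounded above. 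Hence
$$
\|G_1(s,\phi_{2,s})-G_2(s,\phi_{2,s})\|_{L^2(\bT^2)}\le C\|G_1(s,\cdot)-G_2(s,\cdot)\|_{L^2(\bT^2)}.
$$

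Next, $G_1-G_2$ is algebraically an expression involving the inverse of $I+f^{-1}D(f^{-1}Dh_i)$, $\nabla h_i$, and $\partial_t\nabla h_i$; writing it as a sum of terms where exactly one pair of solutions differs, each such term is bounded in $L^2$ by a constant times either $\|h_{1,s}-h_{2,s}\|_{H^2}$ or $\|\partial_t\nabla h_{1,s}-\partial_t\nabla h_{2,s}\|_{L^2}$. (The inverse matrix difference is handled by the algebraic identity $A^{-1}-B^{-1}=A^{-1}(B-A)B^{-1}$ and the uniform lower bound (\ref{assumptOnSOlnUniqSGSW}).) Combining these estimates gives
$$
\|\phi_{1,t}-\phi_{2,t}\|_{L^2}\le C\int_0^t\|\phi_{1,s}-\phi_{2,s}\|_{L^2}\,ds+C\int_0^t\big(\|h_{1,s}-h_{2,s}\|_{H^2}+\|\partial_t\nabla h_{1,s}-\partial_t\nabla h_{2,s}\|_{L^2}\big)\,ds,
$$
and an application of Gronwall's inequality absorbs the first integral, producing the claimed estimate.

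The one non-routine step is verifying that the absence of the measure-preserving property of $\phi_{2,s}$ does not obstruct the change-of-variables bound; this is where the positivity $h_0\ge c_1$ from (\ref{1.10}) is essential, and was not needed in the SG analogue.
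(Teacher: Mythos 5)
Your proposal is correct and follows essentially the same route as the paper: the paper's proof of Lemma \ref{6.74} simply refers back to Lemma \ref{6.53}, replacing the exact measure-preserving change-of-variables step there with the bounded-Jacobian change of variables obtained from \eqref{6.61} together with the uniform bounds $h_0\ge c_1$ and $h_{2,t}\le \|h_2\|_{L^\infty(0,T_0;C^3)}$, which is exactly what you have spelled out.
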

\begin{proof}
We argue exactly as in the proof of Lemma \ref{6.53}, using (\ref{6.61})
for $\phi_{2,t}$ instead of the measure-preserving property.
\end{proof}
With above estimates, we can conclude uniqueness in the same way as for SG case.

\section{Appendix}
\label{appendix-section}
\subsection{Frechet derivatives for maps between H\"{o}lder spaces}
\begin{lem}\label{CompositionFrechetDif}
Let $\alpha,\beta\in (0,1)$ and $\alpha>\beta$. Then,
\begin{enumerate}[(i)]
\item
 The map $G:C^{2,\alpha}(\bT^2)\times C^{1,\beta}(\bT^2;\bR^2)\rightarrow C^{1,\beta}(\bT^2)$
defined by
$$
G(p, w)= p\circ (id+w)
$$
is continuously Frechet-differentiable.
\item
The map $G$ above considered as a map $C^{1,\alpha}(\mathbb{T}^2)\times
C^{1,\beta}(\mathbb{T}^2;\mathbb{R}^2)\rightarrow C^{0,\beta}(\mathbb{T}^2)$
is also continuously Frechet differentiable.
\end{enumerate}
\end{lem}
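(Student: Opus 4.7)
\smallskip
\noindent\textbf{Proof plan.}
The natural candidate for the Frechet derivative of $G$ at $(p,w)$ is the linear map
\begin{equation*}
(h,k)\longmapsto h(x+w(x)) + \nabla p(x+w(x))\cdot k(x).
\end{equation*}
I would first verify that this formula defines a bounded linear operator between the spaces stated in each case, using the standard H\"older composition and product estimates (for example $\|u\circ(id+w)\cdot k\|_{C^{m,\beta}}\le C(\|w\|_{C^{1,\beta}})\|u\|_{C^{m,\beta}}\|k\|_{C^{m,\beta}}$ for $m=0,1$) together with the inclusion $C^{m,\alpha}\subset C^{m,\beta}$ coming from $\alpha>\beta$.

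Next I would express the increment as
\begin{align*}
G(p+h,w+k)&-G(p,w)-DG(p,w)(h,k)\;=\;R_1(x)+R_2(x),\\
R_1(x)&=\int_0^1\bigl[\nabla(p+h)(x+w+sk)-\nabla(p+h)(x+w)\bigr]\cdot k(x)\,ds,\\
R_2(x)&=\nabla h(x+w(x))\cdot k(x).
\end{align*}
It is essential to keep Taylor's theorem at the level of $\nabla(p+h)$ rather than expanding to second order, because in (i) only two derivatives of $p$ are available. The term $R_2$ is immediately bounded in $C^{1,\beta}$ by $C\|h\|_{C^{2,\alpha}}\|k\|_{C^{1,\beta}}$ (and in case (ii) in $C^{0,\beta}$ by $C\|h\|_{C^{1,\alpha}}\|k\|_{C^{1,\beta}}$), hence contributes $o(\|(h,k)\|)$. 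To estimate $R_1$ in case (i), I would differentiate under the integral sign; the resulting terms split into those involving $D^2(p+h)(x+w+sk)-D^2(p+h)(x+w)$ multiplied by $k$ and by $I+\nabla w$, and those involving $D^2(p+h)$ or $\nabla(p+h)(x+w+sk)-\nabla(p+h)(x+w)$ multiplied by $\nabla k$ and $k$. The pointwise H\"older modulus of $D^2(p+h)\in C^{0,\alpha}$ provides a factor $|sk|^\alpha\le\|k\|_{C^0}^\alpha$, producing a super-linear dependence of $\|\nabla R_1\|_{C^0}$ on $\|k\|_{C^{1,\beta}}$; for the $C^{0,\beta}$ seminorm, the standard composition estimate combined with $\alpha>\beta$ yields the needed smallness. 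In case (ii) no differentiation of $R_1$ is needed: $R_1$ itself is estimated directly in $C^{0,\beta}$ by the same devices, with $\nabla(p+h)\in C^{0,\alpha}$ playing the role of $D^2(p+h)$.

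Finally, continuous dependence of $DG(p,w)$ on $(p,w)$ in the operator norm reduces to showing that $\nabla p_1\circ(id+w_1)-\nabla p_2\circ(id+w_2)$ tends to zero in the appropriate target space as $(p_1,w_1)\to(p_2,w_2)$, which follows by splitting the difference and applying the same H\"older composition estimates. The main technical obstacle is the consistent use of the strict gap $\alpha>\beta$: it is precisely what converts the pointwise $C^\alpha$-modulus of the second derivatives of $p$ (or first derivatives, in case (ii)) into $C^\beta$ H\"older control of the remainder, with a positive power of $\|k\|$ to spare, so that the $o(\|(h,k)\|)$ bound holds.
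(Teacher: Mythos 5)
Your proposal takes essentially the same approach as the paper: the same candidate derivative, a first-order Taylor remainder expressed at the level of $\nabla p$, and the H\"older gap $\alpha>\beta$ exploited through interpolation between the $C^{0,\alpha}$-modulus of $D^2p$ (resp.\ $\nabla p$ in case (ii)) and the sup-norm, yielding a remainder that is $o(\|(h,k)\|)$; the only difference is bookkeeping (you expand $\nabla(p+h)$ and peel off $\nabla h(x+w)\cdot k$, while the paper expands $\nabla p$ and keeps the cross term as $\varepsilon[h_1(\cdot+\delta h_2)-h_1]$, which are algebraically identical decompositions). One small omission in your continuity-of-$DG$ step: besides $\nabla p\circ(id+w)\cdot k$, you must also control the variation of $h\mapsto h\circ(id+w)$ in $w$, though the estimate is of the same kind.
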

\begin{proof}
In the argument below, the constant $C$ may change from line to line, and depends only on
$\alpha$, $\beta$, $\|p\|_{C^{2,\alpha}(\bT^2)}$ and
$\|w\|_{C^{1,\beta}(\bT^2;\bR^2)}$.

We only prove the first statement. The second statement is proved in a similar manner and is simpler.

That $G$ maps into $C^{1,\beta}(\mathbb{T}^2)$ is clear from the expression.  We show Frechet differentiability,
with differential $D G(p,w)$ for any $(p, w)\in C^{2,\alpha}(\mathbb{T}^2)\times C^{1,\beta}(\mathbb{T}^2;\mathbb{R}^2)$
given by
\begin{equation}\label{formOfFrechetDeriv-1}
\begin{split}
D&G(p,w):C^{2,\alpha}(\mathbb{T}^2)\times C^{1,\beta}(\mathbb{T}^2;\mathbb{R}^2)\rightarrow C^{1,\beta}(\mathbb{T}^2),\\
&(h_1,h_2)\longmapsto h_1\circ(id+w)+(\nabla p\circ(id+w))\cdot h_2.
\end{split}
\end{equation}
Here the second term above denotes the dot product of 2 vectors in $\mathbb{R}^2$.

By definition, we need to show
$$\frac{\|G(p+\eps h_1,w+\delta h_2)-G(p,w)-DG(p,w)(\eps h_1,\delta h_2)\|_{1,\beta}}{\eps+\delta}\rightarrow0,
$$
as $\eps,\delta\rightarrow0$, uniformly for any $\|h_1\|_{2,\alpha}, \|h_2\|_{1,\beta}\leq1$.
we can write
\begin{equation*}
\begin{split}
[G(p+\eps h_1,w+\delta h_2)& -G(p,w)-DG(p,w)(\eps h_1,\delta h_2)](x)\\
&=\int_0^1(\nabla p(x+w(x)+\theta\delta h_2(x))-\nabla p(x+w(x)))d\theta\cdot\delta h_2(x)\\
&\qquad\qquad+\eps(h_1(x+w(x)+\delta h_2(x))-h_1(x+w(x)).
\end{split}
\end{equation*}
Therefore,
\begin{equation}\label{9.1}
\begin{split}
\|G(& p+\eps h_1,w+\delta h_2)-G(p,w)-DG(p,w)(\eps h_1,\delta h_2)\|_{1,\beta}\\
&\leq\delta \sup_{0\leq\theta\leq1}\|\nabla p(id+w+\theta\delta h_2)-
\nabla p(id+w)\|_{1,\beta}\|h_2\|_{1,\beta}\\
&\qquad\qquad\qquad+\eps\|h_1(id+w+\delta h_2)-h_1(id+w)\|_{1,\beta}\\
&\leq\delta \sup_{0\leq\theta\leq1}\|\nabla p(id+w+\theta\delta h_2)-
\nabla p(id+w)\|_{1,\beta}+\eps\|h_1(id+w+\delta h_2)-h_1(id+w)\|_{1,\beta}.
\end{split}
\end{equation}
Now
\begin{equation*}
\begin{split}
 D[\nabla & p(id+w+\theta\delta h_2)-\nabla p(id+w)]\\
&=D^2p(id+w+\theta\delta h_2)(I+Dw+\theta\delta Dh_2)-D^2p(id+w)(I+Dw)\\
&=[D^2p(id+w+\theta\delta h_2)-D^2p(id+w)](I+Dw+\theta\delta Dh_2)+
\theta\delta D^2p(id+w) Dh_2.
\end{split}
\end{equation*}
Therefore,
\begin{equation*}
\begin{split}
\|D[\nabla & p(id+w+\theta\delta h_2)-\nabla p(id+w)]\|_{0,\beta}\\
&\leq\|D^2p(id+w+\theta\delta h_2)-D^2p(id+w)\|_{0,\beta}\|I+Dw+\theta\delta Dh_2\|_{0,\beta}\\
&\qquad\qquad\qquad+\delta\|D^2p(id+w)\|_{0,\beta}\|Dh_2\|_{0,\beta}\\
&\leq\|D^2p(id+w+\theta\delta h_2)-D^2
p(id+w)\|_{0,\beta}(1+\|w\|_{1,\beta}
+\delta)+\delta C\\
&\leq C\|D^2p(id+w+\theta\delta h_2)-D^2
p(id+w)\|_{0,\beta}+\delta C
\end{split}
\end{equation*}
Finally, we use interpolation inequality to estimate the first term above:
\begin{equation*}
\begin{split}
\|D^2p&(id+w+\theta\delta h_2)-D^2p(id+w)\|_{0,\beta}\\
&\leq\|D^2p(id+w+\theta\delta h_2)-D^2p(id+w)\|_{0,\alpha}^{\beta/\alpha}\cdot
\|D^2p(id+w+\theta\delta h_2)-D^2p(id+w)\|_0^{1-\beta/\alpha}\\
&\leq C(\|p\|_{2,\alpha}
\|h_2\|_0^{\alpha}\,\delta^{\alpha})^{1-\beta/\alpha}\\
&\le C\delta^{\alpha-\beta}
\rightarrow 0.
\end{split}
\end{equation*}
The second term in (\ref{9.1}) can be handled in a similar way.

Also using explicit expression  of $DG(p,w)$ in (\ref{formOfFrechetDeriv-1}), and
arguing in a similar way as above, we
show that $DG$ is continuous in $p,w$ under the operator norm.
\end{proof}
\begin{lem}\label{l9.2}
Let $f:\mathbb{R}^n\rightarrow\mathbb{R}$ be a $C^3$ function, then the map
\begin{equation*}
\begin{split}
F:&C^{0,\alpha}(\mathbb{T}^2)\times C^{0,\alpha}\cdots C^{0,\alpha}(\mathbb{T}^2)\rightarrow C^{0,\alpha}(\mathbb{T}^2)\\
&(p_1,p_2,\cdots,p_n)\longmapsto f(p_1,p_2,\cdots p_n).
\end{split}
\end{equation*}
is continuously Frechet differentiable.
\end{lem}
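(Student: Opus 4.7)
The plan is to identify the candidate Frechet derivative, verify it defines a bounded linear map into $C^{0,\alpha}$, control the Taylor remainder in $C^{0,\alpha}$ norm by second-order Taylor expansion, and check continuity of the derivative. Throughout, the workhorse will be an elementary composition estimate: if $g\in C^1(\bR^n)$ and $u\in C^{0,\alpha}(\bT^2;\bR^n)$, then $g\circ u\in C^{0,\alpha}(\bT^2)$ with
\begin{equation*}
\|g\circ u\|_{0,\alpha}\leq \|g\|_{L^\infty(K)}+\|\nabla g\|_{L^\infty(K)}[u]_\alpha,
\end{equation*}
where $K$ is any compact set containing $u(\bT^2)$. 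This follows immediately from the mean value theorem: $|g(u(x))-g(u(y))|\leq \|\nabla g\|_{L^\infty(K)}[u]_\alpha |x-y|^\alpha$.

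The candidate Frechet derivative at $p=(p_1,\dots,p_n)$ is
\begin{equation*}
DF(p)(h_1,\dots,h_n)=\sum_{i=1}^n \partial_i f(p_1,\dots,p_n)\,h_i.
\end{equation*}
Since $f\in C^3$, each $\partial_i f\in C^1$, so the composition estimate above gives $\partial_i f(p)\in C^{0,\alpha}(\bT^2)$. Combined with the fact that $C^{0,\alpha}(\bT^2)$ is a Banach algebra, this shows $DF(p)$ is a bounded linear map from $C^{0,\alpha}(\bT^2)^n$ into $C^{0,\alpha}(\bT^2)$.

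To verify differentiability, I would apply the integral form of the second-order Taylor remainder pointwise in $x\in\bT^2$:
\begin{equation*}
f(p+\varepsilon h)-f(p)-\varepsilon\sum_i \partial_i f(p)\,h_i=\varepsilon^2\sum_{i,j}\int_0^1 (1-\theta)\,\partial_{ij}f(p+\theta\varepsilon h)\,h_i h_j\,d\theta.
\end{equation*}
Taking $C^{0,\alpha}$ norm, the algebra property together with the composition estimate applied to $\partial_{ij}f\in C^1$ yields
\begin{equation*}
\|f(p+\varepsilon h)-f(p)-\varepsilon DF(p)(h)\|_{0,\alpha}\leq C\varepsilon^2\|h\|_{0,\alpha}^2,
\end{equation*}
with $C$ depending only on $f$ and a bound for $\|p\|_{0,\alpha}+\|h\|_{0,\alpha}$. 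This is $o(\varepsilon)$ uniformly on bounded sets of $h$, so $DF(p)$ is indeed the Frechet derivative.

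For continuity of $p\mapsto DF(p)$ in the operator norm, I would write, for a perturbation $q=(q_1,\dots,q_n)\in C^{0,\alpha}(\bT^2)^n$,
\begin{equation*}
[DF(p+q)-DF(p)](h)=\sum_{i,j}\int_0^1 \partial_{ij}f(p+\theta q)\,q_j\,d\theta\cdot h_i,
\end{equation*}
so the algebra and composition estimates give $\|DF(p+q)-DF(p)\|_{\mathrm{op}}\leq C\|q\|_{0,\alpha}$, which tends to zero. The main obstacle is nothing deep but just the careful bookkeeping of the composition estimate in $C^{0,\alpha}$: specifically, one needs the Hölder norms of $\partial_{ij}f(p+\theta\varepsilon h)$ to remain uniformly bounded as $\theta$ and small $\varepsilon$ vary, which is exactly what the hypothesis $f\in C^3$ (hence $\partial_{ij}f\in C^1$) is designed to guarantee.
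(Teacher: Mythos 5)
Your proof is correct and follows essentially the same route as the paper: a second-order Taylor expansion with integral remainder, reduced to the observations that composing $\partial_{ij}f\in C^1$ with $C^{0,\alpha}$ functions lands in $C^{0,\alpha}$ and that $C^{0,\alpha}(\bT^2)$ is a Banach algebra. The paper only writes out the $n=2$ case and is terser about the continuity of $p\mapsto DF(p)$, while you treat general $n$ and make that step explicit, but the underlying argument is the same.
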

\begin{proof}
For simplicity, we just prove this lemma for $n=2$. The general $n$ can be handled in a similar way. From Taylor`s formula, one can write
$$f(p_1+\eps h_1,p_2+\delta h_2)=f(p_1,p_2)+\eps h_1\partial_1f(p_1)+\delta h_2\partial_2f(p_2)+\int_0^1A(x,t)(1-t)dt,
$$
where
$$
A(x,t)=\partial_{11}f(\eps h_1)^2+2\partial_{12}f(\eps h_1)(\delta h_2)+\partial_{22}f(\delta h_2)^2.
$$
Here $\partial_{ij}f$ is evaluated at $(p_1+t\eps h_1,p_2+t\delta h_2)$. Now observe that if
$\|h_1\|_{0,\alpha},\|h_2\|_{0,\alpha}\leq 1$, one has a uniform $C^{0,\alpha}$ bound for $\frac{A(x,t)}{\eps^2+\delta^2}$.
This proves the map $F$ is differentiable. The continuity of the differential follows readily from the
expression $D_{p_i}F(p_1,p_2,\cdots,p_n)h_i=\partial_if(p)h_i$.

\end{proof}

\begin{cor}\label{c9.3}
The map
$$\det:C^{0,\alpha}(M_{2\times2})\rightarrow C^{0,\alpha}
$$
$$A\longmapsto \det A.
$$
is continuously Frechet differentiable.Here $C^{0,\alpha}(M_{2\times2})$ denotes the space of $2\times2$ matrices with entries in $C^{0,\alpha}(\mathbb{T}^2)$.

\end{cor}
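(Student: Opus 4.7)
The plan is to reduce this corollary directly to Lemma \ref{l9.2} by identifying a $2\times 2$ matrix with its $4$-tuple of entries and noting that the determinant is a polynomial in those entries.

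First, I would identify the Banach space $C^{0,\alpha}(M_{2\times 2})$ with the product space $C^{0,\alpha}(\bT^2)\times C^{0,\alpha}(\bT^2)\times C^{0,\alpha}(\bT^2)\times C^{0,\alpha}(\bT^2)$ via the isomorphism $A\mapsto (a_{11},a_{12},a_{21},a_{22})$. This is a linear isomorphism of Banach spaces (under compatible norms), so Frechet differentiability of a map on one space is equivalent to Frechet differentiability of its pullback on the other.

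Next, I would write $\det A = a_{11}a_{22}-a_{12}a_{21} = f(a_{11},a_{12},a_{21},a_{22})$, where $f:\bR^4\rightarrow\bR$ is defined by $f(x_1,x_2,x_3,x_4)=x_1x_4-x_2x_3$. Since $f$ is a polynomial, it is $C^\infty$, and in particular $C^3$, so the hypothesis of Lemma \ref{l9.2} with $n=4$ is satisfied.

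Applying Lemma \ref{l9.2} to $f$, we conclude that the composition map $(a_{11},a_{12},a_{21},a_{22})\longmapsto f(a_{11},a_{12},a_{21},a_{22})$ from $(C^{0,\alpha}(\bT^2))^4$ to $C^{0,\alpha}(\bT^2)$ is continuously Frechet differentiable, with derivative at $A$ acting on an increment $H=(h_{ij})$ by $DF(A)H = a_{22}h_{11}+a_{11}h_{22}-a_{21}h_{12}-a_{12}h_{21}$ (which is the familiar $\mathrm{tr}(\mathrm{cof}(A)^T H)$ expression). There is no real obstacle here since the entire content of the corollary is packaged into Lemma \ref{l9.2}; the only thing one needs is to observe that the determinant depends polynomially on the matrix entries.
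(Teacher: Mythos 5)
Your proposal is exactly the intended argument: the corollary is stated in the paper without an explicit proof precisely because it follows immediately from Lemma \ref{l9.2} with $n=4$, taking $f(x_1,x_2,x_3,x_4)=x_1x_4-x_2x_3$ (a polynomial, hence $C^3$) and identifying a matrix with its tuple of entries. Your argument matches this and is correct.
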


Similar to Lemma \ref{CompositionFrechetDif}, one can also show
\begin{lem}\label{l9.4}
Let $f\in C^2(\mathbb{T}^2)$, then the map
$$F:C^{1,\alpha}(\mathbb{T}^2;\mathbb{R}^2)\rightarrow C^{0,\alpha}(\mathbb{T}^2)
$$
$$w\longmapsto f(id+w).
$$
is continuously Frechet differentiable.
\end{lem}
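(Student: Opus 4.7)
The plan is to observe that Lemma \ref{l9.4} is essentially a corollary of Lemma \ref{CompositionFrechetDif}(ii). Fixing the first variable $p=f$ in the map $G(p,w)=p\circ(id+w)$ produces the partial map $w\mapsto f\circ(id+w)=F(w)$, and a partial map of a continuously Frechet-differentiable map is itself continuously Frechet-differentiable. To invoke Lemma \ref{CompositionFrechetDif}(ii), which requires the first argument to lie in $C^{1,\alpha'}$ for some $\alpha'$ strictly larger than the Hölder exponent on the target side, I would take the target exponent to be $\alpha$ (the $\alpha$ of Lemma \ref{l9.4}) and pick $\alpha'=(1+\alpha)/2>\alpha$. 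Since $f\in C^2(\bT^2)\subset C^{1,1}(\bT^2)\subset C^{1,\alpha'}(\bT^2)$, the hypotheses of Lemma \ref{CompositionFrechetDif}(ii) are satisfied and the claim follows.

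For a self-contained proof, I would repeat the argument of Lemma \ref{CompositionFrechetDif} in this simpler one-variable setting. First, guess the Frechet derivative $(DF(w)h)(x)=\nabla f(x+w(x))\cdot h(x)$, which lies in $C^{0,\alpha}(\bT^2)$ because $\nabla f\in C^{1}\subset C^{0,\alpha}$ and $h\in C^{1,\alpha}\subset C^{0,\alpha}$. Next, by Taylor's theorem in integral form,
$$R(x):=F(w+h)(x)-F(w)(x)-(DF(w)h)(x)=\int_0^1 g_s(x)\cdot h(x)\,ds,$$
where $g_s(x)=\nabla f(x+w(x)+sh(x))-\nabla f(x+w(x))$. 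The uniform bound $\|R\|_0\le C\|h\|_0^2$ is immediate from the Lipschitz continuity of $\nabla f$ (valid since $f\in C^2$). For the Hölder seminorm, one has $\|g_s\|_0\le Cs\|h\|_0$ and $[g_s]_1\le C(1+\|Dw\|_0+s\|Dh\|_0)$; interpolation then yields $[g_s]_\alpha\le C\|g_s\|_0^{1-\alpha}[g_s]_1^\alpha\le Cs^{1-\alpha}\|h\|_0^{1-\alpha}$, uniformly for bounded $\|w\|_{1,\alpha}$ and $\|h\|_{1,\alpha}$. The product rule for Hölder seminorms combined with integration in $s$ then gives $\|R\|_{0,\alpha}\le C\|h\|_{1,\alpha}^{2-\alpha}=o(\|h\|_{1,\alpha})$ since $\alpha<1$. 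Continuity of $w\mapsto DF(w)$ in the operator norm is verified by an analogous estimate applied to $(\nabla f(id+w)-\nabla f(id+w'))\cdot h$.

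The step I expect to be the main obstacle is the $C^{0,\alpha}$ seminorm bound on the remainder, where $\nabla f$ is only Lipschitz rather than Hölder in its own right, so one cannot directly control $[g_s]_\alpha$ by Hölder norms of $\nabla f$ alone. The interpolation between $\|g_s\|_0$ and $[g_s]_1$ produces the decisive factor $s^{1-\alpha}$, which upon integration over $s\in[0,1]$ yields the superlinear power $\|h\|_{1,\alpha}^{2-\alpha}$ required for Frechet differentiability. This is precisely the mechanism already employed in the proof of Lemma \ref{CompositionFrechetDif}, so in practice the shortest route is the reduction described in the first paragraph.
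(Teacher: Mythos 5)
Your self-contained argument in the second paragraph is essentially the paper's proof: both write the remainder as an integral of $\nabla f(id+w+s h)-\nabla f(id+w)$ against $h$ and control its $C^{0,\alpha}$ norm by interpolating between the $C^0$ and $C^1$ norms (giving the decisive factor $s^{1-\alpha}\|h\|_0^{1-\alpha}$), so the proposal is correct. Your first-paragraph observation, that the statement is already a special case of Lemma~\ref{CompositionFrechetDif}(ii) upon freezing $p=f$ and choosing $\alpha'=(1+\alpha)/2$, is a valid shortcut the paper could have used but chose not to.
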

\begin{proof}
Let $\|h\|_{1,\alpha}\leq 1$, we can write
\begin{equation*}
\begin{split}
&f(id+w+\eps h)-f(id+w)-\nabla f(id+w)\eps h\\
&\qquad=\eps h\int_0^1[\nabla f(id+w+\eps\theta h)-\nabla f(id+w)]d\theta.
\end{split}
\end{equation*}
Therefore,
\begin{equation*}
\begin{split}
&\|f(id+w+\eps h)-f(id+w)-\nabla f(id+w)\eps h\|_{0,\alpha}\\
&\qquad\leq\eps\|h\|_{0,\alpha}\sup_{0\leq\theta\leq1}\|\nabla f(id+w+\eps\theta h)-\nabla f(id+w)\|_{0,\alpha}.
\end{split}
\end{equation*}
Now we use interpolation inequality:
\begin{equation*}
\begin{split}
\|\nabla f( & id+w+\eps\theta h)-\nabla f(id+w)\|_{0,\alpha}\\
&\leq\|\nabla f(id+w+\eps\theta h)-\nabla f(id+w)\|_{C^1}^{\alpha}\cdot\|\nabla f(id+w+\eps\theta h)-\nabla f(id+w)\|_0^{1-\alpha}.\\
&\leq C(\eps\|D^2f\|_0\|h\|_0)^{1-\alpha}.
\end{split}
\end{equation*}
\end{proof}

\subsection{Integration on $\bT^2$}
We note the following elementary property of $\bZ^2$-periodic functions:
\begin{lem}\label{chgVarOnTorus}
Let $G: \bR^2\to \bR^2$ be a $C^1$ diffeomorphism, $h\in C(\bR^2)$, and let
 $G-id$ and $h$ be $\bZ^2$-periodic. Then
$$
\int_{G([0,1)^2)}h(y)dy=
\int_{[0,1)^2}h(x)dx.
$$
\end{lem}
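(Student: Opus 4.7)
My plan rests on the observation that, because $G-id$ is $\bZ^2$-periodic, the identity $G(x+h)=G(x)+h$ for all $h\in\bZ^2$ forces $G$ to descend to a homeomorphism of $\bT^2\simeq \bR^2/\bZ^2$, and equivalently forces the family $\{G([0,1)^2)+h\}_{h\in\bZ^2}=\{G([0,1)^2+h)\}_{h\in\bZ^2}$ to be a tiling of $\bR^2$ modulo sets of Lebesgue measure zero. Once this tiling property is established, the result follows by a straightforward rearrangement that uses only the periodicity of $h$ and a change of variables $y\mapsto y-k$.

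First I would record the tiling lemma explicitly: for every $k\in\bZ^2$ the sets $G([0,1)^2)+k$ and $G([0,1)^2)$ are either equal (when $k=0$) or have intersection of Lebesgue measure zero, and their union is all of $\bR^2$. For pairwise disjointness up to null sets, if the interiors of $G([0,1)^2)$ and $G([0,1)^2)+k$ intersected in a set of positive measure, then by injectivity of $G$ on $\bR^2$ and the relation $G([0,1)^2+k)=G([0,1)^2)+k$, the sets $[0,1)^2$ and $[0,1)^2+k$ would do so as well, contradiction. For the covering, given any $y\in\bR^2$, pick $x\in\bR^2$ with $G(x)=y$, write $x=x_0+k$ with $x_0\in[0,1)^2$ and $k\in\bZ^2$, then $y=G(x_0)+k\in G([0,1)^2)+k$.

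Second, since $[0,1)^2$ is bounded and $G$ is continuous, $G([0,1)^2)$ is bounded, so the decomposition
\begin{equation*}
\int_{G([0,1)^2)}h(y)\,dy=\sum_{k\in\bZ^2}\int_{G([0,1)^2)\cap([0,1)^2+k)}h(y)\,dy
\end{equation*}
is in fact a finite sum. In each term I substitute $z=y-k$ and use $h(z+k)=h(z)$ to obtain
\begin{equation*}
\int_{G([0,1)^2)\cap([0,1)^2+k)}h(y)\,dy=\int_{(G([0,1)^2)-k)\cap[0,1)^2}h(z)\,dz=\int_{G([0,1)^2-k)\cap[0,1)^2}h(z)\,dz,
\end{equation*}
where the last equality uses $G([0,1)^2)-k=G([0,1)^2-k)$, itself a direct consequence of $G(x-k)=G(x)-k$.

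Finally, summing over $k$ and invoking the tiling property (with $k$ replaced by $-k$), the sets $G([0,1)^2-k)\cap[0,1)^2$ for $k\in\bZ^2$ cover $[0,1)^2$ with pairwise null intersections, giving $\int_{[0,1)^2}h(z)\,dz$ on the right-hand side and completing the proof. I do not anticipate a genuine obstacle: the only point requiring care is the measure-zero overlap issue in the tilings, which is handled as above using injectivity of $G$ and the fact that diffeomorphisms preserve the class of null sets.
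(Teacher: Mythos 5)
Your proposal is correct and is essentially the same argument as the paper's: the paper's sets $A_{ij}=G([0,1)^2)\cap([0,1)^2+(i,j))$ and $B_{ij}=A_{ij}-(i,j)$ are precisely the pieces in your decomposition, and the paper's verification that the $B_{ij}$ are disjoint and cover $[0,1)^2$ is your tiling lemma in disguise. The only cosmetic differences are that you phrase the key fact as a tiling of $\bR^2$ by $\bZ^2$-translates of $G([0,1)^2)$ and add the (useful) observation that the sum over $k$ is finite because $G([0,1)^2)$ is bounded.
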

\begin{proof}
We can write
$$
G([0,1)^2)=\cup_{(i,j)\in\bZ^2} A_{ij}, \;\;\textrm{ where $A_{ij}$ are
disjoint, and }\;
A_{ij}-(i,j)\subset [0,1)^2.
$$
Denote $B_{ij}:=A_{ij}-(i,j)$.
We show that
\begin{equation}\label{periodicShift}
B_{ij}\;\textrm{  are disjoint, and }\; [0,1)^2=\cup_{(i,j)\in\bZ^2} B_{ij}.
\end{equation}
Indeed, suppose $x\in B_{ij}\cap B_{kl}$. Then
$x+(i,j)=G(y)$ and $x+(k,l)=G(z)$ for $y,z\in[0, 1)^2$. Then using
$\bZ^2$-periodicity of  $G-id$, we have  $G(y)-y=G(y-(i,j))-(y-(i,j))$ and
 $G(z)-z=G(z-(k,l))-(z-(k,l))$,
from which $x=G(y)-(i,j)=G(y-(i,j))$ and similarly
$x=G(z-(k,l))$. Since $G: \bR^2\to \bR^2$ is a diffeomorphism, we get
$y-(i,j))=z-(k,l)$, which implies $y=z$ and $(i,j)=(k,l)$ since
$y,z\in[0, 1)^2$. Thus $B_{ij}$  are disjoint. Next, let $x\in [0, 1)^2$.
Since $G:\bR^2\to \bR^2$ is a diffeomorphism, then $x=G(y)$ for some
$y\in\bR^2$. Then $y=z-(i,j)$ for some $z\in[0,1)^2$ and $(i,j)\in \bZ^2$.
Using again $\bZ^2$-periodicity of  $G-id$, we get $x=G(z-(i,j))=G(z)-(i,j)$,
from which, using that $x,z\in [0, 1)^2$, we get $x\in B_{ij}$.
Thus $[0,1)^2=\cup_{(i,j)\in\bZ^2} B_{ij}$. Now (\ref{periodicShift}) is proved.

With this, using  $\bZ^2$-periodicity of $h$, we get
$$
\int_{G([0,1)^2)}h(y)dy=\sum_{(i,j)\in\bZ^2}\int_{ A_{ij}}h(y)dy
=\sum_{(i,j)\in\bZ^2}\int_{ B_{ij}}h(x)dx=
\int_{[0,1)^2}h(x)dx.
$$
\end{proof}

\section*{Acknowledgements}
The work of Jingrui Cheng was supported in part by the National Science
Foundation under Grant DMS-1401490.
The work of Mikhail Feldman was supported in part by the National Science
Foundation under Grant DMS-1401490, and the
Van Vleck
Professorship Research Award by the University of Wisconsin-Madison.

\end{document}